\documentclass[12pt]{amsart}

\newtheorem{theorem}{Theorem}[section]
\newtheorem{lemma}[theorem]{Lemma}

\newtheorem{proposition}[theorem]{Proposition}

\newtheorem{corollary}[theorem]{Corollary}

\usepackage[bookmarks]{hyperref}
\usepackage{fullpage}
\usepackage{cite}
\usepackage{babel,graphicx,amscd,amsthm,amsfonts,amsopn,amssymb,verbatim,enumerate,xcolor}

\usepackage[letterpaper,left=1in,top=1in,right=1in,bottom=1in]{geometry}

\def\fracsa{\frac{s}{\alpha}}
\def\clogd{c(\alpha)|D_x|^{\alpha-1}}
\def\logd{|D_x|^{\alpha - 1}}

\def\tv{\tilde v}
\def\tw{\tilde w}
\def\half{ \frac{1}{2}}
\def\D{\partial}

\def\R{{\mathbb R}}

\def\nint{\mathop{\diagup\kern-13.0pt\int}}
\def\Z{{\mathbb Z}}

\def\bas{\begin{align*}}
\def\eas{\end{align*}}
\def\bi{\begin{itemize}}
\def\ei{\end{itemize}}

\def\emph#1{{\it #1}}


\def\eps{{\epsilon}}

\def\dq{{\delta}}
\def\sdq{{|\delta|}}

\def\pax{\mathbf{u}}
\def\rpax{\mathbf{r}}

\DeclareMathOperator{\sgn}{sgn}

\theoremstyle{definition}
\newtheorem{remark}[theorem]{Remark}
\numberwithin{equation}{section}

\title{Low regularity well-posedness for the generalized surface quasi-geostrophic front equation}

\author{Albert Ai}
\address{Department of Mathematics, University of Wisconsin, Madison}
\email{aai@math.wisc.edu}

\author{Ovidiu-Neculai Avadanei}
\address{Department of Mathematics, University of California at Berkeley}
\email{ovidiu\_avadanei@berkeley.edu}

\begin{document}

\begin{abstract}
We consider the well-posedness of the generalized surface quasi-geostrophic (gSQG) front equation. By using the null structure of the equation via a paradifferential normal form analysis, we obtain balanced energy estimates, which allow us to prove the local well-posedness of the non-periodic  gSQG front equation at a low level of regularity (in the SQG case, at only one-half derivatives above scaling). In addition, we establish global well-posedness for small and localized rough initial data, as well as modified scattering, by using the testing by wave packet approach of Ifrim-Tataru.
\end{abstract}

\keywords{gSQG front equation, low regularity, normal forms, paralinearization, modified energies, frequency envelopes, wave packet testing}
\subjclass[2020]{35Q35, 35B65}

\maketitle
\addtocontents{toc}{\protect\setcounter{tocdepth}{1}}
\tableofcontents

\maketitle

\section{Introduction}
The generalized surface quasi-geostrophic (gSQG) equations are a one parameter family of active scalar equations parameterized by a transport term, given by
\begin{equation}\label{rawgSQG}
   \theta_t+u\cdot\nabla\theta=0,\qquad u=(-\Delta)^{-1 + \frac{\alpha}{2}} \nabla^\perp\theta, \qquad \alpha \in [0,2).
\end{equation}
Here, $\theta$ represents a scalar evolution on $\mathbb{R}^2$, $(-\Delta)^{-1 + \frac{\alpha}{2}}$ is a fractional Laplacian, and $\nabla^\perp$ is given by $\nabla^\perp=(-\partial_y,\partial_x)$.

The case $\alpha = 0$ corresponds to the two-dimensional incompressible Euler equation, while the case $\alpha = 1$ gives the surface quasi-geostrophic equation (SQG) equation. The latter arises as a model for quasi-geostrophic flows confined to a surface in atmospheric and oceanic science. It also shares some similarities with the three dimensional incompressible Euler equation, and thus is often used as a simplified model problem. In particular, the question of finite time singularity formation remains open for both equations.

Front solutions to \eqref{rawgSQG} are solutions of the form
\[
\theta(t, x, y) = \begin{cases}
  \theta_+  \qquad \text{if } y > \varphi(t, x), \\
  \theta_-  \qquad \text{if } y < \varphi(t, x),
\end{cases}
\]
where the front refers to the graph $y = \varphi(t, x)$ with $x \in \R$. Front solutions are closely related to patch solutions, which have the form
\[
\theta(t, x, y) = \begin{cases}
  \theta_+  \qquad \text{if } (x, y) \in \Omega(t), \\
  \theta_-  \qquad \text{if } (x, y) \notin \Omega(t),
\end{cases}
\]
where $\Omega(t)$ is a bounded, simply connected domain. 

When $\alpha \in (1, 2)$, the derivation and analysis of contour dynamics equations governing fronts and patches do not differ substantially. However, when $\alpha \in [0, 1]$, the derivation of the equations for fronts has additional complexities relative to the case of patches, arising from the slow decay of Green's functions. The derivation in this range was provided by Hunter-Shu \cite{HSderivation} via a regularization procedure, and again by Hunter-Shu-Zhang in \cite{HSZderivation}. In the generalized $\alpha \neq 1$ case, the equation takes the form
\begin{equation}\label{gSQG}
\begin{aligned}
(\partial_t - c(\alpha)|D_x|^{\alpha-1}\D_x)\varphi &= Q(\varphi , \D_x\varphi), \\
\varphi(0,x)&=\varphi_0(x),
\end{aligned}
\end{equation}
while in the SQG case $\alpha=1$, the equation
takes the form
\begin{equation}\label{SQG}
\begin{aligned}
(\partial_t - 2\log|D_x|\D_x)\varphi &= Q(\varphi , \D_x\varphi), \\
\varphi(0,x)&=\varphi_0(x).
\end{aligned}
\end{equation}
Here, $\varphi$ is a real-valued function $\varphi : [0, \infty) \times \R \rightarrow \R$, $c(\alpha)$ denotes the constant
\begin{equation}
\begin{aligned}
c(\alpha) &=-2\sin\left(\frac{\pi(2-\alpha)}{2}\right)\Gamma(1-\alpha),\qquad &&\alpha\in(0,2), \\
c(\alpha) &= -\half, \qquad &&\alpha = 0, \\
\end{aligned}
\end{equation}
and the nonlinearity $Q$ is given in the two cases $\alpha \in (0, 2)$ and $\alpha = 0$ respectively by
\begin{equation}\label{A-op}
Q(f,g)(x) = \int \left(\frac{1}{|y|^{\alpha}} - \frac{1}{(y^2+(f(x+y)-f(x))^2)^{\frac{\alpha}{2}}}\right)\cdot (g(x + y) - g(x)) \,dy
\end{equation}
and
\begin{equation}\label{A-op-zero}
Q(f,g)(x) = \int \frac{1}{2\pi}\log\left(1+\left(\frac{f(x+y)-f(x)}{y}\right)^{2}\right) \cdot (g(x+y)-g(x))\,dy.
\end{equation}

The nonlinearity $Q$ can be written more succinctly using difference quotients,
\begin{equation}\label{A-op2}
Q(f,g)(x) = \int \frac{1}{|y|^{\alpha-1}}F(\dq^yf) \cdot \sdq^yg\,dy,
\end{equation}
where
\[
\displaystyle \dq^yf(x)=\frac{f(x+y)-f(x)}{y}, \qquad \displaystyle \sdq^yg(x)=\frac{g(x+y)-g(x)}{|y|},
\]
and
\[
\displaystyle F(s)=1-\frac{1}{(1+s^2)^{\frac{\alpha}{2}}} \quad \text{when }\alpha \in (0, 2),\qquad \displaystyle F(s)=\frac{1}{2\pi}\log(1+s^2) \quad \text{when }\alpha=0.
\]

The equations \eqref{gSQG} and \eqref{SQG} are invariant under the scaling
\begin{align*}
    t\rightarrow \kappa^\alpha t,\qquad x\rightarrow\kappa x,\qquad\varphi\rightarrow\kappa\varphi,
\end{align*}
which implies that $\dot{H}^{\frac{3}{2}}(\mathbb{R})$ is the corresponding critical Sobolev space.

\

In the case of SQG patches, Gancedo-Nguyen-Patel proved in \cite{GNP} that under a suitable parametrization, the contour dynamics evolution is locally well-posed in $H^s(\mathbb{T})$ when $s>2$. The gSQG case with $\alpha\in(0,2)$ and $\alpha \neq 1$, was also considered by Gancedo-Patel \cite{GP}, where they in particular showed local well-posedness in $H^2$ for $\alpha \in (0, 1)$ and $H^3$ for $\alpha \in (1, 2)$. For a more recent result on enhanced lifespan for $\alpha$-patches, see Berti-Cuccagna-Gancedo-Scrobogna \cite{BCGS}. 

Patches in the 2D Euler case $\alpha=0$ correspond to a special kind of Yudovich \cite{Yudovich} solution, also referred to as Euler vortex patches. Bertozzi \cite{Bertozzi} showed that they are locally well-posed in the space $C^{1,\delta}$. By using paradifferential calculus, Chemin \cite{Chemin}  proved that given a patch solution whose boundary is $C^{k,\mu}$ at the initial time, its regularity persists for all times. Bertozzi-Constantin \cite{Bertozzi-Constantin} soon obtained another proof based on a level set approach, and yet another proof was subsequently obtained by Serfati \cite{Serfati}. Recently, Radu \cite{Radu} proved global existence for Euler patch solutions. For results in Sobolev spaces, see Coutand-Shkoller \cite{Coutand-Shkoller}. 

For the question of ill-posedness in the context of patches, Kiselev-Luo \cite{KL} obtained some results in Sobolev spaces with exponents $p \neq 2$, as well as in H\"older spaces. In addition, Zlato\v s \cite{Zlatos} showed that, provided local well-posedness is known for some $\alpha\in(0,\frac12]$ , suitable initial data give rise to blow up for both bounded and unbounded patch solutions.

\

In the current paper, we are interested in the well-posedness of gSQG fronts, following our previous work on the SQG case \cite{SQGzero2}. The first results in the gSQG setting were obtained by Córdoba-Gómez-Serrano-Ionescu \cite{GlobalPatch} for $\alpha \in (1,2)$, showing global well-posed for small and localized initial data in $H^s$, where $s > 20\alpha$. 

For the cases $\alpha \in (0, 1]$ spanning 2D Euler to the classical SQG, Hunter-Shu-Zhang first studied the local well-posedness for a cubic approximation of the SQG equation in \cite{HSZapprox}, before establishing in \cite{HSZglobal} local well-posedness for the full SQG equation \eqref{SQG} with initial data in $H^s$ with $s \geq 5$, along with global well-posedness for small, localized, and essentially smooth ($s \geq 1200$) initial data. These results were later extended to the gSQG cases $\alpha \in (0, 1)$, while also lowering the regularity threshold to $s > \frac72 + \frac{3\alpha}{2}$ \cite{HSZfamily}.

Although it is typical to study global well-posedness in the context of small and localized data, we remark that the local well-posedness results of \cite{HSZapprox, HSZfamily} were also established in the context of a small data assumption, as well as a convergence condition on an expansion of the nonlinearity $Q(\varphi, \D_x\varphi)$ from \eqref{SQG}. There, the purpose of the small data was to ensure that the modified energies used in the proof were coercive.

In the SQG case $\alpha = 1$, the authors lowered both the local and global well-posedness regularity thresholds, to $s>\frac{5}{2}$ and $s>4$ respectively \cite{SQGzero}, while removing the small data and convergence conditions from the local well-posedness result. Subsequently, by observing a null structure satisfied by the SQG equation \eqref{SQG}, the authors further improved the low regularity thresholds to $s>2$ and $s>3$ respectively \cite{SQGzero2}, while also improving the low frequency threshold to $\dot{H}^{s_0}$ for any $s_0<\frac{3}{2}$. In particular, this result establishes local well-posedness without requiring control at the level of $L^2$.

Our current objective is to consider the generalized SQG family $\alpha \in [0,2)$, to prove lower regularity local and global well-posedness results which parallel those of the SQG case $\alpha = 1$. Our contributions include
\begin{itemize}
\item obtaining a local well-posedness result in a significantly lower regularity setting, at $\frac{\alpha}{2}$ derivatives above scaling, by making use of a null structure exhibited by the generalized family of equations, and
\item proving low regularity global well-posedness by using the wave packet testing method of Ifrim-Tataru (see for instance \cite{ITschrodinger, ITpax}).
\end{itemize}

\subsection{Main results}
Similar to the SQG setting considered in \cite{SQGzero2}, a key observation of the current article is that the gSQG equation \eqref{gSQG} exhibits a resonance structure when $\alpha\in(0,1)\cup(1,2)$. More precisely, we can approximate the nonlinearity $Q$ by
\begin{equation}
    Q(\varphi,v)\approx\Omega(\psi, v), \qquad \psi := \partial_x^{-1}F(\varphi_x)
\end{equation}
where $\Omega$ is a bilinear form whose symbol is given by the resonance function
\begin{equation*}
\Omega(\xi_1, \xi_2)=\frac{1}{\alpha}(\omega(\xi_1) + \omega(\xi_2) - \omega(\xi_1 + \xi_2)), \qquad \omega(\xi) = c(\alpha)i \xi|\xi|^{\alpha-1}.
\end{equation*}
This is meaningful because $\psi$ solves an equation similar to $\varphi$ (see Proposition ~\ref{p:psi-eqn}, part $b)$).

This structure enables us to use normal form methods to obtain \emph{balanced energy estimates}, which use control norms with an even balance of derivatives,
\[
\frac{d}{dt}E^{(s)}(\varphi)\lesssim_A B^2 \cdot E^{(s)}(\varphi),
\]
where
\begin{equation}\label{control-norms}
A = \|\D_x \varphi\|_{L^\infty}, \qquad B=\|\D_x \varphi\|_{B_{\infty,2}^{\frac{\alpha}{2}} \cap BMO^{\frac{\alpha}{2}}}.
\end{equation}

We can now state our main local well-posedness result:

\begin{theorem}\label{t:lwp}
Let $\alpha\in[0,2)$. Equation \eqref{gSQG} is locally well-posed for initial data in $\dot H^{s_0} \cap \dot H^s$ with $s_0<\frac32$ and
\begin{equation*}
\begin{aligned}
&s > \frac{\alpha + 3}{2} \qquad &&\text{if} \quad \alpha = 0, 1, \\
&s\geq\frac{\alpha+3}{2} \qquad &&\text{if} \quad \alpha\in(0,1)\cup(1,2).
\end{aligned}
\end{equation*}
Precisely, for every $R > 0$, there exists $T=T(R)>0$ such that for any $\varphi_0\in \dot H^{s_0} \cap \dot H^s$ with $\|\varphi_0\|_{\dot H^{s_0} \cap \dot H^s} < R$, the Cauchy problem \eqref{SQG} has a unique solution $\varphi \in C([0, T], \dot H^{s_0} \cap \dot H^s)$. Moreover,  the solution map $\varphi_0 \mapsto \varphi$ from $\dot H^{s_0} \cap \dot H^s$ to $C([0, T], \dot H^{s_0} \cap \dot H^s)$ is continuous.
\end{theorem}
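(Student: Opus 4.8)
The plan is to follow the standard low-regularity well-posedness paradigm: (i) paralinearize the equation and set up a normal-form–corrected energy, (ii) prove the \emph{a priori} balanced energy estimate $\frac{d}{dt}E^{(s)}(\varphi)\lesssim_A B^2\, E^{(s)}(\varphi)$ together with its counterpart at the low-frequency level $\dot H^{s_0}$, (iii) prove a corresponding difference (Lipschitz) estimate for two solutions in a weaker topology, and (iv) assemble these into existence, uniqueness, and continuous dependence via a regularization/frequency-envelope argument. The role of the exponent $s=\frac{\alpha+3}{2}$ is exactly that it is the threshold at which the control norm $B$ in \eqref{control-norms} is bounded by the $\dot H^s$ norm (via Sobolev/Besov embedding, $\dot H^s\hookrightarrow B^{\alpha/2}_{\infty,2}\cap BMO^{\alpha/2}$ at the level of $\D_x\varphi$), so that $B\in L^2_t$ follows from the energy bound and Gronwall closes. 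The endpoint $s=\frac{\alpha+3}{2}$ is allowed precisely when $\alpha\in(0,1)\cup(1,2)$ because there the resonance function $\Omega$ is genuinely nonzero (away from the Euler and SQG cases), giving a clean bilinear normal form with no logarithmic loss; at $\alpha=0,1$ one loses a logarithm and must take $s$ strictly above the threshold.

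Concretely, I would first use Proposition~\ref{p:psi-eqn} to replace $Q(\varphi,\D_x\varphi)$ by $\Omega(\psi,\D_x\varphi)$ modulo perturbative errors, where $\psi=\D_x^{-1}F(\varphi_x)$ and $\Omega$ has symbol $\frac1\alpha(\omega(\xi_1)+\omega(\xi_2)-\omega(\xi_1+\xi_2))$. Then I would perform a paradifferential normal form: the leading paraproduct piece $2T_\psi \D_x\varphi$-type terms are removed by conjugating the energy with a bounded, invertible Fourier multiplier/flow depending on $\psi$ (a paradifferential change of variables in the spirit of Alinhac's good unknown), while the balanced frequency interactions are handled by a bilinear normal form transformation whose denominator is $\Omega(\xi_1,\xi_2)$. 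The key structural input is that $\Omega(\xi_1,\xi_2)$ vanishes only on the resonant set and, crucially, that the numerator of the normal form correction carries matching vanishing — this is the ``null structure'' — so that the corrected energy $E^{(s)}(\varphi)$ is equivalent to $\|\varphi\|_{\dot H^s}^2$ (for $A$ small, or after localizing in a ball of radius $R$, possibly with an additional low-frequency correction to handle $\dot H^{s_0}$) and its time derivative is controlled by $A$-dependent constants times $B^2 E^{(s)}$. I would run the same analysis for the linearized equation to get the difference estimate in, say, $\dot H^{s_0}\cap \dot H^{s-1}$ or a suitable weaker norm, since one cannot expect Lipschitz dependence at the top regularity.

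For the existence and continuous dependence part, I would regularize the data (e.g. frequency truncation $\varphi_0^{(n)}=P_{\leq n}\varphi_0$), solve the regularized problems on a uniform time interval $[0,T(R)]$ using the uniform energy bounds, and extract a limit; uniqueness follows from the difference estimate and a Gronwall argument using $B\in L^1_t$ (in fact $L^2_t$). Continuous dependence is obtained by the Bona–Smith method: combine the difference estimate in the low norm with frequency-envelope control of the high norm to upgrade weak convergence to convergence in $\dot H^{s_0}\cap\dot H^s$. Throughout I would use frequency envelopes to make the estimates scale-uniform and to handle the $\dot H^{s_0}$ low-frequency component, which is decoupled from the high-frequency analysis precisely because the equation has no good $L^2$ (or low-frequency) conservation — one only needs that the low-frequency norm is transported with bounded distortion.

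The main obstacle, and the heart of the paper, is step (ii): carrying out the paradifferential normal form so that the error terms are genuinely \emph{balanced}, i.e. controlled by $B^2 E^{(s)}$ rather than by $\|\D_x\varphi\|_{C^{1/2+}}\cdot E^{(s)}$ or similar. This requires (a) a careful symbol analysis of $\Omega$ and its reciprocal on the various frequency regions (high–high to low, high–low, and the near-resonant region where $|\xi_1+\xi_2|$ is small), showing the normal form multiplier is bounded with the right homogeneity and loses no derivatives in the ``bad'' direction; (b) verifying that the quartic and higher remainders produced by the normal form (and by the difference between $Q$ and $\Omega(\psi,\cdot)$) are perturbative at regularity $\frac{\alpha+3}{2}$, which is delicate because this is only $\frac\alpha2$ above scaling and leaves essentially no room; and (c) handling the $\alpha=0$ and $\alpha=1$ degenerate cases separately, where $\Omega$ degenerates and the normal form is replaced by a logarithmically-weighted version (for $\alpha=1$ this is the analysis of \cite{SQGzero2}, and for $\alpha=0$ one expects a similar but milder loss), explaining the strict inequality $s>\frac{\alpha+3}{2}$ there.
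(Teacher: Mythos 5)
Your plan matches the paper's strategy closely: balanced cubic estimates via a paradifferential normal form with control parameters $A=\|\D_x\varphi\|_{L^\infty}$ and $B=\|\D_x\varphi\|_{B^{\alpha/2}_{\infty,2}\cap BMO^{\alpha/2}}$, a modified energy $E(v)=\int v\,T_{J^{-1/\alpha}}v\,dx$ coercive for small $A$, difference estimates from the linearized equation in $L^2$, and the Ifrim--Tataru frequency-envelope construction of Section~\ref{s:lwp} for existence and continuous dependence (including your correct identification of $s=\frac{\alpha+3}{2}$ as the embedding threshold $\dot H^{s-1}\hookrightarrow \dot B^{\alpha/2}_{\infty,2}\cap BMO^{\alpha/2}$ and of the endpoint being reached only when $\alpha\neq 0,1$). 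The one point of difference is mechanistic rather than substantive: you describe the normal form as a Fourier multiplier with $\Omega(\xi_1,\xi_2)$ in the denominator, whereas the paper exploits that the nonlinearity already has $\Omega$ as its symbol (that is, $Q(\varphi,v)\approx T_{F'(\varphi_x)}\Omega(\psi,v)$ with $\alpha\Omega(\xi_1,\xi_2)=\omega(\xi_1)+\omega(\xi_2)-\omega(\xi_1+\xi_2)$), so the Shatah denominator cancels identically and the bounded paradifferential correction is simply $\tilde v = v - \frac1\alpha\D_x T_{T_J v}\psi - \frac1\alpha\D_x\Pi(T_J v,\psi)$; this exact cancellation, not a lower bound on $\Omega$, is the precise content of the null structure here.
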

\begin{remark}
The SQG case $\alpha=1$ is the subject of our previous paper \cite{SQGzero2}, while the 2D Euler case $\alpha = 0$ is addressed in Appendix~\ref{s:appendix}.
    
For the main subject of the current paper, the cases $\alpha \in (0, 1) \cup (1, 2)$, the control norms \eqref{control-norms} allow us to obtain the local well-posedness in the endpoint case $s = \frac{\alpha+3}{2}$. In contrast, in the SQG and 2D Euler cases, we only prove the result in the case $s>2$ and $s > 3/2$ respectively, due to the logarithmic loss generated by the dispersion relation and nonlinearities. Precisely, for comparison, we recall that in the analysis of the SQG case in \cite{SQGzero2}, we used the control parameter
\[
B = \|\D_x \varphi\|_{C^{\frac{1}{2}+}}.
\]

\end{remark}
\begin{remark}
    For the sake of simplicity, we assume that the parameter $A$ is small. This assumption can be removed with a more careful definition of the paraproduct, at the expense of having to deal with more technical details; see for instance \cite{SQGzero}.
\end{remark}

Normal forms were introduced by Shatah \cite{Shatah}, who used them to prove results on the long-time dynamics of solutions to dispersive equations. Unfortunately, this method cannot be applied directly to quasilinear problems, because the resulting transformations would be unbounded. To address this issue, several approaches have been introduced; we rely on two in the current paper. The first consists of carrying out the analysis in a paradifferential manner, and was introduced by Alazard-Delort \cite{Alazard-Delort} in a paradiagonalization argument to obtain Sobolev estimates for the solutions of the water waves equations in the Zakharov formulation. This approach was also subsequently employed by Ifrim-Tataru \cite{Benjamin} to obtain a new proof of $L^2$ global well-posedness for the Benjamin-Ono equation, a result first obtained in \cite{Ionescu-Kenig}.

The second method consists of employing modified energies instead of applying the direct normal form at the level of the equation. This procedure was introduced by Hunter-Ifrim-Tataru-Wong \cite{Hunter-Ifrim-Tataru-Wong} to study the long time behavior of solutions to the Burgers-Hilbert equation.

These approaches were first combined in order to study the low regularity well-posedness for quasilinear models by the first author together with Ifrim-Tataru \cite{Ai-Ifrim-Tataru} for the gravity water waves system, through the proof of \textit{balanced energy estimates}. This type of estimate was then later used together with Strichartz estimates to obtain low regularity well-posedness results for the time-like minimal surface problem in the Minkowski space \cite{Minimal-surface}.

\

We next consider global well-posedness for small and localized data. To describe localized solutions, we define the operator 
\[
L = x+t\alpha c(\alpha)|D_x|^{\alpha-1},
\]
which commutes with the linear flow $\partial_t-c(\alpha)|D_x|^{\alpha-1}\D_x$, and at time $t = 0$ is simply multiplication by $x$. Then we define the time-dependent weighted energy space
\[
\|\varphi\|_X := \|\varphi\|_{\dot{H}^{s_0}\cap\dot{H}^s} + \|L\partial_x \varphi\|_{L^2},
\]
where $s>\alpha+2$ and $s_0<1$. To track the dispersive decay of solutions, we define the pointwise control norm 
\[
\|\varphi\|_Y := \||D_x|^{1-\delta}\langle D_x\rangle^{\frac{\alpha}{2}+2\delta}\varphi\|_{L_x^{\infty}}.
\]

\begin{theorem}\label{t:gwp}
Consider data $\varphi_0$ with
\[
\|\varphi_0\|_X \lesssim \eps \ll 1.
\]
Then the solution $\varphi$ to \eqref{gSQG} for $\alpha > 0$ with initial data $\varphi_0$ exists globally in time, with energy bounds 
\[
\|\varphi(t)\|_{X} \lesssim \eps t^{C\eps^2}
\]
and pointwise bounds 
\[
\|\varphi(t)\|_Y \lesssim \eps \langle t\rangle^{-\half}.
\]
\end{theorem}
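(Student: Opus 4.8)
The plan is to run a bootstrap/continuity argument combining energy estimates with the testing-by-wave-packet method of Ifrim-Tataru. We assume on a time interval $[0,T]$ the bootstrap hypotheses
\[
\|\varphi(t)\|_X \leq C_1 \eps \langle t\rangle^{C\eps^2}, \qquad \|\varphi(t)\|_Y \leq C_1 \eps \langle t\rangle^{-\frac12},
\]
and we aim to improve the constant $C_1$ to $C_1/2$, which by the local theory of Theorem~\ref{t:lwp} (applied on unit time scales, using that $s > \alpha + 2 > \frac{\alpha+3}{2}$ and $s_0 < 1 < \frac32$) permits continuation. The first ingredient is the \emph{energy estimate}: using the balanced energy estimate $\frac{d}{dt} E^{(s)}(\varphi) \lesssim_A B^2 E^{(s)}(\varphi)$ with $B = \|\D_x\varphi\|_{B^{\alpha/2}_{\infty,2} \cap BMO^{\alpha/2}}$, we need to control $\int_0^t B(\tau)^2\, d\tau \lesssim \eps^2 \log\langle t\rangle$. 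This follows by interpolating $B$ between the pointwise norm $Y$ (which gives $\langle t\rangle^{-1/2}$ decay, and controls frequencies $\gtrsim 1$ with the right number of derivatives since $Y$ carries $|D_x|^{1-\delta}\langle D_x\rangle^{\alpha/2 + 2\delta}$) and the energy norm $X$; the $t^{-1}$ borderline decay integrates to the logarithm, producing the $t^{C\eps^2}$ growth. One must also propagate the weighted norm $\|L\D_x\varphi\|_{L^2}$: since $L$ commutes with the linear part, applying $L$ to the equation and commuting through the nonlinearity $Q$ produces only terms already controlled by $X$ and $Y$ (this is where the null/resonance structure $Q \approx \Omega(\psi, v)$ is used to avoid derivative loss in the commutator), giving the same logarithmic growth.

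The second and main ingredient is the \emph{pointwise decay estimate}, established by testing the equation against a wave packet. For a frequency $\xi$ adapted to the spatial location via the group velocity relation associated to $\omega(\xi) = c(\alpha) i\xi|\xi|^{\alpha-1}$, one builds an approximate solution $\Psi_{v}$ (a wave packet concentrated at the appropriate point in phase space) to the linear equation, and defines
\[
\gamma(t,v) = \langle \varphi(t), \Psi_v(t)\rangle.
\]
The key computation is that $\gamma$ is, up to errors, a good approximation of $\langle t\rangle^{1/2} \varphi$ evaluated along rays, and that it satisfies an ODE of the form
\[
\frac{d}{dt}\gamma = i c |\gamma|^2 \gamma \cdot (\text{phase factor}) + \text{error},
\]
where the cubic term arises from the nonlinearity after extracting its leading resonant contribution (the quadratic contributions having been removed by the normal form structure implicit in $\psi = \D_x^{-1} F(\varphi_x)$). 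From this ODE one reads off that $|\gamma|$ is almost conserved — hence the pointwise bound $\|\varphi\|_Y \lesssim \eps\langle t\rangle^{-1/2}$ is propagated — while the phase has a logarithmic correction, which is exactly modified scattering.

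The main obstacle, and where most of the work lies, is \textbf{controlling the error terms in the wave packet ODE} and showing they are integrable in time with room to spare (i.e. $O(\eps^3 t^{-1-\delta})$ or $O(C_1^3\eps^3 t^{-1})$ with a small constant). These errors come from several sources: (i) the difference between the true nonlinearity $Q(\varphi,\D_x\varphi)$ and its resonant cubic approximation, which must be estimated using the difference-quotient structure \eqref{A-op2} and the decay/regularity afforded by $Y$; (ii) the failure of the wave packet to be an exact solution of the linear flow, controlled by the dispersive properties of $\omega$ and requiring care at low frequencies where $|D_x|^{\alpha-1}$ degenerates (this is why $s_0 < 1$ and the $|D_x|^{1-\delta}$ weight in $Y$ appear) and at high frequencies (handled by the $\langle D_x\rangle^{\alpha/2+2\delta}$ weight and the energy norm); and (iii) commutator terms between $L$ and the nonlinearity. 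The low-frequency analysis is the most delicate point specific to the gSQG family: unlike classical dispersive models, the phase velocity $c(\alpha)\xi|\xi|^{\alpha-1}$ is not smooth at $\xi = 0$ for $\alpha \in (0,1)$, so the wave packet construction and the stationary phase estimates underlying the $t^{-1/2}$ decay must be carried out with weights that vanish appropriately as $\xi \to 0$, matching the choice of $X$ and $Y$. Once these error bounds are in hand, the bootstrap closes by choosing $\eps$ small depending on $C_1$ and the implicit constants, and $C$ in the exponent $t^{C\eps^2}$ large enough to absorb the logarithmic energy growth; modified scattering then follows by integrating the wave packet ODE as $t \to \infty$.
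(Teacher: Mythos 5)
Your proposal follows the same high-level architecture as the paper: bootstrap on the $Y$-decay, balanced energy estimates for the $\dot H^s$ piece of $X$, vector field bounds giving $\langle t\rangle^{-1/2+C\eps^2}$ decay with a loss, then wave packet testing to remove the loss and establish modified scattering. That is indeed the route taken.

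One place where your sketch is looser than the paper's argument is the propagation of the weighted piece $\|L\partial_x\varphi\|_{L^2}$. You assert that commuting $L$ through $Q$ ``produces only terms already controlled by $X$ and $Y$,'' invoking the null structure; but the mechanism the paper actually exploits is cleaner and worth being explicit about: one shows that the specific combination $u = L\partial_x\varphi + t\int |y|^{1-\alpha}F(\dq^y\varphi)\sdq^y\varphi_x\,dy$ \emph{solves the linearized equation} \eqref{linearized-eqn} with a balanced source, after which the linearized energy estimate of Proposition~\ref{Linearized equation energy estimates} applies verbatim. Simply ``commuting $L$ through'' without identifying this corrected variable would leave you with a quadratic commutator that you could not absorb. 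A second small imprecision: you speak of removing ``quadratic contributions'' via the normal form before the wave packet ODE; in fact $Q(\varphi,\partial_x\varphi)$ is already cubic to leading order (as $F$ vanishes to second order), and the wave-packet-testing step isolates the \emph{balanced} part of that cubic, via a semiclassical/stationary-phase computation (Lemma~\ref{Semiclassical computation}), rather than via normal forms — the normal form machinery lives entirely in the energy estimates. Neither point derails the argument, but they are where the details bite.
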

 Further, the solution  $\varphi$ exhibits a modified scattering behavior, with an asymptotic profile $W\in H^{1-C_1\epsilon^2}(\R)$, in a sense that will be made precise in Section~\ref{s:scattering}.
 
 \subsection{Outline of the paper.} The organization of the paper follows closely that of the SQG case \cite{SQGzero2}. Here, the main novelties include improvements which allow us to obtain the endpoint regularity $s = \frac{\alpha + 3}{2}$, as well as a generalized modified energy and normal form transformation. 
 
 In Section~\ref{s:notation}, we introduce notations and establish preliminary estimates for paradifferential calculus and difference quotients.
 
In Section~\ref{s:equations}, we describe the null structure of equation \eqref{gSQG} and of its linearization,
\begin{equation}\label{linearized-eqn}
\D_tv - c(\alpha)|D_x|^{\alpha-1} \D_xv = \D_x Q(\varphi, v).
\end{equation}
We also define the paradifferential flow associated to \eqref{linearized-eqn}, which will play a key role in the analysis.

In Section ~\ref{s:reduction} we reduce the energy estimates and well-posedness for the linearized equation \eqref{linearized-eqn} to the case of the inhomogeneous paradifferential flow. The main challenge is ensuring that the paradifferential error terms satisfy cubic balanced energy estimates, which we achieve by performing a normal form analysis. 

In Section~\ref{s:energy estimates} we prove energy estimates for the paradifferential flow. To achieve this, we construct a paradifferential modified energy functional. Here, we note that unlike in the SQG case $\alpha=1$ \cite{SQGzero2}, the normal form underlying the energy functional coincides with the normal form correction of the previous section only to first order, the latter being just a linearization of the former.

In Section~\ref{s:higher order energy estimates}, we obtain higher order energy estimates. As the resulting commutators are quadratic and thus not perturbative, they pose an additional obstacle. To address this, we perform a two-step change of variables. First, we eliminate the highest order terms by using a Jacobian exponential conjugation, at the cost of creating a number of lower order factors. Then, we carry out a normal form analysis to eliminate these remaining non-perturbative terms.

In Section~\ref{s:lwp}, we provide the proof of our local well-posedness result. Here we construct rough solutions as the unique limits of smooth solutions, controlled by frequency envelopes. This technique was introduced by Tao in \cite{25}, who used it to obtain more accurate information on the evolution of the energy distribution between dyadic frequencies under nonlinear flows. This method is systematically presented in the context of local well-posedness theory for quasilinear problems by Ifrim-Tataru in their expository paper \cite{ITprimer}. 

In Section~\ref{s:gwp} we prove the global well-posedness part of Theorem~\ref{t:gwp} and the dispersive bounds on the resulting solutions, by using the wave packet testing method of Ifrim-Tataru \cite{ITpax}. In Section~\ref{s:scattering}, we prove the modified scattering behavior, completing the proof of Theorem \ref{t:gwp}.

Finally, in the appendix we prove the local well-posedness result in the case $\alpha=0$, which corresponds to Euler fronts. Here, we note that due to the degenerate character of the dispersion relation, the methods that we used cannot be applied to obtain an analogue of the global well-posedness result.

\subsection{Acknowledgements}

The first author was supported by the NSF grant DMS-2220519 and the RTG in Analysis and Partial Differential equations grant DMS-2037851. The second author was supported by the NSF
grant DMS-2054975, as well as by the Simons Foundation. 

The authors were also supported by the NSF under Grant No. DMS-1928930 while in residence at the Simons Laufer Mathematical Sciences Institute (formerly MSRI) in Berkeley, California, during the summer of 2023.

The authors would like to thank Mihaela Ifrim and Daniel Tataru for many helpful discussions.

\section{Notations and classical estimates}\label{s:notation}

In this section we introduce some notations and classical estimates that we use throughout the article. These include paradifferential calculus and difference quotient estimates.

\subsection{Paradifferential operators and paraproducts}

Let $\chi$ be an even smooth function such that $\chi=1$ on $[-\frac{1}{20}, \frac{1}{20}]$ and $\chi = 0$ outside $[-\frac{1}{10}, \frac{1}{10}]$, and define
\[
\tilde{\chi}(\theta_1,\theta_2) = \chi\left(\frac{|\theta_1|^2}{M^2+|\theta_2|^2}\right).
\]
Given a symbol $a(x,\eta)$,  we use the above cutoff $\tilde \chi$ to define an $M$-dependent paradifferential quantization of $a$ by (see also \cite{ABZgravity}) 
\begin{align*}
    \widehat{T_au}(\xi)=(2\pi)^{-1}\int \hat{P}_{>M}(\xi)\tilde \chi\left(\xi - \eta, \xi + \eta \right) \hat{a}(\xi-\eta,\eta)\hat{P}_{>M}(\eta)\hat{u}(\eta)\,d\eta,
\end{align*}
where the Fourier transform of the symbol $a(x,\eta)$ is taken with respect to the first argument.

This quantization was employed in \cite{SQGzero}, where the parameter $M$ was introduced to ensure the coercivity of the modified quasilinear energy without relying on a small data assumption. We recall in particular that in the case of a paraproduct, where $a = a(x)$, $T_a$ is self-adjoint.

\

The following two commutator-type estimates are exact reproductions of statements from Lemmas 2.4 and 2.6 in Section 2 of \cite{Ai-Ifrim-Tataru}, respectively:

\begin{lemma}[Para-commutators]\label{l:para-com}
 Assume that $\gamma_1, \gamma_2 < 1$. Then we have
\begin{equation}\label{para-com}
\| T_f T_g - T_g T_f \|_{\dot H^{s} \to \dot H^{s+\gamma_1+\gamma_2}} \lesssim 
\||D|^{\gamma_1}f \|_{BMO}\||D|^{\gamma_2}g\|_{BMO},
\end{equation}
\begin{equation}
\| T_f T_g - T_g T_f \|_{\dot B^{s}_{\infty,\infty} \to \dot H^{s+\gamma_1+\gamma_2}} \lesssim 
\||D|^{\gamma_1}f \|_{L^2}\||D|^{\gamma_2}g\|_{BMO}.
\end{equation}
A bound similar to \eqref{para-com} holds in the Besov scale of spaces, namely 
from $\dot B^{s}_{p, q}$ to $\dot B^{s+\gamma_1+\gamma_2}_{p, q}$
for real $s$ and $1\leq p,q \leq \infty$.
\end{lemma}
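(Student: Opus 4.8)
The plan is to decompose the commutator $T_fT_g - T_gT_f$ into a sum over dyadic blocks and exploit the almost-orthogonality provided by the frequency localization in the paradifferential quantization. Concretely, write $f = \sum_j P_j f$, $g = \sum_k P_k g$, and correspondingly decompose $T_f = \sum_j T_{P_j f}$, $T_g = \sum_k T_{P_k g}$. The symbol cutoff $\tilde\chi$ forces, in the product $T_{P_j f} T_{P_k g}$ acting on a frequency-$\ell$ packet, the constraint that the intermediate frequency is $\sim 2^\ell$ and that $2^j, 2^k \lesssim 2^\ell$; the same holds for $T_{P_k g} T_{P_j f}$. Thus in the difference $T_{P_j f}T_{P_k g} - T_{P_k g}T_{P_j f}$, the output frequency is $\sim 2^\ell$ with $\max(2^j, 2^k) \lesssim 2^\ell$. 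The key gain is that when $2^j$ and $2^k$ are both much smaller than $2^\ell$, the two compositions nearly cancel: each is, to leading order, multiplication by $P_j f(x) P_k g(x)$ on that frequency block, and the commutator picks up a derivative falling on the lower of the two symbol frequencies. Quantitatively, one expects
\[
\| (T_{P_j f} T_{P_k g} - T_{P_k g} T_{P_j f}) P_\ell u \|_{\dot H^{s+\gamma_1+\gamma_2}} \lesssim 2^{\min(j,k)} \cdot 2^{(j+k)\gamma_? } \cdots,
\]
but more cleanly, after summing the intermediate frequency, one gets a bound of the shape $2^{-|j-k|\epsilon}\,\|P_j f\|_{L^\infty}\,2^{j\gamma_1}\,\|P_k g\|_{L^\infty}\,2^{k\gamma_2}\,\|P_\ell u\|_{\dot H^s}$ times an appropriate power of $2^{\ell}$ from the Sobolev weights; the factor $2^{-|j-k|\epsilon}$ (equivalently the extra derivative on the min-frequency factor) comes from expanding the symbol difference, and is exactly what is needed to sum in $j, k$. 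The hypotheses $\gamma_1, \gamma_2 < 1$ are used precisely here, to ensure the geometric series $\sum_{j \le \ell} 2^{j(\gamma_1 - 1)}$ and its counterpart converge; at $\gamma_i = 1$ the sum would produce a logarithmic divergence.

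First I would set up the paradifferential symbol calculus needed to make the heuristic cancellation precise: for a paraproduct $T_a$ with $a = a(x)$ the symbol is just $a$ itself (with the $\tilde\chi$ truncation), so the composition $T_f T_g$ has, by the symbolic calculus for this quantization, leading symbol $fg$ and a first-order correction governed by $\partial_\eta(\cdot)\partial_x(\cdot)$ — but since paraproduct symbols are $\eta$-independent, that correction vanishes and the genuine error is lower order still, which is what yields the improvement $\gamma_1 + \gamma_2$ (rather than just $\max$) in the target space. I would isolate this as the analytic core: bound $T_{P_j f}T_{P_k g} - T_{P_{j}f \cdot P_k g}$ (the "symbol composition error") in the appropriate operator norm with the decay factor in $|j-k|$, and likewise for the reversed order; the commutator is then the difference of two such errors plus the trivially-cancelling main term $T_{P_jf P_kg} - T_{P_k g P_j f} = 0$. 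This reduces everything to a single-block kernel estimate that can be done by integration by parts in the defining oscillatory integral, using that $\hat{P_jf}$, $\hat{P_kg}$ are supported in annuli and $\tilde\chi$ is smooth with the stated support properties.

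For the second estimate, with $\|\,|D|^{\gamma_1}f\|_{L^2}$ in place of $\|\,|D|^{\gamma_1}f\|_{BMO}$ and the domain space $\dot B^s_{\infty,\infty}$, the same decomposition applies; the only change is bookkeeping in which factor carries the $L^2$ norm and which the $L^\infty$/$BMO$ norm, using Bernstein's inequality to trade $L^2$ and $L^\infty$ on the dyadic blocks and Cauchy–Schwarz in the frequency sum against the $\ell^2$-summability of $2^{j\gamma_1}\|P_jf\|_{L^2}$. For the Besov-scale version of \eqref{para-com}, nothing changes in the core block estimate — which is an operator bound between dyadic pieces — and one simply replaces the $\dot H^s \to \dot H^{s+\gamma_1+\gamma_2}$ square-summation of outputs by the $\dot B^s_{p,q} \to \dot B^{s+\gamma_1+\gamma_2}_{p,q}$ one, valid for all $1 \le p, q \le \infty$ because the $|j-k|$-decay makes the double frequency sum converge by Young's/Schur's inequality regardless of the exponents. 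The main obstacle is the core single-block estimate: proving the quantitative symbol-composition bound with the crucial $2^{-\epsilon|j-k|}$ gain and the correct powers of the output frequency, carefully tracking the $\tilde\chi$ truncation and the $\hat P_{>M}$ factors in the definition of $T_a$; once that is in hand, the summation is routine given $\gamma_1, \gamma_2 < 1$. Since the statement is quoted verbatim from \cite{Ai-Ifrim-Tataru}, I would in practice simply cite that reference, but the above is the self-contained argument.
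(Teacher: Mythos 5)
The paper does not prove this lemma: the text immediately preceding it says that Lemma~\ref{l:para-com} (together with Lemma~\ref{l:para-prod}) is an ``exact reproduction of statements from Lemmas 2.4 and 2.6 in Section 2 of \cite{Ai-Ifrim-Tataru}'', and no argument is given here. Your instinct at the end --- to cite \cite{Ai-Ifrim-Tataru} --- is precisely what the authors do, so there is no internal proof to compare against.

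Since you also outlined a self-contained argument, two technical points should be flagged. First, the decay factor $2^{-\epsilon|j-k|}$ you propose is not what the dyadic block bound must deliver: both $j$ and $k$ range over all indices $\lesssim \ell$, so summability requires decay in $\ell - j$ and in $\ell - k$ \emph{separately}, whereas $2^{-\epsilon|j-k|}$ alone is not summable over $\{j,k \lesssim \ell\}$ (the sum grows like $\ell$). You describe this as ``equivalently the extra derivative on the min-frequency factor,'' but these are not equivalent: an extra derivative on the lower of the two coefficient frequencies, paired against the output frequency, gives a factor of order $2^{\min(j,k)-\ell}$, i.e.\ decay in $\ell - \min(j,k)$, which is the right shape. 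Second, the series you invoke, $\sum_{j\le\ell}2^{j(\gamma_1-1)}$, actually \emph{diverges} when $\gamma_1<1$ (the exponent $j(\gamma_1-1)$ tends to $+\infty$ as $j\to-\infty$); the convergent series one needs is of the shape $\sum_{j\le\ell}2^{-(\ell-j)(1-\gamma_1)}$, and that is where $\gamma_1<1$ (and likewise $\gamma_2<1$) is used. These would need to be corrected before the sketch becomes a proof, but since the lemma is imported verbatim, simply citing the reference is the consistent choice.
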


The next paraproduct estimate, see \cite[Lemma 2.5]{Ai-Ifrim-Tataru}, directly relates multiplication and paramultiplication:

\begin{lemma}[Para-products]\label{l:para-prod}
Assume that $\gamma_1, \gamma_2 < 1$. Then
\begin{equation}
\| T_f T_g - T_{T_fg} \|_{\dot H^{s} \to \dot H^{s+\gamma_1+\gamma_2}} \lesssim 
\||D|^{\gamma_1}f \|_{BMO}\||D|^{\gamma_2}g\|_{BMO}.
\end{equation}
If in addition $\gamma_1+\gamma_2 \geq 0$,
\begin{equation}
\| T_f T_g - T_{fg} \|_{\dot H^{s} \to \dot H^{s+\gamma_1+\gamma_2}} \lesssim 
\||D|^{\gamma_1}f \|_{BMO}\||D|^{\gamma_2}g\|_{BMO}.
\end{equation}

Similar bounds hold in the Besov scale of spaces, namely 
from $\dot B^{s}_{p, q}$ to $\dot B^{s+\gamma_1+\gamma_2}_{p, q}$
for real $s$ and $1\leq p,q \leq \infty$.
\end{lemma}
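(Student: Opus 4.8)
The plan is to establish the first bound in detail and then read off the second. For the reduction, I would invoke Bony's decomposition $fg = T_f g + T_g f + \Pi(f,g)$, with $\Pi$ the resonant (high--high) part, so that
\[
T_f T_g - T_{fg} = \bigl(T_f T_g - T_{T_f g}\bigr) - T_{T_g f + \Pi(f,g)},
\]
and control the last paraproduct from $\dot H^s$ to $\dot H^{s+\gamma_1+\gamma_2}$ through the $\dot B^{\gamma_1+\gamma_2}_{\infty,\infty}$ size of its symbol --- this is where the hypothesis $\gamma_1+\gamma_2\geq 0$ is used --- and then bound that symbol by $\||D|^{\gamma_1}f\|_{BMO}\||D|^{\gamma_2}g\|_{BMO}$ using only the elementary block estimate $\|P_k u\|_{L^\infty}\lesssim 2^{-k\gamma}\||D|^\gamma u\|_{BMO}$ and the standard bilinear paraproduct/resonant bounds. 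The role of the $M$-dependent cutoff $\tilde\chi$ is inessential here; one records at the outset that $T_b u = b\,u$ \emph{exactly} whenever $b$ is frequency-localized far enough below the frequency of $u$ (since $1-\tilde\chi$ then vanishes there), which is what keeps the algebra below transparent.

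For the first bound, the plan is to decompose $f=\sum_j f_j$, $g=\sum_k g_k$ into Littlewood--Paley pieces, expand $T_f T_g$, $T_g$ and $T_{T_f g}$ accordingly, and organize the resulting trilinear terms by output frequency $2^\ell$. In the region where both symbol frequencies lie well below $2^\ell$, the exact-product property collapses $T_f T_g h$ to the product $f_j g_k h_\ell$ and collapses $T_{T_f g}h$ to the sub-sum in which the $f$-frequency is additionally below the $g$-frequency; the difference is therefore, up to boundary terms, exactly $T_{T_g f + \Pi(f,g)}h$, matching the reduction above. Quantitatively, the contribution of output frequency $2^\ell$ is dominated by $\sum_{k\lesssim j\ll\ell} 2^{-j\gamma_1}2^{-k\gamma_2}2^{\ell(\gamma_1+\gamma_2)}\,\||D|^{\gamma_1}f\|_{BMO}\||D|^{\gamma_2}g\|_{BMO}\,(2^{\ell s}\|h_\ell\|_{L^2})$ after placing the two low factors in $L^\infty$ and using $L^2$-boundedness of paraproducts; the $j$- and $k$-sums converge precisely because $\gamma_1,\gamma_2<1$, and an $\ell^2$ summation in $\ell$ closes this part.

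The remaining, and I expect genuinely delicate, step is the boundary analysis: the interfaces $j\sim\ell$, $k\sim\ell$, and $j\sim k$, together with the fact that a product of two adjacent Littlewood--Paley pieces is only \emph{almost} frequency-localized, so that the collapsing above is only approximate. On each such term I would again put the lower-frequency factors in $L^\infty$ via the block estimate and extract the derivative gain from the frequency gap between the larger symbol frequency and the output, together with the extra derivative produced by commuting a slowly varying factor past a Littlewood--Paley projection. The point requiring care is to arrange these estimates so that \emph{every} residual term still gains the full $\gamma_1+\gamma_2$ derivatives and no geometric series diverges at the endpoints $\gamma_i\to 1^-$ (and, for the second bound, at $\gamma_1+\gamma_2=0$). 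Finally, the Besov-scale statements follow by rerunning the identical argument with the $L^2$ square-function estimates replaced by their $\dot B^s_{p,q}$ analogues; because the high frequency always rests on a single factor, no new interaction geometry enters.
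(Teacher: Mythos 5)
The paper does not prove this lemma; it is imported verbatim from \cite[Lemma 2.5]{Ai-Ifrim-Tataru}, so there is no proof in the paper to compare against. I can only assess your argument on its own terms, and there is a concrete gap in the quantitative step.

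You assert that the contribution at output frequency $2^\ell$ is controlled by
\[
\sum_{k\lesssim j\ll\ell} 2^{-j\gamma_1}2^{-k\gamma_2}\,2^{\ell(\gamma_1+\gamma_2)}\cdot\||D|^{\gamma_1}f\|_{BMO}\,\||D|^{\gamma_2}g\|_{BMO}\cdot\bigl(2^{\ell s}\|h_\ell\|_{L^2}\bigr),
\]
and that ``the $j$- and $k$-sums converge precisely because $\gamma_1,\gamma_2<1$.'' This is not the case. Substituting $a=\ell-j\gtrsim 1$, $b=\ell-k\geq a$, the inner factor is $\sum_{a\gtrsim 1}\sum_{b\geq a}2^{a\gamma_1+b\gamma_2}$, which \emph{diverges} the moment $\gamma_2\geq 0$, and even when $\gamma_2<0$ still requires $\gamma_1+\gamma_2<0$. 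In particular, for $\gamma_1=\gamma_2=\tfrac12$, which the lemma permits, your bulk bound does not close. The hypothesis $\gamma_i<1$ plays no role in the convergence of the series you wrote down; that condition has nothing to do with controlling the tails of $\sum 2^{(\ell-j)\gamma_1}$ as $j\to-\infty$.

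The real mechanism is different. With the cutoff $\tilde\chi$ of the paper, the low-low region where every symbol frequency sits strictly inside the cutoff contributes identically to both operators (you yourself observe the ``exact-product'' property there), so after the cancellation the surviving piece is concentrated where one symbol frequency is comparable to the input frequency, and in that region a Taylor expansion of the cutoff in its second slot produces an extra smallness factor of the form $2^{m-\ell}$ ($m$ being the other, lower, symbol frequency). It is only after extracting this factor that one ends up summing $\sum_{m<\ell}2^{m(1-\gamma_i)}\approx 2^{\ell(1-\gamma_i)}$, and this is exactly where $\gamma_i<1$ is needed. Your proposal never extracts that commutator gain, which is why the sum you wrote refuses to converge. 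Equivalently, the reduction you propose for the second estimate --- controlling $T_{T_gf+\Pi(f,g)}$ by the $\dot B^{\gamma_1+\gamma_2}_{\infty,\infty}$ size of its symbol --- cannot work directly either, since $\|(T_gf)_m\|_{L^\infty}\lesssim\|f_m\|_{L^\infty}\|P_{<m}g\|_{L^\infty}$ and the low-frequency factor is not $O(2^{-m\gamma_2})$ when $\gamma_2>0$. The overall architecture (Bony decomposition, separating boundary from bulk, deducing the $T_{fg}$ bound from the $T_{T_fg}$ one) is sensible, but the central quantitative claim as you have written it is false, and the role you assign to $\gamma_i<1$ is misplaced.
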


Next, we recall the following Moser-type estimate; see for instance \cite{SQGzero}.
\begin{theorem}[Moser]\label{t:moser}
Let $F:\R \rightarrow \R$ be a smooth function with $F(0) = 0$, $\gamma_1+\gamma_2 =s>0$, and 
\[
R(v) = F(v) - T_{F'(v)} v.
\]
Then 
\begin{equation}\label{moser}
\|R(v)\|_{\dot{W}^{s, \infty}}\lesssim_{\|v\|_{L^\infty}} \||D|^{\gamma_1} v\|_{BMO\cap \dot{B}^{0}_{\infty,2}}\||D|^{\gamma_2} v\|_{BMO\cap \dot{B}^{0}_{\infty,2}}.
\end{equation}
\end{theorem}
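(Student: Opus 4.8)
\textbf{Proof strategy for the Moser-type estimate (Theorem~\ref{t:moser}).}

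The plan is to run a standard Littlewood--Paley decomposition of the remainder $R(v) = F(v) - T_{F'(v)}v$, exploiting the cancellation built into the paraproduct $T_{F'(v)}v$. First I would write $v = \sum_k P_k v$ and expand $F(v)$ using the discrete fundamental theorem of calculus / telescoping identity
\[
F(v) = \sum_k \bigl( F(v_{\le k}) - F(v_{\le k-1}) \bigr) = \sum_k \int_0^1 F'\bigl(v_{\le k-1} + \tau P_k v\bigr)\, d\tau \cdot P_k v,
\]
where $v_{\le k} = \sum_{j \le k} P_j v$. The leading behavior of $\int_0^1 F'(v_{\le k-1} + \tau P_k v)\,d\tau$ at frequencies $\lesssim 2^k$ is $F'(v_{\le k-1}) \approx F'(v)_{\le k}$, which is precisely what the paraproduct $T_{F'(v)}v$ subtracts off (up to the harmless $M$-cutoff and the distinction between $F'(v)_{\le k-1}$ and the symbol-smoothed version in the definition of $T$). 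So after subtraction, $R(v)$ decomposes into three types of terms: (i) a high-high-to-low piece where two input frequencies are comparable and larger than the output, coming from the tail $F'(v_{\le k-1}) - (F'(v))_{<k}$ and from the $\tau P_k v$ correction inside $F'$; (ii) the ``diagonal'' contribution where the frequency of $F'(v_{\le k-1})$ is comparable to $2^k$; and (iii) genuinely lower-order commutator-type differences between the naive paraproduct $\sum_k (F'(v))_{<k} P_k v$ and the $M$-adapted quantization $T_{F'(v)}v$, which are controlled by Lemma~\ref{l:para-prod}.

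The core estimate is then a frequency-by-frequency bound. For the output at frequency $2^\ell$, in the high-high regime I would expand $F'(v_{\le k-1}) - F'(v_{\le \ell})$ (for $k \ge \ell$) once more via the fundamental theorem of calculus to extract a factor $v_{[\ell,k]} := \sum_{\ell \le j \le k} P_j v$, picking up $\|F''\|_{L^\infty}$ along an interval where the argument stays bounded by $\|v\|_{L^\infty}$ — this is where the $C^\infty$ hypothesis and $F(0)=0$ (to anchor $F'(0)$ being finite and the argument staying in a fixed compact set) enter, and where the implicit constant's dependence on $\|v\|_{L^\infty}$ comes from. Schematically each term in $|D|^\ell R(v)$ is a double sum over scales $j_1, j_2 \ge \ell$ (roughly) of $2^{\ell(\gamma_1 + \gamma_2)}$ times $\|P_{j_1} v\|_{L^\infty}$ times $\|P_{j_2} v\|_{L^\infty}$ times a product of low-frequency $L^\infty$ factors of $v$ absorbed into the constant; writing $2^{\ell(\gamma_1+\gamma_2)} = 2^{(\ell - j_1)\gamma_1} 2^{(\ell-j_2)\gamma_2} \cdot 2^{j_1 \gamma_1} 2^{j_2 \gamma_2}$, the gains $2^{(\ell-j_i)\gamma_i}$ with $\gamma_i < 1$ are \emph{not} automatically summable (only $\gamma_i>0$ would suffice, and we don't have that separately), so one must instead keep one factor in $\dot B^0_{\infty,2}$ and sum in $\ell^2$ via Cauchy--Schwarz against the $\ell^1$-in-scale-difference geometric weights, and keep the other factor measured in $BMO$ to handle the high-high-to-low output case where $\ell$ can be much smaller than $j_1 \sim j_2$ (this is exactly the standard reason $BMO$ rather than $L^\infty$ appears). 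The two norms $\dot W^{s,\infty}$ on the left and $BMO \cap \dot B^0_{\infty,2}$ on the right are matched precisely so that this bookkeeping closes: the $\dot B^0_{\infty,2}$ output norm pairs with the $\dot B^0_{\infty,2}$ input and the $BMO = \dot B^0_{\infty,\infty}$-type estimate handles the $\dot W^{s,\infty} = \dot B^s_{\infty,\infty}$ part of the output.

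\textbf{Main obstacle.} The delicate point is the endpoint summation: because we only assume $\gamma_1, \gamma_2 < 1$ (and not $\gamma_i > 0$), the naive geometric series in the Littlewood--Paley sum does not converge on its own, and one cannot simply put everything in $L^\infty$. The resolution is to be careful that in every one of the terms (i)--(iii), at least one input factor that sits at the ``dangerous'' high frequency is measured in $\dot B^0_{\infty,2}$ (so the square-summability gives an $\ell^2$ sum handled by Cauchy--Schwarz) while the other dangerous factor, together with the low-frequency output located possibly far below, is measured in $BMO$, using the $BMO$--$\dot B^0_{\infty,\infty}$ duality-type frequency estimate $\|\sum_j f_j\|_{\dot B^s_{\infty,\infty}} \lesssim \sup_j 2^{js}\|f_j\|_{L^\infty}$ together with the John--Nirenberg-type bound that the low-frequency piece of a $BMO$ function has controlled $L^\infty$-oscillation. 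Keeping track of which factor plays which role across the high-high, high-low, and low-high interactions, and verifying that the $M$-truncation in the definition of $T_a$ only produces terms already of the subtracted form plus Lemma~\ref{l:para-prod}-controlled errors, is the real content; everything else is routine paradifferential bookkeeping.
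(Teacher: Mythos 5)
The paper does not prove Theorem~\ref{t:moser}; it states it as a recalled estimate with the citation \cite{SQGzero}, so there is no internal proof to compare your sketch against. Your strategy — telescoping $F(v)=\sum_k\bigl(F(v_{\le k})-F(v_{\le k-1})\bigr)$, peeling off the paraproduct $T_{F'(v)}v$ as the leading diagonal contribution, and then running a frequency-by-frequency Littlewood--Paley estimate on the remainder — is indeed the standard paradifferential proof of this kind of Moser bound, and at the level of a sketch it identifies the correct decomposition and the correct role for the hypothesis $s=\gamma_1+\gamma_2>0$.

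That said, the reasoning in your ``main obstacle'' paragraph is muddled in a way worth flagging. You claim that because we only know $\gamma_i<1$ (not $\gamma_i>0$), the geometric factors $2^{(\ell-j_i)\gamma_i}$ need not be summable, and that the fix is to put one factor in $\dot B^0_{\infty,2}$ and use Cauchy--Schwarz against ``the $\ell^1$-in-scale-difference geometric weights.'' But Cauchy--Schwarz against a weight $2^{(\ell-j)\gamma_i}$ requires that weight to be $\ell^2$-summable, which again forces $\gamma_i>0$ — so as stated this does not buy anything over the naive $\ell^\infty$ argument and does not actually resolve the case $\gamma_i\le 0$. For the balanced (high-high-to-low) block the correct observation is that once both input scales are $j_1\simeq j_2=j\gtrsim\ell$, the total gain is $2^{(\ell-j)s}$ with $s=\gamma_1+\gamma_2>0$, and that \emph{is} summable regardless of the signs of the individual $\gamma_i$; the $BMO$ vs.\ $\dot B^0_{\infty,2}$ distinction is then about matching the output $\dot W^{s,\infty}\simeq\dot B^s_{\infty,\infty}$ norm, not about rescuing a divergent geometric series. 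For the low-high (diagonal) block, when one exponent vanishes the coefficient factor $\|m_k-(F'(v))_{<k-N}\|_{L^\infty}$ is only controlled via $\|v\|_{L^\infty}$ (which the $\lesssim_{\|v\|_{L^\infty}}$ constant is there to absorb), not via a convergent tail sum — so you should make that step explicit rather than appeal to summability. Also, a small point: $F(0)=0$ is needed so that the telescoping sum converges as $k\to-\infty$ (since $F(v_{\le k})\to F(0)$), not to make $F'(0)$ finite; smoothness already gives the latter. None of this breaks your overall plan, but the sketch as written would not survive the $\gamma_i\le 0$ regime it explicitly tries to address, and the justification should be rerouted through $s>0$ plus $\|v\|_{L^\infty}$ as above.
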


\

From this Moser estimate, we obtain the following Moser-type paraproduct estimate:
\begin{lemma}\label{l:para-prod3}
Assume that $0 \leq \gamma_1+\gamma_2 < 1$. Let $F$ be a smooth function. Then we have
\begin{equation}
\| T_{\D_xF(f)} - T_{F'(f)}T_{f_x}\|_{\dot H^{s} \to \dot H^{s+\gamma_1+\gamma_2 - 1}} \lesssim
\||D|^{\gamma_1}f \|_{BMO}\||D|^{\gamma_2}f\|_{BMO}.
\end{equation}
\end{lemma}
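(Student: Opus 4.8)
The plan is to decompose $\D_x F(f)$ using the Moser-type splitting from Theorem~\ref{t:moser}, and then match each piece against $T_{F'(f)}T_{f_x}$ using the para-product estimate of Lemma~\ref{l:para-prod}. Write $F(f) = T_{F'(f)} f + R(f)$ with $R(f) = F(f) - T_{F'(f)}f$. Applying $\D_x$ and using the paradifferential quantization, $T_{\D_x F(f)} = T_{\D_x(T_{F'(f)}f)} + T_{\D_x R(f)}$. For the remainder term, Theorem~\ref{t:moser} with exponents $\gamma_1, \gamma_2$ gives $\|R(f)\|_{\dot W^{\gamma_1+\gamma_2,\infty}} \lesssim_{\|f\|_{L^\infty}} \||D|^{\gamma_1}f\|_{BMO \cap \dot B^0_{\infty,2}}\||D|^{\gamma_2}f\|_{BMO \cap \dot B^0_{\infty,2}}$, hence $\|\D_x R(f)\|_{\dot W^{\gamma_1+\gamma_2-1,\infty}}$ is controlled by the same quantity. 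Since $\gamma_1 + \gamma_2 - 1 < 0$, the operator norm $\|T_{\D_x R(f)}\|_{\dot H^s \to \dot H^{s+\gamma_1+\gamma_2-1}}$ is bounded by the $\dot W^{\gamma_1+\gamma_2-1,\infty}$ norm of the symbol (boundedness of a paraproduct by a low-regularity symbol — this is the standard paraproduct mapping bound, and one should double-check the precise form available in the paper's conventions, since only the Besov version appears explicitly in Lemma~\ref{l:para-prod}; one can instead invoke Lemma~\ref{l:para-prod} itself with one factor being $\D_x R(f)$).

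For the main term $T_{\D_x(T_{F'(f)}f)}$, note that $\D_x(T_{F'(f)}f) = T_{\D_x F'(f)}f + T_{F'(f)}f_x$, so $T_{\D_x(T_{F'(f)}f)} = T_{T_{\D_x F'(f)}f} + T_{T_{F'(f)}f_x}$. The first of these is again a paraproduct by a symbol of regularity essentially $\gamma_1 + \gamma_2 - 1 < 0$ (since $\D_x F'(f) = F''(f)f_x$ has the regularity of $f_x$, and $T_{\D_x F'(f)}f$ gains back the regularity of $f$), controlled as before via a Moser/paraproduct estimate; the details here parallel the remainder term. The second term is the crucial one: by Lemma~\ref{l:para-prod} (first estimate), $\|T_{T_{F'(f)}f_x} - T_{F'(f)}T_{f_x}\|_{\dot H^s \to \dot H^{s+\gamma_1+\gamma_2-1}}$ — after adjusting indices so that the two paraproduct factors are $F'(f)$ and $f_x$ with the derivative count shifted by one — is bounded by $\||D|^{\gamma_1}(F'(f))\|_{BMO}\||D|^{\gamma_2 - 1}(f_x)\|_{BMO} = \||D|^{\gamma_1}F'(f)\|_{BMO}\||D|^{\gamma_2}f\|_{BMO}$, and then $\||D|^{\gamma_1}F'(f)\|_{BMO} \lesssim_{\|f\|_{L^\infty}} \||D|^{\gamma_1}f\|_{BMO \cap \dot B^0_{\infty,2}}$ by another application of the Moser estimate (or a composition estimate for $F'$). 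Collecting all contributions yields the claimed bound.

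\textbf{Main obstacle.} The principal technical point is bookkeeping the derivative indices correctly: the statement has a net loss of one derivative ($\dot H^s \to \dot H^{s+\gamma_1+\gamma_2-1}$), and one must carefully track where that derivative lands when moving it through the paraproduct — in particular, Lemma~\ref{l:para-prod} is stated with both factors at regularity $<1$, so the application to the pair $(F'(f), f_x)$ requires pairing $f_x$ with the index $\gamma_2 - 1$, which is automatically $<1$, while $F'(f)$ sits at $\gamma_1 < 1$; this is consistent. A secondary obstacle is that the paper only states the para-product and para-commutator lemmas for $|D|^{\gamma_i}$ of the \emph{symbols} in $BMO$, so one needs the Moser composition bound $\||D|^{\gamma}F'(f)\|_{BMO \cap \dot B^0_{\infty,2}} \lesssim_{\|f\|_{L^\infty}} \||D|^{\gamma}f\|_{BMO \cap \dot B^0_{\infty,2}}$, which follows from Theorem~\ref{t:moser} applied to $F'$ (valid since we may assume $F'(0)$ is subtracted off, as the paraproduct $T_c = 0$ for a constant $c$ in this quantization, or absorb it trivially). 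Finally, one should confirm that the $\dot B^0_{\infty,2}$ components appearing in the Moser estimates can be dropped in favor of pure $BMO$ control on the right-hand side of the stated lemma; if not, the lemma's right side should be read with the $BMO$ norms understood in the slightly stronger $BMO \cap \dot B^0_{\infty,2}$ sense consistent with how it is invoked elsewhere in the paper — but as written, tracing through shows the $\dot B^0_{\infty,2}$ pieces are genuinely needed only in intermediate Moser applications and the final paraproduct step only sees $BMO$, so the statement as given should be reachable.
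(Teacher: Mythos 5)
Your proposal follows essentially the same route as the paper's proof: split $F(f) = T_{F'(f)}f + R(f)$ via the Moser estimate of Theorem~\ref{t:moser}, apply $\D_x$ and Leibniz to obtain $T_{\D_x F'(f)}f + T_{F'(f)}f_x$, dispose of the $T_{\D_x F'(f)}f$ contribution by a second Moser-type application, and handle the remaining difference $T_{T_{F'(f)}f_x} - T_{F'(f)}T_{f_x}$ via the first estimate of Lemma~\ref{l:para-prod} (with the indices $(\gamma_1,\gamma_2-1)$, exactly as you do). Your side remarks on the composition estimate $\||D|^{\gamma_1}F'(f)\|_{BMO}\lesssim_{\|f\|_{L^\infty}}\||D|^{\gamma_1}f\|_{BMO\cap\dot B^0_{\infty,2}}$ and on the $\dot B^0_{\infty,2}$ bookkeeping are details the paper leaves implicit, but the decomposition, the key lemmas, and the conclusion are identical.
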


\begin{remark}
   We need this estimate only in the $\alpha < 1$ case, to shift derivatives between both the low frequency paraproducts as well as the high frequency argument. In the $\alpha > 1$ case, it always suffices to balance derivatives between just the low frequency paraproducts using Lemma~\ref{l:para-prod} directly.
\end{remark}

\begin{proof}
We apply the Moser estimate of Theorem~\ref{t:moser} to replace $\D_xF(f)$ with $\D_x T_{F'(f)} f$. A second application of a Moser estimate then estimates the contribution from $T_{\D_x F'(f)} f$, which leaves us with
\begin{align*}
    T_{T_{F'(f)}f_x} - T_{F'(f)}T_{f_x}.
\end{align*}
By the first estimate of Lemma~\ref{l:para-prod}, it follows that this error is acceptable. 
\end{proof}

\subsection{Difference quotients}

Recall that we denote difference quotients by 
\[
\dq^yh(x)=\frac{h(x+y)-h(x)}{y}, \qquad \sdq^yh(x)=\frac{h(x+y)-h(x)}{| y|},
\]
and define the smooth function 
\[
F(s) = 1 - \frac{1}{(1+s^2)^{\frac{\alpha}{2}}},
\]
which in particular vanishes to second order at $s = 0$, satisfying $F(0)=F'(0)=0$. Using this notation, we may express
\[
    Q(\varphi, v)(t,x) = \int |y|^{1 - \alpha} F(\dq^y\varphi(t,x)) \cdot \sdq^y v(t,x)\,dy.
\]
In addition, to facilitate the normal form analysis in later sections, we denote
\[
\psi := \D_x^{-1} F(\D_x \varphi), \qquad J = (1 - F(\D_x \varphi))^{-1} = (1 - \D_x \psi)^{-1},
\]
where we fix the antiderivative such that $\psi(-\infty) = 0$.

We have the following estimate which allows the balancing of up to $2 - \alpha$ derivatives over multilinear averages of difference quotients:

\begin{lemma}\label{Trilinear integral estimate-v0}
Let $i = \overline{1,n}$ and $p_i, r \in [1, \infty]$ and $\alpha_i, \beta_i \in [0, 1]$ satisfying
\[
\sum_{i}\frac{1}{p_i} = \frac{1}{r}, \qquad n+\alpha-2 < \sum_{i}\alpha_i\leq n, \qquad 0 \leq \sum_{i}\beta_i < n+\alpha-2.
\]
Then
\[
 \left\|\int\frac{1}{|y|^{\alpha-1}} \prod \dq^yf_i \,dy\right\|_{L_x^r} \lesssim \prod \||D|^{\alpha_i} f_i\|_{L^{p_i}} + \prod \||D|^{\beta_i} f_i\|_{L^{p_i}}.
 \]

\end{lemma}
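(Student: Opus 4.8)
The plan is to localize all $n+1$ functions (the $n$ factors $f_i$ together with the kernel $|y|^{1-\alpha}$) to dyadic frequency scales and reduce matters to estimating a single Littlewood--Paley block of the integral. Write $f_i = \sum_{k_i} f_{i,k_i}$ with $f_{i,k_i} = P_{k_i} f_i$, and split the $y$-integral dyadically as $\int = \sum_{j \in \Z} \int_{|y| \sim 2^{-j}}$. The two basic pointwise tools are: first, the difference quotient is a smoothing operation, $\|\dq^y f_{i,k_i}\|_{L^{p_i}} \lesssim \min(2^{k_i}, |y|^{-1}) \|f_{i,k_i}\|_{L^{p_i}}$, which follows by writing $\dq^y f = \int_0^1 (f')(\cdot + ty)\,dt$ for the first bound and by the triangle inequality plus Bernstein for the second; and second, $\|f_{i,k_i}\|_{L^{p_i}} \lesssim 2^{-k_i \sigma}\||D|^\sigma f_{i,k_i}\|_{L^{p_i}}$ for any $\sigma$. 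I would combine these to get, for each fixed $y$ and each choice of exponents $0\le \gamma_i\le 1$,
\[
\|\dq^y f_{i,k_i}\|_{L^{p_i}} \lesssim \min(2^{k_i}, |y|^{-1})\, 2^{-k_i\gamma_i}\, \||D|^{\gamma_i} f_{i,k_i}\|_{L^{p_i}}.
\]

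Next I would insert these bounds into the integral, apply Hölder in $x$ (using $\sum 1/p_i = 1/r$) to pull the product of $L^{p_i}$ norms outside, and then sum. The result is a bound of the form
\[
\sum_{k_1,\dots,k_n} \Big(\int \frac{1}{|y|^{\alpha-1}} \prod_i \min(2^{k_i},|y|^{-1})\, dy\Big) \prod_i 2^{-k_i\gamma_i}\||D|^{\gamma_i} f_{i,k_i}\|_{L^{p_i}}.
\]
The $y$-integral is elementary: ordering the frequencies so that $2^{k_1} \le \dots \le 2^{k_n}$, one splits $|y| \lesssim 2^{-k_n}$, where every $\min$ equals $2^{k_i}$ and the integral $\int_{|y|\lesssim 2^{-k_n}} |y|^{1-\alpha}\,dy \sim 2^{-k_n(2-\alpha)}$ (using $2-\alpha > 0$), against the region $|y|\gtrsim 2^{-k_1}$ and the intermediate dyadic shells, each of which contributes a geometric series dominated by its endpoint. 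One checks that in every regime the $y$-integral is bounded by $C\prod_i 2^{k_i} \cdot 2^{-k_{\max}}\cdots$, more precisely by a product $\prod_i 2^{k_i \mu_i}$ where $\sum_i \mu_i = n+\alpha-2$ and each $\mu_i \in [0,1]$ can be distributed with some freedom (any $\mu_i$ can be taken up to $1$, since each $\min$ factor contributes at most $2^{k_i}$, and the total homogeneity in the frequencies is fixed at $n+\alpha-2$ by scaling). With $\gamma_i = \alpha_i$: the hypothesis $n+\alpha-2 < \sum\alpha_i \le n$ means that we can choose the distribution $\mu_i \le \alpha_i$ with $\sum \mu_i = n+\alpha-2$ and with strict inequality $\mu_i < \alpha_i$ for at least one index, producing a summable geometric series in that $k_i$ while the others are summed using $\mu_i \le \alpha_i$, $\mu_i \le 1$; this handles the high-frequency/small-$y$ regime and yields the first term $\prod\||D|^{\alpha_i}f_i\|_{L^{p_i}}$. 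Symmetrically, with $\gamma_i=\beta_i$ and the hypothesis $0 \le \sum\beta_i < n+\alpha-2$, the gap now forces at least one $\mu_i > \beta_i$, giving summability in the low-frequency/large-$y$ regime and the second term.

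The main obstacle, and the step requiring the most care, is the bookkeeping of the dyadic summation: one must verify that the constraints $\alpha_i, \beta_i \in [0,1]$ together with the two strict inequalities always leave enough room to distribute the homogeneity $n+\alpha-2$ among the $\mu_i$ so that every single $k_i$-sum converges — i.e., that no frequency is "pinned" at a non-summable rate. This is where the restriction $\alpha \in [0,2)$ (so that $2-\alpha>0$, making the small-$y$ integral of $|y|^{1-\alpha}$ convergent) and the upper bounds $\alpha_i,\beta_i\le 1$ (so that the crude bound $\|\dq^y f_{i,k_i}\|\lesssim |y|^{-1}\|f_{i,k_i}\|$, which can only shift one derivative per factor, suffices) enter decisively. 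Once the summation is organized by ordering the frequencies and peeling off the extreme ones, the estimate closes; I would present it by first doing the case $n=1$ to fix notation and then indicate the general induction/ordering argument.
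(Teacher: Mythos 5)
Your proposal is correct in spirit but takes a genuinely different, and much heavier, route than the paper. The paper's proof contains essentially no Littlewood--Paley at all: it splits the $y$-integral once at $|y|=1$, applies H\"older in $x$, and then bounds each factor by the scalar estimate $\|\dq^y f_i\|_{L^{p_i}} \lesssim |y|^{\sigma-1}\,\||D|^{\sigma}f_i\|_{L^{p_i}}$ (valid for any $\sigma \in [0,1]$, by interpolating the trivial $\sigma=0$ and $\sigma=1$ endpoints), using $\sigma=\alpha_i$ on $|y|\le 1$ and $\sigma=\beta_i$ on $|y|>1$. This produces $\int_{|y|\le 1}|y|^{1-\alpha+\sum\alpha_i-n}\,dy$ and $\int_{|y|>1}|y|^{1-\alpha+\sum\beta_i-n}\,dy$, and the two strict inequalities in the hypothesis are exactly the conditions for convergence of these scalar integrals; no dyadic summation in frequency is needed. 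Your approach decomposes every $f_i$ into Littlewood--Paley blocks \emph{and} splits $y$ dyadically, which in effect re-derives the black-box difference-quotient bound from scratch and then multiplies the combinatorics: as you yourself note, you must then order the $n$ frequencies, distribute the homogeneity $n+\alpha-2$ among exponents $\mu_i$, and verify that \emph{every} $k_i$-sum closes. This bookkeeping is genuinely delicate -- for instance when $\alpha<1$ the natural $\mu_n = \alpha-1$ coming from the small-$|y|$ regime is negative, so your blanket claim that each $\mu_i\in[0,1]$ is not quite right, and ensuring strict gain in one index does not by itself sum the remaining $n-1$ indices. None of this is wrong, but it is exactly the subtlety the paper avoids by not decomposing in frequency at all. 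What the dyadic approach buys you, if pushed through, is a Besov-type refinement of the conclusion -- which is in fact what the paper proves separately in the following lemma (Lemma~\ref{Besov trilinear integral estimate}), there via the Besov difference-quotient characterization rather than raw Littlewood--Paley.
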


\begin{proof}
We write 
\[
\int \frac{1}{|y|^{\alpha-1}}\prod \dq^yf_i \,dy = \int_{|y| \leq 1} + \int_{|y| > 1}.
\]
For the former integral, we have by H\"older
\[
 \left\| \int_{|y| \leq 1}\frac{1}{|y|^{\alpha-1}} \prod \dq^yf_i \,dy \right\|_{L_x^r} \lesssim \int_{|y| \leq 1} \frac{1}{|y|^{n+\alpha-1 - \sum \alpha_i}} \prod \||D|^{\alpha_i} f_i\|_{L^{p_i}} \, dy \lesssim \prod \||D|^{\alpha_i} f_i\|_{L^{p_i}}.
\]
The latter integral is treated similarly.
\end{proof}

The above lemma may be sharpened using Besov spaces. First, we recall the following difference quotient representation of the Besov space $\dot{B}^s_{p,r}$ \cite[Theorem 2.36]{Bahouri-Chemin}:

\begin{lemma}\label{Endpoint integral estimate}
Let $s\in(0,1)$ and $(p,r) \in [1, \infty]^2$. Then
\[
\|u\|_{\dot{B}^s_{p,r}} \approx \left\|\frac{\|u(x+y)-u(x)\|_{L_x^p}}{|y|^s}\right\|_{L_y^r(\mathbb{R},\frac{1}{|y|})}.
 \]
\end{lemma}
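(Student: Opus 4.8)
The plan is to establish the two-sided bound by comparing both quantities to the Littlewood--Paley characterization of $\dot B^s_{p,r}$. Fix a homogeneous Littlewood--Paley decomposition $u = \sum_{j \in \Z} P_j u$, with $P_j$ localizing to $|\xi| \sim 2^j$ via a bump $\varphi$, so that $\|u\|_{\dot B^s_{p,r}} \approx \|2^{js}\|P_j u\|_{L^p}\|_{\ell^r_j}$. Since $s \in (0,1)$, the finite-difference functional on the right-hand side depends on $u$ only modulo constants, consistent with $\dot B^s_{p,r}$ being a space of distributions modulo polynomials; hence there is no loss in assuming $u$ Schwartz and arguing by density at the end (both sides being simultaneously finite or infinite).

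For the bound $\gtrsim$, i.e. controlling the finite differences by the Besov norm, we decompose $u(x+y) - u(x) = \sum_j (P_j u(x+y) - P_j u(x))$ and split the sum at $2^j|y| = 1$. For the low-frequency part $2^j|y| \le 1$ we combine the mean value inequality with Bernstein: $\|P_j u(\cdot + y) - P_j u\|_{L^p} \lesssim |y|\,\|\D_x P_j u\|_{L^p} \lesssim 2^j|y|\,\|P_j u\|_{L^p}$; for the high-frequency part $2^j|y| > 1$ we use the trivial bound $\|P_j u(\cdot + y) - P_j u\|_{L^p} \le 2\|P_j u\|_{L^p}$. Dividing by $|y|^s$ and using $0 < s < 1$, so that $\sum_{2^j \le |y|^{-1}}(2^j|y|)^{1-s}$ and $\sum_{2^j > |y|^{-1}}(2^j|y|)^{-s}$ are summable geometric series, and then taking the $L^r_y(dy/|y|)$ norm, we are left with a discrete convolution in the dyadic index, handled by Young's inequality on $\ell^r(\Z)$; here the scale invariance of $dy/|y|$ is what lets an integral over the shells $|y| \sim 2^{-j}$ be replaced by a sum over $j$. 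This gives the bound by $\|u\|_{\dot B^s_{p,r}}$.

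For the reverse bound $\lesssim$, the key is a reproducing identity writing each Littlewood--Paley piece as a superposition of finite differences. Pick a bump $\tilde\varphi$ equal to $1$ on an annulus containing $\operatorname{supp}\varphi$ and vanishing near the origin, and let $g_j$ be the inverse Fourier transform of $\tilde\varphi(2^{-j}\cdot)$; then $g_j$ is Schwartz at scale $2^{-j}$ with $\int g_j = 0$, and a Fourier-side computation gives $P_j u = P_j\bigl(\int g_j(y)\,(u(\cdot+y) - u(\cdot))\,dy\bigr)$ — the vanishing mean of $g_j$ is exactly what makes the finite difference, rather than $u$ itself, appear. Hence $\|P_j u\|_{L^p} \lesssim \int |g_j(y)|\,\|u(\cdot+y) - u(\cdot)\|_{L^p}\,dy$ with $|g_j(y)| \lesssim 2^j(1 + 2^j|y|)^{-N}$. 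Multiplying by $2^{js}$, changing variables $z = 2^j y$, taking the $\ell^r_j$ norm and applying Minkowski's integral inequality reduces matters to estimating $\|g(2^{-j}z)\|_{\ell^r_j}$ for fixed $z$, where $g(y) = \|u(\cdot+y)-u(\cdot)\|_{L^p}/|y|^s$; this last step again exploits the scale invariance of $dy/|y|$ together with the fixed rapidly decaying weight $|z|^s(1+|z|)^{-N}$ to bound the dyadic sampling by the continuous $L^r(dy/|y|)$ norm.

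The main obstacle is the reverse direction, and within it two points deserve care: first, constructing the zero-mean reproducing kernel $g_j$ so that finite differences (not the functions) appear — equivalently, dealing with the zero of $e^{iy\xi}-1$ at $y=0$; and second, the transition between the continuous dilation parameter $y$ and the discrete index $j$, which is precisely where the hypotheses $s \in (0,1)$ and the measure $dy/|y|$ (rather than $dy$) on the right-hand side are used. The remaining ingredients are routine Bernstein estimates and geometric-series bookkeeping.
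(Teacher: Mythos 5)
The paper does not prove this lemma; it is cited verbatim as Theorem~2.36 of Bahouri--Chemin(--Danchin), so there is no internal proof to compare against. Your argument is the standard Littlewood--Paley proof of that theorem. The forward direction (finite differences controlled by $\|u\|_{\dot B^s_{p,r}}$) is clean and correct: split at $2^j|y|\sim 1$, use the fundamental theorem of calculus plus Bernstein on the low side, the triangle inequality on the high side, pass to dyadic shells in $y$ (which is where $dy/|y|$ enters), and close with discrete Young's inequality on $\ell^r(\Z)$, the kernel $\min(t^{1-s},t^{-s})$ being summable over dyadic scales precisely because $0<s<1$. The reverse direction also has the right skeleton: the zero-mean reproducing kernel $g_j$ with $P_j u = P_j\bigl(\int g_j(y)\,(u(\cdot+y)-u)\,dy\bigr)$ and the decay $|g_j(y)|\lesssim 2^j(1+2^j|y|)^{-N}$ are exactly the right tools.

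There is, however, a genuine gap in how you close the reverse direction. After the change of variables $z=2^jy$ and Minkowski's integral inequality you say the problem ``reduces to estimating $\|g(2^{-j}z)\|_{\ell^r_j}$ for fixed $z$'' (with $g(y)=\|u(\cdot+y)-u\|_{L^p}/|y|^s$); but for fixed $z$ this is a geometric sampling of $g$, which is \emph{not} controlled by $\|g\|_{L^r(dy/|y|)}$ when $r<\infty$ --- indeed it can be infinite on a null set of $z$. So the step as stated does not close. The fix is to keep the $z$-integral rather than reduce to a pointwise-in-$z$ bound: decomposing $\{|z|\sim 2^m\}$ and using Fubini plus the substitution $w=2^{-j}z$ gives
\[
\int_{|z|\sim 2^m}\sum_j |g(2^{-j}z)|^r\,dz \;\approx\; 2^m\sum_j\int_{|w|\sim 2^{m-j}}|g(w)|^r\,\frac{dw}{|w|} \;=\; 2^m\,\|g\|^r_{L^r(dy/|y|)},
\]
so that H\"older on each block yields $\int_{|z|\sim 2^m}\|g(2^{-j}z)\|_{\ell^r_j}\,dz\lesssim 2^m\|g\|_{L^r(dy/|y|)}$; the geometric factor $2^m$ is then absorbed by the weight $|z|^s(1+|z|)^{-N}$ (using $N>s+1$ on one side and $s>0$ on the other), giving the desired bound on $\sum_m\int_{|z|\sim 2^m}$. (Equivalently: do not change variables, and instead treat $a_j\mapsto\int(2^j|y|)^{s+1}(1+2^j|y|)^{-N}g(y)\,\frac{dy}{|y|}$ as an $\ell^1$--$\ell^\infty$ Schur-bounded kernel operator from $L^r(dy/|y|)$ to $\ell^r_j$.) You correctly identify that the weight and the scale invariance of $dy/|y|$ are what make this work, but the reduction to a fixed-$z$ estimate is the wrong intermediate statement and would not close as written.
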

We apply this to establish a Besov version of the estimate on multilinear averages:
\begin{lemma}\label{Besov trilinear integral estimate}
Let $i = \overline{1,n}$ and $p_i, r \in [1, \infty]$ and $\alpha_i\in [0, 1)$ satisfying
\[
\sum_{i}\frac{1}{p_i} = \frac{1}{r},\qquad \sum_{i}\frac{1}{q_i}=1, \qquad \sum_{i}\alpha_i=2-\alpha.
\]
Then
\[
 \left\|\int\frac{1}{|y|^{\alpha-1}} \prod \dq^yf_i \,dy\right\|_{L_x^r} \lesssim \prod_{i=1}^n \|f_i\|_{\dot{B}^{1-\alpha_i}_{p_i,q_i}}.
 \]

\end{lemma}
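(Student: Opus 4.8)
The plan is to reduce the Besov estimate to the $L^p$-based estimate of Lemma~\ref{Trilinear integral estimate-v0} by a dyadic decomposition in the $y$ variable, using Lemma~\ref{Endpoint integral estimate} to convert the difference-quotient norms into Besov norms. First I would split the integral dyadically, writing $\int \frac{1}{|y|^{\alpha-1}}\prod \dq^y f_i\,dy = \sum_{j \in \Z} I_j$, where $I_j$ is the integral over the annulus $|y| \sim 2^j$. On each such annulus, $\frac{1}{|y|^{\alpha-1}} \sim 2^{-j(\alpha-1)}$ up to constants, so by H\"older in $y$ and then in $x$,
\[
\|I_j\|_{L_x^r} \lesssim 2^{-j(\alpha-1)} \int_{|y|\sim 2^j} \prod_i \frac{\|\dq^y f_i\|_{L_x^{p_i}}\,}{1}\,dy,
\]
but this crude bound loses; instead I want to pull out the factors $|y|^{1-\alpha_i}$ to match the Besov scaling. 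Writing $\dq^y f_i(x) = \frac{f_i(x+y)-f_i(x)}{y}$, on the annulus $|y|\sim 2^j$ we have $\|\dq^y f_i\|_{L_x^{p_i}} \lesssim 2^{-j} \|f_i(\cdot+y)-f_i(\cdot)\|_{L_x^{p_i}}$, and the quantity $\frac{\|f_i(\cdot+y)-f_i(\cdot)\|_{L_x^{p_i}}}{|y|^{1-\alpha_i}}$ is exactly what appears in the difference-quotient characterization of $\dot B^{1-\alpha_i}_{p_i, q_i}$ from Lemma~\ref{Endpoint integral estimate}.

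So the key step is: on the annulus $|y| \sim 2^j$, apply H\"older in $x$ with exponents $p_i$ to get
\[
\|I_j\|_{L_x^r} \lesssim \int_{|y|\sim 2^j} |y|^{1-\alpha} \prod_i |y|^{-1} \| f_i(\cdot+y) - f_i(\cdot)\|_{L_x^{p_i}}\,dy = \int_{|y|\sim 2^j} |y|^{1-\alpha - n} \prod_i \| f_i(\cdot+y) - f_i(\cdot)\|_{L_x^{p_i}}\,dy,
\]
using $\sum_i 1 = n$ and the hypothesis $\sum_i \alpha_i = 2 - \alpha$, so that $|y|^{1-\alpha-n} = |y|^{-1}\prod_i |y|^{-(1-\alpha_i)}$. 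Then inside the annulus I apply H\"older in $y$ with respect to the measure $\frac{dy}{|y|}$ and exponents $q_i$ (legitimate since $\sum_i 1/q_i = 1$), which yields
\[
\|I_j\|_{L_x^r} \lesssim \prod_i \left\| \frac{\| f_i(\cdot+y) - f_i(\cdot)\|_{L_x^{p_i}}}{|y|^{1-\alpha_i}} \right\|_{L^{q_i}_y(|y|\sim 2^j,\, \frac{dy}{|y|})}.
\]
Summing over $j$ and applying H\"older once more in the discrete variable $j$ with the same exponents $q_i$ (since $\ell^1 \hookrightarrow \ell^{q_i}$ fails in general, one instead uses $\|\sum_j a_j b_j \cdots\|_{\ell^1} \le \prod \|a^{(i)}\|_{\ell^{q_i}}$ with $\sum 1/q_i = 1$), I recombine the dyadic pieces into the full norms $\left\| \frac{\| f_i(\cdot+y)-f_i(\cdot)\|_{L^{p_i}_x}}{|y|^{1-\alpha_i}} \right\|_{L^{q_i}_y(\R,\frac{dy}{|y|})}$, which by Lemma~\ref{Endpoint integral estimate} is comparable to $\|f_i\|_{\dot B^{1-\alpha_i}_{p_i,q_i}}$. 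This gives the claimed bound.

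The main obstacle I anticipate is the bookkeeping around the two layers of H\"older (once in $y$ within each annulus, once in $j$ across annuli) and making sure the exponents $q_i$ with $\sum 1/q_i = 1$ are used consistently in both; in particular one must verify that the condition $\alpha_i \in [0,1)$ keeps the Besov exponents $1-\alpha_i \in (0,1]$ in the valid range for Lemma~\ref{Endpoint integral estimate}, and handle the endpoint $q_i = \infty$ (corresponding to $1-\alpha_i = 1$, i.e. $\alpha_i = 0$) with the usual modifications. A secondary point is that, unlike Lemma~\ref{Trilinear integral estimate-v0}, here there is no separate near-$y=0$ versus far-$y=\infty$ split with two different homogeneities — the single scaling hypothesis $\sum_i \alpha_i = 2-\alpha$ makes the integrand scale-invariant, which is precisely why the sharper Besov (as opposed to Sobolev/Lebesgue) norm is both available and necessary, and why only one term appears on the right-hand side.
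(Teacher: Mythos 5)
Your proof is correct and in essence the same as the paper's: both boil down to Minkowski's inequality, H\"older in $x$ with exponents $p_i$, H\"older in $y$ with exponents $q_i$ against the measure $\tfrac{dy}{|y|}$, and then the Besov difference-quotient characterization of Lemma~\ref{Endpoint integral estimate}. The dyadic decomposition of the $y$-integral and the subsequent discrete H\"older over $j$ are an unnecessary detour --- the paper applies H\"older in $y$ directly on $\int_{\mathbb{R}}\tfrac{dy}{|y|}$ in a single step --- though this does not affect the correctness of your argument.
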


\begin{proof}
By the triangle and H\"older's inequalities, we have
\begin{align*}
 \left\|\int\frac{1}{|y|^{\alpha-1}} \prod \dq^yf_i \,dy\right\|_{L_x^r} &\lesssim  \int\frac{1}{|y|} \left\|\prod |y|^{\alpha_i}\dq^yf_i\right\|_{L_x^r} \,dy\lesssim \prod_{i=1}^n\int\frac{1}{|y|}\left\| |y|^{\alpha_i}\dq^yf_i\right\|_{L_x^{p_i}} \,dy\\
 &\lesssim \prod_{i=1}^n \left\|\left\| |y|^{\alpha_i}\dq^yf_i\right\|_{L_x^{p_i}}\right\|_{L_y^{q_i}\left(\mathbb{R},\frac{1}{|y|}\right)}\lesssim \prod_{i=1}^n \|f_i\|_{\dot{B}^{1-\alpha_i}_{p_i,q_i}}.
\end{align*}

\end{proof}

\section{The null structure and paradifferential equation}\label{s:equations}

In this section and the next, we reduce energy estimates for the linearized equation \eqref{linearized-eqn},
\[
\D_tv - c(\alpha)|D_x|^{\alpha-1} \D_xv = \D_x Q(\varphi, v),
\]
to energy estimates for a corresponding paradifferential equation. 

This can be achieved by treating \eqref{linearized-eqn} as a paradifferential equation with a perturbative source, where the main task is to paralinearize the cubic term $\D_x Q(\varphi, v)$. Moreover, we are interested in carrying out this process in a manner such that the perturbative errors satisfy \emph{balanced estimates}. Precisely, we obtain estimates that only involve the control norms
\[
A := \|\D_x \varphi\|_{L^\infty}, \qquad B := \|\D_x \varphi\|_{B_{\infty,2}^{\frac{\alpha}{2}} \cap BMO^{\frac{\alpha}{2}}},
\]
where $A$ corresponds to the scaling-critical threshold, while $B$ lies $\alpha/2$ derivatives above scaling.

Unfortunately, directly estimating the paralinearization errors does not allow us to prove estimates that are controlled only by $A$ and $B$. Instead, we will use a paradifferential normal form transformation to eliminate the source terms that do not directly satisfy the desired balanced cubic estimates. In this section, we first consider various formulations of the paradifferential equation which will be useful in the following sections.

\subsection{Null structure}

Even though the principal term in the expansion of $F(\dq^y \varphi)$ is quadratic in $\varphi$ (and thus $Q(\varphi, v)$ is principally cubic), estimates on derivatives of $F(\dq^y \varphi)$ do not fully capture its quadratic structure. This happens because they are limited by the cases of low-high interaction where derivatives fall solely on the high frequency variable. Consequently, in the context of proving balanced estimates, $F(\dq^y \varphi)$ behaves essentially like a linear coefficient. 

On the other hand, we note that $Q$ exhibits a null structure in the following sense. By writing
\[
\Omega(\varphi, v) = \int \frac{1}{|y|^{\alpha-1}}\dq^y\varphi \cdot \sdq^yv\,dy
\]
and using the heuristic approximation
\[
F(\dq^y \varphi) \approx T_{F'(\varphi_x)} \dq^y \varphi,
\]
we may express $Q$ as a quadratic form with a low frequency coefficient,
\begin{equation}\label{atoq}
Q(\varphi, v) \approx T_{F'(\varphi_x)}\Omega(\varphi, v).
\end{equation}
We then observe that the bilinear form $\Omega(\varphi, v)$ exhibits a null structure, since its symbol (abusing notation)
\[
\Omega(\xi_1, \xi_2) = \int \frac{\sgn{y}}{|y|^{\alpha-1}} \cdot \frac{(e^{i\xi_1 y}-1)(e^{i\xi_2 y}-1)}{y^2} \, dy
\]
satisfies the resonance identity
\begin{equation}\label{resonance}
\alpha\Omega(\xi_1, \xi_2) = \omega(\xi_1) + \omega(\xi_2) - \omega(\xi_1 + \xi_2), \qquad \omega(\xi) = c(\alpha)i \xi |\xi|^{\alpha-1}.
\end{equation}
This null structure is crucial for the normal form analysis, which we carry out in the next section.

\

We make the above discussion precise in the following lemma. Recall that we denote
\[
\psi := \D_x^{-1} F(\varphi_x).
\]

\begin{lemma}\label{l:toy-red}
We have
\[
Q(\varphi, v) = \Omega(\psi, v) + R(x, D) v
\]
where 
\[
\|(\D_x R)(x, D) v\|_{L^2} \lesssim_A B^2 \|v\|_{L^2}.
\]
\end{lemma}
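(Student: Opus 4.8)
The strategy is to make the heuristic $F(\dq^y\varphi)\approx T_{F'(\varphi_x)}\dq^y\varphi$ from \eqref{atoq} precise, then recognize $T_{F'(\varphi_x)}\Omega(\varphi,v)$ as $\Omega(\psi,v)$ up to a controllable error, using the definition $\psi = \D_x^{-1}F(\varphi_x)$. First I would write
\[
Q(\varphi,v) = \int\frac{1}{|y|^{\alpha-1}}F(\dq^y\varphi)\cdot\sdq^y v\,dy = \int\frac{1}{|y|^{\alpha-1}}T_{F'(\varphi_x)}\!\left(\dq^y\varphi\right)\cdot\sdq^y v\,dy + \int\frac{1}{|y|^{\alpha-1}}R_1(\dq^y\varphi)\cdot\sdq^y v\,dy,
\]
where $R_1(\dq^y\varphi) = F(\dq^y\varphi) - T_{F'(\varphi_x)}\dq^y\varphi$ — note the paraproduct is in the low frequency coefficient $F'(\varphi_x)$, not in $\dq^y\varphi$. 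The point is that $R_1(\dq^y\varphi)$ should be bilinear in $\varphi$ in a way that exposes both factors, so that the corresponding error term is a genuine cubic form satisfying a balanced estimate. I would estimate this piece by a Moser-type argument (Theorem~\ref{t:moser} and Lemma~\ref{l:para-prod3}) combined with the multilinear difference quotient estimates of Lemma~\ref{Besov trilinear integral estimate} (or Lemma~\ref{Trilinear integral estimate-v0}), distributing the $2-\alpha$ available derivatives plus the extra derivative from $\D_x R$ across the three factors $\varphi,\varphi,v$ so that each $\varphi$ picks up roughly $\tfrac{\alpha}{2}$ derivatives (the $B$ norm) and one $\varphi$ or $v$ absorbs the rest; the $L^\infty$ bound $A$ enters through the Moser constant $\lesssim_{\|\varphi_x\|_{L^\infty}}$.

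For the main term, I would use that $T_{F'(\varphi_x)}$ is a paraproduct acting on $x$, so it commutes past the difference structure only up to paracommutator errors: write
\[
\int\frac{1}{|y|^{\alpha-1}}T_{F'(\varphi_x)}\!\left(\dq^y\varphi\right)\cdot\sdq^y v\,dy = T_{F'(\varphi_x)}\Omega(\varphi,v) + \Big(\text{commutator terms}\Big).
\]
Then I would replace $T_{F'(\varphi_x)}\Omega(\varphi,v)$ by $\Omega(\psi,v)$: since $\psi_x = F(\varphi_x)$ and $\Omega$ is bilinear with a translation-invariant symbol, $\Omega(\psi,v)$ has leading paraproduct part $T_{\psi_x}(\text{something in }v) = T_{F(\varphi_x)}(\cdots)$, and one checks $T_{F(\varphi_x)} = T_{F'(\varphi_x)\varphi_x} + (\text{lower order})$ via Lemma~\ref{l:para-prod} / Lemma~\ref{l:para-prod3} — wait, more carefully: $\Omega(\psi,v)$ and $T_{F'(\varphi_x)}\Omega(\varphi,v)$ should be compared by decomposing both into Littlewood–Paley pieces, keeping only the low-$\psi$/low-$\varphi$–high-$v$ interaction as the main term, with all other interactions (high-high, and high-$\varphi$/low-$v$) going into $R$. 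The identity $\psi_x = F(\varphi_x)$ together with the second-order vanishing $F(0)=F'(0)=0$ is what makes $\psi$ quadratically small in $\varphi$ and keeps these errors cubic.

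The main obstacle is the bookkeeping in the second step: showing that the difference between $T_{F'(\varphi_x)}\Omega(\varphi,v)$ and $\Omega(\psi,v)$ — which at the symbol level is the difference between applying the low-frequency multiplier $F'(\varphi_x)$ inside versus building it into the antiderivative $\psi$ — produces only terms with two "unbalanced" low-frequency $\varphi$ factors that can be recombined into $\|\varphi_x\|_{L^\infty}^{?}$-type factors, versus genuinely balanced factors. Concretely, the high-low interactions where $v$ is at low frequency are dangerous: there $\sdq^y v$ is essentially a coefficient, and one must use the null cancellation \eqref{resonance} in $\Omega$ to gain back the derivative needed to close the estimate with $B^2$ rather than a single power of $B$ times a high Sobolev norm of $\varphi$. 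I expect the proof to reduce, after these reductions, to several applications of Lemma~\ref{Besov trilinear integral estimate} with carefully chosen exponents $\alpha_i$ summing to $2-\alpha$ (shifted by one for the extra $\D_x$), each time routing $\tfrac{\alpha}{2}+$ derivatives onto each $\varphi$ into the $B_{\infty,2}^{\alpha/2}\cap BMO^{\alpha/2}$ norm and the remaining derivatives onto $v$ in $L^2$, with $A$ absorbed into Moser constants.
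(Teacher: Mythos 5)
Your proposal takes a genuinely different and considerably more elaborate route than the paper, and in the process introduces complications that are not actually present in the lemma. The paper's proof is direct and elementary: since $\dq^y\psi = \dq^y\D_x^{-1}F(\varphi_x)$, one writes
\[
(\D_x R)(x,D)v \;=\; \int \frac{1}{|y|^{\alpha-1}}\,\frac{F'(\dq^y\varphi)\,\dq^y\varphi_x - \dq^y F(\varphi_x)}{|y|}\,\bigl(v(x+y)-v(x)\bigr)\,dy,
\]
and then observes that, with $a=\varphi_x(x+y)$, $b=\varphi_x(x)$, the kernel splits as
\[
\tfrac1y\bigl[F'(b)(a-b)-(F(a)-F(b))\bigr] \;+\; \tfrac1y\bigl[(F'(\dq^y\varphi)-F'(b))(a-b)\bigr].
\]
Each piece is pointwise $\lesssim_A |y|^{1-\alpha}\|\dq^y\varphi_x\|_{L^\infty}(\|\dq^y\varphi_x\|_{L^\infty}+\|\dq^{y,(2)}\varphi\|_{L^\infty})$ by Taylor's theorem and the mean value theorem, after which Minkowski's inequality and Lemma~\ref{Besov trilinear integral estimate} close the estimate. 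There is no paraproduct $T_{F'(\varphi_x)}$, no frequency decomposition, and crucially no use of the resonance identity \eqref{resonance}.

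Three points in your plan are therefore off the mark. First, the null cancellation is not needed here and claiming it is misreads the role of this lemma: the resonance identity enters only afterward (Lemma~\ref{l:q-principal}) to analyze $\Omega_{lh}$, once $Q\approx\Omega(\psi,\cdot)$ has already been established. Second, you speak of routing some derivatives onto $v$, but the stated bound is $\|v\|_{L^2}$ with no smoothing — all $2-\alpha$ difference-quotient derivatives must land on the two $\varphi$ factors (indeed that is exactly $\frac\alpha2$ each in the $\dot B^{\alpha/2}_{\infty,2}$ norm), and $v$ contributes only through $\|v(\cdot+y)-v\|_{L^2}\lesssim\|v\|_{L^2}$. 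Third, and most substantively, the step replacing $T_{F'(\varphi_x)}\Omega(\varphi,v)$ by $\Omega(\psi,v)$ is the heart of your plan but is only gestured at. The paraproduct $T_{F'(\varphi_x)}$ does not commute with the $y$-integral for free — it sits inside, acting on $\dq^y\varphi$ — and moving it past $\sdq^y v$ and the $y$-average produces commutators that need their own careful balanced estimates. This is precisely the bookkeeping you flag as "the main obstacle" without resolving it, and it is entirely avoidable: subtracting $\Omega(\psi,v)$ directly, as the paper does, collapses the whole comparison into a single Taylor remainder at the kernel level.
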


\begin{proof}
We write
\begin{equation}
\begin{aligned}
Q(\varphi, v) - \Omega(\psi, v) &= \int \frac{1}{|y|^{\alpha-1}}\frac{F(\dq^y \varphi) - \dq^y \D_x^{-1} F(\varphi_x)}{|y|} \cdot (v(x + y) - v(x)) \, dy
\end{aligned}
\end{equation}
and set
\[
r(x, \xi) = -\int \frac{1}{|y|^{\alpha-1}}\frac{F(\dq^y \varphi) - \dq^y \D_x^{-1} F(\varphi_x)}{|y|} (e^{i\xi y} - 1) \, dy.
\]

Then we have
\begin{equation*}
\begin{aligned}
(\D_x R)(x, D)v &= \int \frac{1}{|y|^{\alpha-1}}\frac{F'(\dq^y \varphi)\dq^y \varphi_x - \dq^y F(\varphi_x)}{|y|} \cdot (v(x + y) - v(x)) \, dy \\
& =: \int K(x, y) \cdot (v(x + y) - v(x)) \, dy.
\end{aligned}
\end{equation*}

We first estimate $K$, which we may write as 
\begin{equation*}
\begin{aligned}
|y|^{\alpha} K(x, y) &= \frac{1}{y} (F'(b)(a - b) - (F(a) - F(b))+\frac{1}{y} (F'(\dq^y \varphi)-F'(\varphi_x))(a - b) \\
&=: |y|^{\alpha} K_1(x, y) + |y|^{\alpha} K_2(x, y),
\end{aligned}
\end{equation*}
where $a = \varphi_x(x + y)$, $b = \varphi_x(x)$. From $K_1$ we obtain a Taylor expansion,
\[
\|K_1(\cdot, y)\|_{L^\infty_x} \lesssim_A  \frac{1}{|y|^{\alpha-1}}\left\|\frac{a - b}{y} \right\|_{L^\infty_x}^2 = \frac{1}{|y|^{\alpha-1}}\|\dq^y \varphi_x\|_{L^\infty_x}^2.
\]
For $K_2$, we have 
\begin{align*}
\|K_2(\cdot, y) \|_{L^\infty_x} &\lesssim_A \frac{1}{|y|^{\alpha-1}}\left\|\frac{\varphi(x+y)-\varphi(x)-y\varphi_x(x)}{y^2} \right\|_{L^\infty_x} \left\|\frac{a - b}{y} \right\|_{L^\infty_x} \\
&=  \frac{1}{|y|^{\alpha-1}}\|\dq^{y, (2)} \varphi\|_{L^\infty_x} \|\dq^y \varphi_x\|_{L^\infty_x},
\end{align*}
where $\dq^{y, (2)}$ denotes the second-order difference quotient.

By Minkowski's inequality,
\begin{equation*}
\begin{aligned}
\|(\D_x R)(x, D)v \|_{L^2} &\lesssim \int \|K(\cdot, y)\|_{L^\infty_x} \|v(\cdot + y) - v(\cdot) \|_{L^2_x} \, dy \\
&\lesssim_A  \int \frac{1}{|y|^{\alpha-1}}\|\dq^y \varphi_x\|_{L^\infty_x} (\|\dq^y \varphi_x\|_{L^\infty_x} + \|\dq^{y, (2)} \varphi\|_{L^\infty_x} )\|v \|_{L^2_x} \, dy.
\end{aligned}
\end{equation*}
By Lemma \ref{Besov trilinear integral estimate}, we conclude
\[
\|(\D_x R)(x, D)v \|_{L^2} \lesssim_A B^2 \|v\|_{L^2}.
\]

\

\end{proof}

\subsection{The paradifferential flow}

We consider the linearized equation \eqref{linearized-eqn}, to which we associate its corresponding inhomogeneous paradifferential flow,
\begin{equation}\label{paradiff-eqn}
\D_tv - \clogd \D_xv - \D_x Q_{lh}(\varphi, v) = f,
\end{equation}
where the frequency decomposition of the (essentially) quadratic form has been expressed as
\begin{equation}\label{paralin}
\begin{aligned}
Q(\varphi, v) &= \int \frac{1}{|y|^{\alpha-1}}T_{F(\dq^y \varphi)} \sdq^yv\,dy + \int \frac{1}{|y|^{\alpha-1}}T_{\sdq^yv} F(\dq^y \varphi) \,dy + \int \frac{1}{|y|^{\alpha-1}}\Pi(\sdq^yv, F(\dq^y \varphi)) \,dy \\
&=: Q_{lh}(\varphi, v) + Q_{hl}(\varphi, v) + Q_{hh}(\varphi, v).
\end{aligned}
\end{equation}
We analogously decompose $\Omega = \Omega_{lh} + \Omega_{hl} + \Omega_{hh}$.

In Section~\ref{s:reduction}, we prove that the linearized equation \eqref{linearized-eqn} reduces to its paradifferential version \eqref{paradiff-eqn}, where $f$ is perturbative in the sense that it satisfies balanced, cubic estimates. On the other hand, as $Q$ and hence its paradifferential components $Q_{hl}(\varphi, v)$ and $Q_{hh}(\varphi, v)$ are essentially quadratic, we will first have to perform an appropriate paradifferential normal form change of variables for this to become apparent. 

Here, we prove a preliminary quadratic estimate for the reduction, which will be used when constructing and evaluating the contributions of the normal form transformation later in Section~\ref{s:reduction}.

\

We first extract the principal components of $\Omega_{lh}$, which include a transport term of order 0 and a dispersive term of  order $\alpha-1$:

\begin{lemma}\label{l:q-principal}
We can express
\begin{equation}\label{qlh-expression}
\alpha Q_{lh}(\varphi, v) = c(\alpha) (T_{|D_x|^{\alpha-1} \D_x \psi + R} v - T_{\D_x\psi} |D_x|^{\alpha-1} v + \D_x [T_\psi, |D_x|^{\alpha-1}] v).
\end{equation}
Further, we have
\begin{equation}\label{qlh-expression-full}
Q_{lh}(\varphi, v) = c(\alpha)(T_{\frac{1}{\alpha}(|D_x|^{\alpha-1}  \D_x \psi + R)} v - T_{\D_x \psi} |D_x|^{\alpha-1} v) + \Gamma(\D_x^2 \psi, |D_x|^{\alpha-2} v)
\end{equation}
where
\begin{equation}\label{l-est}
\||D_x|^{\frac{2-\alpha}{2}} \Gamma\|_{L^2} \lesssim_A B \|v\|_{L^2}, \qquad \|\D_x \Gamma\|_{L^2} \lesssim_A B \||D|^{\frac{\alpha}{2}} v\|_{L^2},
\end{equation}
as well the pointwise estimates
\begin{equation}\label{l-est-infty}
\begin{aligned}
&\||D_x|^{\frac{2-\alpha}{2}} \Gamma\|_{L^\infty} \lesssim_A B \|v\|_{L^\infty}, \qquad  &\|\D_x \Gamma\|_{L^\infty} \lesssim_A B \||D|^{\frac{\alpha}{2}} v\|_{L^\infty}.
\end{aligned}
\end{equation}
\end{lemma}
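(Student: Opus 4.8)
The plan is to start from the definition of $Q_{lh}(\varphi,v)$ as the low-high paraproduct component of $Q(\varphi,v)$, and replace $F(\dq^y\varphi)$ by $\dq^y\D_x\psi$ at the cost of a perturbative error: by Lemma~\ref{l:toy-red} (or rather its $Q_{lh}$ analogue, obtained by running the same argument restricted to the low-high frequency interaction), we may write
\[
Q_{lh}(\varphi,v) = \Omega_{lh}(\psi,v) + R_{lh}(x,D)v
\]
with $\|\D_x R_{lh}(x,D)v\|_{L^2}\lesssim_A B^2\|v\|_{L^2}$, which is why the symbol $R$ appears in \eqref{qlh-expression}. So the heart of the matter is to compute the principal part of $\Omega_{lh}(\psi,v)$, i.e.\ of $\int |y|^{1-\alpha}T_{\dq^y\psi}\,\sdq^yv\,dy$. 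First I would write out $\dq^y\psi(x) = \int_0^1 \psi_x(x+\theta y)\,d\theta$ in the paraproduct, and correspondingly expand $\sdq^yv$, so that $\Omega_{lh}(\psi,v)$ becomes, to leading order, a paradifferential operator $T_{\omega(x,D)}v$ whose symbol is $\int|y|^{1-\alpha}\,\dq^y\psi \cdot \sdq^y(e^{i\xi\cdot})\,dy$ frozen appropriately — but since $\psi$ is at low frequency, I would instead Taylor-expand the low-frequency factor about $x$, keeping the zeroth order term $\psi_x(x)$ (giving the transport term of order $0$) and the first order term $\psi_{xx}(x)$ (giving a contribution of order $\alpha-2$, i.e.\ $\Gamma$), with the remainder absorbed into $\Gamma$ as well.

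The key computation is the symbol identity: if we freeze $\psi_x$ and use the symbol formula from Section~\ref{s:equations},
\[
\Omega(\xi_1,\xi_2) = \int \frac{\sgn y}{|y|^{\alpha-1}}\cdot\frac{(e^{i\xi_1 y}-1)(e^{i\xi_2 y}-1)}{y^2}\,dy,
\]
then expanding to first order in the low frequency $\xi_1$ and using the resonance identity \eqref{resonance}, $\alpha\Omega(\xi_1,\xi_2)=\omega(\xi_1)+\omega(\xi_2)-\omega(\xi_1+\xi_2)$, gives precisely that the order-$1$ and order-$0$ parts in $\xi_1$ of $\alpha\,\partial_{\xi_1}$-type expansions produce the three terms on the right of \eqref{qlh-expression}: the $\omega(\xi_1)$ piece contributes $c(\alpha)T_{\logd\D_x\psi}v$ (after the $\D_x^{-1}$ in $\psi$), the difference $\omega(\xi_2)-\omega(\xi_1+\xi_2)$ to first order in $\xi_1$ contributes $-c(\alpha)T_{\D_x\psi}\logd v$ plus the commutator term $c(\alpha)\D_x[T_\psi,\logd]v$ (which encodes exactly the derivative of $\omega(\xi_2)=c(\alpha)i\xi_2|\xi_2|^{\alpha-1}$ acting on the low-frequency coefficient), and the $R$ inside the first paraproduct is the error symbol from the $\Omega_{lh}\leftrightarrow Q_{lh}$ replacement above. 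Then \eqref{qlh-expression-full} is just \eqref{qlh-expression} divided by $\alpha$ with the commutator term $\frac{c(\alpha)}{\alpha}\D_x[T_\psi,\logd]v$ — together with the second-order Taylor remainder — relabeled as $\Gamma(\D_x^2\psi,\logd[-1]v)$, i.e.\ a bilinear expression that is of the form (second derivative of $\psi$) times ($|D_x|^{\alpha-2}v$).

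For the estimates \eqref{l-est} and \eqref{l-est-infty}, I would use the para-commutator bound of Lemma~\ref{l:para-com}: the term $\D_x[T_\psi,\logd]v$ is handled by applying \eqref{para-com} with $\gamma_1+\gamma_2 = \alpha-2 <0$ wait — more carefully, write $[T_\psi,\logd] = [T_\psi,\logd]$ and note $\logd = T_1\logd$ is not a paraproduct, so instead I would compare $T_\psi\logd v$ with $T_{\logd\psi}v$ type expressions, or directly use that $[T_\psi, \logd]$ has order $\alpha-2$ with operator norm $\dot H^s\to\dot H^{s-(\alpha-2)}$ controlled by $\||D_x|^{1-\frac{\alpha}{2}}\psi_x\|$-type norms; since $\psi = \D_x^{-1}F(\varphi_x)$ and $F$ vanishes to second order, the Moser estimate of Theorem~\ref{t:moser} gives $\||D_x|^{\frac{\alpha}{2}}\D_x\psi\|_{BMO\cap\dot B^0_{\infty,2}}\lesssim_A B\cdot\||D_x|^{\frac{\alpha}{2}}\varphi_x\|\lesssim_A B$, wait more precisely it gives control of $\D_x\psi = F(\varphi_x)$ at regularity $\alpha/2$ above $L^\infty$ by $B$ times a lower norm; combined with the $L^\infty$-based control parameter $A$ this yields the claimed mapping properties. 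The same structure, using the $L^\infty$-valued difference quotient bounds together with Lemma~\ref{Besov trilinear integral estimate} for the remainder piece of $\Gamma$, gives the pointwise estimates \eqref{l-est-infty}. The main obstacle I anticipate is \textbf{bookkeeping the Taylor expansion of the low-frequency paraproduct coefficient to exactly first order} while keeping track of which pieces are genuinely order $\alpha-1$ (the dispersive term), which are order $0$ (the transport term), and which collapse into the order-$(\alpha-2)$ remainder $\Gamma$ — and making sure that the commutator $\D_x[T_\psi,\logd]v$, which formally looks like it could be order $\alpha-1$, genuinely gains the extra derivative so that it is of order $\alpha-2$ as required for the $\Gamma$ estimates; this is exactly the kind of cancellation that the resonance identity \eqref{resonance} is there to provide, so the proof hinges on organizing the symbol expansion so that \eqref{resonance} can be invoked cleanly.
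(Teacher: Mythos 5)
Your overall approach matches the paper's: reduce $Q_{lh}$ to $\Omega_{lh}$ via the low-high restriction of Lemma~\ref{l:toy-red}, then apply the resonance identity \eqref{resonance} to expand $\alpha\Omega_{lh}(\psi,v) = c(\alpha)\bigl(T_{|D_x|^{\alpha-1}\D_x\psi}v + [T_\psi, |D_x|^{\alpha-1}\D_x]v\bigr)$, and peel off the factored commutator $\D_x[T_\psi,|D_x|^{\alpha-1}]v$. The paper does not use a Taylor expansion of the low-frequency coefficient as an organizing device; it simply writes out the exact commutator, so your ``Taylor expand the low-frequency factor'' framing is an unnecessary detour, but it leads to the same decomposition.

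However, there is a genuine bookkeeping error in your step from \eqref{qlh-expression} to \eqref{qlh-expression-full}. You propose to take \eqref{qlh-expression} divided by $\alpha$ and relabel the entire commutator term $\frac{c(\alpha)}{\alpha}\D_x[T_\psi,|D_x|^{\alpha-1}]v$ as $\Gamma$. This cannot work: $[T_\psi,|D_x|^{\alpha-1}]$ has order $\alpha-2$, so $\D_x[T_\psi,|D_x|^{\alpha-1}]$ has order $\alpha-1$, and its \emph{principal} symbol is $-c(\alpha)(\alpha-1)T_{\D_x\psi}|D_x|^{\alpha-1}$ --- an order-$(\alpha-1)$ paradifferential operator that cannot satisfy the order-$(\alpha-2)$ estimates \eqref{l-est}, \eqref{l-est-infty}. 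Moreover, the coefficient of $T_{\D_x\psi}|D_x|^{\alpha-1}v$ in your version would be $-\tfrac{1}{\alpha}$, not the $-1$ appearing in \eqref{qlh-expression-full}; the missing $-\tfrac{\alpha-1}{\alpha}$ is precisely this principal part of the factored commutator, which must be extracted and combined with the $-\tfrac{1}{\alpha}T_{\D_x\psi}|D_x|^{\alpha-1}v$ term before you relabel what is left. Only the \emph{subprincipal} remainder of $\D_x[T_\psi,|D_x|^{\alpha-1}]$ is $\Gamma$. In other words, your closing worry that the commutator ``genuinely gains the extra derivative so that it is of order $\alpha-2$'' is not resolved by the resonance identity --- \eqref{resonance} has already been spent in deriving \eqref{qlh-expression}. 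The order-$(\alpha-1)$ piece of the commutator does not disappear; it is simply absorbed into the displayed $-T_{\D_x\psi}|D_x|^{\alpha-1}v$ term by plain arithmetic.
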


\begin{proof}
We use the resonance identity \eqref{resonance} to expand
\begin{equation}
\begin{aligned}
\alpha\Omega_{lh}(\psi, v) &= c(\alpha)(T_{|D_x|^{\alpha-1} \D_x \psi} v + [T_\psi, |D_x|^{\alpha-1} \D_x]v)\\
&= c(\alpha)(T_{|D_x|^{\alpha-1} \D_x \psi} v - T_{\D_x\psi} |D_x|^{\alpha-1} v + \D_x [T_\psi, |D_x|^{\alpha-1}] v).
\end{aligned}
\end{equation}
Combined with the low-high component of Lemma~\ref{l:toy-red}, we obtain \eqref{qlh-expression}. To then obtain \eqref{qlh-expression-full}, the remaining commutator may be expressed as
\[
-c(\alpha)(\alpha - 1) T_{\D_x \psi} |D_x|^{\alpha-1}v + \alpha \Gamma(\D_x^2 \psi, |D_x|^{\alpha-2} v)
\]
where $\Gamma$ denotes the subprincipal remainder, which has a favorable balance of derivatives on the low frequency and thus may be estimated as \eqref{l-est} and \eqref{l-est-infty}. 
\end{proof}

\begin{proposition}\label{p:quadratic-lin}
Consider a solution $v$ to \eqref{linearized-eqn}. Then $v$ satisfies
\begin{equation}\label{paradiff-eqn-2}
(\D_t - c(\alpha)(T_{J^{-1}} |D_x|^{\alpha-1} - T_{\frac{1}{\alpha}(|D_x|^{\alpha-1} \D_x\psi + R)})\D_x) v = f
\end{equation}
where
\begin{equation}\label{quadratic-est}
\||D|^{-\frac{2-\alpha}{2}}f\|_{L^2} \lesssim_A B \|v\|_{L^2}.
\end{equation}
\end{proposition}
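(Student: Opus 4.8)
The plan is to regard \eqref{paradiff-eqn-2} as the paradifferential truncation of \eqref{linearized-eqn} and take $f$ to be the resulting discrepancy. Since both equations carry the same time derivative $\D_tv$,
\[
f = \Bigl(c(\alpha)|D_x|^{\alpha-1}\D_x - c(\alpha)\bigl(T_{J^{-1}}|D_x|^{\alpha-1} - T_{\frac1\alpha(|D_x|^{\alpha-1}\D_x\psi + R)}\bigr)\D_x\Bigr)v + \D_x Q(\varphi, v),
\]
and the whole task is to bound this combination by $\lesssim_A B\,\|v\|_{L^2}$ in $\dot H^{-\frac{2-\alpha}{2}}$. I would begin by inserting the frequency decomposition $Q = Q_{lh} + Q_{hl} + Q_{hh}$ from \eqref{paralin} and treating the three pieces separately: $Q_{lh}$ produces the paradifferential operator of \eqref{paradiff-eqn-2} together with genuine error terms, while $Q_{hl}$ and $Q_{hh}$ are entirely errors at this stage.

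For $Q_{lh}$ I would substitute the expansion \eqref{qlh-expression-full} of Lemma~\ref{l:q-principal}. By design, the principal paradifferential pieces $c(\alpha)\D_x T_{\frac1\alpha(|D_x|^{\alpha-1}\D_x\psi + R)}v$ and $-c(\alpha)\D_x T_{\D_x\psi}|D_x|^{\alpha-1}v$, after commuting the outer $\D_x$ through the paraproducts and using $J^{-1} = 1 - \D_x\psi$ (modulo a harmless low-frequency correction coming from $\mathrm{Id} - T_1$), recombine with the linear dispersion $c(\alpha)|D_x|^{\alpha-1}\D_x v$ to reproduce exactly the operator on the left of \eqref{paradiff-eqn-2}. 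What remains is (i) the subprincipal remainder $\D_x\Gamma(\D_x^2\psi, |D_x|^{\alpha-2}v)$, controlled directly by \eqref{l-est} since $|D_x|^{-\frac{2-\alpha}{2}}\D_x = i\,\sgn(D_x)|D_x|^{\frac\alpha2}$ and one interpolates between the two bounds there; and (ii) the commutators $[\D_x, T_{(\cdot)}] = T_{\D_x(\cdot)}$ with symbols $\D_x^2\psi$, $\D_x(|D_x|^{\alpha-1}\D_x\psi)$ and $\D_x R$. Here the key point is that $F$ vanishes to second order, so $\D_x\psi = F(\varphi_x)$ is genuinely one derivative smoother than one would naively estimate — of the same regularity as $\varphi_x$, hence bounded by $\lesssim_A AB$ — whereupon the para-product and para-commutator estimates (Lemmas~\ref{l:para-prod} and \ref{l:para-com}) together with the $\frac{2-\alpha}{2}$ derivatives of room provided by $|D_x|^{-\frac{2-\alpha}{2}}$ close the bound; the $\D_x R$ contribution is handled by Lemma~\ref{l:toy-red}.

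For $Q_{hl}$ and $Q_{hh}$, where $F(\dq^y\varphi)$ sits at the output frequency, I would paralinearize by a Moser estimate, $F(\dq^y\varphi) = T_{F'(\varphi_x)}\dq^y\varphi + (\text{remainder})$, reducing these pieces to multilinear averages $\int|y|^{1-\alpha}\,\dq^y\varphi\cdot\sdq^yv\,dy$ carrying a low-frequency coefficient $F'(\varphi_x)$ that is bounded in $L^\infty$ by $\lesssim_A A$, plus remainders which are strictly better by Theorem~\ref{t:moser}. The averages are then estimated by the Besov difference-quotient bound Lemma~\ref{Besov trilinear integral estimate} (or Lemma~\ref{Trilinear integral estimate-v0} at non-endpoint exponents), distributing the $2-\alpha$ derivatives of smoothing supplied by the integral together with the order-$\frac\alpha2$ operator $|D_x|^{-\frac{2-\alpha}{2}}\D_x$ so that $v$ lands in $L^2$, one factor of $\varphi_x$ in $L^\infty$, and the other in $B^{\alpha/2}_{\infty,2}$ — this asymmetric split is what preserves a single power of $B$. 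In the range $\alpha < 1$, where $2-\alpha$ exceeds the $[0,1)$ window allowed for a single difference quotient, one additionally shifts derivatives onto the low-frequency coefficient by Lemma~\ref{l:para-prod3}. Collecting the three contributions yields \eqref{quadratic-est}.

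The main obstacle is the tightness of the derivative budget in the low-high commutators and in $Q_{hl}$: only $\frac{2-\alpha}{2}$ derivatives of smoothing are available on the output side and no more than $L^2$ regularity may be demanded of $v$. For the $Q_{lh}$ commutators this is resolved by exploiting that $\psi$ is one full derivative smoother than $\varphi$ (because $F(0)=F'(0)=0$); for $Q_{hl}$ it is resolved by the asymmetric $A$-versus-$B$ split of the two $\varphi$-factors and, when $\alpha<1$, by redistributing derivatives through the low-frequency paraproduct. The high-high term $Q_{hh}$ is comparatively straightforward, as the balanced frequency interaction lets the derivatives be placed freely.
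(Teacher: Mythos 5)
Your proposal follows the paper's proof essentially step for step: the same decomposition $Q = Q_{lh} + Q_{hl} + Q_{hh}$, the same application of the expansion \eqref{qlh-expression-full} from Lemma~\ref{l:q-principal} to identify the principal paradifferential operator, and the same absorption of the $\Gamma$ remainder and $\D_x$-commutator terms into $f$ via favorable derivative balance. Where you go beyond the paper's terse ``directly from definition'' for the $Q_{hl}$ and $Q_{hh}$ pieces, your Moser paralinearization of $F(\dq^y\varphi)$ followed by the Besov difference-quotient bound (Lemma~\ref{Besov trilinear integral estimate}) is exactly the mechanism the authors deploy for the cognate estimate in Lemma~\ref{l:toy-red}, and the asymmetric $A$-versus-$B$ split you describe is the right way to land a single factor of $B$; so this is a welcome unpacking of a compressed step rather than a different route. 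One small remark: you flag the $\mathrm{Id}-T_1$ discrepancy when converting $1 - T_{\D_x\psi}$ into $T_{J^{-1}}$ as a ``harmless low-frequency correction,'' which the paper silently suppresses; this is fair, though calling it harmless deserves a one-line justification (the resulting term lives at frequencies $\lesssim M$ and is handled separately in the energy scheme, or else one notes that in every place \eqref{paradiff-eqn-2} is used the paraproduct $T_{J^{-1}}$ is immediately composed with $T_J$, so $T_1$ appears only inside a product and the discrepancy cancels).
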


\begin{proof}
We express \eqref{linearized-eqn} in terms of the paradifferential equation \eqref{paradiff-eqn} with source,
\[
\D_tv - \clogd \D_xv - \D_x Q_{lh}(\varphi, v) = \D_x Q_{hl}(\varphi, v) + \D_x Q_{hh}(\varphi, v).
\]

We estimate the source terms. We see directly from definition that $\D_x Q_{hl}(\varphi, v)$ has a favorable balance of derivatives which satisfies \eqref{quadratic-est} and may be absorbed into $f$. The balanced $Q_{hh}$ term similarly satisfies \eqref{quadratic-est}, so we have thus reduced \eqref{linearized-eqn} to \eqref{paradiff-eqn}.

It then suffices to apply \eqref{qlh-expression-full} of Lemma~\ref{l:q-principal} to the remaining paradifferential $Q_{lh}$ term on the left hand side of \eqref{paradiff-eqn} to obtain \eqref{paradiff-eqn-2}. Here, the $\Gamma$ contribution may be absorbed into $f$ directly. Further, we have commuted the $\D_x$ outside $Q_{lh}$ through the low frequency paracoefficients, since the cases where this derivative falls on the low frequency coefficients,
\[
c(\alpha)(T_{\frac{1}{\alpha}(|D_x|^{\alpha-1} \D_x^2 \psi + \D_x R)} v - T_{\D_x^2 \psi} |D_x|^{\alpha-1} v),
\]
have a favorable balance of derivatives, satisfying \eqref{quadratic-est}.

\end{proof}

\subsection{Nonlinear equations}

The paradifferential equation \eqref{paradiff-eqn-2} will also be used in the context of the nonlinear solutions $\varphi$. To conclude this section, we prove preliminary quadratic bounds on the inhomogeneity of the paradifferential flow, in analogy with the preceding Proposition~\ref{p:quadratic-lin} for its linearized counterpart.

\begin{proposition}\label{p:psi-eqn}
 Consider a solution $\varphi$ to \eqref{gSQG}. 
 
a) The solution $\varphi$ satisfies
\begin{equation}\label{psi-eqn}
(\D_t - c(\alpha)(T_{J^{-1}} |D_x|^{\alpha-1} - T_{\frac{1}{\alpha}(|D_x|^{\alpha-1} \D_x\psi + R)})\D_x) \varphi = f
\end{equation}
where
\begin{equation}\label{psi-eqn-est}
\||D_x|^{\frac{2-\alpha}{2}} f\|_{L^\infty} \lesssim_A B, \qquad \|\D_x f\|_{L^\infty} \lesssim_A B^2.
\end{equation}

b) The same holds for $\psi$ in the place of $\varphi$.
\end{proposition}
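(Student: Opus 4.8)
The plan is to treat \eqref{gSQG} as the paradifferential equation \eqref{paradiff-eqn-2} with an inhomogeneous source, exactly as in Proposition~\ref{p:quadratic-lin}, but now tracking the estimates in $L^\infty$-based norms rather than $L^2$-based ones, and also keeping the quadratic (not merely linear) structure of the source. For part $a)$, I would start from the decomposition \eqref{paralin} of $Q(\varphi, \D_x\varphi)$ into $Q_{lh} + Q_{hl} + Q_{hh}$. The point is that $\varphi$ here plays the role of $v$ in \eqref{linearized-eqn} — it satisfies \eqref{gSQG}, which is \eqref{linearized-eqn} with $v = \varphi$ — so Lemma~\ref{l:q-principal} applies verbatim to $Q_{lh}(\varphi, \varphi)$, giving the transport and dispersive paracoefficients $T_{J^{-1}}|D_x|^{\alpha-1}$ and $T_{\frac{1}{\alpha}(|D_x|^{\alpha-1}\D_x\psi + R)}$ on the left, plus the $\Gamma$ remainder which by \eqref{l-est-infty} satisfies the first bound in \eqref{psi-eqn-est} (with $v = \varphi$, using $\|\varphi\|_{L^\infty}$, $\||D|^{\alpha/2}\varphi\|_{L^\infty} \lesssim$ the control norms — here one has to be slightly careful since the relevant control is on $\D_x\varphi$, so the $\Gamma$ bound should be read after one derivative falls appropriately, or one uses that $|D_x|^{(2-\alpha)/2}\Gamma$ and $\D_x\Gamma$ are controlled by $B$ times $\|v\|_{L^\infty}$ and $\||D|^{\alpha/2}v\|_{L^\infty}$ respectively, both of which are $\lesssim_A B$ when $v = \varphi$ modulo the homogeneity bookkeeping). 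The contributions $\D_xQ_{hl}$ and $\D_xQ_{hh}$ go into $f$; these have a favorable balance of derivatives, and the key estimate is that they obey $\||D_x|^{(2-\alpha)/2}f\|_{L^\infty} \lesssim_A B^2$ — in fact better than the claimed first bound — by Lemma~\ref{Besov trilinear integral estimate} applied to the difference-quotient representation, using $F(0) = F'(0) = 0$ so that $F(\dq^y\varphi)$ carries two factors of $\dq^y\varphi$. The second, stronger bound $\|\D_xf\|_{L^\infty} \lesssim_A B^2$ comes from the same multilinear averaging estimate: every term entering $f$ is genuinely quadratic (two copies of $\D_x\varphi$, each contributing a $B$) once one accounts for the $\D_x$ landing on the low-frequency coefficient, which is precisely the case noted at the end of the proof of Proposition~\ref{p:quadratic-lin}.

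For part $b)$, recall $\psi = \D_x^{-1}F(\varphi_x)$. I would differentiate $\psi$ in time using \eqref{gSQG}: $\D_t\psi = \D_x^{-1}\D_t F(\varphi_x) = \D_x^{-1}(F'(\varphi_x)\D_x\varphi_t)$, and substitute $\varphi_t = c(\alpha)|D_x|^{\alpha-1}\D_x\varphi + Q(\varphi, \D_x\varphi)$. For the linear part, $\D_x^{-1}(F'(\varphi_x)\D_x(c(\alpha)|D_x|^{\alpha-1}\D_x\varphi))$: here I would paralinearize $F'(\varphi_x)$, replacing $F'(\varphi_x)$ by $T_{F'(\varphi_x)}$ plus a balanced remainder (Moser, Theorem~\ref{t:moser}), commute $T_{F'(\varphi_x)}$ with $|D_x|^{\alpha-1}\D_x$ via Lemma~\ref{l:para-com}, and recognize $\D_x^{-1}T_{F'(\varphi_x)}\D_x(|D_x|^{\alpha-1}\D_x\varphi) = T_{F'(\varphi_x)}|D_x|^{\alpha-1}\D_x\varphi + \text{(balanced)}$; since $F'(\varphi_x) = 1 - J^{-1} \cdot (\text{something})$... more precisely $1 - F(\varphi_x) = 1 - \D_x\psi$ and $J^{-1} = 1 - \D_x\psi$, so $F'(\varphi_x)$ relates to $\D_x\psi$ and $J$ through the chain rule — one should rather observe directly that the dispersive operator $T_{J^{-1}}|D_x|^{\alpha-1}\D_x$ acting on $\psi$ produces $|D_x|^{\alpha-1}\D_x\psi$ to leading order minus a $T_{\D_x\psi}|D_x|^{\alpha-1}\D_x\psi$ correction, which after using Lemma~\ref{l:para-prod3} (the case $\alpha < 1$) and Lemma~\ref{l:para-prod} (case $\alpha > 1$) matches $\D_x^{-1}(F'(\varphi_x)\D_x \cdot c(\alpha)|D_x|^{\alpha-1}\D_x\varphi)$ up to balanced errors. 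In parallel, the transport coefficient $T_{\frac{1}{\alpha}(|D_x|^{\alpha-1}\D_x\psi + R)}\D_x\psi$ must be produced; since $F'(\varphi_x)\varphi_{xx} = \D_x F(\varphi_x) = \D_x^2\psi$, the chain-rule identity $\D_x\psi = F(\varphi_x)$ lets one rewrite everything consistently in terms of $\psi$. For the nonlinear part, $\D_x^{-1}(F'(\varphi_x)\D_x Q(\varphi, \D_x\varphi))$ is cubic and, after pulling out a paraproduct $T_{F'(\varphi_x)}$, combines with the analogous $Q_{hl}, Q_{hh}$ leftovers; all such terms are absorbed into $f$ using Lemma~\ref{Besov trilinear integral estimate}.

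The main obstacle I anticipate is the bookkeeping in part $b)$: verifying that the paradifferential structure produced by differentiating $\psi = \D_x^{-1}F(\varphi_x)$ and inserting the $\varphi$-equation is \emph{exactly} the same operator $c(\alpha)(T_{J^{-1}}|D_x|^{\alpha-1} - T_{\frac{1}{\alpha}(|D_x|^{\alpha-1}\D_x\psi + R)})\D_x$ acting on $\psi$, modulo genuinely balanced cubic errors — rather than some operator built from $F'(\varphi_x)$ directly. This requires carefully using the chain-rule relations $\D_x\psi = F(\varphi_x)$, $J^{-1} = 1 - F(\varphi_x)$, and $\D_xF(\varphi_x) = F'(\varphi_x)\varphi_{xx}$ together with para-linearization (Moser) to trade $F'(\varphi_x)$-paraproducts for $J^{-1}$- and $\D_x\psi$-paraproducts, and repeatedly invoking Lemmas~\ref{l:para-com}, \ref{l:para-prod}, \ref{l:para-prod3} to reorganize compositions of paradifferential operators. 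A secondary technical point is checking that the paralinearization remainder of $F'(\varphi_x)$ (and of $F(\varphi_x)$) against the $\alpha-1$ order dispersive factor really does produce only terms with the favorable derivative balance captured by \eqref{psi-eqn-est}; this is where the hypothesis $A$ small and the precise form of the control norm $B = \|\D_x\varphi\|_{B^{\alpha/2}_{\infty,2}\cap BMO^{\alpha/2}}$ — sitting exactly $\alpha/2$ derivatives above scaling — are used, via the multilinear estimate of Lemma~\ref{Besov trilinear integral estimate} with the exponent constraint $\sum\alpha_i = 2-\alpha$.
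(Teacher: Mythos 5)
Your proposal is correct and follows essentially the same path as the paper. Part a) is identical: decompose $Q(\varphi,\D_x\varphi)$ paradifferentially into $Q_{lh}+Q_{hl}+Q_{hh}$, apply Lemma~\ref{l:q-principal} to $Q_{lh}$, and absorb $Q_{hl}$ and $Q_{hh}$ into $f$ by favorable derivative balance (via Lemma~\ref{Besov trilinear integral estimate}). For part b) the paper routes explicitly through the bridge variable $T_{F'(\varphi_x)}\varphi$: it applies the paracoefficient $T_{F'(\varphi_x)}$ to \eqref{psi-eqn}, estimates the resulting spatial and temporal commutators, and only then passes to $\psi$ via Theorem~\ref{t:moser} together with the $\D_x^{-1}$ commutator; you instead compute $\D_t\psi = \D_x^{-1}(F'(\varphi_x)\D_x\D_t\varphi)$ directly and reorganize. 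These are the same Moser and para-commutator estimates in a slightly different order, so the content is unchanged. One caution: the aside in which you try to relate $F'(\varphi_x)$ algebraically to $J^{-1}$ is a dead end — $J^{-1}=1-F(\varphi_x)$ is a function of $F$, not of $F'$, and the only usable links are $\D_x\psi = F(\varphi_x)$, the chain rule $\D_x^2\psi = F'(\varphi_x)\varphi_{xx}$, and the Moser approximation $F(\varphi_x)\approx T_{F'(\varphi_x)}\varphi_x$ — which you in fact fall back on correctly.
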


\begin{proof}

We first consider the case of $\varphi$. We paradifferentially decompose $Q(\varphi, \D_x \varphi)$ in \eqref{gSQG} to write it in terms of the paradifferential equation \eqref{paradiff-eqn} with source,
\[
\D_t\varphi - c(\alpha) |D_x|^{\alpha-1} \D_x \varphi - Q_{lh}(\varphi, \D_x \varphi) = Q_{hl}(\varphi, \D_x \varphi) + Q_{hh}(\varphi, \D_x \varphi).
\]

As with the linearized equation, we estimate the source terms. We see directly from definition that $Q_{hl}(\varphi, \D_x \varphi)$ has a favorable balance of derivatives which satisfies \eqref{psi-eqn-est} and may be absorbed into $f$. The balanced $Q_{hh}$ term similarly satisfies \eqref{psi-eqn-est}, so we have thus replaced $Q(\varphi, \D_x \varphi)$ in \eqref{gSQG} with $Q_{lh}(\varphi, \D_x \varphi)$. In turn, it then suffices to apply Lemma~\ref{l:q-principal} to obtain \eqref{psi-eqn}.

\

We next reduce the equation for $T_{F'(\varphi_x)} \varphi$ to that of $\varphi$. It suffices to apply the paracoefficient $T_{F'(\varphi_x)}$ to \eqref{psi-eqn}, and estimate the commutators. This is straightforward for the spatial paradifferential terms, applying Lemma~\ref{l:para-com} and observing a favorable balance of derivatives.

For the time derivative, we substitute \eqref{psi-eqn} for the time derivative of $\varphi$:
\[
T_{F''(\varphi_x) \D_x \D_t\varphi} \varphi = T_{F''(\varphi_x) \left(c(\alpha)\D_x T_{J^{-1}} |D_x|^{\alpha-1} \D_x\varphi + \frac{c(\alpha)}{\alpha}\D_x T_{|D_x|^{\alpha-1} \D_x \psi + R} \D_x \varphi + \D_x f\right)} \varphi.
\]
The estimate \eqref{psi-eqn-est} on $\D_x f$ in the paracoefficient implies that its contribution in this context also satisfies \eqref{psi-eqn-est}. For the remaining terms, the favorable balance of derivatives, with two or more derivatives on the low frequency paracoefficient, again implies that we may absorb their contribution into $f$. 

\

To conclude the proof for $\psi$, it suffices to apply the Moser estimate of Theorem~\ref{t:moser}, other than for the time derivative, for which we need to estimate 
\[
\D_x^{-1} (F'(\varphi_x) \D_x \D_t\varphi) - \D_t T_{F'(\varphi_x)} \varphi.
\]
We decompose this into
\[
[\D_x^{-1}, T_{F'(\varphi_x)}] \D_x \D_t\varphi
\]
which we estimate directly, using the favorable balance of derivatives, and 
\[
\D_x^{-1} T_{\D_x \D_t\varphi} F'(\varphi_x) + \D_x^{-1}\Pi(\D_x \D_t\varphi, F'(\varphi_x))
\]
which is similar to the time derivative commutation in the previous reduction.

\end{proof}

\section{Reduction to the paradifferential equation}\label{s:reduction}

In this section, we reduce energy estimates and well-posedness for the linearized equation \eqref{linearized-eqn} to that of the inhomogeneous paradifferential flow \eqref{paradiff-eqn},
\[
\D_tv - c(\alpha)|D_x|^{\alpha-1} \D_xv - \D_x Q_{lh}(\varphi, v) = f.
\]
For the energy estimates to be balanced, we in turn require that the inhomogeneity $f$ satisfy balanced cubic estimates.

Unfortunately, the paradifferential errors $Q_{hl}(\varphi, v)$ and $Q_{hh}(\varphi, v)$ are essentially quadratic rather than cubic, and in particular do not satisfy balanced cubic estimates. On the other hand, as we noted in \eqref{atoq}, $Q$ is the quadratic form associated to the resonance function for the dispersion relation of \eqref{gSQG}, up to leading order and a low frequency coefficient. The classical normal form transformation associated to this nonlinearity is
\begin{equation}\label{full-nf}
\tilde v = v - \frac{1}{\alpha}\D_x (\psi v), \qquad \psi = \D_x^{-1} F(\varphi_x).
\end{equation}

Unfortunately, \eqref{full-nf} has two drawbacks:
\begin{enumerate}
\item It is unbounded, and hence we cannot apply it directly, and
\item quartic (essentially cubic) error terms arising in the equation for $\tv$ given by \eqref{full-nf} are still unbalanced.
\end{enumerate}
To address the first issue, we instead consider only the bounded paradifferential components of \eqref{full-nf},
\begin{equation}\label{para-nf}
\tilde v = v - \frac{1}{\alpha}\D_x (T_v \psi) - \frac{1}{\alpha}\D_x \Pi(v, \psi),
\end{equation}
which is well-suited to the purpose of reducing the problem to the paradifferential equation \eqref{paradiff-eqn}. To address the second issue, we augment \eqref{para-nf} with a low frequency Jacobian coefficient which eliminates the quartic and higher order residuals: 
\begin{equation}\label{j-para-nf}
\tilde v = v - \frac{1}{\alpha}\D_x T_{T_{J} v} \psi - \frac{1}{\alpha}\D_x \Pi(T_{J} v, \psi), \qquad J = (1 - \D_x \psi)^{-1}.
\end{equation}

\begin{proposition}\label{Linearized normalized variable}
Consider a solution $v$ to \eqref{linearized-eqn}. Then we have
\begin{equation}\label{paradiff-eqn-inhomog}
\D_t\tv - c(\alpha)|D_x|^{\alpha-1} \D_x\tv - \D_x Q_{lh}(\varphi, \tv) = \tilde f,
\end{equation}
where $\tilde f$ satisfies balanced cubic estimates,
\begin{equation}\label{bal-est}
\|\tilde f\|_{L^2} \lesssim_A B^2 \|v\|_{L^2}.
\end{equation}
\end{proposition}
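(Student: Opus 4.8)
The plan is to conjugate the linearized equation \eqref{paradiff-eqn-2} by the paradifferential normal form transformation \eqref{j-para-nf}, and to verify that every error term generated in the process is either balanced cubic or can be reabsorbed into the leading paradifferential structure. Concretely, I would start from the equation for $v$ in the form \eqref{paradiff-eqn-2} obtained in Proposition~\ref{p:quadratic-lin}, apply $\D_t$ to $\tv = v - \frac{1}{\alpha}\D_x T_{T_J v}\psi - \frac{1}{\alpha}\D_x \Pi(T_J v, \psi)$, and substitute the evolution equations for both $v$ (from \eqref{paradiff-eqn-2}) and $\psi$ (from Proposition~\ref{p:psi-eqn} part b) wherever time derivatives of $v$ or $\psi$ appear. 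The point of the Jacobian weight $J = (1-\D_x\psi)^{-1}$ is that it is precisely the factor which, when the transport part $-c(\alpha)T_{\D_x\psi}|D_x|^{\alpha-1}\D_x$ of the flow hits the $\D_x$ in the correction term, cancels the quartic residual that the bare normal form \eqref{para-nf} would leave behind; this is the computational heart of the argument and where I expect to spend the most care.

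The bookkeeping then splits into a few groups. First, the quadratic terms: here one checks that the normal form correction exactly cancels the unbalanced quadratic source $\D_x Q_{hl}(\varphi,v) + \D_x Q_{hh}(\varphi,v)$ up to balanced errors. This uses the null/resonance identity \eqref{resonance} together with the approximation \eqref{atoq} and Lemma~\ref{l:toy-red}: the resonance factor $\alpha\Omega = \omega(\xi_1)+\omega(\xi_2)-\omega(\xi_1+\xi_2)$ is exactly what is produced when the three pieces of the linear flow (on $\tv$, and on the two factors $T_J v$ and $\psi$ inside the correction) are collected, so the quadratic terms telescope. Second, the commutator-type errors: applying $\D_t$ and the spatial paradifferential operators to the correction produces commutators between the low-frequency paracoefficients ($T_{J^{-1}}$, $T_{\frac{1}{\alpha}(|D_x|^{\alpha-1}\D_x\psi+R)}$, $T_{F'(\varphi_x)}$-type factors implicit in $\psi$ and $J$) and the operators $|D_x|^{\alpha-1}$, $\D_x$, $\Pi$, $T_{(\cdot)}$. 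These are handled by Lemmas~\ref{l:para-com}, \ref{l:para-prod}, \ref{l:para-prod3}, exploiting that each such commutator carries at least one extra derivative onto a low-frequency factor, which buys the $\alpha/2$ gain needed to close at the level of $B^2$; the difference-quotient multilinear bounds (Lemmas~\ref{Trilinear integral estimate-v0}, \ref{Besov trilinear integral estimate}) are used to convert the $|D_x|^{\alpha-1}$-weighted integrals defining $Q$ and $\Omega$ into $B$-controlled quantities. Third, the genuinely cubic-and-higher residuals left over after the Jacobian cancellation: these are estimated directly, using that they now carry enough derivatives distributed favorably, and the boundedness of $T_J$ on the relevant spaces (since $A$ is assumed small, $J$ and $J^{-1}$ are bounded paracoefficients with $\||D_x|^{\alpha/2}J\|_{BMO}\lesssim_A B$).

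The main obstacle, as the text itself flags, is issue (2): even after passing to the paradifferential correction \eqref{para-nf}, the quartic (essentially cubic) residuals are unbalanced, and it is only the insertion of the low-frequency Jacobian $T_{T_J v}$ in \eqref{j-para-nf} that fixes this. So the crux of the proof is the algebraic identity showing that the transport term acting on $\D_x T_{T_J v}\psi$ (and on $\D_x\Pi(T_J v,\psi)$) produces, to leading order, exactly $-c(\alpha)T_{\D_x\psi}|D_x|^{\alpha-1}\D_x$ applied to the correction plus a term proportional to $\D_x\psi \cdot (\text{correction})$, which is absorbed by the $(1-\D_x\psi)$ appearing when one expands $J^{-1}=1-\D_x\psi$ in the $T_{J^{-1}}|D_x|^{\alpha-1}$ part of the flow. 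Once this cancellation is isolated, everything else is a (lengthy but routine) application of the paraproduct, para-commutator, Moser, and difference-quotient estimates collected in Sections~\ref{s:notation} and \ref{s:equations}, always tracking that the surviving terms carry a net $B^2$ and a bounded-on-$L^2$ structure, with $A$-dependence entering only through the smallness-normalized paracoefficients. I would organize the write-up by first recording the output of $\D_t$ applied to each of the three pieces of $\tv$, then collecting the quadratic terms and invoking \eqref{resonance}, then disposing of the commutators group by group, and finally the higher-order residuals.
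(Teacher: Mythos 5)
Your proposal follows essentially the same route as the paper: start from the paradifferential form of the linearized equation (Proposition~\ref{p:quadratic-lin}), apply $\D_t$ to $\tv$ from \eqref{j-para-nf} and substitute \eqref{paradiff-eqn-2} and Proposition~\ref{p:psi-eqn}(b), then use the resonance identity \eqref{resonance} together with the $T_J T_{J^{-1}} = T_1$ cancellation to telescope the quadratic source $\D_x Q_{hl} + \D_x Q_{hh}$, estimating the remaining commutator and cubic residuals with Lemmas~\ref{l:para-com}--\ref{l:para-prod3} and the difference-quotient bounds. The only caveat is that your description of the Jacobian cancellation mechanism is a bit garbled (the actual point is that the $T_J$ in the correction composes with the $T_{J^{-1}}$ coming from the flow to the identity via Lemma~\ref{l:para-prod}, so that $\alpha\D_x\Omega_{hl}(\psi,v)$ emerges cleanly, rather than a ``$\D_x\psi\cdot(\text{correction})$'' term being absorbed), but this is an expositional imprecision rather than a gap.
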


\begin{proof}

We express $v$ satisfying \eqref{linearized-eqn} in terms of the paradifferential equation \eqref{paradiff-eqn} with source,
\[
\D_tv - c(\alpha) |D_x|^{\alpha-1} \D_xv - \D_x Q_{lh}(\varphi, v) = \D_x Q_{hl}(\varphi, v) + \D_x Q_{hh}(\varphi, v).
\]
Unlike in Proposition~\ref{p:quadratic-lin}, we do not estimate the source terms directly. Instead, we will establish the following cancellation with the contribution from the normal form correction,
\begin{equation}\label{main-cancellation}
\D_t \D_x T_{T_J v} \psi - c(\alpha) |D_x|^{\alpha-1} \D_x^2 T_{T_J v} \psi - \D_x Q_{lh}(\varphi, \D_x T_{T_J v} \psi) = \alpha\D_x Q_{hl}(\varphi, v) + \tilde f,
\end{equation}
with the analogous relationship for the balanced $\Pi$ component of the correction, with $Q_{hh}$.

To show \eqref{main-cancellation}, we first observe that using Lemma~\ref{l:q-principal}, we may replace $\D_x Q_{lh}$ on the left hand side of \eqref{main-cancellation} by its principal components. The $\Gamma$ error is estimated using the second estimate of \eqref{l-est},
\[
\|\D_x \Gamma\|_{L^2} \lesssim_A B \||D|^{\frac{\alpha}{2}} \D_x T_{T_J v} \psi\|_{L^2} \lesssim_A B^2 \|v\|_{L^2}
\]
and may be absorbed into $\tilde f$. It thus suffices to show
\begin{equation}\label{lh-simplified}
\left(\D_t - c(\alpha)\D_x (T_{J^{-1}} |D_x|^{\alpha-1} - T_{\frac{1}{\alpha}(|D_x|^{\alpha-1}\D_x \psi + R)})\right) \D_x T_{T_J v} \psi = \alpha\D_x Q_{hl}(\varphi, v) + \tilde f.
\end{equation}

\

Next, we compute the time derivative in \eqref{lh-simplified}. The case where $\D_t$ falls on the low frequency $J$ may be absorbed into $\tilde f$ due to a favorable balance of derivatives. More precisely, we use \eqref{psi-eqn} to write
\[
\D_t J = J^2 \D_x \D_t \psi = c(\alpha)J^2\D_x (T_{J^{-1}} |D_x|^{\alpha-1} - T_{\frac{1}{\alpha}(|D_x|^{\alpha-1} \D_x\psi + R)})\D_x \psi + J^2 \D_x f
 \]
so that we can we can estimate for instance the contribution of the first term,
\[
\|\D_x T_{T_{ J^2 \D_x^2 |D_x|^{\alpha-1} \psi} v} \psi \|_{L^2} \lesssim_A B^2 \|v\|_{L^2},
\]
with similar estimates for the other contributions, using the first estimate of \eqref{psi-eqn-est} for the contribution of $f$.

In the remaining cases, $\D_t$ falls on the middle frequency $v$ or the high frequency $\psi$, so we use \eqref{paradiff-eqn-2} and \eqref{psi-eqn} respectively to write
\begin{equation}\label{dt-nf}
\begin{aligned}
\D_x T_{T_J \D_t v} \psi &= c(\alpha) \D_x T_{T_J\left(T_{J^{-1}} |D_x|^{\alpha-1} \D_xv + \frac{1}{\alpha}T_{|D_x|^{\alpha-1} \D_x \psi + R} \D_x v + f\right)} \psi, \\
\D_x T_{T_J v} \D_t \psi &= c(\alpha)\D_x T_{T_J v} \left(T_{J^{-1}} |D_x|^{\alpha-1} \D_x\psi + T_{\frac{1}{\alpha}(|D_x|^{\alpha-1} \D_x \psi + R)} \D_x \psi + f_\psi\right).
\end{aligned}
\end{equation}
We consider the first, second, and third contributions from the two equations of \eqref{dt-nf} in pairs:

\

i) The first terms in \eqref{dt-nf} combine with the second term on the left in \eqref{lh-simplified},
\begin{equation}\label{pre-q}
c(\alpha)\D_x (T_{T_{J} T_{J^{-1}} |D_x|^{\alpha-1} \D_xv} \psi + T_{T_{J} v} T_{J^{-1}} |D_x|^{\alpha-1} \D_x\psi - T_{J^{-1}} |D_x|^{\alpha-1} \D_x T_{T_{J} v} \psi),
\end{equation}
to form $\alpha \D_x Q_{hl}(\varphi, v)$, modulo balanced errors which may be absorbed into $\tilde f$. To see this, we will use in each of the three terms that $J^{-1}J = 1$. As we do so, we need to take care that any paraproduct errors and commutators yield balanced errors. 

First, we observe that in the third term, we can apply the commutator estimate
\[
\|\D_x [T_{J^{-1}}, |D_x|^{\alpha-1} \D_x] T_{T_{J} v} \psi \|_{L^2} \lesssim_A B^2 \|v\|_{L^2}.
\]
Then using Lemma~\ref{l:para-prod} to compose paraproducts in each of the three terms of \eqref{pre-q}, we have
\[
c(\alpha)\D_x (T_{|D_x|^{\alpha-1} \D_xv} \psi + T_{J^{-1} T_{J} v} |D_x|^{\alpha-1}\D_x\psi -  |D_x|^{\alpha-1} \D_x T_{J^{-1}T_{J} v} \psi).
\]

For the latter two terms, we will also use Lemma~\ref{l:para-prod} to compose paraproducts, before applying $J^{-1}J = 1$. To do so, we first need to exchange multiplication by $J^{-1}$ with the paraproduct $T_{J^{-1}}$. However, the error from this exchange is not directly perturbative. Instead, we perform the exchange for the two terms simultaneously, to observe a cancellation in the form of the commutator
\[
c(\alpha)\D_x(T_{T_{T_{J} v} J^{-1}} |D_x|^{\alpha-1} \D_x\psi -  |D_x|^{\alpha-1}\D_x T_{T_{T_{J} v}J^{-1}} \psi),
\]
which has a favorable balance of derivatives and may be absorbed into $\tilde f$. The same holds for the analogous cases with $\Pi(J^{-1}, T_{J} v)$. We have thus reduced \eqref{pre-q} to 
\[
c(\alpha)\D_x (T_{|D_x|^{\alpha-1} \D_xv} \psi + T_{v} |D_x|^{\alpha-1} \D_x\psi -  |D_x|^{\alpha-1} \D_x T_{v} \psi) = \alpha\D_x \Omega_{hl}(\psi, v)
\]
which by Lemma~\ref{l:toy-red} coincides with $\alpha\D_x Q_{hl}(\varphi, v)$ up to balanced errors, as desired.

\

ii) The second terms in \eqref{dt-nf},
\begin{equation}\label{second-term}
\frac{c(\alpha)}{\alpha}\D_x (T_{T_{J} T_{|D_x|^{\alpha-1} \D_x \psi + R} \D_x v} \psi + T_{T_{J} v} T_{|D_x|^{\alpha-1} \D_x \psi + R} \D_x \psi),
\end{equation}
combine to cancel the third term on the left hand side of \eqref{lh-simplified}, up to balanced errors. To see this, we apply the commutator Lemma~\ref{l:para-com} to exchange the first term of \eqref{second-term} with
\[
\frac{c(\alpha)}{\alpha}\D_x T_{T_{|D_x|^{\alpha-1} \D_x \psi + R} T_{J} \D_x v} \psi.
\]
We can freely exchange the low frequency paraproduct $T_{|D_x|^{\alpha-1} \D_x\psi + R}$ with a standard product, since
\begin{equation}\label{favorable-bal}
\|\D_x T_{T_{T_{J} \D_x v} |D_x|^{\alpha-1} \D_x \psi + T_{T_{J} \D_x v} R} \psi\|_{L^2} \lesssim_A B^2 \|v\|_{L^2}
\end{equation}
and likewise for the balanced $\Pi$ case. We thus have 
\[
\frac{c(\alpha)}{\alpha}\D_x T_{T_{J} \D_x v \cdot |D_x|^{\alpha-1} \D_x \psi + R} \psi.
\]
Then applying Lemma~\ref{l:para-prod} for splitting paraproducts, and returning to \eqref{second-term}, we arrive at
\[
\frac{c(\alpha)}{\alpha}\D_x (T_{T_{J} \D_x v} T_{|D_x|^{\alpha-1} \D_x \psi + R} \psi + T_{T_{J} v} T_{|D_x|^{\alpha-1} \D_x \psi + R} \D_x \psi).
\]
Lastly, we factor out a derivative,
\[
\frac{c(\alpha)}{\alpha}\D_x^2 T_{T_{J} v} T_{|D_x|^{\alpha-1} \D_x \psi + R} \psi
\]
where we have absorbed the cases where the factored derivative falls on $J$ or $|D_x|^{\alpha-1} \D_x \psi + R$ into $\tilde f$, similar to \eqref{favorable-bal}. After one more instance of the commutator Lemma~\ref{l:para-com}, we arrive at the third term on the left hand side of \eqref{lh-simplified} as desired.

\

iii) By Propositions~\ref{p:quadratic-lin} and \ref{p:psi-eqn} respectively, the contributions from $f$ and $f_\psi$ satisfy \eqref{bal-est} and may be absorbed into $\tilde f$.

\end{proof}

We also prove a similar but easier balanced estimate for the reduction of the nonlinear equation to the paradifferential flow, in the $\dot H^s$ setting. In this situation, the normal form correction only consists of a balanced $\Pi$ component:

\begin{equation}\label{j-para-nf-2}
\tilde{\varphi} = \varphi -  \frac{1}{\alpha}\Pi(\psi,T_J \D_x\varphi).
\end{equation}

\begin{proposition}\label{Normalized variable}
Consider a solution $\varphi$ to \eqref{SQG}. Then we have
\begin{equation}\label{paradiff-eqn-inhomog-2}
\D_t\tilde{\varphi} - 2\clogd \D_x\tilde{\varphi} -  \D_x Q_{lh}(\varphi, \tilde{\varphi}) = \tilde f,
\end{equation}
where $\tilde f$ satisfies balanced cubic estimates,
\begin{equation}\label{bal-est-2}
\|\tilde f\|_{\dot{H}^s} \lesssim_A B^2 \|\varphi\|_{\dot{H}_x^s}.
\end{equation}
\end{proposition}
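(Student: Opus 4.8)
The plan is to mirror the structure of the proof of Proposition~\ref{Linearized normalized variable}, but in the nonlinear $\dot H^s$ setting and with the simpler normal form correction \eqref{j-para-nf-2}, which now consists only of the balanced $\Pi$ term. First I would paradifferentially decompose $Q(\varphi, \D_x\varphi)$ as in \eqref{paralin}, writing the equation \eqref{SQG} for $\varphi$ in the form
\[
\D_t\varphi - 2\clogd \D_x\varphi - \D_x Q_{lh}(\varphi, \D_x\varphi) = \D_x Q_{hl}(\varphi, \D_x\varphi) + \D_x Q_{hh}(\varphi, \D_x\varphi).
\]
The term $\D_x Q_{hl}(\varphi, \D_x\varphi)$ already has a favorable balance of derivatives (the derivative loss is on the low-frequency factor) and, being essentially quadratic with a high-frequency argument that is itself $\D_x\varphi$, it satisfies \eqref{bal-est-2} directly — unlike in the linearized case, here $Q_{hl}$ does \emph{not} need to be cancelled by the normal form since there is no loss relative to $\|\varphi\|_{\dot H^s}$. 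Thus the only genuinely non-perturbative source term is $\D_x Q_{hh}(\varphi, \D_x\varphi)$, which the $\Pi$ piece of \eqref{j-para-nf-2} is designed to remove.

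The core of the argument is then the analogue of the cancellation \eqref{main-cancellation}: I would compute
\[
\D_t \D_x \Pi(\psi, T_J \D_x\varphi) - 2\clogd \D_x^2 \Pi(\psi, T_J\D_x\varphi) - \D_x Q_{lh}(\varphi, \D_x \Pi(\psi, T_J\D_x\varphi)) = \alpha \D_x Q_{hh}(\varphi, \D_x\varphi) + \tilde f,
\]
with $\tilde f$ satisfying \eqref{bal-est-2}. As in Proposition~\ref{Linearized normalized variable}, I would first use Lemma~\ref{l:q-principal} to replace $\D_x Q_{lh}$ by its principal components, absorbing the $\Gamma$ remainder into $\tilde f$ via the second estimate of \eqref{l-est}. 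Then I would distribute $\D_t$ across the three factors $\psi$, $J$, and $\D_x\varphi$ inside $\Pi(\psi, T_J\D_x\varphi)$: the case where $\D_t$ lands on $J$ is perturbative by the favorable balance, using $\D_t J = J^2 \D_x\D_t\psi$ together with \eqref{psi-eqn} (for $\psi$, i.e. part b) of Proposition~\ref{p:psi-eqn}) exactly as in the linearized proof; the cases where $\D_t$ lands on $\psi$ or on $\D_x\varphi$ are handled by substituting \eqref{psi-eqn} (again using part b) for $\psi$ and the $\varphi$ version for $\D_x\varphi$). The resulting terms are then matched in pairs with the dispersive and transport parts of $\D_x Q_{lh}(\varphi, \D_x\Pi(\psi, T_J\D_x\varphi))$, using Lemmas~\ref{l:para-com} and \ref{l:para-prod} to compose and split paraproducts and to commute low-frequency coefficients past $|D_x|^{\alpha-1}$, with all commutator and paraproduct errors being balanced. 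The bookkeeping here is lighter than in the linearized case because only the $\Pi$ (high-high) interaction is present, so there is no middle-frequency $v$ and the Jacobian cancellation $J^{-1}J = 1$ that was delicate in step i) of Proposition~\ref{Linearized normalized variable} does not arise in the same form; the surviving identity is just that the balanced $\Pi$ normal form reproduces $\alpha\D_x\Omega_{hh}(\psi, \D_x\varphi)$, which coincides with $\alpha\D_x Q_{hh}(\varphi, \D_x\varphi)$ up to balanced errors by the high-high component of Lemma~\ref{l:toy-red}.

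Finally I would need to check that the normal form correction itself, $\frac{1}{\alpha}\Pi(\psi, T_J\D_x\varphi)$, is bounded on $\dot H^s$ with $\|\Pi(\psi, T_J\D_x\varphi)\|_{\dot H^s} \lesssim_A B \|\varphi\|_{\dot H^s}$ (so that it is a genuine perturbation of $\varphi$ and \eqref{paradiff-eqn-inhomog-2} is equivalent to the nonlinear equation up to the stated error), and likewise that the lower-order terms generated when converting the equation for $\tilde\varphi$ back in terms of $\varphi$ are acceptable — this uses the Moser-type and paraproduct estimates of Section~\ref{s:notation}, in particular Lemma~\ref{l:para-prod} and the multilinear difference quotient bound Lemma~\ref{Besov trilinear integral estimate} to estimate the high-high piece. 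I expect the main obstacle to be precisely the bookkeeping in the pairwise matching of the $\D_t$-derivative terms with the $Q_{lh}$ terms — ensuring every paraproduct-composition and commutator error that appears (especially those where $\D_x$ or $\D_t$ falls on a low-frequency coefficient) genuinely has two or more derivatives on the low frequency and hence is controlled by $B^2$ rather than $B$ times a norm that is a half-derivative too high. Everything else is routine given the machinery already set up in Sections~\ref{s:notation}--\ref{s:equations}.
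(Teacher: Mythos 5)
There is a genuine gap in your treatment of $Q_{hl}$, and it is precisely where the nonlinear proposition differs from the linearized one. You claim $\D_x Q_{hl}(\varphi, \D_x\varphi)$ can be absorbed directly into $\tilde f$ because ``the high-frequency argument \ldots is itself $\D_x\varphi$,'' but this misreads the decomposition \eqref{paralin}: by definition $Q_{hl}(\varphi, v) = \int |y|^{1-\alpha}\, T_{\sdq^y v}\, F(\dq^y\varphi)\,dy$, so the \emph{low} frequency sits on $\sdq^y v = \sdq^y \D_x\varphi$ and the \emph{high} frequency sits on $F(\dq^y\varphi)$ --- exactly as in the linearized case, which is why the linearized normal form \eqref{j-para-nf} needs a dedicated $T_{T_J v}\psi$ piece to kill it. Expanding $F(\dq^y\varphi) \approx T_{F'(\varphi_x)}\dq^y\varphi$ by Moser, the balancing of Lemma~\ref{Besov trilinear integral estimate} then puts all $2-\alpha$ available difference-quotient derivatives on the single low-frequency factor $\sdq^y\D_x\varphi$, while the paracoefficient $F'(\varphi_x)$ only contributes an $A$; the outcome is a bound like $A\,\||D|^{2-\alpha}\D_x\varphi\|_{L^\infty}\|\varphi\|_{\dot H^s}$, which is not of the required form $B^2\|\varphi\|_{\dot H^s}$ for general $\alpha$. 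So $Q_{hl}$ is not free.

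The paper's route (its first display) is the algebraic observation that since both arguments of $Q(\varphi,\D_x\varphi)$ come from the same $\varphi$, one has $Q_{lh}(\varphi,\D_x\varphi) + Q_{hl}(\varphi,\D_x\varphi) = \D_x Q_{lh}(\varphi,\varphi)$ up to balanced errors: the $\D_x$ falling on the high frequency reproduces $Q_{lh}(\varphi,\D_x\varphi)$, and the $\D_x$ falling on $T_{F(\dq^y\varphi)}$ reproduces $Q_{hl}(\varphi,\D_x\varphi)$ after a Moser/paraproduct rearrangement and the $\sgn(y)^2 = 1$ cancellation between $\dq^y$ and $\sdq^y$. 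This absorbs $Q_{hl}$ into the paradifferential left-hand side of \eqref{paradiff-eqn-inhomog-2}, leaving only $Q_{hh}(\varphi,\D_x\varphi)$, which is exactly what the single $\Pi$ piece of \eqref{j-para-nf-2} is built to remove. This step --- not a perturbativity estimate for $Q_{hl}$ --- is what makes the nonlinear normal form shorter than the linearized one, and it is missing from your proposal. Separately, your displays insert spurious factors of $\D_x$: equation \eqref{gSQG} has $Q(\varphi,\D_x\varphi)$ on the right, not $\D_x Q(\varphi,\D_x\varphi)$, and the correction in \eqref{j-para-nf-2} is $\frac{1}{\alpha}\Pi(\psi, T_J\D_x\varphi)$ with no outer $\D_x$, so your ``analogue of \eqref{main-cancellation}'' is off by one derivative throughout and would have to be rewritten before the bookkeeping can close.
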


\begin{proof}

First observe that we have
\[
\D_t\varphi - 2\clogd \D_x \varphi -  \D_x Q_{lh}(\varphi, \varphi) = Q_{hh}(\varphi, \D_x \varphi).
\]
Then the normal form analysis is similar to the analysis for the (balanced) paradifferential errors of the linear equation in Proposition~\ref{Linearized normalized variable}.

To see that we can obtain balanced estimates in the $\dot H^s$ setting, first observe that in each of the estimates in the proof of Proposition~\ref{Linearized normalized variable}, we can easily obtain at least one $B$ from the estimate of the low frequency variable. Then since we are in the balanced $\Pi$ setting, we can obtain a second $B$, with $s$ outstanding derivatives, which can then be placed on the remaining high frequency factor.

\end{proof}

\section{Energy estimates for the paradifferential equation}\label{s:energy estimates}

In this section we obtain energy estimates for the paradifferential flow \eqref{paradiff-eqn}. We define the modified energy
\[
E(v) := \int v \cdot T_{\tilde{J}(\psi_x)} v \, dx, \qquad \tilde{J}(x) = J(x)^{-\frac{1}{\alpha}}.
\]

Unlike in the SQG case, where $\alpha = 1$, the energy functional here is in general a fully nonlinear function of $\psi_x$. In particular, observe that the paradifferential normal form used in the previous section is a linearization of the normal form underlying the modified energy used here.

Indeed, the normal form of the previous section would suggest that here we use the transformation
\[
\displaystyle \tv=v-\frac{1}{\alpha}\partial_xT_{\psi}v,
\]
corresponding to a cubic modified energy of the form
\begin{align*}
\int \tilde{v}^2-\frac{2}{\alpha}\tilde{v}\cdot\partial_xT_{\psi}\tilde{v}\,dx=\int \tilde{v}\cdot T_{1-\frac{1}{\alpha}\psi_x}\tilde{v}\,dx,
\end{align*}
which only provides the first order approximation of the correct energy $E(v)$.

\begin{proposition}\label{Paradiferential flow linearized energy estimates}
We have
\begin{equation}\label{bal-energy}
\frac{d}{dt} E(v) \lesssim_A B^2 \|v\|_{L^2}^2 + \|f\|_{L^2} \|v\|_{L^2}.
\end{equation}

\end{proposition}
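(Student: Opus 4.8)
The plan is to differentiate $E(v)$ in time, substitute the paradifferential equation \eqref{paradiff-eqn} for $\D_t v$, and show that all resulting terms are either controlled by $B^2 \|v\|_{L^2}^2$ or by $\|f\|_{L^2}\|v\|_{L^2}$. Writing $\mathcal{T} := c(\alpha)(T_{J^{-1}}|D_x|^{\alpha-1} - T_{\frac{1}{\alpha}(|D_x|^{\alpha-1}\D_x\psi + R)})\D_x$ for the principal spatial operator (so that, by Proposition~\ref{p:quadratic-lin}, $\D_t v = \mathcal{T} v + Q_{err} + f$ where $Q_{err}$ collects the essentially quadratic but balanced $\D_x Q_{hl}, \D_x Q_{hh}$ pieces satisfying \eqref{quadratic-est}), we compute
\[
\frac{d}{dt}E(v) = \int \D_t v \cdot T_{\tilde J(\psi_x)} v + v \cdot T_{\tilde J(\psi_x)} \D_t v \, dx + \int v \cdot T_{\D_t \tilde J(\psi_x)} v\, dx.
\]
The first task is the last term: using Proposition~\ref{p:psi-eqn}(b) to substitute the equation for $\D_t \psi$, we see $\D_t \psi_x$ has the form $\mathcal{T}'\psi_x + \cdots$ with a favorable balance of derivatives, so $\|\D_t \tilde J(\psi_x)\|$ in the appropriate norm is $\lesssim_A B^2$ (or $\lesssim_A B$ with $|D_x|^{\frac{2-\alpha}{2}}$ weight), giving the bound $B^2\|v\|_{L^2}^2$ after pairing against $v \otimes v$. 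The $f$ contribution pairs against $T_{\tilde J}v$, and since $T_{\tilde J}$ is bounded on $L^2$, yields $\|f\|_{L^2}\|v\|_{L^2}$. The $Q_{err}$ contribution pairs against $T_{\tilde J} v$; here one moves the $|D_x|^{-\frac{2-\alpha}{2}}$ onto $Q_{err}$ (using \eqref{quadratic-est}) at the cost of $|D_x|^{\frac{2-\alpha}{2}}$ on $T_{\tilde J}v$, but this exceeds the available regularity — so instead one should keep $Q_{err} = \D_x Q_{hl} + \D_x Q_{hh}$ in its quadratic form and exploit that these are already in divergence/paraproduct form with the high frequency on $v$; pairing symmetrically and integrating by parts distributes derivatives evenly, reducing to $B\cdot B \|v\|_{L^2}^2$.

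The heart of the matter is the principal term, namely
\[
I := \int \mathcal{T}v \cdot T_{\tilde J(\psi_x)} v + v \cdot T_{\tilde J(\psi_x)}\mathcal{T}v\, dx = \int v\cdot\big(\mathcal{T}^* T_{\tilde J} + T_{\tilde J}\mathcal{T}\big) v\, dx,
\]
and we must show $|I| \lesssim_A B^2 \|v\|_{L^2}^2$. The dispersive part of $\mathcal{T}$ is $c(\alpha)T_{J^{-1}}|D_x|^{\alpha-1}\D_x$, which is skew-adjoint \emph{to leading order}; the transport part $-\frac{c(\alpha)}{\alpha}T_{(|D_x|^{\alpha-1}\D_x\psi + R)}\D_x$ is not, but its symmetrization is exactly what the weight $\tilde J = J^{-1/\alpha}$ is designed to handle — this is the algebraic role of the modified energy. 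Concretely, I expect to expand $\mathcal{T}^* T_{\tilde J} + T_{\tilde J}\mathcal{T}$ using the para-commutator and para-product Lemmas~\ref{l:para-com},~\ref{l:para-prod}: commuting $T_{\tilde J}$ past $|D_x|^{\alpha-1}\D_x$ produces a term with symbol $\sim T_{\{|D_x|^{\alpha-1}\xi,\, \tilde J\}}$, i.e.\ $\sim (\alpha-1)T_{|D_x|^{\alpha-1}\tilde J'(\psi_x)\psi_{xx}}$ type expressions, which must cancel against the $\tilde J$-weighted transport term. Here the identity relating $\tilde J' = -\frac{1}{\alpha}J^{-1/\alpha - 1}J'$ with $J' = J^2 \psi_{xx}$ and the factor $\frac{1}{\alpha}$ in the transport coefficient should produce the cancellation, leaving only genuinely balanced remainders: commutators where $|D_x|^{\alpha-1}$ has been fully commuted (gaining $\alpha-1 < 1$ derivatives of regularity, matched by $\||D_x|^{\frac{\alpha}{2}}$-type norms of $\psi_x$ on each side, hence $B^2$), lower-order para-product errors from Lemma~\ref{l:para-prod}, and the contribution of $R$ (which by Lemma~\ref{l:toy-red} already satisfies $\|\D_x R \|_{L^2 \to L^2}\lesssim_A B^2$, even better than needed).

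\textbf{Main obstacle.} The principal obstruction is the algebraic symmetrization/cancellation in $I$: verifying that the first-order piece of $\mathcal{T}^* T_{\tilde J} + T_{\tilde J}\mathcal{T}$ — combining the commutator of $T_{\tilde J}$ with the dispersive operator against the (non-skew) transport term weighted by $T_{\tilde J}$ — vanishes exactly, so that only balanced (quadratic-in-$B$) remainders survive. This is precisely where the fully nonlinear choice $\tilde J = J^{-1/\alpha}$ (rather than its linearization $1 - \frac{1}{\alpha}\psi_x$, which would suffice only for $\alpha = 1$) is forced: the $\alpha$-dependence in both the dispersion exponent and the transport coefficient $\frac1\alpha$ must conspire through the chain rule for $\tilde J(\psi_x)$. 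A secondary technical point is the careful bookkeeping of paraproduct composition errors via Lemmas~\ref{l:para-com} and~\ref{l:para-prod}: since we are at the endpoint regularity $s = \frac{\alpha+3}{2}$, i.e.\ exactly $\alpha/2$ derivatives above scaling, every error term must distribute derivatives so that each of the two low-frequency factors carries exactly $\alpha/2$ derivatives, measured in $BMO^{\alpha/2}\cap B^{\alpha/2}_{\infty,2}$ — with no room to spare — which is why the $B_{\infty,2}$ component of the norm $B$ (rather than merely $BMO^{\alpha/2}$ or $C^{\alpha/2+}$) is essential here, in contrast to \cite{SQGzero2}.
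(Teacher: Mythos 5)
The central gap is your treatment of $\int v\cdot T_{\D_t\tilde J(\psi_x)}v\,dx$: you assert $\D_t\tilde J$ is perturbative, bounded by $B^2$ (or by $B$ after a derivative shift), so that this integral is absorbed into $B^2\|v\|_{L^2}^2$. This is false, and the integral is in fact an essential participant in the cancellation. Writing $\D_t\tilde J = -\frac{1}{\alpha}J\tilde J\,\D_x\D_t\psi$ and substituting \eqref{psi-eqn}, the leading contribution to the paracoefficient is $c(\alpha)J\tilde J\,T_{J^{-1}}|D_x|^{\alpha-1}\D_x^2\psi$, i.e.\ essentially $|D_x|^{\alpha}F(\varphi_x)$ since $\D_x^2\psi=\D_x F(\varphi_x)$. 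The Moser estimate (Theorem~\ref{t:moser}) controls only the remainder $F(\varphi_x)-T_{F'(\varphi_x)}\varphi_x$ by $B^2$; the low-high piece $T_{F'(\varphi_x)}|D_x|^{\alpha}\varphi_x$ places all $\alpha$ derivatives on the single high-frequency factor $\varphi_x$, which is $\alpha/2$ more than $B$ controls. So $\|\D_t\tilde J\|_{L^\infty}\not\lesssim_A B^2$ — this is exactly the phenomenon the paper flags when it says $F(\dq^y\varphi)$ ``behaves essentially like a linear coefficient.'' The paper keeps this term (the second integral of \eqref{quartic-terms-3}) and, in step i) of its proof, cancels it against the $T_{|D_x|^{\alpha-1}\D_x\psi+R}$ transport piece of $Q_{lh}$ after cyclic integration by parts, using $JJ^{-1}=1$, $\D_x\tilde J=-\frac1\alpha J\tilde J\D_x^2\psi$, and Lemma~\ref{l:para-prod}. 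Discarding it as error, as you propose, does not close the estimate.

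Two secondary issues. First, you insert $Q_{err}=\D_x Q_{hl}+\D_x Q_{hh}$ into the equation, but Proposition~\ref{Paradiferential flow linearized energy estimates} concerns the paradifferential flow \eqref{paradiff-eqn}, whose only nonlinear term is $\D_x Q_{lh}$; the $hl$ and $hh$ pieces belong to the reduction step (Propositions~\ref{p:quadratic-lin} and~\ref{Linearized normalized variable}) and should not reappear here. Second, even if they were present, your claim that ``pairing symmetrically and integrating by parts'' yields $B^2\|v\|_{L^2}^2$ for them is incorrect: by \eqref{quadratic-est} these terms satisfy only $\||D|^{-\frac{2-\alpha}{2}}\D_x Q_{hl}\|_{L^2}\lesssim_A B\|v\|_{L^2}$ — a single power of $B$ with a derivative deficit — and the upgrade to a balanced cubic bound is precisely what the paradifferential normal form of Proposition~\ref{Linearized normalized variable} is for, not a naive energy pairing. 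Your intuition about the role of the fully nonlinear weight $\tilde J=J^{-1/\alpha}$ and the chain-rule cancellation is in the right direction, but the paper also keeps the commutator $\D_x[T_\psi,|D_x|^{\alpha-1}]$ from \eqref{qlh-expression} explicitly and isolates a genuinely skew-symmetric residual $L$ in step ii), rather than immediately absorbing it into $\Gamma$ as your use of \eqref{qlh-expression-full} would entail; without bringing $\D_t\tilde J$ into the cancellation, neither route closes.
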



\begin{proof}
Without loss of generality we assume $f = 0$. Using \eqref{paradiff-eqn} and \eqref{psi-eqn} respectively to expand time derivatives, we have
\begin{equation}\label{dtq}
\begin{aligned}
\frac{\alpha}{c(\alpha)} \frac{d}{dt} \int v \cdot T_{\tilde J} v \, dx &= 2\frac{\alpha}{c(\alpha)} \int \D_t v \cdot T_{\tilde J} v \, dx - \frac{\alpha}{c(\alpha)} \int v \cdot T_{\frac{1}{\alpha}J \tilde J \cdot \D_x \D_t \psi} v \, dx \\
&= 2\frac{\alpha}{c(\alpha)} \int (\clogd \D_xv + \D_x Q_{lh}(\varphi, v)) \cdot T_{\tilde J} v \, dx \\
&\quad - \int v \cdot T_{ J \tilde J \cdot \D_x ( T_{J^{-1}} \logd \D_x \psi + \frac{1}{\alpha} T_{\logd \D_x \psi + R} \D_x \psi + f)} v \, dx.
\end{aligned}
\end{equation}
The contribution from $f$ may be estimated using \eqref{psi-eqn-est} and discarded. 

We expand the contribution from $Q_{lh}$ in the second term on the right hand side of \eqref{dtq} using Lemma~\ref{l:q-principal},
\begin{equation}\label{quartic-terms-2}
2 \int \D_x (T_{\logd \D_x \psi + R} v + T_{\psi}\logd \D_x  v - \logd \D_x T_\psi v) \cdot T_{\tilde J} v \, dx.
\end{equation}
For clarity we also record the remaining terms on the right hand side of \eqref{dtq},
\begin{equation}\label{quartic-terms-3}
\int 2\alpha \logd \D_xv \cdot T_{\tilde J} v \, dx - \int v \cdot T_{J \tilde J \cdot \D_x ( T_{J^{-1}} \logd \D_x \psi + \frac{1}{\alpha} T_{\logd \D_x \psi + R} \D_x \psi)} v \, dx.
\end{equation}
We observe cancellations between \eqref{quartic-terms-2} and \eqref{quartic-terms-3} in the following three steps.

\

i) From the first term in \eqref{quartic-terms-2}, we have after cyclically integrating by parts, 
\[
\int v \cdot T_{\D_x^2 \logd \psi + \D_x R}T_{\tilde J} v - v \cdot T_{\logd \D_x  \psi + R}T_{\D_x \tilde J} v + v \cdot [T_{\tilde J}, T_{\logd \D_x  \psi + R}]\D_x v \, dx.
\]
The commutator satisfies \eqref{bal-energy} by Lemma~\ref{l:para-com}. On the other hand, the first two terms cancel with the second integral in \eqref{quartic-terms-3}, up to errors also satisfying \eqref{bal-energy}. To see these cancellations, we first observe that 
\[
JJ^{-1} = 1, \qquad \D_x \tilde J =-\frac{1}{\alpha} J \tilde J \D_x^2 \psi.
\]
To make use of these identities, we in turn use Lemma~\ref{l:para-prod} to compose paraproducts, and observe that any contribution with two or more derivatives on the lowest frequency has a favorable balance of derivatives and satisfies \eqref{bal-energy}. For instance, from the second term of the second integrand in \eqref{quartic-terms-3}, we have the perturbative component
\[
\left|\int v \cdot T_{T_{\D_x^2\logd \psi} \D_x\psi} v \, dx \right| \lesssim_A B^2 \|v\|_{L^2}^2.
\]

\

ii) We consider the remaining two terms of \eqref{quartic-terms-2}, which may be viewed as a (skew symmetric) commutator, plus a residual integral:
\begin{equation}\label{ii2}
2 \int \D_x [T_{\psi},\logd] \D_x v \cdot T_{\tilde J} v \, dx - 2 \int \D_x  \logd T_{\D_x\psi} v \cdot T_{\tilde J} v \, dx.
\end{equation}
We would like to replace the commutator with its principal component,
\[
-2(\alpha - 1) \int T_{\D_x \psi} \logd \D_x v \cdot T_{\tilde J} v \, dx.
\]
However, since this principal component is not directly skew symmetric, we also include an additional correction:
\[
(\alpha - 1) \int [T_{\D_x \psi}, \logd \D_x] v \cdot T_{\tilde J} v \, dx.
\]
After peeling away the principal component and the correction from the commutator, the residual is skew-symmetric:
\[
L(\D_x^2 \psi, \logd (\cdot)) = \D_x [T_{\psi},\logd] \D_x + (\alpha - 1)(T_{\D_x \psi} \logd \D_x - \half [T_{\D_x \psi}, \logd \D_x]).
\]
We thus have a commutator form for its contribution,
\[
\int T_{\tilde J} L(\D_x^2 \psi, \logd v) \cdot v \, dx = \half \int (T_{\tilde J} L(\D_x^2 \psi, \logd v) -  L(\D_x^2 \psi, \logd T_{\tilde J} v)) \cdot v \, dx
\]
for which we have the desired balanced estimate. Collecting the principal component, the correction, and the second integral from \eqref{ii2}, we have
\[
 - \int \left((\alpha + 1) \logd \D_x T_{\D_x\psi} v + (\alpha - 1) T_{\D_x \psi} \logd \D_x v \right) \cdot T_{\tilde J} v \, dx.
\]
Together with the first term of \eqref{quartic-terms-3}, it remains to consider
\begin{equation}\label{final-com}
\int \left((\alpha + 1)\logd \D_x T_{J^{-1}} v + (\alpha - 1) T_{J^{-1}} \logd \D_x v \right) \cdot T_{\tilde J} v \, dx.
\end{equation}

\

iii) From the unit-coefficient terms in \eqref{final-com}, we have the commutator
\[
\int [\logd \D_x, T_{J^{-1}}] v \cdot T_{\tilde J} v \, dx =: \int L(\D_x^2 \psi, \logd v) \cdot T_{\tilde J} v \, dx.
\]
On the other hand, from the $\alpha$-coefficient terms, we integrate by parts on the first term, and commute the paracoefficients on the second term using Lemma~\ref{l:para-com},
\[
\alpha \int - T_{J^{-1}} v \cdot \logd \D_x T_{\tilde J} v + T_{\tilde J}\logd \D_x v \cdot T_{J^{-1}} v \, dx,
\]
thus also obtaining a commutator,
\[
-\alpha \int [\logd \D_x, T_{\tilde J}] v \cdot T_{J^{-1}} v \, dx =\int L(\alpha\D_x \tilde J, \logd v) \cdot T_{J^{-1}} v \, dx.
\]
Then since
\[
\alpha \D_x \tilde J =-J \tilde J \D_x^2 \psi,
\]
we may apply Lemma~\ref{l:para-prod3} to reduce to
\[
-\int T_{J \tilde J} L(\D_x^2\psi, \logd v) \cdot T_{J^{-1}} v \, dx.
\]
Using self-adjointness with Lemma~\ref{l:para-prod} and $JJ^{-1} = 1$, we have cancellation with the previous commutator. Precisely, to split and recombine the $J$ and $\tilde J$ terms while maintaining balanced errors, derivatives are shifted from the low frequency $\D_x^2 \psi$ to $v$, before being absorbed using Lemma~\ref{l:para-prod} on the $J$ and $\tilde J$ paraproducts.

\end{proof}


\begin{proposition}\label{Linearized equation energy estimates}
Assume that $A\ll 1$ and $B\in L_t^2$. Then there exists an energy functional $E_{lin}(v)$ such that we have the following:
\begin{enumerate}
    \item[a)]Norm equivalence:
    \begin{align*}
        E_{lin}(v)\approx_A\|v\|^2_{L_x^2}
    \end{align*}
    \item[b)]Energy estimates:
    \begin{align*}
        \frac{d}{dt}E_{lin}(v)\lesssim _{A}B^2\|v\|^2_{L_x^2}
    \end{align*}
\end{enumerate}
\end{proposition}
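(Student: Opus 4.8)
The plan is to build $E_{lin}$ as a small perturbation of the nonlinear modified energy $E(v)$ from Proposition~\ref{Paradiferential flow linearized energy estimates}, adjusted so as to cancel the (non-balanced, quadratic) contributions of the paradifferential errors $\D_x Q_{hl}(\varphi,v)$ and $\D_x Q_{hh}(\varphi,v)$ which separate the true linearized flow \eqref{linearized-eqn} from the paradifferential flow \eqref{paradiff-eqn}. Concretely, recall that the normal form correction of Proposition~\ref{Linearized normalized variable} produces a corrected variable $\tv$ given by \eqref{j-para-nf} which solves the paradifferential equation \eqref{paradiff-eqn-inhomog} with a genuinely balanced cubic source $\tilde f$. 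The natural definition is therefore
\[
E_{lin}(v) := E(\tv),
\]
with $\tv$ as in \eqref{j-para-nf}. I would then verify (a) and (b) in turn.

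For part (a), the point is that the normal form correction $v \mapsto \tv$ is a bounded perturbation of the identity on $L^2$, with operator norm $\lesssim_A A$: indeed each of the two correction terms $\frac{1}{\alpha}\D_x T_{T_J v}\psi$ and $\frac{1}{\alpha}\D_x \Pi(T_J v, \psi)$ carries one derivative which lands favorably (on the low or comparable frequency, using that $\psi_x = F(\varphi_x)$ vanishes to second order so $\|\psi_x\|_{L^\infty}\lesssim_A A$), so that $\|\tv - v\|_{L^2}\lesssim_A A\|v\|_{L^2}$. Hence for $A$ small, $\|\tv\|_{L^2}\approx \|v\|_{L^2}$, and combined with the norm equivalence $E(w)\approx_A \|w\|_{L^2}^2$ for the nonlinear modified energy (which follows from the coercivity of $T_{\tilde J(\psi_x)}$ since $\tilde J(\psi_x) = J(\psi_x)^{-1/\alpha}$ is bounded above and below by the smallness of $A$, together with the $M$-dependent quantization from Section~\ref{s:notation}), we get $E_{lin}(v) = E(\tv)\approx_A \|\tv\|_{L^2}^2 \approx_A \|v\|_{L^2}^2$.

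For part (b), I differentiate: $\frac{d}{dt}E_{lin}(v) = \frac{d}{dt}E(\tv)$, and apply the balanced energy estimate \eqref{bal-energy} of Proposition~\ref{Paradiferential flow linearized energy estimates} to $\tv$, which solves \eqref{paradiff-eqn-inhomog} with inhomogeneity $\tilde f$. This yields
\[
\frac{d}{dt}E_{lin}(v) \lesssim_A B^2\|\tv\|_{L^2}^2 + \|\tilde f\|_{L^2}\|\tv\|_{L^2}.
\]
Now invoke the balanced cubic bound \eqref{bal-est}, $\|\tilde f\|_{L^2}\lesssim_A B^2\|v\|_{L^2}$, together with $\|\tv\|_{L^2}\lesssim_A \|v\|_{L^2}$ from part (a), to conclude $\frac{d}{dt}E_{lin}(v)\lesssim_A B^2\|v\|_{L^2}^2$, as desired. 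One should note that \eqref{bal-energy} is stated for the flow \eqref{paradiff-eqn} with source $f$, whereas here the ``source'' is $\tilde f$; but the proof of Proposition~\ref{Paradiferential flow linearized energy estimates} only uses the paradifferential structure of the left-hand side and treats the source via Cauchy--Schwarz, so it applies verbatim.

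\textbf{Main obstacle.} The delicate point is part (a): ensuring that the $M$-dependent paradifferential quantization makes $T_{\tilde J(\psi_x)}$ coercive on $L^2$ \emph{uniformly}, so that $E(\tv)\approx_A\|\tv\|_{L^2}^2$ holds with constants depending only on $A$ (and not, say, degenerating as the data grows) — this is precisely the role of the parameter $M$ recalled after the definition of the quantization, and it is what lets us avoid a genuine smallness assumption beyond that on $A$. A secondary technical point is checking that the $\D_x$ in each normal form correction term genuinely distributes favorably in \emph{all} frequency interactions (low-high, high-low where relevant, and the $\Pi$ term), using the second-order vanishing of $F$ and Lemma~\ref{l:para-prod} / Lemma~\ref{l:para-prod3}, so that the $L^2\to L^2$ bound on $v\mapsto \tv-v$ really does come with a factor of $A$ rather than merely $B$; but this is the same bookkeeping already carried out in the proof of Proposition~\ref{Linearized normalized variable}.
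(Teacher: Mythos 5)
Your proposal is correct and follows precisely the paper's own (very terse) argument: the paper defines $E_{lin}(v) := E(\tv)$ with $\tv$ the normal form variable from Proposition~\ref{Linearized normalized variable}, declares part (a) immediate, and derives part (b) by chaining Proposition~\ref{Paradiferential flow linearized energy estimates} applied to \eqref{paradiff-eqn-inhomog} with the balanced bound \eqref{bal-est}. Your write-up simply spells out the details — the $L^2$-smallness of $\tv - v$ for small $A$, the coercivity of $T_{\tilde J}$, and the Cauchy–Schwarz treatment of the source — that the paper leaves implicit.
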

\begin{remark}
We note that the linearized equation \eqref{linearized-eqn} is well-posed in $L_x^2$. We do not need this result in the current paper, but we briefly discuss the main steps in its proof. The main idea is to prove a similar estimate for the adjoint equation, interpreted as a backward evolution in the space $L^2$. More precisely, the adjoint equation corresponding to the linearized equation has the form
\begin{align*}
    \partial_tv-c(\alpha)|D_x|^{\alpha-1}\partial_xv-Q(\varphi,\partial_xv)=0.
\end{align*}
By carrying out a paradifferential normal form transformation, similar to the one from the proof of Proposition \ref{Linearized normalized variable}, this can be reduced to
\begin{align*}
    \partial_tv-c(\alpha)|D_x|^{\alpha-1}\partial_xv-Q_{lh}(\varphi,\partial_xv)=0.
\end{align*}
By considering the modified energy functional
\begin{align*}
  \int v\cdot T_{J^{\frac{1}{\alpha}}}v\,dx, 
\end{align*}
 we obtain the desired energy estimate for the dual problem in a manner similar to Proposition \ref{Paradiferential flow linearized energy estimates}. We can now infer existence for the solutions to the linearized equation \eqref{linearized-eqn} by a standard duality argument (for the general theory, see Theorem 23.1.2 in \cite{Hormander}).
\end{remark}
\begin{proof}
Let $E_{lin}(v)=E(\tilde{v})$, where $E(\cdot)$ is defined in Proposition \ref{Paradiferential flow linearized energy estimates}, and $\tilde{v}$ is defined in Proposition \ref{Linearized normalized variable}. Part a) is immediate, whereas part b) follows from Proposition \ref{Paradiferential flow linearized energy estimates}.
\end{proof}

\section{Higher order energy estimates}\label{s:higher order energy estimates}

In this section we obtain higher order energy estimates. As the commutators with $D^s$ are quadratic and thus not perturbative, they pose an additional obstacle. To address this, we first eliminate the highest order terms by using a Jacobian exponential conjugation, at the cost of generating additional terms, but at lower order. We then carry out a normal form analysis in order to eliminate the remaining non-perturbative terms.

\begin{proposition}\label{High energy normalized variable}
    Let $s\geq 0$. Given $v$ solving \eqref{paradiff-eqn-inhomog}, there exists a normalized variable $v^s$ such that
    \begin{align*}
        \partial_tv^s - c(\alpha)|D_x|^{\alpha-1} \partial_xv^s-\partial_xQ_{lh}(\varphi,v^s)= f + \mathcal{R}(\varphi, v),
    \end{align*}
    with
    \begin{align*}
        \|v^s-|D_x|^sv\|_{L_x^2}&\lesssim_AA\|v\|_{\dot{H}_x^s}
    \end{align*}
    and $\mathcal{R}(\varphi,v)$ satisfying balanced cubic estimates,
\begin{equation}\label{bal-est-eng}
\|\mathcal{R}(\varphi, v)\|_{L^2} \lesssim_A B^2 \|v\|_{L^2}.
\end{equation}
\end{proposition}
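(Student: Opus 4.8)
The plan is to construct $v^s$ in two stages, following the outline described at the start of Section~\ref{s:higher order energy estimates}. The starting point is to commute $|D_x|^s$ through the paradifferential equation \eqref{paradiff-eqn-inhomog} satisfied by $\tv$ (we abuse notation and write $v$ for the normalized variable $\tv$ of Proposition~\ref{Linearized normalized variable}, since we have already reduced to the paradifferential flow). The commutator $[|D_x|^s, c(\alpha)|D_x|^{\alpha-1}\D_x]$ vanishes, so the only obstructions come from the transport-type and dispersive paracoefficients inside $\D_x Q_{lh}(\varphi, v)$, which by Lemma~\ref{l:q-principal} we may replace by its principal components $c(\alpha)\D_x(T_{\frac{1}{\alpha}(\logd \D_x\psi + R)} v - T_{\D_x\psi}\logd v)$ modulo the $\Gamma$ remainder, which already obeys \eqref{bal-est-eng} thanks to \eqref{l-est} after placing the $s$ derivatives on the high frequency factor. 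Commuting $|D_x|^s$ past $T_{\D_x\psi}$ and the dispersive paracoefficient produces two kinds of terms: a genuinely quadratic, non-perturbative piece at the top order (the one where all $s$ derivatives land on $v$ but the coefficient is differentiated once, i.e.\ the $[|D_x|^s, T_{\D_x\psi}|D_x|^{\alpha-1}\D_x]$-type leading term), and lower order commutator remainders that fit into $\mathcal R(\varphi, v)$ via Lemma~\ref{l:para-com}.

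The first stage is the Jacobian exponential conjugation. The leading non-perturbative commutator term has, to principal order, the form $-c(\alpha) s\, T_{\D_x^2\psi}|D_x|^{\alpha-1}\D_x |D_x|^s v$ (plus the analogous contribution from the $R$ and transport parts), which is a transport term of order $s + \alpha$ with a coefficient that is a perfect derivative. Because the coefficient is $\D_x^2\psi = -\D_x(\text{something involving }J)$, this term can be absorbed by conjugating with a paradifferential exponential of the Jacobian, i.e.\ setting $w = T_{J^{-\sigma}}|D_x|^s v$ for an appropriate power $\sigma$ (matching the $\frac{1}{\alpha}$-type exponent appearing in the energy $\tilde J = J^{-1/\alpha}$, and scaled by $s$), so that $\D_t w$ picks up a compensating term $-\sigma T_{J^{-\sigma-1}\D_t J}\cdots$ which, after substituting \eqref{psi-eqn} for $\D_t\psi$, cancels the top-order transport commutator. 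This is exactly the mechanism used to define $E(v)$ in Section~\ref{s:energy estimates}, now applied at the level of the equation rather than the energy. The cost is a collection of lower order terms — those where fewer than the maximal number of derivatives fall on $v$, or where the paraproduct composition in $T_{J^{-\sigma}}T_{J^{-1}\D_t J}$ generates subprincipal errors — all of which are quadratic but now at order strictly below $s+\alpha$ in the high frequency, hence leaving room to balance derivatives.

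The second stage is a paradifferential normal form, run exactly as in Proposition~\ref{Linearized normalized variable}, to remove the remaining quadratic-but-not-cubic lower order terms generated by the conjugation. The structure is identical: these residual terms are again built from the resonant quadratic form $\Omega$ up to low frequency coefficients, so the normal form correction $w \mapsto w - \frac{1}{\alpha}\D_x T_{T_J w}\psi - \frac{1}{\alpha}\D_x\Pi(T_J w, \psi)$ applied to $w$ produces, via the cancellation \eqref{main-cancellation} and the resonance identity \eqref{resonance}, a cubic residual satisfying \eqref{bal-est-eng}. Setting $v^s$ to be this normal form correction of $w$, the bound $\|v^s - |D_x|^s v\|_{L^2}\lesssim_A A\|v\|_{\dot H^s}$ follows because both the conjugation ($T_{J^{-\sigma}} - I$ has an $A$-small paracoefficient since $F(0)=F'(0)=0$ and $A$ is assumed small) and the normal form correction ($\D_x T_{T_J v}\psi$ type terms, bounded by $\|\psi_x\|_{L^\infty}\lesssim_A A$) are perturbations of size $O_A(A)$.

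The main obstacle I expect is controlling the cascade of subprincipal terms produced by the exponential conjugation: unlike the energy-functional computation in Section~\ref{s:energy estimates}, where one only needs the quadratic form under the integral sign and skew-symmetry kills the top order, here one is working at the level of the equation and must track \emph{every} term of order between $s$ and $s+\alpha-1$ in the high frequency, verifying that each either enters a skew/commutator structure (handled by Lemma~\ref{l:para-com}) or has at least two derivatives on the low frequency coefficient (handled by Lemma~\ref{l:para-prod3} to redistribute derivatives onto $v$, exactly as in step iii) of Proposition~\ref{Paradiferential flow linearized energy estimates}). Getting the power $\sigma$ in $T_{J^{-\sigma}}$ exactly right so that the top-order cancellation is clean — and checking that the $\D_x$ outside $Q_{lh}$ can be commuted through the paracoefficients with only balanced errors, as in Proposition~\ref{p:quadratic-lin} — is the delicate bookkeeping core of the argument.
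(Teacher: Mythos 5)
Your two-stage architecture (Jacobian conjugation followed by a normal form), and your stage a) with the conjugation $T_{J^{-s/\alpha}}$, match the paper exactly. The gap is in stage b). You claim the residuals left over after the conjugation are ``built from the resonant quadratic form $\Omega$ up to low frequency coefficients'' and hence are cancelled by reusing the Proposition~\ref{Linearized normalized variable} correction $w \mapsto w - \tfrac{1}{\alpha}\D_x T_{T_J w}\psi - \tfrac{1}{\alpha}\D_x\Pi(T_J w, \psi)$ through the identity \eqref{main-cancellation}. That is not what those residuals are. The input $v$ already satisfies the paradifferential flow \eqref{paradiff-eqn-inhomog}, so the terms $\D_x Q_{hl}$ and $\D_x Q_{hh}$ that \eqref{main-cancellation} is built to cancel are simply not present. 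What remains after the conjugation stage are the commutator residues $L_0(\D_x^3\psi,\cdot)$, $L_0(|D_x|^{\alpha-1}\D_x^3\psi,\cdot)$, $L_1(\D_x^3\psi,\cdot)$ from \eqref{tvs-eqn}, whose bilinear symbols come from $[|D_x|^s, T_f]$ and $[T_{J^{-s/\alpha}}, |D_x|^{\alpha-1}]$ — these are not the resonance symbol $\Omega(\xi_1,\xi_2)$, and they are strictly subprincipal (order $-1$ relative to $\tv^s$) precisely because the conjugation has already cancelled the principal part.

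If you actually applied the Proposition~\ref{Linearized normalized variable} correction to the conjugated variable $\tv^s$, the computation \eqref{main-cancellation} would \emph{introduce} the terms $-\alpha\D_x Q_{hl}(\varphi,\tv^s) - \alpha\D_x Q_{hh}(\varphi,\tv^s)$ on the right-hand side with nothing to cancel them against, so the source would become worse, not better. What the paper does instead is define a bespoke correction $\tw^s = \frac{1}{c(\alpha)\alpha} T_{J} L_0(\D_x^2\psi, \D_x^{-1} \tv^s)$, built directly out of the commutator bilinear form $L_0$ that appears in the residual. When $(\D_t - c(\alpha)\D_x\mathcal P)$ hits $\tw^s$, the $\D_t$ acting on the low-frequency $\psi$ inside $L_0$ is expanded via \eqref{psi-eqn} and matches the $L_0(|D_x|^{\alpha-1}\D_x^3\psi,\cdot)$ residual; the dispersive commutators with $|D_x|^{\alpha-1}$ match the $L_1$ residual; and the $\D_x$ falling on the low frequency matches the first $L_0$ residual. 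Dispersion still drives the cancellation, but through $\D_t\psi$, not through the resonance identity \eqref{resonance}. Your bound $\|v^s - |D_x|^s v\|_{L^2}\lesssim_A A\|v\|_{\dot H^s}$ is fine for either choice of correction, since both $T_{J^{-s/\alpha}} - I$ and the order-zero normal form are $O_A(A)$; but the equation for $v^s$ only closes with the $L_0$-based correction.
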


\begin{proof}
Let $v$ satisfy \eqref{paradiff-eqn-inhomog}, where without loss of generality, $f = 0$. The natural approach is to reduce the equation for $v^s := |D_x|^s v$ to \eqref{paradiff-eqn-inhomog} with a perturbative inhomogeneity. However, the commutators arising from such a reduction are quadratic, and cannot satisfy balanced cubic estimates. In particular, they cannot be seen as directly perturbative. We will address these errors via a conjugation combined with a normal form correction.

In preparation, we use Lemmas~\ref{l:toy-red} and \ref{l:q-principal} to rewrite $Q_{lh}$ in \eqref{paradiff-eqn-inhomog}. Denoting
\[
\mathcal P = T_{1 - \frac{1}{\alpha} \D_x \psi}|D_x|^{\alpha-1}+ T_{\frac{1}{\alpha}(|D_x|^{\alpha-1} \D_x \psi + R)} + \D_x [T_{\frac{1}{\alpha}\psi}, |D_x|^{\alpha-1}],
\]
we obtain
\begin{equation}
\begin{aligned}
\D_t v - c(\alpha) \D_x \mathcal P v = 0.
\end{aligned}
\end{equation}
Then $v^s := |D_x|^s v$ satisfies the same equation, but with the following commutators in the source:
\begin{equation}\label{vs-eqn}
\begin{aligned}
&\frac{c(\alpha)}{\alpha} \D_x ([|D_x|^s, T_{-\D_x \psi}]|D_x|^{\alpha-1} + [|D_x|^s, T_{|D_x|^{\alpha-1} \D_x \psi + R}]  + \partial_x[|D_x|^s, [T_{\psi}, |D_x|^{\alpha-1}]] ) v \\
&\quad =: L(\D_x^2 \psi, |D_x|^{\alpha-1} v^s) - L(|D_x|^{\alpha-1} \D_x^2 \psi, v^s)+ c(\alpha)\partial_x^2[|D_x|^s, [T_{\frac{1}{\alpha}\psi}, |D_x|^{\alpha-1}]]v+ \mathcal R
\end{aligned}
\end{equation}
where we have absorbed $\D_x R$ into $\mathcal R$ and $L$ denotes an order zero paradifferential bilinear form, 
\begin{equation}
\begin{aligned}
L(\D_x f, u) &= -\frac{c(\alpha)}{\alpha} \D_x [|D_x|^s, T_f] |D_x|^{-s} u.
\end{aligned}
\end{equation}
In particular, observe that the principal term of $L$ is given by
\[
L(g, u) \approx -\frac{c(\alpha)}{\alpha} sT_{g} u.
\]

To address the two $L$ contributions, which are quadratic and not directly perturbative, we apply two steps:
\begin{enumerate}
\item[a)] We first apply a conjugation to $v^s$ which improves the leading order of the contributions of the $L$ terms from $\alpha-1$ and 0 to $\alpha-2$ and $-1$ respectively.
\item[b)] We then apply a normal form transformation yielding cubic, balanced source terms.
\end{enumerate}

\

a) We begin by computing the equation for the conjugated variable
\[
\tv^s := T_{J^{-\fracsa}} v^s.
\]
To do so, it suffices to apply $T_{J^{-\frac{s}{\alpha}}}$ to \eqref{vs-eqn} and consider the commutators. These will include a $\D_t$ commutator, a $\D_x$ commutator, and a $|D_x|^{\alpha-1}$ commutator.

\

i) First, we use \eqref{psi-eqn} to expand the $\D_t$ commutator,
\begin{equation*}
\begin{aligned}
\, [T_{J^{-\fracsa}}, \D_t] v^s &= - \frac{s}{\alpha}T_{J^{1-\fracsa} \D_x \D_t \psi} v^s \\
&= - \frac{s}{\alpha}T_{J^{1 - \frac{s}{\alpha}}\D_x \left(c(\alpha)T_{J^{-1}} |D_x|^{\alpha-1} \D_x \psi + \frac{c(\alpha)}{\alpha}T_{|D_x|^{\alpha-1} \D_x \psi + R} \D_x \psi + f\right)} v^s.
\end{aligned}
\end{equation*}
Here the contribution from $f$ may be estimated using \eqref{psi-eqn-est} and discarded. Further, due to a favorable balance of derivatives when $\D_x$ falls on the lowest frequency variables, we reduce to 
\[
- \frac{sc(\alpha)}{\alpha}T_{J^{-\fracsa} \left(T_{J} T_{J^{-1}} |D_x|^{\alpha-1} \D_x^2 \psi + \frac{1}{\alpha}T_{J} T_{|D_x|^{\alpha-1} \D_x \psi + R} \D_x^2 \psi\right)} v^s.
\]
Lastly, we apply Lemma~\ref{l:para-prod} to merge and split paraproducts, reducing to 
\begin{equation}\label{dt-comm}
- \frac{sc(\alpha)}{\alpha}T_{|D_x|^{\alpha-1} \D_x^2 \psi +\frac{1}{\alpha} T_{J|D_x|^{\alpha-1} \D_x \psi + R} \D_x^2 \psi} \tv^s.
\end{equation}
Observe that the first part of \eqref{dt-comm} cancels with the principal term of the second $L$ on the right hand side of \eqref{vs-eqn}. The second part of \eqref{dt-comm} will cancel with a contribution from ii) below.

\

ii) Next, we consider the commutator of $T_{J^{-\fracsa}}$ with the outer $\D_x$. We obtain 
\[
\frac{sc(\alpha)}{\alpha}T_{J^{1-\fracsa} \D_x^2 \psi} \mathcal P v^s.
\]
Here it is convenient to apply \eqref{qlh-expression-full} of Lemma~\ref{l:q-principal} to write this as
\[
\frac{sc(\alpha)}{\alpha}T_{J^{1-\fracsa} \D_x^2 \psi} \left((T_{J^{-1}} |D_x|^{\alpha-1} + T_{\frac{1}{\alpha}(|D_x|^{\alpha-1} \D_x \psi + R)}) v^s + \Gamma(\D_x^2 \psi, |D_x|^{\alpha-1}\D_x^{-1} v^s)\right).
\]
We first apply Lemma~\ref{l:para-prod3} to split $J^{1 - \fracsa}$ from $\D_x^2 \psi$ in the outer paraproduct. Then applying Lemma \ref{l:para-prod} along with \ref{l:para-com}, to compose and commute paraproducts, this reduces, modulo perturbative terms, to 
\[
\frac{sc(\alpha)}{\alpha}T_{\D_x^2 \psi} |D_x|^{\alpha-1} \tv^s + \frac{sc(\alpha)}{\alpha^2} T_{J|D_x|^{\alpha-1}\D_x \psi \cdot \D_x^2 \psi + \D_x^2 \psi \cdot R} \tv^s.
\]
The first term above cancels with the principal term of the first $L$. The second term cancels with the remaining part of the $\D_t$ commutator above in \eqref{dt-comm}. To see this cancellation, we have freely exchanged multiplication by $J|D_x|^{\alpha-1} \D_x \psi$ with a paraproduct, as the difference has a favorable balance of derivatives and is thus perturbative.

\

iii) Returning to the commutator of $T_{J^{-\fracsa}}$ with the dispersive term, it remains to consider the commutator with the inner $|D_x|^{\alpha-1}$, where we have used Lemma~\ref{l:para-com} to discard any paraproduct commutators. We have
\[
-c(\alpha)\D_x T_{J^{-1}} [T_{J^{-\fracsa}}, |D_x|^{\alpha-1}] v^s
\]
whose principal term $\fracsa c(\alpha)(\alpha-1)T_{\D_x^2 \psi} |D_x|^{\alpha-1}\tv^s$ cancels with the principal term of the double commutator on the right hand side of \eqref{vs-eqn}.

\

To conclude, we have
\begin{equation}\label{tvs-eqn}
\begin{aligned}
\D_t \tv^s &- c(\alpha) \D_x \mathcal P \tv^s \\
&= (L(\D_x^2 \psi, |D_x|^{\alpha-1} \tv^s) + \frac{sc(\alpha)}{\alpha}T_{\D_x^2 \psi} |D_x|^{\alpha-1}\tv^s) \\
&\quad - (L(|D_x|^{\alpha-1} \D_x^2 \psi, \tv^s) + \frac{sc(\alpha)}{\alpha}T_{|D_x|^{\alpha-1} \D_x^2 \psi} \tv^s) \\
&\quad + \frac{c(\alpha)}{\alpha}T_{J^{-\fracsa}}\D_x^2 [|D_x|^s, [T_\psi, |D_x|^{\alpha-1}]] v - c(\alpha)\D_x T_{J^{-1}} [T_{J^{-\fracsa}}, |D_x|^{\alpha-1}] v^s + f \\
&=: L_0(\D_x^3 \psi,  |D_x|^{\alpha-1}\D_x^{-1}\tv^s) - L_0(|D_x|^{\alpha-1} \D_x^3 \psi, \D_x^{-1}\tv^s) + L_1(\D_x^3 \psi, |D_x|^{\alpha-1}\D_x^{-1} \tv^s) + f
\end{aligned}
\end{equation}
where $f$ satisfies \eqref{bal-est-eng}. Here $L_0$ and $L_1$ denote order zero paradifferential bilinear forms, respectively
\begin{equation*}
\begin{aligned}
L_0(\D_x^2 f, |D_x|^{\alpha-1}\D_x^{-1} u) &= L(\D_x f, u) + \frac{sc(\alpha)}{\alpha}T_{\D_x f} u, \\
L_1(\D_x^2 f, |D_x|^{\alpha-1}\D_x^{-1} T_{J^{-\fracsa}} u) &=  \frac{c(\alpha)}{\alpha}T_{J^{-\fracsa}}\D_x^2 [|D_x|^s, [T_{\D_x^{-1} f}, |D_x|^{\alpha-1}] |D_x|^{-s} u \\
&\quad - c(\alpha)\D_x T_{J^{-1}} [T_{J^{-\fracsa}}, |D_x|^{\alpha-1}] u.
\end{aligned}
\end{equation*}
Observe that since $L_i$ are all order 0 paradifferential bilinear forms, we have reduced the terms of the inhomogeneity to order $-1$.

\

b) We next choose a normal form transformation to reduce the quadratic components of the inhomogeneity to balanced cubic terms. Let
\[
\tw^s = \frac{1}{c(\alpha)\alpha} T_{J} L_0(\D_x^2\psi, \D_x^{-1} \tv^s).
\]
Then we claim that $\tilde u^s := \tv^s - \tilde w^s$ is the desired normal form transform. To see this, it remains to compute $(\D_t - c(\alpha) \D_x \mathcal P) \tw^s$, which may be expressed using Lemma~\ref{l:q-principal} as
\begin{equation}\label{compute-nf}
\begin{aligned}
\left(\D_t - c(\alpha) \D_x (T_{\frac{1}{\alpha}(|D_x|^{\alpha-1}  \D_x \psi + R)} - T_{J^{-1}} |D_x|^{\alpha-1}) \right) \tw^s + \D_x\Gamma(\D_x^2 \psi, |D_x|^{\alpha-2} \tw^s),
\end{aligned}
\end{equation}
and observe cancellation with the three $L_i$ bilinear forms on the right hand side of \eqref{tvs-eqn}. To see this, we partition the computation into the following subgroups:

\

i) When the full equation of \eqref{compute-nf} falls on the high frequency $\tv^s$ input of $L_0$, we may use \eqref{tvs-eqn} to see that the contribution has a favorable balance of derivatives and may be absorbed into $f$. 

\

ii) We may commute the equation freely with the low frequency $J$ due to a favorable balance of derivatives, absorbing the contribution again into $f$. 

\

It remains to consider commutators of the terms of the equation \eqref{compute-nf} across the low frequency $\D_x^2 \psi$ input of $\tilde w^s$. 

\

iii) We first consider the commutators involving the operators
\[
c(\alpha) T_{\frac{1}{\alpha}(|D_x|^{\alpha-1} \D_x \psi + R)} \D_x + \Gamma(\D_x^2 \psi, |D_x|^{\alpha-1}\D_x^{-1} (\cdot)) \circ \D_x.
\] 
of \eqref{compute-nf}, where we have freely commuted the $\D_x$ forward with balanced errors. The contribution from the $\Gamma$ term may also be absorbed into $f$ due to a favorable balance. The remaining contribution
\[
\frac{1}{\alpha^2}T_{J} L_0(T_{|D_x|^{\alpha-1} \D_x \psi + R}\D_x^3\psi, \D_x^{-1} \tv^s)
\]
will cancel with a contribution of step iv) below.

\

iv) For the case when $\D_t$ falls on the low frequency input of $L_0$, we apply equation \eqref{psi-eqn}. Precisely, the two non-perturbative contributions on the left hand side of \eqref{psi-eqn} cancel respectively with the second $L_0$ source term in \eqref{tvs-eqn}, and the remaining contribution of step iii) above.

\

v) From the remaining dispersive term $c(\alpha)T_{J^{-1}}\D_x |D_x|^{\alpha-1}$ of \eqref{compute-nf}, the case when $\D_x$ falls on the low frequency input of $L_0$ while $|D_x|^{\alpha-1}$ has commuted to the high frequency input cancels with the first $L_0$ source term in \eqref{tvs-eqn}.

\

vi) From the same term $c(\alpha)T_{J^{-1}}\D_x |D_x|^{\alpha-1}$, it remains to consider the commutators with $|D_x|^{\alpha-1}$, where the $\D_x$ remains in front. 

We first apply Lemma~\ref{l:para-prod3} to split $J^{1 - \fracsa}$ from $\D_x^2 \psi$ in the outer paraproduct, together with Lemma~\ref{l:para-prod} with $JJ^{-1} = 1$, and opening the definition of $L_0$, we have
\[
\frac{c(\alpha)}{\alpha}\D_x^2 [|D_x|^{\alpha-1}, [|D_x|^s, T_\psi]] |D_x|^{-s} \tv^s - \frac{sc(\alpha)}{\alpha}\D_x [|D_x|^{\alpha-1}, T_{\D_x \psi}] \tv^s.
\]

We claim that these two terms cancel with the two terms of $L_1$ respectively. Indeed, for the first term, we commute using Lemma~\ref{l:para-com} to reduce to (suppressing a factor of $c(\alpha)$ from all terms henceforth)
\[
\frac{1}{\alpha}T_{J^{-\fracsa}}\D_x^2 [|D_x|^{\alpha-1}, [|D_x|^s, T_\psi]] v
\]
which cancels with the double commutator term of $L_1$. For the second term, we also commute using Lemma~\ref{l:para-com} to reduce to
\[
\frac{s}{\alpha} T_{J^{-\fracsa}} \D_x [T_{\D_x \psi}, |D_x|^{\alpha-1}] v^s =: - \frac{s(\alpha-1)}{\alpha} T_{J^{-\fracsa}} T_{\D_x^2 \psi} |D_x|^{\alpha-1}v^s + T_{J^{-\fracsa}} L_2(\D_x^3 \psi, |D_x|^{\alpha-1}\D_x^{-1} v).
\]
On the other hand, the second term of $L_1$ may be expressed as
\[
- \D_x T_{J^{-1}} [T_{J^{-\fracsa}}, |D_x|^{\alpha-1}] v^s = \frac{s(\alpha-1)}{\alpha} T_{J^{-1}}T_{J^{1-\fracsa} \D_x^2 \psi} v^s - T_{J^{-1}} L_2(J^{1-\fracsa} \D_x^3 \psi, |D_x|^{\alpha-1}\D_x^{-1} v).
\]
We apply Lemma~\ref{l:para-prod3} to split $J^{1 - \fracsa}$ into its own paraproduct. Then these terms cancel, up to an application of Lemma~\ref{l:para-prod}. Precisely, we move one derivative from $\partial_x^2\psi$ onto the highest frequency, after which we apply Lemma~\ref{l:para-prod} with $JJ^{-1} = 1$.
\end{proof}

\

We thus obtain the following:

\begin{proposition}\label{Higher energy estimates}
Assume that $A\ll 1$ and $B\in L_t^2$. Let $s\geq 0$. Then there exist energy functionals $E^{(s)}(v)$ such that we have the following:
\begin{enumerate}
    \item[a)]Norm equivalence:
    \begin{align*}
        E^{(s)}(v)\approx_A\|v\|^2_{\dot{H}_x^s}
    \end{align*}
    \item[b)]Energy estimates:
    \begin{align*}
        \frac{d}{dt}E^{(s)}(v)\lesssim _{A}B^2\|v\|^2_{\dot{H}_x^s}
    \end{align*}
\end{enumerate}
\end{proposition}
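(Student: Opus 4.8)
The plan is to build $E^{(s)}$ by composing the two reductions already available and then invoking the base $L^2$ energy estimate. Given $v$ solving the linearized equation \eqref{linearized-eqn}, I would first pass to the low-regularity paradifferential normal form variable $\tv$ of \eqref{j-para-nf}: by Proposition~\ref{Linearized normalized variable} it solves the inhomogeneous paradifferential flow \eqref{paradiff-eqn-inhomog} with a balanced cubic source $\tilde f$, and running that argument (as in Proposition~\ref{Normalized variable}) in $\dot H^s$ rather than $L^2$ — that is, loading the $s$ outstanding derivatives onto the highest-frequency factor of each balanced trilinear term — gives $\|\tilde f\|_{\dot H^s}\lesssim_A B^2\|v\|_{\dot H^s}$ and $\|v-\tv\|_{\dot H^s}\lesssim_A A\|v\|_{\dot H^s}$. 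Next I would apply Proposition~\ref{High energy normalized variable} with input $\tv$ to obtain the high-order normalized variable $\tilde u^s$, built from $\tv$ by the Jacobian conjugation $T_{J^{-\fracsa}}|D_x|^s$ followed by the normal form correction $\tw^s$ of that proposition, satisfying
\[
\D_t\tilde u^s - \clogd\D_x\tilde u^s - \D_x Q_{lh}(\varphi,\tilde u^s) = \tilde f + \mathcal R(\varphi,\tv),
\]
with $\|\tilde u^s-|D_x|^s\tv\|_{L^2}\lesssim_A A\|v\|_{\dot H^s}$ and $\mathcal R$ balanced cubic. Combining the two steps, $\|\tilde u^s-|D_x|^s v\|_{L^2}\lesssim_A A\|v\|_{\dot H^s}$ and $\|\tilde f+\mathcal R(\varphi,\tv)\|_{L^2}\lesssim_A B^2\|v\|_{\dot H^s}$.

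I would then set $E^{(s)}(v):=E(\tilde u^s)$, with $E$ the modified energy of Proposition~\ref{Paradiferential flow linearized energy estimates}. Part (a) follows since that proposition gives $E(\tilde u^s)\approx_A\|\tilde u^s\|_{L^2}^2$, and, $A$ being small, $\|\tilde u^s-|D_x|^s v\|_{L^2}\lesssim_A A\|v\|_{\dot H^s}$ forces $\|\tilde u^s\|_{L^2}\approx_A\|v\|_{\dot H^s}$, hence $E^{(s)}(v)\approx_A\|v\|_{\dot H^s}^2$. For part (b), since $\tilde u^s$ solves an instance of the inhomogeneous paradifferential flow \eqref{paradiff-eqn} with source $\tilde f+\mathcal R(\varphi,\tv)$, Proposition~\ref{Paradiferential flow linearized energy estimates} applies and yields
\[
\frac{d}{dt}E^{(s)}(v)\lesssim_A B^2\|\tilde u^s\|_{L^2}^2 + \|\tilde f+\mathcal R(\varphi,\tv)\|_{L^2}\,\|\tilde u^s\|_{L^2},
\]
and inserting the bounds from the previous paragraph, using $A\ll 1$ once more, dominates the right-hand side by $B^2\|v\|_{\dot H^s}^2$. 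When $s=0$ one has $L_0=0$, so $\tw^0=0$ and $\tilde u^0=\tv$, and the argument reduces exactly to Proposition~\ref{Linearized equation energy estimates}.

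The substantive difficulty lies not in the present statement but in Proposition~\ref{High energy normalized variable}, on which everything above rests: the commutators of $|D_x|^s$ with the paracoefficients of the flow are merely quadratic, of orders $\alpha-1$ and $0$, hence not perturbative and not absorbable into a cubic source. The two-step remedy is what resolves this — the Jacobian exponential conjugation exploits the identities $\D_t J=J^2\D_x\D_t\psi$ and $\D_x\tilde J=-\tfrac1\alpha J\tilde J\D_x^2\psi$ together with \eqref{psi-eqn} to cancel the leading symbols of these commutators against the $\D_t$, $\D_x$, and $|D_x|^{\alpha-1}$ commutators of $T_{J^{-\fracsa}}$, dropping the residuals to orders $\alpha-2$ and $-1$; then the normal form correction $\tw^s$ converts the remaining order $-1$ quadratic forms $L_i$ into balanced cubic terms via the resonance identity \eqref{resonance}, exactly as in Proposition~\ref{Linearized normalized variable}. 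Granting that proposition, the only point in the argument for Proposition~\ref{Higher energy estimates} itself that requires care is the coercivity in (a), which is why the smallness $A\ll 1$ is assumed; the hypothesis $B\in L^2_t$ plays no role here and enters only afterwards, when integrating the differential inequality in (b).
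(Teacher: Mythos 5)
Your overall architecture is right — define $E^{(s)}(v)=E(\,\cdot\,)$ of a suitably conjugated and normal-form corrected $|D_x|^s$-variable, where $E$ is the modified energy of Proposition~\ref{Paradiferential flow linearized energy estimates} — but you insert an extra step that is both unnecessary and incorrect. The paper reads $v$ in Proposition~\ref{Higher energy estimates} as a solution of the paradifferential flow \eqref{paradiff-eqn-inhomog}, and the proof is a one-liner: $E^{(s)}(v):=E(v^s)$, where $v^s$ is the normalized variable of Proposition~\ref{High energy normalized variable}. The $L^2$-level normal form $\tv$ of \eqref{j-para-nf} from Proposition~\ref{Linearized normalized variable} does not appear. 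In Section~\ref{s:lwp} the Sobolev energy estimate is applied to the nonlinear solution by way of Proposition~\ref{Normalized variable}, whose normal-form correction consists only of the balanced $\Pi$-piece $\Pi(\psi,T_J\D_x\varphi)$; the $\dot H^s$ boundedness asserted there is specific to that choice.

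The gap in your argument is the claim that Proposition~\ref{Linearized normalized variable} can be ``run in $\dot H^s$ rather than $L^2$'' to get $\|v-\tv\|_{\dot H^s}\lesssim_A A\|v\|_{\dot H^s}$ and $\|\tilde f\|_{\dot H^s}\lesssim_A B^2\|v\|_{\dot H^s}$. The transformation \eqref{j-para-nf} contains the low-high piece
\[
-\frac{1}{\alpha}\D_x T_{T_{J}v}\psi,
\]
in which $v$ sits in the paracoefficient (low-frequency) slot and $\psi$ carries the high frequency. Its $\dot H^s$ norm is controlled by $\|T_J v\|_{L^\infty}\|\psi\|_{\dot H^{s+1}}$, which has nothing to do with $\|v\|_{\dot H^s}$; so the proposed norm equivalence fails, and so does the transfer of the $\tilde f$-estimate \eqref{bal-est} to $\dot H^s$. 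The device you invoke — ``loading the $s$ outstanding derivatives onto the highest-frequency factor'' — is exactly what the paper says in the proof of Proposition~\ref{Normalized variable}, but it works precisely \emph{because} that normal form drops the $T_v\psi$ contribution. For the nonlinear equation (where the two arguments of $Q$ are $\varphi$ and $\D_x\varphi$), the $hl$ term can be merged with the $lh$ term via Leibniz, leaving only the balanced $\Pi$ piece to correct; that merging is unavailable for a general linearized $v$. Indeed, the paper never claims a higher-order energy estimate for the linearized equation — only an $L^2$ one (Proposition~\ref{Linearized equation energy estimates}) — and your proposal, if it worked, would prove something strictly stronger than the paper does.

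The fix is simply to match the intended scope: take $v$ solving \eqref{paradiff-eqn-inhomog} (with $f$ balanced cubic), skip the passage to $\tv$ entirely, set $E^{(s)}(v)=E(v^s)$ with $v^s$ from Proposition~\ref{High energy normalized variable}, and invoke Proposition~\ref{Paradiferential flow linearized energy estimates} for (b) and $\|v^s-|D_x|^sv\|_{L^2}\lesssim_A A\|v\|_{\dot H^s}$ with $A\ll1$ for (a). Your discussion of why Proposition~\ref{High energy normalized variable} is the real content — the commutator orders $\alpha-1$ and $0$, the Jacobian conjugation exploiting $\D_t J=J^2\D_x\D_t\psi$ and $\D_x\tilde J=-\tfrac1\alpha J\tilde J\D_x^2\psi$, and the subsequent normal form via \eqref{resonance} — is accurate and is the part worth keeping.
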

\begin{proof}
Let $E^{(s)}(v)=E(v^s)$, where $E(\cdot)$ is defined in Proposition \ref{Paradiferential flow linearized energy estimates}, and $v^s$ is defined in Proposition \ref{High energy normalized variable}. Part a) is immediate, whereas part b) follows from Proposition \ref{Paradiferential flow linearized energy estimates}.
\end{proof}

\section{Local well-posedness}\label{s:lwp}

In this section we prove Theorem \ref{t:lwp}, which is our main local well-posedness result. We follow the approach outlined in \cite{ITprimer}. We consider $\varphi_0 \in (\dot{H}_x^{s_1}\cap\dot{H}_x^{s_2})$, with $s_1<\frac{3}{2}$, $s_2>\frac{\alpha+3}{2}$. Let $\varphi_0^h=(\varphi_0)_{\leq h}$, where $h\in\mathbb{Z}$. Since $\varphi_0^h \rightarrow \varphi_0$ in $(\dot{H}_x^{s_1}\cap\dot{H}_x^{s_2})$, we may assume that $ \|\varphi_0^h\|_{(\dot{H}_x^{s_1}\cap\dot{H}_x^{s_2})}<R$ for all $h$.

We construct a uniform $(\dot{H}_x^{s_1}\cap\dot{H}_x^{s_2}) $ frequency envelope $\{c_k\}_{k\in\mathbb{Z}}$ for $\varphi_0$ having the following properties:

\begin{enumerate}
     \item[a)]Uniform bounds:     
     \[ \|P_k(\varphi_0^h)\|_{\dot {H}_x^{s_1}\cap\dot {H}_x^{s_2}}\lesssim c_k,\]
     
     \item[b)]High frequency bounds:     
     \[\|\varphi_0^h\|_{\dot {H}_x^{s_1}\cap\dot {H}_x^{N}}\lesssim 2^{h(N-s_2)}c_h, \qquad N>s_2, \]
     
     \item[c)]Difference bounds:     
     \[\|\varphi_0^{h+1}-\varphi_0^h\|_{L_x^2}\lesssim 2^{-s_2h}c_h,\]
     
     \item[d)]Limit as $h\rightarrow\infty$:     
     \[ \varphi_0^h\rightarrow \varphi_0 \in \dot{H}_x^{s_1}\cap\dot{H}_x^{s_2}.\]
     
 \end{enumerate}

Let $\varphi^h$ be the solutions with initial data $\varphi_0^h$, whose existence is guaranteed instance by \cite{SQGzero}. Using the energy estimate for the solution $\varphi$ of \eqref{gSQG} from Proposition \ref{Higher energy estimates} and Proposition \ref{Normalized variable}, we deduce that there exists $T = T(\|\varphi_0\|_{H_x^s}) > 0$ on which all of these solutions are defined, with high frequency bounds
    \[ 
    \|\varphi^h\|_{C_t^0(\dot{H}_x^{s_1}\cap\dot{H}_x^{N})}\lesssim \|\varphi_0^h\|_{(\dot{H}_x^{s_1}\cap\dot{H}_x^{N})} \lesssim 2^{h(N-s_2)}c_h.
     \]
Further, by using the energy estimates for the solution of the linearized equation from Proposition \ref{Linearized equation energy estimates}, we have
\[
\|\varphi^{h+1}-\varphi^h\|_{C_t^0L_x^2}\lesssim 2^{-s_2h}c_h.
\]
By interpolation, we infer that
\[
\|\varphi^{h+1}-\varphi^h\|_{C_t^0(\dot{H}_x^{s_1}\cap\dot{H}_x^{s_2})}\lesssim c_h.
\]

As in \cite{ITprimer}, we get
\[
\|P_k \varphi^h \|_{C_t^0(\dot{H}_x^{s_1}\cap\dot{H}_x^{s_2}) } \lesssim c_k
\]
and that
\[
\|\varphi^{h+k}-\varphi^h\|_{C_t^0(\dot{H}_x^{s_1}\cap\dot{H}_x^{s_2}) }\lesssim c_{h\leq\cdot<h+k}=\left(\sum_{\substack{n=h}}^{h+k-1}c_n^2\right)^{\frac{1}{2}}
\]
for every $k\geq 1$. Thus, $\varphi^h$ converges to an element $\varphi$ belonging to $C_t^0(\dot{H}_x^{s_1}\cap\dot{H}_x^{s_2})([0,T]\times\mathbb{R})$.  Moreover, we also obtain
\begin{equation}\label{convergence estimate}
\begin{aligned}
\|\varphi^h - \varphi\|_{C_t^0(\dot{H}_x^{s_1}\cap\dot{H}_x^{s_2})} &\lesssim c_{\geq h}=\left(\sum_{\substack{n=h}}^{\infty} c_n^2\right)^{\frac{1}{2}}.
\end{aligned}
\end{equation}

We now prove continuity with respect to the initial data. We consider a sequence
 \[
 \varphi_{0j}\rightarrow \varphi_0 \in \dot{H}_x^{s_1}\cap\dot{H}_x^{s_2}
 \]
 and an associated sequence of $ H_x^s$-frequency envelopes $\{c^j_k\}_{k \in \Z}$, each satisfying the analogous properties enumerated above for $c_k$, and further such that $c^j_k \rightarrow c_k$ in $l^2(\mathbb{Z})$. In particular,
\begin{equation}\label{convergence estimate j}
\|\varphi_j^h - \varphi_j\|_{C_t^0(\dot{H}_x^{s_1}\cap\dot{H}_x^{s_2})} \lesssim c^j_{\geq h}=\left(\sum_{\substack{n=h}}^{\infty} (c^j_n)^2\right)^{\frac{1}{2}}.
\end{equation}
 
Using the triangle inequality with \eqref{convergence estimate} and \eqref{convergence estimate j}, we write
\begin{align*}
\|\varphi_j - \varphi\|_{C_t^0(\dot{H}_x^{s_1}\cap\dot{H}_x^{s_2}) } &\lesssim \|\varphi^h - \varphi\|_{C_t^0(\dot{H}_x^{s_1}\cap\dot{H}_x^{s_2})}+\|\varphi_j^h - \varphi_j\|_{C_t^0(\dot{H}_x^{s_1}\cap\dot{H}_x^{s_2})}+\|\varphi_j^h - \varphi^h\|_{C_t^0(\dot{H}_x^{s_1}\cap\dot{H}_x^{s_2})}\\
&\lesssim c_{\geq h}+c^j_{\geq h}+\|\varphi_j^h - \varphi^h\|_{C_t^0(\dot{H}_x^{s_1}\cap\dot{H}_x^{s_2})}.
\end{align*}
To address the third term, we observe that for every fixed $h$, $\varphi_j^h \rightarrow \varphi^h$ in $\dot{H}_x^{s_1}\cap\dot{H}_x^{s_2}$. We conclude $\varphi_j \rightarrow \varphi$ in $C_t^0(\dot{H}_x^{s_1}\cap\dot{H}_x^{s_2}) ([0,T]\times\mathbb{R})$ and therefore $\varphi_j\rightarrow \varphi$ in $C_t^0 (\dot{H}_x^{s_1}\cap\dot{H}_x^{s_2})([0,T]\times\mathbb{R})$.

\section{Global well-posedness}\label{s:gwp}

In this section we prove the global well-posedness part of Theorem \ref{t:gwp} and the dispersive bounds of the resulting solution, by using the wave packet testing method of Ifrim-Tataru. This approach is systematically presented in \cite{ITpax}.

\subsection{Notation}

Consider the linear flow
\[
i\D_t \varphi - A(D)\varphi = 0
\]
and the linear operator
\[
L = x - tA'(D).
\]
In our setting, we have the symbol
\[
a(\xi) = -c(\alpha)\xi |\xi|^{\alpha-1}
\]
and thus
\[
A(D)=-c(\alpha)D|D|^{\alpha-1}, \qquad  L = x+t\alpha c(\alpha)|D|^{\alpha-1}.
\]
We define the weighted energy space ($s_0<1$, $s>\alpha+2$)
\[
\|\varphi\|_X = \|\varphi\|_{\dot H^{s_0}\cap \dot{H}^s} + \|L\partial_x \varphi\|_{L^2}.
\]

\
We also define the pointwise control norm
\[
\|\varphi\|_Y = \||D_x|^{1-\delta}\varphi\|_{L_x^\infty} + \||D_x|^{\frac{\alpha}{2}+\delta}\varphi_x\|_{L_x^\infty}.
\]

We partition the frequency space into dyadic intervals $I_\lambda$ localized at dyadic frequencies $\lambda \in 2^\Z$, and consider the associated partition of velocities
\[
J_\lambda = a'(I_\lambda)
\]
which form a covering of $(-\infty,0)$. For each $\lambda$, $|J_\lambda|\approx \lambda^{\alpha-1}$. To these intervals $J_\lambda$ we select reference points $v_\lambda \in J_\lambda$, and consider an associated spatial partition of unity
\[
1 = \sum_\lambda \chi_\lambda(x), \qquad \text{supp } \chi_\lambda \subseteq  \overline{J_\lambda},\qquad \chi_\lambda=1\text{ on }J_\lambda,
\]
where $\overline{J_\lambda}$ is a slight enlargement of $J_\lambda$, of comparable length, uniformly in $\lambda$.

Lastly, we consider the related spatial intervals, $tJ_\lambda$, with reference points $x_\lambda  = tv_\lambda \in tJ_\lambda$. 
\

\subsection{Overview of the proof}

We provide a brief overview of the proof.

\

1. We make the bootstrap assumption for the pointwise bound
\begin{equation}\label{bootstrap}
    \|\varphi(t)\|_Y \lesssim C \eps \langle t\rangle^{-\frac{1}{2}}
\end{equation}
where $C$ is a large constant, in a time interval $t \in [0, T]$ where $T>1$. 

\

2. The energy estimates for \eqref{gSQG} and the linearized equation will imply
\begin{equation}\label{Regular Energy Estimate}
    \|\varphi(t)\|_{X} \lesssim \langle t\rangle^{C^2\eps^2} \|\varphi(0)\|_X.
    \end{equation}
    
\

3. We aim to improve the bootstrap estimate \eqref{bootstrap} to 
\begin{equation}\label{pointwise}
    \|\varphi(t)\|_Y \lesssim \eps \langle t\rangle^{-\frac12}.
\end{equation}
We use vector field inequalities to derive bounds of the form
\begin{equation}\label{pt-estimate}
    \|\varphi(t)\|_Y\lesssim \eps \langle t\rangle^{-\frac{1}{2} + C\eps^2},
\end{equation}
which is the desired bound but with an extra $t^{C\eps^2}$ loss.

\

4. In order to rectify the extra loss, we use the wave packet testing method. Namely, we define a suitable asymptotic profile $\gamma$, which is then shown to be an approximate solution for an ordinary differential equation. This enables us to obtain suitable bounds for the asymptotic profile without the aforementioned loss, which can then be transferred back to the solution $\varphi$.

\subsection{Energy estimates} 

From Proposition \ref{Higher energy estimates} and Gr\"onwall's lemma, together with the fact that $\eps\ll 1$, we get that
\begin{align*}
 \|\varphi(t,x)\|_{H_x^s}\lesssim e^{C\int_0^tC(A(\tau))B(\tau)^2\,d\tau}\|\varphi_0\|_{H_x^s}. 
\end{align*}

Let $u=L\partial_x\varphi+t\int |y|^{1-\alpha}F(\dq^y\varphi)\sdq^y\varphi_x\,dy$, which satisfies the linearized equation with error $\int |y|^{1-\alpha}F'(\dq^y\varphi)\dq^y\varphi\sdq^y\varphi_x\,dy$, which is clearly balanced. From Proposition \ref{Linearized equation energy estimates}, along with Gr\"onwall's lemma and the fact that $\eps\ll 1$, we have
\begin{align*}
 \|L\partial_x\varphi(t,x)\|_{L_x^2}\lesssim e^{C\int_0^tC(A(\tau))B(\tau)^2\,d\tau}\|u_0\|_{L_x^2}. 
\end{align*}
Along with the bootstrap assumptions, these readily imply that
\begin{equation}\label{Vector Field Energy Estimate}
    \|\varphi\|_{X}\lesssim \|\varphi(t)\|_{\dot{H}_x^{s_0}\cap\dot{H}_x^s} + \|L\partial_x\varphi(t)\|_{L_x^2}\lesssim \eps e^{C^2\eps^2\int_0^t\langle s\rangle^{-1}\,ds}\lesssim \eps \langle t\rangle^{C^2\eps^2}.
\end{equation}

\

\subsection{Vector field bounds} 
Proposition 2.1 from \cite{ITpax} implies that
\begin{align*}
    \|\varphi_\lambda\|^2_{L_x^{\infty}}&\lesssim \frac{1}{t\lambda^{\alpha-1}}(\|\varphi_\lambda\|_{L_x^2}\|L\partial_x\varphi_\lambda\|_{L_x^2}+\|\varphi_\lambda\|^2_{L_x^2}).
\end{align*}
When $\lambda\leq 1$,
\begin{align*}
  \||D_x|^{1-\delta}\varphi_\lambda\|_{L_x^{\infty}}
  &\lesssim \frac{1}{\sqrt{t}}\lambda^{\delta_1}(\|\lambda^{2-2\delta-2\delta_1}\varphi_\lambda\|^{1/2}_{L_x^2}\|L\partial_x\varphi_\lambda\|^{1/2}_{L_x^2}+\|\lambda^{1-\delta-\delta_1}\varphi_\lambda\|_{L_x^2}) \\
  &\lesssim \frac{1}{\sqrt{t}}\lambda^{\delta_1}\|\varphi\|_X
\end{align*}
and when $\lambda>1$,
\begin{align*}
  \||D_x|^{\frac{\alpha}{2}+\delta}\partial_x\varphi_\lambda\|_{L_x^{\infty}}
  &\lesssim \frac{1}{\sqrt{t}}\lambda^{-\delta}(\|\lambda^{\alpha+2+4\delta}\varphi_\lambda\|^{1/2}_{L_x^2}\|L\partial_x\varphi_\lambda\|^{1/2}_{L_x^2}+\|\lambda^{\frac{\alpha}{2}+1+2\delta}\varphi_\lambda\|_{L_x^2}) \lesssim \frac{1}{\sqrt{t}}\lambda^{-\delta}\|\varphi\|_X.
\end{align*}
By dyadic summation and Bernstein's inequality, we deduce the bound
\begin{equation}\label{Pointwise Vector Field Bound 2}
    \|\varphi\|_Y=\|\langle D_x\rangle^{2\delta+\frac{\alpha}{2}}|D_x|^{1-\delta}\varphi\|_{L_x^{\infty}}\lesssim\frac{\|\varphi\|_X}{\sqrt{t}}.
\end{equation}

By the localized dispersive estimate \cite[Proposition 5.1]{ITpax}, 
\begin{align*}
    |\varphi_\lambda(x)|^2\lesssim\frac{1}{|x-x_{\lambda}|t\lambda^{\alpha-2}}(\|L\varphi_\lambda\|_{L_x^2}+\lambda^{-1}\|\varphi_\lambda\|_{L_x^2})^2,
\end{align*}
 which implies that
 \begin{equation}\label{Pointwise Elliptic Estimate-IT}
    \|(1-\chi_{\lambda})\varphi_\lambda\|_{L_x^{\infty}}\lesssim \frac{\lambda^{\frac{1}{2}-\alpha}}{t}(\|L\partial_x\varphi_\lambda\|_{L_x^2}+\|\varphi_\lambda\|_{L_x^2})\lesssim\frac{\lambda^{\frac{1}{2}-\alpha}}{t}(\|\varphi\|_X+\|\varphi_\lambda\|_{L_x^2})
\end{equation}
 We also record the bound
 \begin{equation}\label{L2 Elliptic Estimate-IT}
    \|(1-\chi_{\lambda})\varphi_\lambda\|_{L_x^{\infty}}\lesssim \frac{\lambda^{\frac{1}{2}-\alpha}}{t}(\|L\partial_x\varphi_\lambda\|_{L_x^2}+\|\varphi_\lambda\|_{L_x^2})\lesssim\frac{\lambda^{\frac{1}{2}-\alpha}}{t}(\|\varphi\|_X+\|\varphi_\lambda\|_{L_x^2}),
\end{equation}
which follows directly from \cite[Proposition 5.1]{ITpax}.
\

To end this section we consider derivatives and difference quotients:

\begin{lemma}\label{Elliptic bounds for the derivative}
We have
  \begin{align*}
    \|\partial_x((1-\chi_{\lambda})\varphi_\lambda)\|_{L_x^{\infty}}+\|(1-\chi_{\lambda})\dq^y\varphi_\lambda\|_{L_x^{\infty}}&\lesssim  \frac{\lambda^{\frac{3}{2}-\alpha}}{t}\|\varphi\|_X,
  \end{align*}
  as well as the $L_x^2$-bounds
  \begin{align*}
    \|\partial_x((1-\chi_{\lambda})\varphi_\lambda)\|_{L_x^{2}}+\|(1-\chi_{\lambda})\dq^y\varphi_\lambda\|_{L_x^{2}}&\lesssim  \frac{\lambda^{1-\alpha}\|\varphi\|_X}{t}.
  \end{align*}
\end{lemma}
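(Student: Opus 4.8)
The plan is to derive both bounds from the corresponding estimates for $(1-\chi_\lambda)\varphi_\lambda$ itself, namely \eqref{Pointwise Elliptic Estimate-IT}--\eqref{L2 Elliptic Estimate-IT}, by exploiting the fact that $\varphi_\lambda$ is frequency-localized at $|\xi| \approx \lambda$ and that away from $tJ_\lambda$ we are in the non-stationary (elliptic) regime. First I would observe that $\partial_x((1-\chi_\lambda)\varphi_\lambda) = (1-\chi_\lambda)\partial_x\varphi_\lambda - \chi_\lambda'\varphi_\lambda$, where $\chi_\lambda' = O((t\lambda^{\alpha-1})^{-1})$ is supported on the enlargement region where $(1-\chi_\lambda)$-type decay is also available; and the key point is that $\partial_x\varphi_\lambda$ is again localized at frequency $\lambda$, so I may apply \cite[Proposition 5.1]{ITpax} directly to $\partial_x\varphi_\lambda$ in place of $\varphi_\lambda$. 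This replaces $\|L\partial_x\varphi_\lambda\|_{L^2} + \|\varphi_\lambda\|_{L^2}$ on the right-hand side by $\|L\partial_x^2\varphi_\lambda\|_{L^2} + \|\partial_x\varphi_\lambda\|_{L^2}$; using the frequency localization, $\|\partial_x\varphi_\lambda\|_{L^2}\lesssim\lambda\|\varphi_\lambda\|_{L^2}\lesssim\lambda\|\varphi\|_X$, and $L$ commutes with $\partial_x$ up to the bounded zeroth-order commutator $[L,\partial_x] = -1$, so $\|L\partial_x^2\varphi_\lambda\|_{L^2}\lesssim\lambda(\|L\partial_x\varphi_\lambda\|_{L^2} + \|\partial_x\varphi_\lambda\|_{L^2})\lesssim\lambda\|\varphi\|_X$. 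Tracking the powers of $\lambda$: the prefactor in \eqref{Pointwise Elliptic Estimate-IT} was $\lambda^{\frac12-\alpha}/t$, and picking up one extra $\lambda$ from the derivative gives $\lambda^{\frac32-\alpha}/t$, as claimed. For the $L^2$ bound, I would instead use \eqref{L2 Elliptic Estimate-IT} (or more precisely the $L^2$ version of \cite[Proposition 5.1]{ITpax}), where the prefactor scaling differs by half a power of $\lambda$ — the $L^2$ estimate carries $\lambda^{\frac12-\alpha}$ times an extra $\lambda^{1/2}$ coming from the width $|J_\lambda|^{1/2}\approx\lambda^{(\alpha-1)/2}$ of the integration region, adjusted appropriately — landing on $\lambda^{1-\alpha}/t$ after the extra derivative; I would double-check this scaling bookkeeping against the $\partial_x$-free estimates.

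For the difference quotient terms, the plan is to write
\[
(1-\chi_\lambda)\dq^y\varphi_\lambda(x) = (1-\chi_\lambda(x))\cdot\frac{1}{y}\int_0^y \partial_x\varphi_\lambda(x+z)\,dz,
\]
so that $\|(1-\chi_\lambda)\dq^y\varphi_\lambda\|_{L^\infty_x}$ is controlled, uniformly in $y$, by $\sup_z\|(1-\chi_\lambda)\partial_x\varphi_\lambda(\cdot+z)\|_{L^\infty_x}$ — but care is needed because the cutoff $1-\chi_\lambda$ is evaluated at $x$ while the derivative is evaluated at $x+z$. The clean way around this is to note that $\chi_\lambda$ varies on the scale $t\lambda^{\alpha-1} = t|J_\lambda|$, whereas the relevant displacements $y$ are harmless: for $|y|\lesssim t\lambda^{\alpha-1}$ the cutoffs $1-\chi_\lambda(x)$ and $1-\chi_\lambda(x+z)$ are comparable (up to a slight enlargement of the support, which is already built into the definition of $\overline{J_\lambda}$), so the difference quotient estimate reduces to the derivative estimate just proved; for $|y|\gtrsim t\lambda^{\alpha-1}$ one gains from the $1/|y|$ factor in $\dq^y$, and it suffices to use the pointwise bound on $(1-\chi_\lambda)\varphi_\lambda$ (and $\varphi_\lambda(\cdot+y)$, which is still frequency-localized and in the elliptic region for such large $y$) from \eqref{Pointwise Elliptic Estimate-IT} directly, dividing by $|y|\gtrsim t\lambda^{\alpha-1}$ — this produces $\lambda^{\frac12-\alpha}t^{-1}\cdot(t\lambda^{\alpha-1})^{-1}\|\varphi\|_X$, which is even better than required. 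The same dichotomy handles the $L^2_x$ difference quotient bound, using \eqref{L2 Elliptic Estimate-IT} in the large-$y$ regime.

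**Main obstacle.** The main subtlety I anticipate is the interplay between the spatial cutoff $1-\chi_\lambda$ and the translations implicit in $\partial_x$ (at the symbol level) and in $\dq^y$: one must confirm that applying a frequency-$\lambda$-localized derivative, or a difference quotient over displacement $y$, does not move mass out of the elliptic region $\{|x-x_\lambda|\gtrsim t\lambda^{\alpha-1}\}$ where the favorable decay of \cite[Proposition 5.1]{ITpax} holds — equivalently, that the error committed by commuting $1-\chi_\lambda$ past these operations is itself controlled with the stated $\lambda$-powers. Since $\chi_\lambda$ is slowly varying (on scale $t|J_\lambda|$) and the displacements that actually matter for the derivative are at the unit scale (or, after rescaling, scale $\lambda^{-1}$), this commutator is lower order; making that precise — via a Coifman–Meyer / paraproduct-type argument or simply by exploiting the explicit support enlargement $\overline{J_\lambda}$ already present in the partition of unity — is the one place where a genuine (if routine) argument is needed rather than a direct quotation. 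Everything else is bookkeeping of dyadic exponents and an appeal to the already-established elliptic estimates.
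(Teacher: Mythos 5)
Your proof of the $\partial_x$ bounds matches the paper's: apply Leibniz to split $\partial_x((1-\chi_\lambda)\varphi_\lambda)=(1-\chi_\lambda)\partial_x\varphi_\lambda-\chi_\lambda'\varphi_\lambda$, bound $\chi_\lambda'=O(t^{-1}\lambda^{1-\alpha})$, and apply \eqref{Pointwise Elliptic Estimate-IT} (resp.\ \eqref{L2 Elliptic Estimate-IT}) to $\partial_x\varphi_\lambda$, picking up the expected extra power of $\lambda$. That part is fine.

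For the difference-quotient bounds, however, you take an unnecessary detour that introduces a gap, and you also misidentify the ``main obstacle.'' There is no cutoff-vs-translation commutation problem to solve, because the right move is \emph{not} to reduce the $\dq^y$ bound to the $\partial_x$ bound. Instead, the paper simply observes that $\dq^y\varphi_\lambda$ is itself a frequency-$\lambda$-localized function (since $\dq^y$ is a Fourier multiplier), and applies \eqref{Pointwise Elliptic Estimate-IT} directly to $w:=\dq^y\varphi_\lambda$ in place of $\varphi_\lambda$. This gives $\|(1-\chi_\lambda)\dq^y\varphi_\lambda\|_{L^\infty}\lesssim\frac{\lambda^{\frac32-\alpha}}{t}\bigl(\|L\dq^y\varphi_\lambda\|_{L^2}+\lambda^{-1}\|\dq^y\varphi_\lambda\|_{L^2}\bigr)$ with no translation of the cutoff whatsoever — the cutoff $1-\chi_\lambda(x)$ multiplies $w(x)$, and the pointwise elliptic decay is at the same point $x$. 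The only commutator one then needs is $[L,\dq^y]\varphi=-\varphi(\cdot+y)$ (coming from $[x,\dq^y]$; the Fourier-multiplier part of $L$ commutes with $\dq^y$ exactly), which immediately gives $\|L\dq^y\varphi_\lambda\|_{L^2}\lesssim\|\dq^y(L\varphi_\lambda)\|_{L^2}+\|\varphi_\lambda\|_{L^2}\lesssim\|L\partial_x\varphi_\lambda\|_{L^2}+\|\varphi_\lambda\|_{L^2}$, and the result follows. No dichotomy in $|y|$, no paraproducts, no enlargement of $\overline{J_\lambda}$.

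By contrast, your small-$|y|$ argument as written does not go through: the claim that ``$1-\chi_\lambda(x)$ and $1-\chi_\lambda(x+z)$ are comparable'' for $|z|\lesssim t\lambda^{\alpha-1}$ cannot hold, because $1-\chi_\lambda$ vanishes identically on $J_\lambda$, so near $\partial J_\lambda$ the ratio $(1-\chi_\lambda(x))/(1-\chi_\lambda(x+z))$ is unbounded (and $x+z$ can lie inside $J_\lambda$ where neither elliptic decay for $\partial_x\varphi_\lambda$ nor the comparability is available). One could patch this by inserting $1\leq(1-\chi_\lambda(x))\leq 1$ and instead bounding $\sup_{|z|\leq|y|}\|(1-\tilde\chi_\lambda)\partial_x\varphi_\lambda(\cdot+z)\|_{L^\infty}$ for an enlarged cutoff $\tilde\chi_\lambda$ — but then you still have contributions from $x$ such that $x+z\in\supp\tilde\chi_\lambda$, which need a separate (e.g.\ Bernstein) argument. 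All of this is avoided by applying the elliptic estimate to $\dq^y\varphi_\lambda$ in the first place. Your large-$|y|$ argument is essentially sound, though the parenthetical claim that $\varphi_\lambda(\cdot+y)$ is ``in the elliptic region'' is not needed and is not always true; Bernstein on $\|\varphi_\lambda\|_{L^\infty}$ suffices there.
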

\begin{proof}
We use the bounds
\begin{align*}
    |\partial_x(\chi_{\lambda}(x/t)|&\lesssim t^{-1}\lambda^{1-\alpha}
\end{align*}
From \eqref{Pointwise Elliptic Estimate-IT} applied for $\partial_x\varphi$,
\begin{align*}
    \|\partial_x((1-\chi_{\lambda})\varphi_\lambda)\|_{L_x^{\infty}}&\lesssim \frac{1}{t}\lambda^{1-\alpha}\|\chi'_{\lambda}\varphi_\lambda\|_{L_x^{\infty}}+\|(1-\chi_{\lambda})\partial_x\varphi_\lambda\|_{L_x^{\infty}} \lesssim \frac{\lambda^{\frac32-\alpha}}{t}(\|\varphi\|_X+\|\varphi_\lambda\|_{L_x^2})
\end{align*}

For the bounds involving the difference quotient, from \ref{Pointwise Elliptic Estimate-IT} applied for $\dq^y\varphi$, we have
\begin{align*}
\|(1-\chi_{\lambda})\dq^y\varphi_\lambda\|_{L_x^\infty}&\lesssim \frac{\lambda^{\frac32-\alpha}}{t}(\|L\dq^y\varphi_\lambda\|_{L_x^2}+\lambda^{-1}\|\dq^y\varphi_\lambda\|_{L_x^2})\\
&\lesssim \frac{\lambda^{\frac32-\alpha}}{t}(\|\dq^y(L\varphi_\lambda)\|_{L_x^2}+\|\varphi_\lambda(x+y)\|_{L_x^2}+\|\varphi_\lambda\|_{L_x^2})\\
&\lesssim \frac{\lambda^{\frac32-\alpha}}{t}(\|L\partial_x\varphi_\lambda\|_{L_x^2}+\|\varphi_\lambda\|_{L_x^2})\\
&\lesssim \frac{\lambda^{\frac32-\alpha}}{t}(\|\varphi\|_X+\|\varphi_\lambda\|_{L_x^2})
\end{align*}
The other bounds are proved similarly.
\end{proof}

\subsection{Wave packets}

We construct wave packets as follows. Given the dispersion relation $a(\xi)$, the group velocity $v$ satisfies
\[
v = a'(\xi) = -c(\alpha)\alpha|\xi|^{\alpha-1},
\]
so we denote
\[
\xi_v = -\left(\frac{-v}{c(\alpha)\alpha}\right)^{\frac{1}{\alpha-1}}.
\]
Then we define the linear wave packet $\pax^v$ associated with velocity $v$ by
\[
\pax^v = a''(\xi_v)^{-\half} \chi(y) e^{it\phi(x/t)}, \qquad y = \frac{x - vt}{t^\half a''(\xi_v)^\half},
\]
where the phase $\phi$ is given by
\[
\phi(v) = v\xi_v - a(\xi_v),
\]
and $\chi$ is a unit bump function, such that $\int\chi(y)\,dy=1$.

We remark that we will typically apply frequency localization $\pax^v_\lambda = P_\lambda\mathbf{u}^v$ with $v \in J_\lambda$.

\

We observe that since
\[
\D_v (|a''(
\xi_v
)|^{-\half} ) \simeq (a''(\xi_v)^{-\half})^{\frac{4-3\alpha}{2-\alpha}}, 
\]
we may write
\begin{equation}\label{dpax}
\D_v\pax^v = - \tilde L \pax^v + (a''(\xi_v)^{-\half})^{\frac{2-2\alpha}{2-\alpha}}\pax^{v,II} = t^{\half} (a''(\xi_v)^{-\half})\pax^v + (a''(\xi_v)^{-\half})^{\frac{2-2\alpha}{2-\alpha}}\pax^{v,II} 
\end{equation}
where
 \[ \tilde{L}=t(\partial_x-i\phi'(x/t))\]
and $\pax^{v,II}$ has a similar wave packet form. We also recall from \cite[Lemmas 4.4, 5.10]{ITpax} the sense in which $\pax^v$ is a good approximate solution:

\begin{lemma}\label{l:paxsoln}
The wave packet $\pax^v$ solves an equation of the form
\[
(i\D_t - A(D)) \pax^v = t^{-\frac32}(L \pax^{v, I} + \rpax^v)
\]
where $\pax^{v, I}, \rpax^v$ have wave packet form,
\[
\pax^{v, I} \approx a''(\xi_v)^{-\half} \pax^v, \qquad \rpax^v \approx \xi_v^{-1}a''(\xi_v)^{-\half} \pax^v.
\]
\end{lemma}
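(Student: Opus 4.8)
The plan is to verify the wave packet equation by direct computation, treating $\mathbf{u}^v$ as a WKB ansatz and carefully tracking the orders of all error terms. We write $\pax^v = g(\xi_v) \chi(y) e^{it\phi(x/t)}$ with $g(\xi_v) = a''(\xi_v)^{-1/2}$ and $y = (x-vt)/(t^{1/2}g(\xi_v)^{-1})$, and apply $i\partial_t - A(D)$. The dispersive operator $A(D)$ is nonlocal, so the key preliminary step is to understand its action on the modulated bump $\chi(y)e^{it\phi(x/t)}$: since this function is frequency-localized near $\xi_v = (\phi')^{-1}$-type values (by the stationary phase / WKB heuristic, the spatial location $x \approx vt$ forces frequency $\approx \xi_v$ where $v = a'(\xi_v)$), we Taylor expand $a(\xi)$ about $\xi_v$. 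The constant term $a(\xi_v)$ combines with the $\partial_t$ of the phase to produce, via $\phi(v) = v\xi_v - a(\xi_v)$ and $\partial_t[t\phi(x/t)] = \phi - (x/t)\phi'$, a cancellation of the leading oscillatory contributions; the linear term $a'(\xi_v)(\xi-\xi_v) = v(\xi - \xi_v)$ pairs with the transport $\partial_t y$ of the bump envelope, again producing cancellation because $y$ is transported at exactly the group velocity $v$; and the quadratic term $\frac12 a''(\xi_v)(\xi-\xi_v)^2$ is precisely what the Gaussian-like width normalization $t^{1/2}g(\xi_v)^{-1}$ is tuned to, contributing at order $t^{-1}$ relative to the leading term — but these $t^{-1}$ terms are, after re-expressing, of the form $t^{-1}\partial_y^2$ acting on $\chi$, which is again a wave packet profile. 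The upshot is that the principal balance is automatic by construction, and the genuine errors come from (i) the cubic and higher Taylor remainders of $a$, and (ii) the $t$-dependence of $\xi_v$, $g(\xi_v)$ along the ray, i.e. derivatives falling on the slowly-varying amplitude.

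Next I would organize the error terms and extract the claimed structure. The $t$-derivatives hitting the amplitude $g(\xi_v) = a''(\xi_v)^{-1/2}$ are where the operator $\tilde L = t(\partial_x - i\phi'(x/t))$ and the identity \eqref{dpax} enter: the computation $\partial_v \pax^v = -\tilde L \pax^v + (\text{l.o.t.})\,\pax^{v,II}$ shows that a $v$-derivative of the wave packet is, up to the favorable lower-order piece $\pax^{v,II}$, the same as applying $-\tilde L$. Since along the evolution the relevant variation is in $v$ (through $x/t$), this lets us convert amplitude-derivative errors into $L$ applied to a wave packet — this is the source of the $L\pax^{v,I}$ term, with $\pax^{v,I} \approx a''(\xi_v)^{-1/2}\pax^v$ reflecting one extra factor of the amplitude from the chain rule. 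The remaining scalar errors — the cubic Taylor remainder of $a$ and the subprincipal pieces — are lower order by a factor of $\xi_v^{-1}$ (dimensional analysis: an extra frequency derivative of $a$ relative to the width scaling costs $\xi_v^{-1}$ times $a''(\xi_v)^{-1/2}$), giving the $\rpax^v \approx \xi_v^{-1} a''(\xi_v)^{-1/2}\pax^v$ term. Collecting, the right-hand side is $t^{-3/2}(L\pax^{v,I} + \rpax^v)$, where the overall $t^{-3/2}$ is the size of one error term: $t^{-1}$ from the subleading WKB order times $t^{-1/2}$ from the width normalization in the $L^\infty$-normalized profile — I would double-check this power count against the normalization $\|\pax^v\|_{L^2} \sim 1$ used in \cite{ITpax}.

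The main obstacle will be handling the nonlocality of $A(D) = -c(\alpha)D|D|^{\alpha-1}$ rigorously rather than heuristically: unlike a differential operator, $|D|^{\alpha-1}$ does not act locally on $\chi(y)e^{it\phi}$, so the Taylor expansion of the symbol about $\xi_v$ must be justified with quantitative remainder bounds, controlling the non-stationary contributions from frequencies away from $\xi_v$. This is exactly the content being imported from \cite[Lemmas 4.4, 5.10]{ITpax}, so the honest version of this proof is a citation plus a verification that the gSQG symbol $a(\xi) = -c(\alpha)\xi|\xi|^{\alpha-1}$ satisfies the symbol hypotheses of that reference — namely the appropriate homogeneity, the non-degeneracy $a''(\xi_v) \neq 0$ for $\alpha \neq 1$ (which fails at $\alpha = 1$, consistent with the logarithmic dispersion relation being excluded here and treated separately), and the scaling relations among $a$, $a'$, $a''$ at $\xi_v$ that make the identity \eqref{dpax} hold with the stated exponents. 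I would therefore present the proof as: (1) record the symbol class and verify the hypotheses of \cite{ITpax}; (2) invoke \cite[Lemma 4.4]{ITpax} to obtain the abstract wave packet identity; (3) specialize the exponents using $|J_\lambda| \approx \lambda^{\alpha-1}$ and $\xi_v \simeq v^{1/(\alpha-1)}$ to match the stated forms of $\pax^{v,I}$ and $\rpax^v$.
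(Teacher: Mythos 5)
The paper does not prove this lemma at all: it is stated as a direct citation to \cite[Lemmas~4.4, 5.10]{ITpax}, and you correctly identify at the end of your proposal that ``the honest version of this proof is a citation plus a verification that the gSQG symbol $a(\xi)=-c(\alpha)\xi|\xi|^{\alpha-1}$ satisfies the symbol hypotheses of that reference.'' Your heuristic WKB sketch and your observations about the degeneracy of $a''$ at $\alpha=1$ are correct and show good understanding, but the normalization you flagged for double-checking is $\int\chi(y)\,dy=1$ (an $L^1_y$ normalization of the bump), not $\|\pax^v\|_{L^2}\sim 1$; the $t^{-3/2}$ is best read off directly from \cite{ITpax} rather than re-derived from a unit-$L^2$ heuristic.
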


\

The asymptotic profile at frequency $\lambda$ is meaningful when the associated spatial region $tJ_\lambda$ dominates the wave packet scale at frequency $\lambda$:
\[
\delta x \approx t^\half a''(\lambda)^\half \lesssim |tJ_\lambda| \approx t \lambda a''(\lambda).
\]
This corresponds to 
\[
t \gtrsim \lambda^{-2} a''(\lambda)^{-1} \approx \lambda^{-\alpha}.
\]
Accordingly we define
\[
\mathcal D = \{(t, v) \in \R^+ \times (-\infty,0) : v \in J_\lambda, \ t \gtrsim \lambda^{-\alpha} \}.
\]

\subsection{Wave packet testing}

In this section we establish estimates on the asymptotic profile function
\[
\gamma^\lambda(t, v) := \langle \varphi, \mathbf{u}^v_\lambda \rangle_{L^2_x}=\langle \varphi_\lambda, \mathbf{u}^v \rangle_{L^2_x}.
\]
We will see that $\gamma^\lambda$ essentially has support $v\in J_\lambda$.

\

We will also use the following crude bounds involving the higher regularity of $\gamma^\lambda$:
\begin{lemma}\label{Gamma bounds}
We have
    \begin{align*}
    \|\chi_\lambda \D_v^n \gamma^\lambda\|_{L^\infty}&\lesssim  t^{\half}(\lambda^{1-\alpha}+ t^\half\lambda^{1-\frac{\alpha}{2}})^n\|\varphi_{\lambda}\|_{L_x^{\infty}}, \\
      \|\chi_\lambda \D_v^n\gamma^\lambda\|_{L^2}&\lesssim  (t\lambda^{2-\alpha})^{\frac14}(\lambda^{1-\alpha}+t^\half\lambda^{1-\frac{\alpha}{2}})^n\|\varphi_{\lambda}\|_{L_x^{2}},
    \end{align*}
    and 
    \begin{align*}
\|\chi_\lambda\partial_v \gamma^\lambda\|_{L^\infty}&\lesssim t^{\frac14}\lambda^{-\frac12-\frac{\alpha}{4}}\|\varphi\|_X + t^\half \lambda^{1-\alpha}\|\varphi_\lambda\|_{L_x^{\infty}}.
    \end{align*}
    
\end{lemma}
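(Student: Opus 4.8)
The statement is a collection of crude bounds on $v$-derivatives of the wave packet testing integral $\gamma^\lambda(t,v) = \langle \varphi_\lambda, \pax^v\rangle_{L^2_x}$, so the entire strategy is to differentiate the defining integral in $v$, use the formula \eqref{dpax} for $\D_v \pax^v$, and then apply the $L^2$ and $L^\infty$ bounds already established in the previous subsections (in particular \eqref{Pointwise Vector Field Bound 2}, \eqref{Pointwise Elliptic Estimate-IT}, and Lemma~\ref{Elliptic bounds for the derivative}). First I would record the zeroth-order bounds: since $\pax^v$ has wave packet form with spatial scale $\delta x \approx t^\half a''(\xi_v)^\half \approx t^\half \lambda^{1 - \frac{\alpha}{2}}$ and $L^\infty$ size $a''(\xi_v)^{-\half} \approx \lambda^{\frac{\alpha}{2} - 1}$, we have $\|\pax^v\|_{L^1_x} \lesssim t^\half$ and $\|\pax^v\|_{L^2_x} \approx (t\lambda^{2-\alpha})^{\frac14} \cdot$(const). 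Pairing with $\varphi_\lambda$ in $L^\infty$ and $L^2$ respectively gives the $n=0$ cases. For higher $n$, I would iterate \eqref{dpax}: each application of $\D_v$ either produces a factor of $\tilde L = t(\D_x - i\phi'(x/t))$ acting on the packet, or a subleading packet $\pax^{v,II}$ with an extra weight $(a''(\xi_v)^{-\half})^{\frac{2-2\alpha}{2-\alpha}}$. The point is that both mechanisms contribute at most the claimed factor $\lambda^{1-\alpha} + t^\half \lambda^{1 - \frac{\alpha}{2}}$ per derivative: the second term $t^\half\lambda^{1-\frac{\alpha}{2}}$ comes precisely from the $t^\half a''(\xi_v)^{-\half} \pax^v = t^\half \lambda^{1-\frac{\alpha}{2}} \pax^v$ piece in \eqref{dpax}, while moving $\tilde L$ onto $\varphi_\lambda$ by (formal) self-adjointness converts it into $L\partial_x$ plus lower order — but since we only want crude bounds, I would instead absorb $\tilde L$ directly: $t(\D_x - i\phi'(x/t))$ acting on the localized packet $\chi(y)e^{it\phi}$ produces a factor $t \cdot (t^\half a''^\half)^{-1} = t^\half a''^{-\half} \approx t^\half \lambda^{\frac{\alpha}{2}-1}$ from differentiating $\chi(y)$, which again is the $t^\half \lambda^{1-\frac{\alpha}{2}}$ weight after accounting for the $a''^{-\half}$ normalization... and the $\lambda^{1-\alpha}$ term arises from the region where we must instead control the phase derivative or use the elliptic tail bounds.

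For the second estimate (the $L^2$ bound with the $(t\lambda^{2-\alpha})^{1/4}$ prefactor) the reasoning is identical but with $\|\pax^v\|_{L^2_x}$ in place of $\|\pax^v\|_{L^1_x}$, and pairing $\varphi_\lambda$ in $L^2_x$. For the last, sharper estimate on $\chi_\lambda \D_v \gamma^\lambda$, I would again differentiate once via \eqref{dpax}, writing $\D_v\gamma^\lambda = -\langle \varphi_\lambda, \tilde L \pax^v\rangle + (\text{lower order})$. On the support of $\chi_\lambda$ (i.e.\ the bulk spatial region $x \in tJ_\lambda$) I would move $\tilde L$ onto $\varphi_\lambda$, producing $\langle L\partial_x \varphi_\lambda \text{-type terms}, \pax^v\rangle$ which by Cauchy-Schwarz with $\|\pax^v\|_{L^2} \approx (t\lambda^{2-\alpha})^{1/4}\lambda^{-1+\frac{\alpha}{2}}$ and the definition of $\|\cdot\|_X$ gives the $t^{1/4}\lambda^{-\frac12 - \frac{\alpha}{4}}\|\varphi\|_X$ term; I should double-check the bookkeeping: $(t\lambda^{2-\alpha})^{1/4}\cdot \lambda^{-1} \cdot \lambda^{\alpha/2}$ should reconcile with $t^{1/4}\lambda^{-1/2 - \alpha/4}$, which it does since $\frac{2-\alpha}{4} - 1 + \frac{\alpha}{2} = -\frac12 - \frac{\alpha}{4} + \frac{\alpha}{4} \cdot$... — I would carefully track the $\lambda$ powers here, accounting also for a $\partial_x$ landing on $\varphi_\lambda$ which at frequency $\lambda$ costs $\lambda$ and is compensated by the $\partial_x^{-1}$ implicit in $L\partial_x\varphi \in L^2$. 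The commutator term where $\tilde L$ hits the phase rather than being integrated by parts, together with the subleading $\pax^{v,II}$ contribution, is lower order and controlled using the $n=0$ bound in $L^\infty$, giving the second term $t^\half\lambda^{1-\alpha}\|\varphi_\lambda\|_{L^\infty_x}$.

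\textbf{Main obstacle.} The routine part is the $L^1$/$L^2$ size estimates on $\pax^v$ and its $\D_v$-derivatives — these follow from the explicit wave packet formula and a change of variables to the $y$ coordinate. The genuinely delicate step is the last, refined bound: one must split the spatial integral into the bulk region (where $\chi_\lambda \approx 1$ and one integrates $\tilde L$ by parts onto $\varphi$, converting it into the $X$-norm via $L\partial_x\varphi$) versus the tail region (where one uses the elliptic/dispersive decay bounds of Lemma~\ref{Elliptic bounds for the derivative} and \eqref{Pointwise Elliptic Estimate-IT}), and check that both pieces fit under the stated bound with the correct powers of $t$ and $\lambda$ — in particular that the integration by parts does not generate uncontrolled boundary-type contributions from $\D_v\chi_\lambda$, and that the $\lambda$ exponents in $\|\pax^v\|_{L^2_x}\approx (t\lambda^{2-\alpha})^{1/4}\lambda^{\frac{\alpha}{2}-1}$ combine correctly with the $\partial_x^{-1}$ weight hidden in $\|L\partial_x\varphi\|_{L^2}$. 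This is essentially a careful dimensional-analysis bookkeeping exercise rather than a conceptual difficulty, but it is where all the care must go.
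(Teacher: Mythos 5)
Your overall plan matches the paper's proof: pair the formula \eqref{dpax} for $\D_v\pax^v$ against $\varphi_\lambda$, using $\|\pax^v\|_{L^1_x}\lesssim t^\half$ and $\|\pax^v\|_{L^2_x}\approx(t\lambda^{2-\alpha})^{\frac14}$ for the first two estimates, and for the third use the first representation $\D_v\pax^v=-\tilde L\pax^v+\cdots$ and move $\tilde L$ onto $\varphi_\lambda$. This is exactly what the paper does; the "move $\tilde L$" step is what the cited Lemma~2.3 of \cite{ITpax} accomplishes, giving $|\langle\varphi_\lambda,\tilde L\pax^v\rangle|\lesssim(t\lambda^{2-\alpha})^{\frac14}\|\tilde L\varphi_\lambda\|_{L^2_x}$, which is then converted to $t^{\frac14}\lambda^{-\frac12-\frac{\alpha}{4}}\|\varphi\|_X$.

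Two small corrections to your bookkeeping. First, the factor $\lambda^{1-\alpha}$ in the first two estimates does not come from "controlling the phase derivative" or from "elliptic tail bounds": it is simply the prefactor $(a''(\xi_v)^{-\half})^{\frac{2-2\alpha}{2-\alpha}}\approx\lambda^{1-\alpha}$ multiplying $\pax^{v,II}$ in \eqref{dpax}, so both terms inside $(\lambda^{1-\alpha}+t^\half\lambda^{1-\frac{\alpha}{2}})$ are read directly off the two coefficients in the second form of $\D_v\pax^v$. Second, your stated $\|\pax^v\|_{L^2}\approx(t\lambda^{2-\alpha})^{\frac14}\lambda^{-1+\frac{\alpha}{2}}$ is incorrect — a direct change of variables gives $\|\pax^v\|_{L^2}\approx a''(\xi_v)^{-\frac14}t^{\frac14}\approx(t\lambda^{2-\alpha})^{\frac14}$. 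The missing $\lambda^{-1}$ you were hunting for does not live in the packet normalization; it comes from converting $\|\tilde L\varphi_\lambda\|_{L^2}$ into a piece of $\|\varphi\|_X$, since $\tilde L$ acting on a frequency-$\lambda$ function is comparable to $L\partial_x$ up to a loss of one inverse frequency factor. Finally, the spatial bulk/tail splitting you anticipate as the "main obstacle" is not needed here: the packet $\pax^v$ with $v$ in the support of $\chi_\lambda$ is already concentrated inside $tJ_\lambda$, so there is no tail region to treat and the elliptic estimates of Lemma~\ref{Elliptic bounds for the derivative} play no role in this particular lemma.
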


\begin{proof}
Using the second form of $ \D_v \pax^v$ in \eqref{dpax}, we have
\[
|\chi_\lambda \D_v \gamma^\lambda| = |\chi_\lambda \langle \varphi_\lambda, \D_v \pax^v \rangle| \lesssim t^{\half}(t^\half\lambda^{1-\frac{\alpha}{2}} + \lambda^{1-\alpha})\|\varphi_{\lambda}\|_{L_x^{\infty}}
\]
where the $t^\half$ loss in front arises from the $L^1$ norm of the wave packet. Higher derivatives are obtained similarly, along with the $L^2$ estimates.

For the last bound, we use the first form of $ \D_v \pax^v$ in \eqref{dpax}. The contribution from the wave packet $\pax^{v,II}$ is easily estimated as above. For the remaining bound, Lemma 2.3 from \cite{ITpax} implies that
    \begin{align*}
        |\langle \varphi_\lambda,\tilde L \pax^v\rangle| &\lesssim (t\lambda^{2-\alpha})^{\frac14}\|\tilde{L}\varphi_\lambda\|_{L_x^2}\lesssim t^{\frac14}\lambda^{-\frac12-\frac{\alpha}{4}}\|\varphi_\lambda\|_X,
         \end{align*}
which finishes the proof.
\end{proof}

\subsubsection{Approximate profile}

We recall from \cite{ITpax} that $\gamma^\lambda$ provides a good approximation for the profile of $\varphi$. In our setting, we will also need to compare the profile with the differentiated flow $\D_x \varphi$. Define
\[
r^\lambda(t, x) = \varphi_\lambda(t, x) - t^{-\half}\gamma^\lambda(t, x/t)e^{-it\phi(x/t)}.
\]

\begin{lemma}\label{Error bounds 1}
Let $t \geq 1$. Then we have
\begin{equation*}
\begin{aligned}
\|\chi_\lambda(x/t) r^\lambda\|_{L^\infty_x} &\lesssim t^{-\frac34}\lambda^{\frac{\alpha}{4}-\frac{1}{2}} \| \tilde L \varphi_\lambda\|_{L^2_x}, \\
\|\chi_\lambda(x/t) \D_v r^\lambda\|_{L^\infty_x} &\lesssim t^{\frac14}\lambda^{\frac{\alpha}{4}-\frac{1}{2}} \| \tilde L \D_x\varphi_\lambda\|_{L^2_x} +  (\lambda^{1-\alpha} + t^\half \lambda^{1-\frac{\alpha}{2}})\|\varphi_\lambda \|_{L^\infty}.
\end{aligned}
\end{equation*}
\end{lemma}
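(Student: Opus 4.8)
The plan is to establish both bounds by treating $r^\lambda$ as the discrepancy between $\varphi_\lambda$ and its wave packet testing reconstruction, following the scheme of Ifrim-Tataru \cite{ITpax}, but keeping careful track of the extra derivative factors coming from the differentiated flow $\D_x \varphi$. For the first bound, I would start from the definition of $\gamma^\lambda(t,v) = \langle \varphi_\lambda, \pax^v\rangle$ and expand $\varphi_\lambda(t,x)$ near the spatial point $x = vt$ using the localization of $\pax^v$ at scale $\delta x \approx t^{\half} a''(\xi_v)^{\half} \approx t^{\half}\lambda^{1-\frac{\alpha}{2}}$. The leading term reproduces $t^{-\half}\gamma^\lambda(t,x/t)e^{-it\phi(x/t)}$; the remainder is controlled by the oscillation of $\varphi_\lambda$ over the wave packet scale, which in turn is estimated by $\tilde L\varphi_\lambda = t(\D_x - i\phi'(x/t))\varphi_\lambda$ after accounting for the phase. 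Pairing the $t^{\half}$ loss from the $L^1$ mass of $\pax^v$, the $t^{-\half}$ normalization, and the $(t\lambda^{2-\alpha})^{\frac14}$ factor from the $L^2\to L^\infty$ transfer on the wave packet scale (as in Lemma~2.3 of \cite{ITpax}), one arrives at the stated power $t^{-\frac34}\lambda^{\frac{\alpha}{4} - \frac12}$ in front of $\|\tilde L\varphi_\lambda\|_{L^2_x}$. This first estimate is essentially a direct citation/adaptation of \cite[Lemma 5.10]{ITpax} in our dispersion relation, so I expect it to be routine.

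For the second bound, I would differentiate the relation $r^\lambda = \varphi_\lambda - t^{-\half}\gamma^\lambda(t,x/t)e^{-it\phi(x/t)}$ with respect to the velocity parameter $v$, which amounts to testing $\varphi_\lambda$ against $\D_v \pax^v$. Using the first form of $\D_v \pax^v$ in \eqref{dpax}, namely $\D_v\pax^v = -\tilde L\pax^v + (a''(\xi_v)^{-\half})^{\frac{2-2\alpha}{2-\alpha}}\pax^{v,II}$, the $\tilde L\pax^v$ piece produces, after integrating $\tilde L$ by parts onto $\varphi_\lambda$, a term of the type $\langle \tilde L\varphi_\lambda, \pax^v\rangle$ — but now one factor of $\tilde L$ effectively trades for a derivative, so the relevant quantity becomes $\tilde L\D_x\varphi_\lambda$ after accounting that $\D_v$ acting on the reconstruction hits the $x/t$ dependence and thus brings down a spatial derivative. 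Repeating the wave-packet-scale Taylor expansion argument from the first part, but now at the level of $\D_x\varphi_\lambda$, yields the first term $t^{\frac14}\lambda^{\frac{\alpha}{4} - \frac12}\|\tilde L\D_x\varphi_\lambda\|_{L^2_x}$ (the power of $t$ shifting up by one relative to the first bound because $\D_v$ costs a factor $\sim t$ via $\tilde L = t(\D_x - i\phi'(x/t))$). The contribution of the secondary wave packet $\pax^{v,II}$ and of the lower-order terms where $\D_v$ hits the amplitude or phase rather than the profile are estimated crudely using Lemma~\ref{Gamma bounds} and the pointwise bounds on $\varphi_\lambda$, producing the second term $(\lambda^{1-\alpha} + t^{\half}\lambda^{1-\frac{\alpha}{2}})\|\varphi_\lambda\|_{L^\infty}$, where the two summands reflect respectively the transport and dispersive wave packet scales already appearing in Lemma~\ref{Gamma bounds}.

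The main obstacle I anticipate is bookkeeping the interplay between the $\D_v$ derivative and the rescaled spatial variable $y = (x-vt)/(t^{\half}a''(\xi_v)^{\half})$: differentiating in $v$ acts both on the explicit $v$ in the numerator and on the $\xi_v$-dependent normalization and phase, so one must carefully separate the part that reconstructs $\D_v(t^{-\half}\gamma^\lambda e^{-it\phi})$ from the genuine error, and verify that the $\tilde L$ that is produced can indeed be moved onto $\D_x\varphi_\lambda$ with only the claimed loss. A secondary technical point is justifying that it suffices to work against $\pax^{v}$ with $v\in J_\lambda$ (so that $a''(\xi_v) \approx \lambda^{\alpha-2}$ and the various powers of $\lambda$ are as stated) and that the tails where $v$ lies outside $\overline{J_\lambda}$ are negligible; this follows from the frequency localization of $\varphi_\lambda$ together with the rapid decay of $\chi$, exactly as in \cite{ITpax}, so I would simply invoke that. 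Once the reduction to $\tilde L\varphi_\lambda$ and $\tilde L\D_x\varphi_\lambda$ is in place, the remaining steps are the same $L^2 \to L^\infty$ wave packet estimates (Lemma~2.3 of \cite{ITpax}) used in the first bound, applied at one higher level of differentiation.
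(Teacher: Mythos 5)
Your proposal follows the paper's proof: the first bound is cited from \cite{ITpax}, and for the second you use the first form of $\D_v\pax^v$ in \eqref{dpax}, identify the leading piece with the undifferentiated estimate applied to $\D_x\varphi_\lambda$ (with an extra factor of $t$ from $\D_v$), and estimate the secondary wave packet $\pax^{v,II}$ and phase-mismatch residuals directly, which is exactly the decomposition in the paper's display \eqref{rexp}. The one wording note: the paper does not transfer the full skew-adjoint $\tilde L$ onto $\varphi_\lambda$ in one step; it integrates only the $t\D_x$ part by parts, producing $t\langle\D_x\varphi_\lambda,\pax^v\rangle$, while the $it\phi'(\cdot/t)$ piece combines with $-it\phi'(v)\gamma$ from the outer phase to yield the separately-estimated middle term $\langle\varphi_\lambda, it(\phi'(\cdot/t)-\phi'(v))\pax^v\rangle$; also $\delta x\approx t^{1/2}a''(\xi_v)^{1/2}\approx t^{1/2}\lambda^{\alpha/2-1}$, not $t^{1/2}\lambda^{1-\alpha/2}$.
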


\begin{proof}
The first estimate may be obtained from the proof of \cite[Proposition 4.7]{ITpax}. For the latter, we use the first representation in \eqref{dpax} to write
\begin{equation}\label{rexp}
e^{it\phi(v)} \D_v(\gamma(t, v) e^{-it(\phi(v)}) = t\langle \D_x\varphi_\lambda, \pax^v \rangle + \langle \varphi_\lambda, it(\phi'(\cdot/t) - \phi'(v)) \pax^v \rangle + (a''(\xi_v)^{-\half})^{\frac{2-2\alpha}{2-\alpha}}\langle \varphi_\lambda, \pax^{v, II} \rangle.
\end{equation}
To address the first term, we see that we may apply the undifferentiated estimate with $\D_x \varphi_\lambda$ in place of $\varphi_\lambda$. Precisely, we may apply the first estimate on
\[
\D_x\varphi_\lambda(t, x) - t^{-\half} \langle \D_x\varphi_\lambda, \pax^{x/t} \rangle e^{-it\phi(x/t)}.
\]
We estimate the third term of \eqref{rexp} via
\begin{equation*}
t^{-\half} (a''(\xi_v)^{-\half})^{\frac{2-2\alpha}{2-\alpha}}|\langle \varphi_\lambda, \pax^{v, II} \rangle| \lesssim \lambda^{1-\alpha}\|\varphi_\lambda \|_{L^\infty}.
\end{equation*}
It remains to estimate the middle term,
\[
 t^{-\half}|\chi_\lambda(v)\langle \varphi_\lambda, it(\phi'(\cdot/t) - \phi'(v)) \pax^v \rangle | \lesssim |\phi''(\lambda)| \cdot t^\half a''(\lambda)^\half \cdot \|\varphi_\lambda \|_{L^\infty} \lesssim t^\half \lambda^{1-\frac{\alpha}{2}} \|\varphi_\lambda \|_{L^\infty}
\]
\end{proof}

We denote
\[
\psi(t,x)=t^{-\half}\chi_\lambda(x/t)\gamma(t,x/t)e^{it\phi(x/t)}.
\]
We record some bounds for $\psi$:
\begin{lemma}\label{Psi bounds}
    Assume that $(t,v)\in\mathcal{D}$. We have
    \begin{align*}
        \|\D_x^n\psi(t,x)\|_{L_x^{\infty}}&\lesssim \|\varphi_\lambda\|_{L_x^{\infty}}\lambda^n\\
        \|\D_x^n \psi(t,x)\|_{L_x^{\infty}}&\lesssim \lambda^{\frac{1}{2}-\frac{\alpha}{4}}t^{-1/4}\|\varphi_\lambda\|_{L_x^{2}}\lambda^n\\
        \|\psi_x(t,x)\|_{L_x^{r}}&\lesssim \|\varphi_\lambda\|_{L_x^{r}}\lambda,\forall r\in[1,\infty]\\
    \end{align*}
\end{lemma}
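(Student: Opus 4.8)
The plan is to reduce everything to pointwise bounds on $\gamma^\lambda$ and its $v$-derivatives on the support of $\chi_\lambda$, which we already have from Lemma~\ref{Gamma bounds}, together with the elementary observations that $\chi_\lambda(x/t)$ is a bump function adapted to the interval $tJ_\lambda$ of length $\approx t\lambda^{\alpha-1}$, so each $x$-derivative of $\chi_\lambda(x/t)$ costs $t^{-1}\lambda^{1-\alpha}$, and that the phase $\phi(x/t)$ has $\phi'(x/t) \approx \xi_v \approx \lambda$, $\phi''(x/t)\approx\lambda^{2-\alpha}$, so each $x$-derivative hitting $e^{it\phi(x/t)}$ costs a factor $\approx\lambda$ (this is the dominant cost). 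Likewise, each $v=x/t$ derivative hitting $\gamma^\lambda(t,x/t)$ becomes $t^{-1}\partial_v\gamma^\lambda$, and by Lemma~\ref{Gamma bounds} $\|\chi_\lambda\partial_v^n\gamma^\lambda\|_{L^\infty}\lesssim t^{1/2}(t^{1/2}\lambda^{1-\alpha/2}+\lambda^{1-\alpha})^n\|\varphi_\lambda\|_{L^\infty}$, so after dividing by $t^n$ the leading contribution is $t^{1/2}(\lambda^{1-\alpha/2}t^{-1/2})^n\|\varphi_\lambda\|_{L^\infty} = t^{1/2}\lambda^{n-n\alpha/2}t^{-n/2}\|\varphi_\lambda\|_{L^\infty}$; since on $\mathcal D$ we have $t\gtrsim\lambda^{-\alpha}$, i.e. $t^{1/2}\lambda^{-\alpha/2}\gtrsim t^0$... one checks this is $\lesssim \lambda^n\|\varphi_\lambda\|_{L^\infty}$ precisely because $\lambda^{-\alpha/2}t^{-1/2}\lesssim 1$ on $\mathcal D$. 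So differentiating $\gamma^\lambda$ is cheaper than differentiating the phase, and the phase term $\lambda^n\|\varphi_\lambda\|_{L^\infty}$ dominates.

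The first bound then follows by applying the Leibniz rule to $\D_x^n\big(t^{-1/2}\chi_\lambda(x/t)\gamma^\lambda(t,x/t)e^{it\phi(x/t)}\big)$: the worst term is the one where all $n$ derivatives hit the exponential, giving $t^{-1/2}|\chi_\lambda||\gamma^\lambda|\lambda^n$, and by Lemma~\ref{Gamma bounds} (the undifferentiated $L^\infty$ bound $\|\chi_\lambda\gamma^\lambda\|_{L^\infty}\lesssim t^{1/2}\|\varphi_\lambda\|_{L^\infty}$) this is $\lesssim \lambda^n\|\varphi_\lambda\|_{L^\infty}$. All mixed terms, where some derivatives fall on $\chi_\lambda$ (cost $t^{-1}\lambda^{1-\alpha}\lesssim\lambda$ on $\mathcal D$) or on $\gamma^\lambda$ (cost $\lesssim\lambda$, by the paragraph above), are no larger. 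The second bound is identical except that one uses the $L^2$-based bound from Lemma~\ref{Gamma bounds}, namely $\|\chi_\lambda\gamma^\lambda\|_{L^2}\lesssim (t\lambda^{2-\alpha})^{1/4}\|\varphi_\lambda\|_{L^2}$, combined with the fact that $\psi$ is spatially localized to a set of measure $\approx t\lambda^{\alpha-1}$ so that $\|\cdot\|_{L^\infty}\lesssim |{\rm supp}|^{-1/2}\|\cdot\|_{L^2}$ on frequency-$\lambda$ localized pieces — actually more directly, one writes $\|\D_x^n\psi\|_{L^\infty}\lesssim t^{-1/2}\lambda^n\|\chi_\lambda\gamma^\lambda\|_{L^\infty}$ and bounds $\|\chi_\lambda\gamma^\lambda\|_{L^\infty}$ against $\|\varphi_\lambda\|_{L^2}$ via the $L^2$ line of Lemma~\ref{Gamma bounds} after a Bernstein step in $v$, yielding the stated $\lambda^{1/2-\alpha/4}t^{-1/4}\|\varphi_\lambda\|_{L^2}\lambda^n$.

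The third bound, the $L^r$ estimate for $\psi_x$ with the gain of exactly one power of $\lambda$, is obtained the same way: $\psi_x = t^{-1/2}\big(t^{-1}\chi_\lambda'\gamma^\lambda + t^{-1}\chi_\lambda\partial_v\gamma^\lambda + i\phi'\chi_\lambda\gamma^\lambda\big)e^{it\phi}$, and the dominant term is $i\phi'\chi_\lambda\gamma^\lambda e^{it\phi}$ with $|\phi'|\approx\lambda$; one then needs $\|t^{-1/2}\chi_\lambda\gamma^\lambda\|_{L^r_x}\lesssim\|\varphi_\lambda\|_{L^r_x}$, which is the $L^r$ version of the undifferentiated profile comparison — this follows by the change of variables $x\mapsto v=x/t$ (Jacobian $t$, contributing $t^{1/r}$, absorbed since the reconstruction $t^{-1/2}\gamma^\lambda(t,v)e^{it\phi}$ recovers $\varphi_\lambda$ up to the error $r^\lambda$ of Lemma~\ref{Error bounds 1}) interpolated between the $L^\infty$ and $L^2$ endpoints already established. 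I expect the one genuine technical point — the \textbf{main obstacle} — to be verifying that the $\partial_v\gamma^\lambda$ contributions really are controlled by $\lambda$ times the right-hand side uniformly on $\mathcal D$, i.e. checking the inequality $\lambda^{-\alpha/2}t^{-1/2}\lesssim 1$ and tracking that the "extra" last bound of Lemma~\ref{Gamma bounds} (which carries a $\|\varphi\|_X$ term) does not degrade the estimates; everything else is bookkeeping with the Leibniz rule.
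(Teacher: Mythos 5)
Your first estimate is correct and matches the paper's approach: differentiate $\psi = t^{-1/2}\chi_\lambda(x/t)\gamma(t,x/t)e^{it\phi(x/t)}$ by the Leibniz rule, feed in Lemma~\ref{Gamma bounds}, and use $t\gtrsim\lambda^{-\alpha}$ on $\mathcal D$ to see that each derivative costs at most a factor $\lambda$ regardless of where it falls. That part of your argument is fine and no more condensed than the paper's own single display for $\psi_x$.

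However, your routes to the second and third estimates have genuine gaps.

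For the second estimate, the step labeled ``a Bernstein step in $v$'' does not exist: $\chi_\lambda\gamma^\lambda$ is spatially localized in $v$, not frequency localized, and spatial localization gives $\|f\|_{L^2}\lesssim|\operatorname{supp} f|^{1/2}\|f\|_{L^\infty}$, i.e.\ the opposite inequality from the one you need. Even if you apply it mechanically you would obtain $\lambda^{1-3\alpha/4}t^{1/4}$ rather than the stated $\lambda^{1/2-\alpha/4}t^{1/4}$ in front of $\|\varphi_\lambda\|_{L^2}$. The correct mechanism is not Lemma~\ref{Gamma bounds} at all but direct Cauchy--Schwarz on the definition of the asymptotic profile: $|\gamma^\lambda(t,v)|=|\langle\varphi_\lambda,\pax^v\rangle_{L^2_x}|\le\|\varphi_\lambda\|_{L^2_x}\|\pax^v\|_{L^2_x}$, and from the wave packet normalization $\|\pax^v\|_{L^2_x}\approx t^{1/4}a''(\xi_v)^{-1/4}\approx t^{1/4}\lambda^{(2-\alpha)/4}$. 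Plugging this $L^\infty_v$ bound into the same Leibniz computation yields the second estimate.

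For the third estimate, interpolation between the $L^\infty$ and $L^2$ endpoints cannot cover the range $r\in[1,2)$, and in any case you have not established either endpoint in a usable form: as noted above, the $L^2_v$ line of Lemma~\ref{Gamma bounds} carries the lossy factor $(t\lambda^{2-\alpha})^{1/4}$, so the change-of-variables calculation you sketch does not produce $\|\psi\|_{L^2_x}\lesssim\|\varphi_\lambda\|_{L^2}$; and Lemma~\ref{Error bounds 1} only controls $r^\lambda$ in $L^\infty_x$ in terms of $\|\tilde L\varphi_\lambda\|_{L^2}$, not in $L^r_x$ in terms of $\|\varphi_\lambda\|_{L^r}$, so the ``reconstruction'' route is also unavailable. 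What actually works for all $r\in[1,\infty]$ is Schur's test: the map $\varphi_\lambda\mapsto t^{-1/2}\gamma^\lambda(t,x/t)$ is an integral operator with kernel $K(x,z)=t^{-1/2}\overline{\pax^{x/t}(z)}$, and a direct computation using the wave-packet scaling gives $\sup_x\int|K(x,z)|\,dz\lesssim1$ and $\sup_z\int|K(x,z)|\,dx\lesssim1$, hence $L^r\to L^r$ boundedness with constant $O(1)$; the dominant Leibniz term $t^{-1/2}\chi_\lambda\gamma^\lambda\,i\phi'\,e^{it\phi}$ with $|\phi'|\approx\lambda$ then gives the stated $\lambda\|\varphi_\lambda\|_{L^r}$.
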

\begin{proof}
     We have
      \begin{align*}
          \psi_x(t,x)=\frac{1}{\sqrt{t}}\frac{1}{t}\chi'_{\lambda}\gamma(t,x/t)e^{it\phi(x/t)}+\frac{1}{\sqrt{t}}\chi_{\lambda}\frac{1}{t}\gamma_v(t,x/t)e^{it\phi(x/t)}+\frac{1}{\sqrt{t}}\chi_{\lambda}\gamma(t,x/t)e^{it\phi(x/t)}i\phi'(x/t)
      \end{align*}
      By using Lemma \ref{Gamma bounds}, as well as the condition $ (t,v)\in\mathcal{D}$, we have
      \begin{align*}
        |\psi_x(t,x)|&\lesssim t^{-1/2}(t^{-1}t^{1/2}\|\varphi_\lambda\|_{L_x^{\infty}}(\lambda^{1-\alpha}+\lambda^{1-\frac{\alpha}{2}}t^{1/2})+t^{1/2}\|\varphi_\lambda\|_{L_x^{\infty}}\lambda)\\
        &\lesssim \|\varphi_\lambda\|_{L_x^{\infty}}(t^{-1}\lambda^{1-\alpha}+\lambda^{1-\alpha/2}t^{-1/2}+\lambda)\\
        &\lesssim \lambda\|\varphi_\lambda\|_{L_x^{\infty}}
      \end{align*}
      The other bounds can be deduced similarly.
\end{proof}

\

We also observe that on the wave packet scale, we may replace $\gamma(t, v)$ with $\gamma(t, x/t)$ up to acceptable errors. Indeed, let 
\[
\theta(t,x)=t^{-\half}\chi_\lambda(x/t)\gamma(t,v)e^{it\phi(x/t)}
,\]
and denote
\[
\beta^\lambda_v(t, x) = \theta(x,t)-\psi(t,x)=t^{-1/2}\chi_\lambda(x/t)(\gamma(t, v) - \gamma(t, x/t))e^{it\phi(x/t)},
\]

We are also going to need some bounds for $\theta$, that we record below:
\begin{lemma}\label{Theta bounds}
    Assume that $(t,v)\in\mathcal{D}$. Then, we have the bounds
    \begin{align*}
        \|\theta(t,x)\|_{L_x^{\infty}}&\lesssim \|\varphi_\lambda\|_{L_x^{\infty}}\\
        \|\theta_x(t,x)\|_{L_x^{r}}&\lesssim t^{1/r}\|\varphi_\lambda\|_{L_x^{\infty}}\lambda,\forall r\in[1,\infty]
    \end{align*}
\end{lemma}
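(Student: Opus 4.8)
The plan is to exploit the feature that distinguishes $\theta$ from $\psi$: in $\theta$ the profile $\gamma(t,v)$ is frozen at the fixed velocity $v$, hence constant in $x$, so all of the $x$-dependence of $\theta$ lives in the smooth cutoff $\chi_\lambda(x/t)$ and the unimodular oscillation $e^{it\phi(x/t)}$. Both are $O(1)$ and supported in $\{x/t \in \overline{J_\lambda}\}$, a set of measure $\approx t|J_\lambda| \approx t\lambda^{\alpha-1}$. In particular no derivative will ever land on the profile, which is what makes both estimates essentially elementary.

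For the $L^\infty$ bound, $|\chi_\lambda| \le 1$ and $|e^{it\phi(x/t)}| = 1$ give $\|\theta\|_{L^\infty_x} \lesssim t^{-\half}|\gamma(t,v)|$, so it suffices to show $|\gamma(t,v)| = |\langle \varphi_\lambda, \pax^v\rangle| \lesssim t^{\half}\|\varphi_\lambda\|_{L^\infty_x}$; this follows from H\"older together with $\|\pax^v\|_{L^1_x} \approx t^{\half}$, which is immediate from a change of variables in the definition of the wave packet (the normalization $a''(\xi_v)^{\pm\half}$ cancels). This yields the first inequality.

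For the derivative bound I would differentiate directly,
\[
\theta_x = t^{-\half}\gamma(t,v)\left(\frac{1}{t}\chi_\lambda'(x/t) + i\,\chi_\lambda(x/t)\,\phi'(x/t)\right)e^{it\phi(x/t)}.
\]
On the support of $\chi_\lambda(\cdot/t)$ one has $\phi'(x/t) = \xi_{x/t}$ (since $\phi'(v)=\xi_v$, from $v = a'(\xi_v)$), with $|\xi_{x/t}|\approx \lambda$, while $|\chi_\lambda'(x/t)| \lesssim |J_\lambda|^{-1}\approx \lambda^{1-\alpha}$; because $(t,v)\in\mathcal{D}$ forces $t\gtrsim\lambda^{-\alpha}$, the cutoff-derivative term is dominated, $t^{-1}\lambda^{1-\alpha}\lesssim\lambda$. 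Hence $|\theta_x| \lesssim t^{-\half}|\gamma(t,v)|\,\lambda\cdot\mathbf{1}_{\{x/t\in\overline{J_\lambda}\}}(x)$. Taking $L^r_x$ norms, the indicator contributes $(t|J_\lambda|)^{1/r}$, and feeding in $|\gamma(t,v)|\lesssim t^{\half}\|\varphi_\lambda\|_{L^\infty_x}$ and reorganizing the dyadic powers via the $\mathcal{D}$-constraint produces the asserted $t^{1/r}\|\varphi_\lambda\|_{L^\infty_x}\lambda$.

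The main obstacle is purely bookkeeping in that last step: one has to chase the wave-packet normalizations $a''(\xi_v)^{\pm\half}\approx\lambda^{\mp(2-\alpha)/2}$ (appearing both in $\|\pax^v\|_{L^1}$ and in the spatial scaling behind $\chi_\lambda$), the measure $t|J_\lambda|$ of the cutoff support, and the constraint $t\gtrsim\lambda^{-\alpha}$, so that the accumulated powers of $\lambda$ collapse to a single $\lambda$ and those of $t$ to $t^{1/r}$. Everything else is elementary precisely because $\gamma(t,v)$ carries no $x$-dependence — which is exactly the reason for introducing $\theta$ (and correspondingly $\beta^\lambda_v$) as a stand-in for $\psi$ on the wave-packet scale.
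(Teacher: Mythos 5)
Your proof is correct and follows essentially the same route as the paper: bound $|\gamma(t,v)|\lesssim t^{1/2}\|\varphi_\lambda\|_{L^\infty_x}$ via H\"older and $\|\pax^v\|_{L^1_x}\approx t^{1/2}$ (this is exactly the $n=0$ case of the paper's Lemma~\ref{Gamma bounds}), then differentiate $\theta$, estimate $|\chi_\lambda'|\lesssim\lambda^{1-\alpha}$ and $|\phi'|\approx\lambda$ on the support, absorb the cutoff term using $t\gtrsim\lambda^{-\alpha}$, and integrate over the support for $r<\infty$. One small bookkeeping remark: the support-measure step actually produces $(t\lambda^{\alpha-1})^{1/r}\lambda\|\varphi_\lambda\|_{L^\infty_x}$, i.e.\ an extra factor $\lambda^{(\alpha-1)/r}$ relative to the stated $t^{1/r}\lambda\|\varphi_\lambda\|_{L^\infty_x}$ — the paper itself only writes out $r=\infty$ and dismisses the rest as ``similar'', so this discrepancy is inherited from the source rather than introduced by you, but it is worth being explicit about rather than asserting that the powers collapse.
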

\begin{proof}
      We have
      \begin{align*}
          \theta_x(t,x)=\frac{1}{\sqrt{t}}\frac{1}{t}\chi_{\lambda}'(x/t)\gamma(t,v)e^{it\phi(x/t)}+\frac{1}{\sqrt{t}}\chi_{\lambda}(x/t)\gamma(t,v)e^{it\phi(x/t)}i\phi'(x/t)
      \end{align*}
      By using Lemma \ref{Gamma bounds}, we have
      \begin{align*}
        |\theta_x(t,x)|&\lesssim t^{-1/2}(t^{-1}\lambda^{1-\alpha}t^{1/2}\|\varphi_\lambda\|_{L_x^{\infty}}+t^{1/2}\|\varphi_\lambda\|_{L_x^{\infty}}\lambda) \lesssim \|\varphi_\lambda\|_{L_x^{\infty}}\lambda
      \end{align*}
      The other bounds can be deduced similarly.
\end{proof}
\begin{lemma}\label{Beta bounds} 
Let $v\in J_\lambda$, and $(t,v)\in\mathcal{D}$. Then, for every $y\neq 0$ and $x$ such that $\displaystyle|x-vt|\lesssim\delta x=t^{1/2}\lambda^{\frac{\alpha}{2}-1}$, we have the bound 
\begin{align*}
|\dq^y\beta_v|&\lesssim t^{-3/4}\lambda^{\frac{\alpha}{4}-\half}\|\varphi\|_X
\end{align*}
\end{lemma}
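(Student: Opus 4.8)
The plan is to exploit the first-order vanishing of the amplitude $\gamma(t,v)-\gamma(t,x/t)$ at $x=vt$ together with the fact that $\beta^\lambda_v$ oscillates at frequency comparable to $\lambda$, thereby reducing the estimate to the bound on $\partial_v\gamma^\lambda$ from Lemma~\ref{Gamma bounds}. Recall that $\beta^\lambda_v(t,x)=t^{-1/2}\chi_\lambda(x/t)\bigl(\gamma(t,v)-\gamma(t,x/t)\bigr)e^{it\phi(x/t)}$. Since $\gamma(t,v)-\gamma(t,w)$ vanishes at $w=v$, the mean value theorem gives $|\gamma(t,v)-\gamma(t,w)|\lesssim |w-v|\,\|\chi_\lambda\partial_v\gamma^\lambda\|_{L_x^\infty}$ for $w$ in the (slight enlargement of the) velocity cell $J_\lambda$ on which $\chi_\lambda$ is supported; here one uses that on $\mathcal{D}$ the wave packet velocity width $\delta x/t$ is $\lesssim |J_\lambda|\approx\lambda^{\alpha-1}$, so that for $|x-vt|\lesssim\delta x$ the segment joining $x/t$ to $v$ stays in that enlargement and $\chi_\lambda(x/t)=1$. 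In particular $|\beta^\lambda_v(x)|\lesssim t^{-1/2}(\delta x/t)\,\|\chi_\lambda\partial_v\gamma^\lambda\|_{L_x^\infty}$ on the wave packet core.

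To bound $\dq^y\beta^\lambda_v$ I would split on the size of $|y|$. For $|y|\le\delta x$ the interval $[x,x+y]$ stays within $O(\delta x)$ of $vt$, so $|\dq^y\beta^\lambda_v(x)|\le\sup_{[x,x+y]}|\D_x\beta^\lambda_v|$; the dominant contribution is the term in which $\D_x$ falls on $e^{it\phi(x/t)}$, producing a factor $\phi'(x/t)=\xi_{x/t}\approx\lambda$, which together with the amplitude bound and the identity $\lambda\,\delta x/t=t^{-1/2}\lambda^{\alpha/2}$ yields $|\dq^y\beta^\lambda_v(x)|\lesssim t^{-1}\lambda^{\alpha/2}\|\chi_\lambda\partial_v\gamma^\lambda\|_{L_x^\infty}$ (the contributions in which $\D_x$ falls on $\gamma(t,x/t)$ or on $\chi_\lambda$ are of the same or lower order on $\mathcal{D}$). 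For $|y|>\delta x$ I would instead use $|\dq^y\beta^\lambda_v(x)|\le(|\beta^\lambda_v(x+y)|+|\beta^\lambda_v(x)|)/|y|$: the $\beta^\lambda_v(x)$ term is $\lesssim t^{-3/2}\|\chi_\lambda\partial_v\gamma^\lambda\|_{L_x^\infty}$ by the amplitude bound, while $\beta^\lambda_v(x+y)$ vanishes unless $(x+y)/t$ lies in the support of $\chi_\lambda$, in which case $|v-(x+y)/t|\le(\delta x+|y|)/t\lesssim|y|/t$ gives $|\beta^\lambda_v(x+y)|/|y|\lesssim t^{-3/2}\|\chi_\lambda\partial_v\gamma^\lambda\|_{L_x^\infty}$. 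Since $t^{-3/2}\lesssim t^{-1}\lambda^{\alpha/2}$ on $\mathcal{D}$, in every regime $|\dq^y\beta^\lambda_v|\lesssim t^{-1}\lambda^{\alpha/2}\|\chi_\lambda\partial_v\gamma^\lambda\|_{L_x^\infty}$, uniformly in $y\ne0$ and in $x$ with $|x-vt|\lesssim\delta x$.

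Finally I would insert the last estimate of Lemma~\ref{Gamma bounds},
\[
\|\chi_\lambda\partial_v\gamma^\lambda\|_{L_x^\infty}\lesssim t^{1/4}\lambda^{-1/2-\alpha/4}\|\varphi\|_X+t^{1/2}\lambda^{1-\alpha}\|\varphi_\lambda\|_{L_x^\infty}.
\]
The first term produces precisely $t^{-3/4}\lambda^{\alpha/4-1/2}\|\varphi\|_X$, which is the claimed bound; the second is absorbed after controlling $\|\varphi_\lambda\|_{L_x^\infty}$ by $\|\varphi\|_X$ through the already-established pointwise bound \eqref{Pointwise Vector Field Bound 2} (and Bernstein at low frequency), trading powers of $\lambda$ against powers of $t$ by means of the inequality $t\gtrsim\lambda^{-\alpha}$ that defines $\mathcal{D}$.

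The step I expect to require the most care is making the difference-quotient bound genuinely uniform in $y$: one must match the near-diagonal estimate, which is driven by the oscillation of $\beta^\lambda_v$ at frequency $\approx\lambda$, with the far estimate, which relies on the compact support and the first-order vanishing of $\beta^\lambda_v$ at $x=vt$ divided by $|y|$, and then verify that the slack available in the two regimes is in each case governed by the constraint $t\gtrsim\lambda^{-\alpha}$. A secondary bookkeeping point is the behaviour near $\partial J_\lambda$ (where $v$ may sit close to the cell boundary), which forces a harmless enlargement of the cutoffs and the use of Lemma~\ref{Gamma bounds} on a slightly larger frequency/velocity window.
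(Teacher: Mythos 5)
Your proof is correct but organizes the difference-quotient estimate differently from the paper. You split on $|y|\le\delta x$ versus $|y|>\delta x$, bounding $\dq^y\beta^\lambda_v$ by $\sup|\D_x\beta^\lambda_v|$ in the near regime and by $(|\beta^\lambda_v(x+y)|+|\beta^\lambda_v(x)|)/|y|$ together with the first-order vanishing of $\beta^\lambda_v$ at $x=vt$ in the far regime. The paper instead applies the discrete Leibniz rule to $\dq^y\beta_v$ once and for all, writing
\[
\dq^y\beta_v = -t^{-1/2}\,\dq^y\bigl(\gamma(t,\cdot/t)\bigr)\,\chi_\lambda((x+y)/t)e^{it\phi((x+y)/t)} + t^{-1/2}\,(\gamma(t,v)-\gamma(t,x/t))\,\dq^y\bigl(\chi_\lambda(\cdot/t)e^{it\phi(\cdot/t)}\bigr),
\]
then bounds the first piece by $t^{-3/2}\|\partial_v\gamma\|_{L^\infty}$ via the mean value theorem (the factor $\chi_\lambda((x+y)/t)$ keeps the segment inside the enlarged velocity cell, so no restriction on $|y|$ is needed), and the second piece by the amplitude estimate $t^{-1/2}\lambda^{\alpha/2-1}\|\partial_v\gamma\|_{L^\infty}$ times the global Lipschitz constant $t^{-1}\lambda^{1-\alpha}+\lambda$ of $\chi_\lambda(\cdot/t)e^{it\phi(\cdot/t)}$. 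The Leibniz-rule decomposition yields a single estimate uniform in $y$, so it sidesteps your regime matching; but the conclusion is the same, as both routes produce $|\dq^y\beta_v|\lesssim t^{-1}\lambda^{\alpha/2}\|\partial_v\gamma\|_{L^\infty}$ after using $t^{-1/2}\lesssim\lambda^{\alpha/2}$ on $\mathcal D$, and then both insert the third estimate of Lemma~\ref{Gamma bounds} and absorb the $\|\varphi_\lambda\|_{L_x^\infty}$ term via \eqref{Pointwise Vector Field Bound 2} and $t\gtrsim\lambda^{-\alpha}$.
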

\begin{proof}
We have
\[
\delta^y \beta_v = -t^{-1/2}\delta^y(\gamma(t, \cdot/t)) \chi_\lambda((x+y)/t)e^{it\phi((x + y)/t)} + t^{-1/2}(\gamma(t, v) - \gamma(t, x/t)) \delta^y (\chi_\lambda(\cdot/t)e^{it\phi(\cdot/t)}).
\]
The Mean Value Theorem ensures that
\[
|\delta^y(\gamma(t, \cdot/t))| \lesssim t^{-1} \|\D_v \gamma\|_{L^\infty},
\]
and that
\begin{align*}
|\dq^y\beta_v|&\lesssim t^{-1/2}(t^{-1}\|\partial_v\gamma\|_{L^\infty}+t^{-1/2}\lambda^{\frac{\alpha}{2}-1}\|\partial_v\gamma\|_{L^\infty}(t^{-1}\lambda^{1-\alpha}+\lambda))\\
&\lesssim t^{-1}\|\partial_v\gamma\|_{L^\infty}(t^{-1/2}+\lambda^{\alpha/2})\lesssim t^{-1}\lambda^{\alpha/2}(t^{\frac14}\lambda^{-\frac12-\frac{\alpha}{4}}\|\varphi\|_X + t^\half \|\varphi_\lambda\|_{L_x^{\infty}})\\
&\lesssim t^{-3/4}\lambda^{\frac{\alpha}{4}-\half}\|\varphi\|_X+t^{-1/2}\lambda^{\alpha/2}\|\varphi_\lambda\|_{L_x^\infty}\lesssim t^{-3/4}\lambda^{\frac{\alpha}{4}-\half}\|\varphi\|_X
\end{align*}
\end{proof}

\subsection{Bounds for $Q$}

Write, slightly abusing notation, 
\[
Q(\varphi) = Q(\varphi, \overline{\varphi}, \varphi) := \frac13 \int\frac{1}{|y|^{\alpha-1}} \sgn(y) \cdot |\dq^y\varphi|^2\dq^y\varphi\, dy.
\]

\begin{lemma}\label{Cubic difference bound}
For $\alpha>1$, we have the difference estimates 
\begin{equation*}\begin{aligned}
\|Q(\varphi_1) - Q(\varphi_2)\|_{L_x^\infty} &\lesssim \|\D_x(\varphi_1,\varphi_2)\|
_{L_x^{\infty}}\|(\varphi_1,\varphi_2)\|_{W_x^{1,\infty}} \|\partial_x(\varphi_1-\varphi_2)\|_{L_x^{\infty}},
\end{aligned}\end{equation*}
\begin{equation*}\begin{aligned}
\|Q(\varphi_1) - Q(\varphi_2)\|_{L_x^2}&\lesssim \|\D_x (\varphi_1,\varphi_2)\|_{L_x^2}\|(\varphi_1,\varphi_2)\|_{W_x^{1,\infty}}\|\partial_x(\varphi_1-\varphi_2)\|_{L_x^\infty},
\end{aligned}\end{equation*}
while for $\alpha<1$ we have
\begin{equation*}\begin{aligned}
\|Q(\varphi_1) - Q(\varphi_2)\|_{L_x^\infty} &\lesssim \|(\varphi_1,\varphi_2)\|
_{L_x^{\infty}}\|(\varphi_1,\varphi_2)\|_{W_x^{1,\infty}} \|\partial_x(\varphi_1-\varphi_2)\|_{L_x^{\infty}},
\end{aligned}\end{equation*}
\begin{equation*}\begin{aligned}
\|Q(\varphi_1) - Q(\varphi_2)\|_{L_x^2}&\lesssim \| (\varphi_1,\varphi_2)\|_{L_x^2}\|(\varphi_1,\varphi_2)\|_{W_x^{1,\infty}}\|\partial_x(\varphi_1-\varphi_2)\|_{L_x^\infty},
\end{aligned}\end{equation*}
Moreover, for $\alpha>1$ we also have the estimates 
\begin{equation*}\begin{aligned}
\|Q(\varphi_1) - Q(\varphi_2)\|_{L_x^\infty} &\lesssim \||D_x|^{1-\delta}(\varphi_1,\varphi_2)\|
_{W_x^{2\delta,\infty}}\||D_x|^{\alpha-1}(\varphi_1,\varphi_2)\|_{L_x^\infty} \|\partial_x(\varphi_1-\varphi_2)\|_{L_x^{\infty}},
\end{aligned}\end{equation*}
\begin{equation*}\begin{aligned}
\|Q(\varphi_1) - Q(\varphi_2)\|_{L_x^2} &\lesssim \||D_x|^{1-\delta}(\varphi_1,\varphi_2)\|
_{H_x^{2\delta}}\||D_x|^{\alpha-1}(\varphi_1,\varphi_2)\|_{L_x^\infty} \|\partial_x(\varphi_1-\varphi_2)\|_{L_x^{\infty}},
\end{aligned}\end{equation*}
while for $\alpha<1$ we have
\begin{equation*}\begin{aligned}
\|Q(\varphi_1) - Q(\varphi_2)\|_{L_x^\infty} &\lesssim \||D_x|^{1-\delta}(\varphi_1,\varphi_2)\|
_{W_x^{\delta,\infty}}\||D_x|^{\alpha-\delta}(\varphi_1,\varphi_2)\|_{W_x^{\delta,\infty}} \|\partial_x(\varphi_1-\varphi_2)\|_{L_x^{\infty}},
\end{aligned}\end{equation*}
\begin{equation*}\begin{aligned}
\|Q(\varphi_1) - Q(\varphi_2)\|_{L_x^2} &\lesssim \||D_x|^{1-\delta}(\varphi_1,\varphi_2)\|
_{H_x^{\delta}}\||D_x|^{\alpha-\delta}(\varphi_1,\varphi_2)\|_{H_x^\delta} \|\partial_x(\varphi_1-\varphi_2)\|_{L_x^{\infty}},
\end{aligned}\end{equation*}
\end{lemma}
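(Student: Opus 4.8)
The plan is to exploit the multilinearity of $Q$. Let $Q(f_1,f_2,f_3) = \tfrac13\int\frac{\sgn y}{|y|^{\alpha-1}}\,\dq^y f_1\,\overline{\dq^y f_2}\,\dq^y f_3\,dy$ denote the symmetric trilinear form whose diagonal is $Q(\varphi) = Q(\varphi,\overline\varphi,\varphi)$, and telescope
\[
Q(\varphi_1)-Q(\varphi_2) = Q(\varphi_1-\varphi_2,\overline{\varphi_1},\varphi_1) + Q(\varphi_2,\overline{\varphi_1-\varphi_2},\varphi_1) + Q(\varphi_2,\overline{\varphi_2},\varphi_1-\varphi_2).
\]
Since the bounds only see $|\dq^y f_i|$, the conjugations are immaterial and, by symmetry of $Q$, it suffices to estimate a single term of the form $Q(g_1,g_2,\varphi_1-\varphi_2)$ with $g_1,g_2\in\{\varphi_1,\varphi_2\}$. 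In this term I would bound the difference quotient of the factor $\varphi_1-\varphi_2$ pointwise and uniformly in $y$ by $\|\dq^y(\varphi_1-\varphi_2)\|_{L^\infty_x}\le\|\partial_x(\varphi_1-\varphi_2)\|_{L^\infty_x}$ and pull it out of the integral. This extracts the factor $\|\partial_x(\varphi_1-\varphi_2)\|_{L^\infty_x}$ common to all eight estimates, and reduces matters to bounding the bilinear average $\big\|\int\frac{1}{|y|^{\alpha-1}}|\dq^y g_1\,\dq^y g_2|\,dy\big\|_{L^r_x}$ for $r\in\{2,\infty\}$; when $r=2$, H\"older sends one background to $L^2_x$ (carrying the $L^2$-based norm) and the other to $L^\infty_x$.

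For the four non-refined bounds, this bilinear average is precisely the kind handled by Lemma~\ref{Trilinear integral estimate-v0} with $n=2$ (the $\sgn y$ in the kernel is harmless); I would split the $y$-integral at $|y|=1$. Near $y=0$ one may assign up to one derivative to each background, which is integrable since $\alpha<2$; near $y=\infty$, integrability of $\int_{|y|\ge1}|y|^{1-\alpha}|y|^{\beta_1+\beta_2-2}\,dy$ forces $\beta_1+\beta_2<\alpha$. For $\alpha>1$ one may therefore keep one full derivative on one background and none on the other, producing $\|\partial_x(\varphi_1,\varphi_2)\|_{L^\infty}\|(\varphi_1,\varphi_2)\|_{W^{1,\infty}}$ and its $L^2$ analogue; for $\alpha<1$ neither background can carry a derivative at large $|y|$, which is exactly why the undifferentiated factor $\|(\varphi_1,\varphi_2)\|_{L^\infty}$ (resp.\ $\|(\varphi_1,\varphi_2)\|_{L^2}$) must appear, while near $y=0$ the surviving $W^{1,\infty}$ factor supplies slightly more than $\alpha$ derivatives via $\||D_x|^s g\|_{L^\infty}\lesssim\|g\|_{W^{1,\infty}}$, $s\in[0,1]$.

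For the refined bounds I would run the same reduction and split at $|y|=1$, but near $y=0$ the derivative loss is evenly balanced and the $y$-integral sits at the endpoint of integrability. For $\alpha>1$ I would invoke the Besov-sharp estimate of Lemma~\ref{Besov trilinear integral estimate} (with $n=2$), which rests on the difference-quotient characterization of $\dot B^s_{p,q}$ in Lemma~\ref{Endpoint integral estimate}, to produce factors $\|g_i\|_{\dot B^{1-\alpha_i}_{p_i,q_i}}$ with $\tfrac1{q_1}+\tfrac1{q_2}=1$; taking $(\alpha_1,q_1)=(0,1)$ and $(\alpha_2,q_2)=(2-\alpha,\infty)$ places the two factors at the levels $1$ and $\alpha-1$, and the extra regularity built into the control norms $\||D_x|^{1-\delta}(\cdot)\|_{W^{2\delta,\infty}}$ (and into $H^{2\delta}$, $H^{\delta}$ in the $L^2$ versions), via the embeddings $\dot B^{1-\delta}_{\infty,\infty}\cap\dot B^{1+\delta}_{\infty,\infty}\hookrightarrow\dot B^1_{\infty,1}$, respectively $\dot H^{1-\delta}\cap\dot H^{1+\delta}\hookrightarrow\dot B^1_{2,1}$, supplies exactly the room to pass from the available norms (which are essentially $\dot B^{s}_{\infty,\infty}$-, resp.\ $\dot H^{s}$-type, away from low frequencies) to the $\ell^1$-summable Besov spaces that these exponents require; the $L^2$ variants are identical with the $L^2$-slot background carrying the Sobolev dressing. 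The case $\alpha<1$ is more delicate: here the naive derivative balance is over-spent relative to the integrability of the kernel at large $|y|$, so the refined bounds require a more careful treatment of the $|y|\ge1$ region, exploiting the frequency support of the functions to which the estimate is applied. The main obstacle is precisely this endpoint and large-$|y|$ bookkeeping for the refined estimates: one must choose the exponents $\alpha_i$ and the summabilities $q_i$ so that the $y$-integral converges at both ends while reproducing exactly the prescribed inhomogeneous norms, and this is where the hypothesis $\alpha\neq1$ and the precise form of the control norms are essential.
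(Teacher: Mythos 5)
Your telescoping of the trilinear form, pulling out $\|\partial_x(\varphi_1-\varphi_2)\|_{L^\infty}$ uniformly in $y$, and then estimating the residual bilinear $y$--average by splitting at $|y|=1$ is exactly the paper's strategy; the paper carries out the $|y|\lessgtr 1$ split by hand while you invoke Lemma~\ref{Trilinear integral estimate-v0} with $n=2$, but the two are the same computation, and your exponent bookkeeping (use the $\alpha_i$ branch near $y=0$, the $\beta_i$ branch near $y=\infty$, with $\sum\beta_i<\alpha$ forcing the distinction between $\alpha>1$ and $\alpha<1$) reproduces the paper's four non-refined estimates correctly. For the $\alpha>1$ refined estimates the paper simply asserts they are ``similar,'' and your route through Lemma~\ref{Besov trilinear integral estimate} with $(\alpha_1,q_1)=(0,1)$, $(\alpha_2,q_2)=(2-\alpha,\infty)$, combined with the embeddings $\dot B^{1-\delta}_{\infty,\infty}\cap\dot B^{1+\delta}_{\infty,\infty}\hookrightarrow\dot B^1_{\infty,1}$ and $\||D_x|^{\alpha-1}g\|_{L^\infty}\gtrsim\|g\|_{\dot B^{\alpha-1}_{\infty,\infty}}$, is a sound way to make that precise.

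The genuine gap is the $\alpha<1$ refined estimates, which you flag but do not close, and the workaround you sketch (``exploiting the frequency support of the functions to which the estimate is applied'') is not an argument for the lemma as written, since the lemma is stated for arbitrary $\varphi_1,\varphi_2$ with no frequency-localization hypothesis. In fact there is a concrete obstruction: after factoring out $\|\partial_x(\varphi_1-\varphi_2)\|_{L^\infty}$, the best $y$-decay the stated norms provide for $\dq^y g_1$ and $\dq^y g_2$ is $|y|^{-\delta}$ (from $\||D_x|^{1-\delta}g_1\|_{L^\infty}$) and $|y|^{\alpha-\delta-1}$ (from $\||D_x|^{\alpha-\delta}g_2\|_{L^\infty}$), so the tail contributes $\int_{|y|>1}|y|^{1-\alpha}\cdot|y|^{-\delta}\cdot|y|^{\alpha-\delta-1}\,dy=\int_{|y|>1}|y|^{-2\delta}\,dy$, which diverges for $\delta<1/2$. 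A scaling check on a single low-frequency bump $g=g_\lambda$, $\lambda\ll 1$, gives $\|Q(g)\|_{L^\infty}\sim\lambda^{1+\alpha}$ while the right-hand side of the stated $\alpha<1$ refined bound scales like $\lambda^{2+\alpha-2\delta}$, which is smaller for small $\lambda$ and $\delta<1/2$. So this is not merely a bookkeeping nuisance you can defer; the stated $\alpha<1$ refined inequality needs either an additional $\|g\|_{L^\infty}$-type control or a frequency-localization hypothesis to handle the $|y|>1$ region, and your proposal does not supply either. (Since the paper's own proof also stops at the first two estimates and hand-waves the rest, this is arguably an issue with the lemma statement as much as with your proof, but it is a real gap in what you wrote.)
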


\begin{proof}
We only prove the first two estimates in the case $\alpha>1$, as the other ones are similar. 

Write
\[
Q(\varphi_1) - Q(\varphi_2) = \int_{|y|\leq 1} + \int_{|y|> 1}
\]
where the integrand may be written
\begin{align*}
\sgn(y)(|\dq^y\varphi_1|^2\dq^y\overline{\varphi_1}- &|\dq^y\varphi_2|^2\dq^y\overline{\varphi_2}) \\
&= \sgn(y)(\dq^y(\varphi_1-\varphi_2)(|\dq^y\varphi_1|^2+|\dq^y\varphi_2|^2)+\dq^y(\overline{\varphi_1}-\overline{\varphi_2})\dq^y\varphi_1\dq^y\varphi_2).
\end{align*}
The first integral contributes to the two estimates respectively,
\begin{equation*}\begin{aligned}
\bigg| \int_{|y|\leq 1} \bigg| \lesssim \|\D_x(\varphi_1,\varphi_2)\|_{L_x^\infty}^2\|\partial_x(\varphi_1-\varphi_2)\|_{L_x^{\infty}}
\end{aligned}\end{equation*}
and
\[
\bigg\| \int_{|y|\leq 1}\bigg\|_{L^2} \lesssim \|\D_x(\varphi_1,\varphi_2)\|_{L_x^2}\|\D_x(\varphi_1,\varphi_2)\|_{L_x^\infty} \|\partial_x(\varphi_1-\varphi_2)\|_{L_x^{\infty}}.
\]

For the second, 
\begin{equation*}\begin{aligned}
\bigg\| \int_{|y|> 1}\bigg\|_{L_x^{\infty}} &\lesssim \||\partial_x(\varphi_1,\varphi_2)\|_{L_x^{\infty}}\|(\varphi_1,\varphi_2)\|_{L_x^\infty}\|\partial_x(\varphi_1-\varphi_2)\|_{L_x^{\infty}}
\end{aligned}\end{equation*}
and 
\begin{equation*}\begin{aligned}
\bigg\| \int_{|y|> 1}\bigg\|_{L^{2}} &\lesssim\||\partial_x(\varphi_1,\varphi_2)\|_{L_x^{2}}\|(\varphi_1,\varphi_2)\|_{L_x^\infty}  \|\partial_x(\varphi_1-\varphi_2)\|_{L_x^{\infty}}.
\end{aligned}\end{equation*}
\end{proof}

We will be considering separately the balanced and unbalanced components of $Q$. Precisely, we denote the diagonal set of frequencies by $\mathcal D$ and write
\begin{equation*}
\begin{aligned}
Q(\varphi, \varphi, \varphi) &= \sum_{(\lambda_1, \lambda_2, \lambda_3, \lambda) \in \mathcal D} Q(\varphi_{\lambda_1}, \varphi_{\lambda_2}, \varphi_{\lambda_3}) + \sum_{(\lambda_1, \lambda_2, \lambda_3, \lambda) \notin \mathcal D} Q(\varphi_{\lambda_1}, \varphi_{\lambda_2}, \varphi_{\lambda_3}) \\
&= Q^{bal}(\varphi, \varphi, \varphi) + Q^{unbal}(\varphi, \varphi, \varphi) = Q^{bal}(\varphi) + Q^{unbal}(\varphi).
\end{aligned}
\end{equation*}

\

The unbalanced portion of $Q$ satisfies the better bound as follows: 

\begin{lemma}\label{Unbalanced cubic estimate}
$Q^{unbal}$ satisfies the bounds
\[
\|\chi_\lambda^1\partial_xP_\lambda Q^{unbal}(\varphi)\|_{L_x^\infty} \lesssim \lambda^{\max\{\frac{1-\alpha}{2},0\}-\delta}\frac{\|\varphi\|^3_X}{t^2}
\]
and
\[
\|\chi_\lambda^1\partial_xP_\lambda Q^{unbal}(\varphi)\|_{L_x^2} \lesssim \lambda^{-\delta}\frac{\|\varphi\|^3_X}{t^{3/2}},
\]
where $\chi^1_{\lambda}$ is a cut-off widening $\chi_{\lambda}$.
\end{lemma}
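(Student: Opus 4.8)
The plan is to exploit the spatial localization carried by the cutoff $\chi^1_\lambda(x/t)$, which restricts the output to the region $x\in t\overline{J_\lambda}$ attached to the frequency $\lambda$. The mechanism producing the extra half power of $t$ over the balanced part is the following: in any unbalanced frequency interaction there is a distinguished input factor $\varphi_\mu$ sitting at a frequency $\mu\not\sim\lambda$, and its spatial core $t\overline{J_\mu}$ is then separated from the output region, so restricting that factor to the complement of its core replaces the dispersive bound $\|\varphi\|_Y\lesssim\|\varphi\|_X/\sqrt t$ by the elliptic tail bounds of \eqref{Pointwise Elliptic Estimate-IT}, \eqref{L2 Elliptic Estimate-IT} and Lemma~\ref{Elliptic bounds for the derivative}, which decay like $t^{-1}$. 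Both the $L^\infty$ and the $L^2$ estimate require this gain, since the naive estimates using only $\|\varphi\|_Y$ and the $\dot H^s$ part of $\|\varphi\|_X$ fall short by exactly $t^{-1/2}$.

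Concretely, I would start from the difference-quotient form $Q(\varphi)=\frac13\int|y|^{1-\alpha}\sgn(y)\,|\dq^y\varphi|^2\dq^y\varphi\,dy$ as in the proof of Lemma~\ref{Cubic difference bound}, distribute the outer $\D_x$ onto the three $\dq^y\varphi$ factors, insert the dyadic decomposition $\varphi=\sum\varphi_\lambda$, and symmetrize so that each unbalanced term $Q(\varphi_{\lambda_1},\varphi_{\lambda_2},\varphi_{\lambda_3})$ with output at $\lambda$ has $\lambda_1\ge\lambda_2\ge\lambda_3$. Since $\lambda\lesssim\lambda_1$, being unbalanced forces either $\lambda_1\sim\lambda_2\gg\lambda$ or $\lambda_1\sim\lambda$ with $\lambda_3\ll\lambda$; in the first case take $\mu=\lambda_1$ and in the second $\mu=\lambda_3$, so that $\mathrm{dist}(J_\mu,J_\lambda)\gtrsim|v_\mu-v_\lambda|$, a positive power of $\max(\mu,\lambda)$ or $\min(\mu,\lambda)$ according to the sign of $\alpha-1$. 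Split $\varphi_\mu=(1-\chi_\mu)\varphi_\mu+\chi_\mu\varphi_\mu$. In the $\chi_\mu\varphi_\mu$ term the cutoff $\chi^1_\lambda(\cdot/t)$ forces the shift $y$ into the range $|y|\gtrsim t|v_\mu-v_\lambda|$, up to $O(1)$ scales near $\mu\sim\lambda$ where the separation degenerates, treated directly; there each of the three $\dq^y$ supplies a factor $|y|^{-1}$, so the integrand is $\lesssim|y|^{-2-\alpha}$ times pointwise bounds on the $\varphi_{\lambda_i}$, and the $y$-integral yields $t^{-1-\alpha}$, with a large surplus. For the main term $(1-\chi_\mu)\varphi_\mu$, split the $y$-integral into $|y|\le1$ and $|y|>1$; bound the displaced factor, or its derivative, by Lemma~\ref{Elliptic bounds for the derivative} on $|y|\le1$ and by \eqref{Pointwise Elliptic Estimate-IT}, \eqref{L2 Elliptic Estimate-IT} together with $|\dq^y\varphi_\mu|\lesssim\|\varphi_\mu\|_{L^\infty_x}/|y|$ on $|y|>1$, each contributing $t^{-1}$ and a power $\mu^{1/2-\alpha}$ or $\mu^{3/2-\alpha}$; bound the other two factors by $\|\varphi\|_Y\lesssim\|\varphi\|_X/\sqrt t$ for the $L^\infty$ estimate, and for the $L^2$ estimate place one of them in $L^2$ via the $\dot H^s$ part of $\|\varphi\|_X$ and keep the last in $L^\infty$ via $\|\varphi\|_Y$. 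Combining by H\"older in $y$ and $x$, applying Bernstein to the outer $\D_xP_\lambda$, and summing the resulting geometric series in $\mu$ and in any remaining free frequency produces the bound $t^{-2}\|\varphi\|_X^3$, resp.\ $t^{-3/2}\|\varphi\|_X^3$, times the advertised power $\lambda^{\max\{(1-\alpha)/2,0\}-\delta}$.

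The main obstacle is the bookkeeping of frequency weights when $\alpha<1$: the kernel $|y|^{1-\alpha}$ at scales $1\le|y|\lesssim t\lambda^{\alpha-1}$, the elliptic tail bounds, and the weights in the $Y$-norm all carry positive powers of frequency, so one must choose carefully which factor is taken in $L^2$, which in the elliptic tail norm, and which in $L^\infty$ via $\|\varphi\|_Y$, so that the dyadic sums over the displaced frequency $\mu$, over the output localization, and over the last input all converge and leave precisely the stated leftover power $\lambda^{(1-\alpha)/2}$ — which is then a genuine loss rather than an artifact. A secondary point needing care is the degenerate low-frequency regime $t\lesssim\lambda^{1-\alpha}$, where the spatial separation $t|v_\mu-v_\lambda|$ need not exceed $1$ and the localization gain is lost; there one instead uses the crude estimate $\|P_\lambda Q^{unbal}(\varphi)\|_{L^\infty_x}\lesssim t^{-3/2}\|\varphi\|_X^3$ coming from $\|\varphi\|_Y$ alone, and lets the factor $\lambda$ from $\D_xP_\lambda$ convert the surplus $t^{1/2}\lesssim\lambda^{(1-\alpha)/2}$ into an acceptable power of $\lambda$.
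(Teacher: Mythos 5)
Your proposal captures the same key mechanism as the paper: identify a displaced input frequency $\mu\not\sim\lambda$, note that the output cutoff $\chi^1_\lambda(\cdot/t)$ places that factor in its elliptic region, and trade the dispersive $t^{-1/2}$ bound for the elliptic $t^{-1}$ bound from \eqref{Pointwise Elliptic Estimate-IT}, \eqref{L2 Elliptic Estimate-IT} and Lemma~\ref{Elliptic bounds for the derivative}. Where you diverge is in the handling of the argument $x+y$: you split $\varphi_\mu=(1-\chi_\mu)\varphi_\mu+\chi_\mu\varphi_\mu$ and treat the core piece by forcing $|y|\gtrsim t|v_\mu-v_\lambda|$, while the paper avoids this split entirely by treating $\dq^y\varphi_\mu$ as a single function of $x$ — still frequency-localized at $\mu$ and with $L\dq^y\varphi_\mu=\dq^yL\varphi_\mu-\varphi_\mu(\cdot+y)$ controlled — so that the elliptic tail bound of Lemma~\ref{Elliptic bounds for the derivative} holds uniformly in $y$ and then Lemma~\ref{Trilinear integral estimate-v0} integrates the remaining two factors in $y$. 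This is what lets the paper skip your ``degenerate low-frequency'' regime $t\lesssim\lambda^{1-\alpha}$: the elliptic bound $\lesssim\mu^{3/2-\alpha}t^{-1}\|\varphi\|_X$ is unconditional, only unsharp when $t$ is small, and the dyadic weights absorb the overshoot. A second small gap in your sketch: when $\lambda_1\sim\lambda_2\gg\lambda$, the operator $\chi^1_\lambda\D_xP_\lambda$ is nonlocal and the paper inserts an enlarged input cutoff $\tilde\chi_\lambda$ and controls the mismatch by the rapid off-diagonal decay $\|\chi^1_\lambda\D_xP_\lambda(1-\tilde\chi_\lambda)\|_{L^p\to L^p}\lesssim\lambda^{1-N}t^{-N}$, whereas Bernstein alone does not transfer the spatial localization needed to invoke the elliptic bound inside the $P_\lambda$ convolution. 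Both routes lead to the same estimates; the paper's is cleaner in that it avoids your function-level decomposition and the attendant commutators between $\chi_\mu$ and $\dq^y$, which you would also need to track.
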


\begin{proof}
We shall denote
\begin{align*}
    I_{\lambda_1,\lambda_2,\lambda_3}&=\int_{\mathbb{R}}\frac{1}{|y|^{\alpha-1}}\sgn(y)\dq^y\varphi_{\lambda_1}\dq^y\varphi_{\lambda_2}\dq^y\varphi_{\lambda_3}\,dy
\end{align*}
and consider two cases in the frequency sum for $\partial_x P_\lambda Q^{unbal}$. 

First we consider the case in which we have two low separated frequencies. We assume without loss of generality that $\lambda_3=\lambda$ and $\lambda_1<\lambda_2\ll \lambda$.
We analyze the case $\alpha>1$.
Here, the elliptic estimates will be applied for the factor $\varphi_{\lambda_2}$. Precisely, from Lemma \ref{Trilinear integral estimate-v0} and estimates \ref{Pointwise Vector Field Bound 2}, \ref{Pointwise Elliptic Estimate-IT}, and \ref{L2 Elliptic Estimate-IT}, we get that 
\begin{align*}
    \left\|\chi^1_{\lambda}I_{\lambda_1,\lambda_2,\lambda_3}\right\|_{L_x^{\infty}}&\lesssim \frac{\lambda_2^{\frac32-\alpha}}{t}\|\varphi\|_X(\lambda_1^{1-2\delta}+\lambda_1)\|\varphi_{\lambda_1}\|_{L_x^\infty}\lambda_3^{\alpha+\delta-1}\|\varphi_{\lambda_3}\|_{L_x^\infty}\\
    &\lesssim \frac{\lambda_2^{\frac32-\alpha}}{t}\|\varphi\|_X\lambda_1^{\frac{\alpha-1}{2}+\delta}(\lambda_1^{\frac{3-\alpha}{2}-3\delta}+\lambda_1^{\frac{3-\alpha}{2}-\delta})\|\varphi_{\lambda_1}\|_{L_x^\infty}\lambda^{-\left(2-\frac{\alpha}{2}+2\delta\right)}\lambda^{1+\frac{\alpha}{2}+3\delta}\|\varphi_{\lambda}\|_{L_x^\infty}\\
    &\lesssim \lambda_2^{\frac32-\alpha}\lambda_1^{\frac{\alpha-1}{2}+\delta}\lambda^{-\left(2-\frac{\alpha}{2}+2\delta\right)}\frac{\|\varphi\|^3_X}{t^2}.
\end{align*}
When $\alpha<1$, we also apply the elliptic estimates for the factor $\varphi_{\lambda_1}$. Precisely, from Lemma \ref{Trilinear integral estimate-v0} and estimates \ref{Pointwise Vector Field Bound 2}, \ref{Pointwise Elliptic Estimate-IT}, and \ref{L2 Elliptic Estimate-IT}, we get that
\begin{align*}
    \left\|\chi^1_{\lambda}I_{\lambda_1,\lambda_2,\lambda_3}\right\|_{L_x^{\infty}}&\lesssim \frac{\lambda_1^{\frac32-\alpha}}{t}\|\varphi\|_X(\lambda_2^{2\delta}+\lambda_2^{4\delta})\|\varphi_{\lambda_2}\|_{L_x^\infty}\lambda_3^{\alpha-3\delta}\|\varphi_{\lambda_3}\|_{L_x^\infty}\\
    &\lesssim \frac{\lambda_1^{\frac32-\alpha}}{t}\|\varphi\|_X\lambda_2^{-\left(1+\delta\right)}(\lambda_2^{1+\frac{\alpha}{2}+3\delta}+\lambda_2^{1+\frac{\alpha}{2}+5\delta})\|\varphi_{\lambda_2}\|_{L_x^\infty}\lambda^{-\left(1-\frac{\alpha}{2}+4\delta\right)}\lambda^{1+\frac{\alpha}{2}+\delta}\|\varphi_{\lambda}\|_{L_x^\infty}\\
    &\lesssim \lambda_1^{\frac32-\alpha}\lambda_2^{-\left(1+\delta\right)}\lambda^{-\left(1-\frac{\alpha}{2}+2\delta\right)}\frac{\|\varphi\|^3_X}{t^2}.
\end{align*}

By using dyadic summation in $\lambda_1$ and $\lambda_2$, we deduce that 

\begin{align*}
    \left\|\chi^1_{\lambda}\partial_x\sum_{\substack{
    \lambda_1<\lambda_2\ll \lambda}}I_{\lambda_1,\lambda_2,\lambda_3}\right\|_{L_x^{\infty}}&\lesssim \lambda^{\max\{\frac{1-\alpha}{2},0\}-\delta}\frac{\|\varphi\|_X^3}{t^{2}}.
\end{align*}
Similarly, we deduce that 
\begin{align*}
    \left\|\chi^1_{\lambda}\partial_x\sum_{\substack{
    \lambda_1<\lambda_2\ll \lambda}}I_{\lambda_1,\lambda_2,\lambda_3}\right\|_{L_x^{2}}&\lesssim \lambda^{-\delta}\frac{\|\varphi\|_X^3}{t^{3/2}}
\end{align*}

We now analyze the situation in which $\lambda_1,\lambda_2\gtrsim \lambda$, and $\lambda_1$ and $\lambda_2$ are comparable and both separated from $\lambda$. Thus, we will be able to use $\lambda_1$ and $\lambda_2$ interchangeably.  We replace $\chi^1_{\lambda}$ by $\tilde{\chi}_{\lambda}$, which has double support, and equals $1$ on a comparably-sized neighbourhood of the support of $\chi^1_{\lambda}$. We write
\begin{align*}
    \chi^1_{\lambda}\partial_xP_\lambda=\chi^1_{\lambda}\partial_xP_\lambda\tilde{\chi}_{\lambda}+\chi^1_{\lambda}\partial_xP_\lambda(1-\tilde{\chi}_{\lambda}).
\end{align*}

For the first term, when $\alpha>1$ using Lemma \ref{Trilinear integral estimate-v0}, along with estimates \ref{Pointwise Vector Field Bound 2}, \ref{Pointwise Elliptic Estimate-IT}, \ref{L2 Elliptic Estimate-IT}, we get the bounds 
\begin{align*}
    \left\|\chi^1_{\lambda}P_\lambda\tilde{\chi}_{\lambda}I_{\lambda_1,\lambda_2,\lambda_3}\right\|_{L_x^{\infty}}&\lesssim \lambda_2^{1/2+\delta}\frac{\lambda_3^{1-2\delta}+\lambda_3}{t}\|\varphi\|_X\|\varphi_{\lambda_2}\|_{L_x^\infty}\|\varphi_{\lambda_3}\|_{L_x^\infty}\\
    &\lesssim \lambda_2^{-1-3\delta/2}\lambda_3^{\delta/2}\frac{\|\varphi\|_X}{t}(\lambda_3^{1-5\delta/2}+\lambda_3^{1-\delta/2})\|\varphi_{\lambda_3}\|_{L_x^\infty}\lambda_2^{\frac{3}{2}+5\delta/2}\|\varphi_{\lambda_2}\|_{L_x^\infty}\\
    &\lesssim\lambda_2^{-1-\frac{3\delta}{2}}\lambda_3^{\delta/2}\frac{\|\varphi\|^3_X}{t^2},
\end{align*}
and

\begin{align*}
    \left\|\chi^1_{\lambda}P_\lambda\tilde{\chi}_{\lambda}I_{\lambda_1,\lambda_2,\lambda_3}\right\|_{L_x^{\infty}}&\lesssim \lambda_2^{1/2+\delta}\frac{\lambda_3^{1-2\delta}+\lambda_3}{t}\|\varphi\|_X\|\varphi_{\lambda_2}\|_{L_x^\infty}\|\varphi_{\lambda_3}\|_{L_x^\infty}\\
    &\lesssim \lambda_2^{-\frac{\alpha+1}{2}-3\delta/2}\lambda_3^{\delta/2}\frac{\|\varphi\|_X}{t}(\lambda_3^{1-5\delta/2}+\lambda_3^{1-\delta/2})\|\varphi_{\lambda_3}\|_{L_x^\infty}\lambda_2^{\frac{3}{2}+5\delta/2}\|\varphi_{\lambda_2}\|_{L_x^\infty}\\
    &\lesssim\lambda_2^{-\frac{\alpha+1}{2}-\frac{3\delta}{2}}\lambda_3^{\delta/2}\frac{\|\varphi\|^3_X}{t^2},
\end{align*}
when $\alpha<1$.

We similarly get the following $L_x^2$ bound:
\begin{align*}
    \left\|\chi^1_{\lambda}P_\lambda\tilde{\chi}_{\lambda}I_{\lambda_1,\lambda_2,\lambda_3}\right\|_{L_x^{2}}&\lesssim \lambda_2^{\delta}\frac{\lambda_3^{1-2\delta}+\lambda_3}{t}\|\varphi\|_X\|\varphi_{\lambda_2}\|_{L_x^\infty}\|\varphi_{\lambda_3}\|_{L_x^\infty}\\
    &\lesssim \lambda_2^{-1-3\delta/2}\lambda_3^{\delta/2}\frac{\|\varphi\|_X}{t}(\lambda_3^{1-5\delta/2}+\lambda_3^{1-\delta/2})\|\varphi_{\lambda_3}\|_{L_x^\infty}\lambda_2^{1+5\delta/2}\|\varphi_{\lambda_2}\|_{L_x^\infty}\\
    &\lesssim\lambda_2^{-1-3\delta/2}\lambda_3^{\delta/2}\frac{\|\varphi\|^3_X}{t^2}.
\end{align*}
By using dyadic summation in $\lambda_1$, $\lambda_2$, and $\lambda_3$ (and by using the fact that $\lambda_1$ and $\lambda_2$ are close), we deduce the bounds
\begin{align*}
   \left\|\chi^1_{\lambda}\partial_xP_\lambda\tilde{\chi}_{\lambda}\sum_{\substack{
    \lambda_3\lesssim \lambda_2,\lambda_1\simeq\lambda_2\gtrsim \lambda}}I_{\lambda_1,\lambda_2,\lambda_3}\right\|_{L_x^\infty}&\lesssim  \lambda^{\max\{\frac{1-\alpha}{2},0\}-\delta}\frac{1}{t^2}\|\varphi\|^3_X\\
    \left\|\chi^1_{\lambda}\partial_xP_\lambda\tilde{\chi}_{\lambda}\sum_{\substack{
    \lambda_3\lesssim \lambda_2,\lambda_1\simeq\lambda_2\gtrsim \lambda}}I_{\lambda_1,\lambda_2,\lambda_3}\right\|_{L_x^{2}}&\lesssim  \lambda^{-\delta}\frac{1}{t^2}\|\varphi\|^3_X. 
\end{align*}

We look at the second term. For every $N$, we know that
\begin{align*}
    \|\chi^1_{\lambda}\partial_xP_\lambda(1-\tilde{\chi}_{\lambda})\|_{L^2\rightarrow L^2}, \|\chi^1_{\lambda}\partial_xP_\lambda(1-\tilde{\chi}_{\lambda})\|_{L^\infty\rightarrow L^\infty}&\lesssim \frac{\lambda^{1-N}}{t^{N}}
\end{align*}
 By carrying out a similar analysis as above, along with Lemma \ref{Trilinear integral estimate-v0} and dyadic summation, we deduce that the contributions corresponding to these terms are also acceptable. 
\end{proof}
\begin{lemma}\label{Semiclassical computation}
We have
\begin{align*}\chi_\lambda((x/t))^3Q(e^{it\phi(x/t)})&=(\chi_\lambda(x/t))^3e^{it\phi(x/t)}q(\phi'(x/t))+h(\lambda,t),
\end{align*}
where for every $a\in(0,1)$
\begin{align*}
|h(\lambda,t)|&\lesssim \frac{\lambda^{6-3\alpha}+\lambda^{5-2\alpha}}{t^{2-(4-\alpha)a}}+\frac{\lambda^{3-\alpha}}{t^{1-(2-\alpha)a}}+\frac{1}{t^{(1+\alpha)a}}
\end{align*},
when $\alpha>1$, and
\begin{align*}
|h(\lambda,t)|&\lesssim \frac{\lambda^{4-2\alpha}}{t^{2-(3-\alpha)a}}+\frac{\lambda^{3-\alpha}}{t^{1-(2-\alpha)a}}+\frac{1}{t^{(1+\alpha)a}}
\end{align*}
for $\alpha<1$.
\end{lemma}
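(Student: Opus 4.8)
The plan is to reduce $Q(e^{it\phi(x/t)})$ to a single explicit oscillatory integral in $y$ and then compare it, on the support of $\chi_\lambda(\cdot/t)$, with the integral obtained by freezing the phase to its linear part. Write $u(x) = e^{it\phi(x/t)}$, $A := e^{it\phi(x/t)}$, and $\Theta = \Theta(t,x,y) := t\phi((x+y)/t) - t\phi(x/t)$; since $|A| = 1$ we have $\dq^y u(x) = A(e^{i\Theta}-1)/y$, hence
\[
|\dq^y u|^2 \,\dq^y u = \frac{A}{y^3}\, g(\Theta), \qquad g(s) := (e^{is}-1)\,|e^{is}-1|^2 .
\]
Since $\sgn(y)\,|y|^{1-\alpha}/y^3 = |y|^{-\alpha-2}$, this yields the exact formula $Q(u)(x) = \tfrac13 e^{it\phi(x/t)}\int |y|^{-\alpha-2} g(\Theta(t,x,y))\, dy$. (We first fix some smooth bounded extension of $\phi$ off its natural domain $a'((-\infty,0))$; the choice affects only the long-range part below, which is bounded using $\|g\|_{L^\infty}$ alone.) The kernel $g$ is bounded, vanishes cubically at the origin, and satisfies $|g^{(k)}(s)|\lesssim\min(|s|^{3-k},1)$ for $k\le 3$; consequently
\[
q(\xi) := \tfrac13\int \frac{g(\xi y)}{|y|^{\alpha+2}}\, dy
\]
converges for all $\alpha\in(0,2)$ (at $y=0$ because $|g(\xi y)|\lesssim|\xi y|^3$, at $y=\infty$ because $|g|\le 8$), is homogeneous of degree $\alpha+1$, and is precisely the symbol in the statement, $q(\phi'(x/t))$ being the contribution of the linearized phase $\Theta\approx\phi'(x/t)\,y$.

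Multiplying by $\chi_\lambda(x/t)^3$ and subtracting the main term, we get
\[
h(\lambda,t) = \chi_\lambda(x/t)^3\, e^{it\phi(x/t)}\, \tfrac13 \int \frac{g(\Theta(t,x,y)) - g(\phi'(x/t)\,y)}{|y|^{\alpha+2}}\, dy ,
\]
which we split at $|y| = t^a$. On $|y|>t^a$ we bound both occurrences of $g$ by $8$, so the contribution is $\lesssim \int_{|y|>t^a}|y|^{-\alpha-2}\,dy\lesssim t^{-(1+\alpha)a}$ — the last term of the claimed bound, and the only place where the extension of $\phi$ enters. On $|y|\le t^a$ the point $(x+y)/t$ stays in a fixed neighborhood of $x/t\in\overline{J_\lambda}$, and Taylor expansion of the phase yields
\[
\Theta(t,x,y) - \phi'(x/t)\,y = \frac{y^2}{2t}\,\phi''(x/t) + \frac{y^3}{6t^2}\,\phi'''(\zeta), \qquad |\zeta - x/t|\le |y|/t .
\]
Writing $\phi$ as the Legendre-type transform of $a$ (so $\phi'(v)=\xi_v$, $\phi''(v)=1/a''(\xi_v)$, $\phi'''(v)=-a'''(\xi_v)/a''(\xi_v)^3$), and using $a(\xi)=-c(\alpha)\xi|\xi|^{\alpha-1}$ together with $|\xi_v|\approx\lambda$ on $\supp\chi_\lambda(\cdot/t)$, we get $|\phi'(x/t)|\approx\lambda$, $|\phi''|\lesssim\lambda^{2-\alpha}$, $|\phi'''|\lesssim\lambda^{3-2\alpha}$, hence
\[
|\Theta(t,x,y) - \phi'(x/t)\,y|\lesssim \frac{y^2\lambda^{2-\alpha}}{t} + \frac{|y|^3\lambda^{3-2\alpha}}{t^2}, \qquad |\phi'(x/t)\,y|\lesssim t^a\lambda .
\]

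It remains — and this is the technical heart — to estimate $g(\Theta) - g(\phi'(x/t)\,y)$ on $|y|\le t^a$ by the mean value theorem (and, for the sharper terms, by second- and third-order Taylor expansion of $g$), controlling $g'$, $g''$, $g'''$ via $|g^{(k)}(s)|\lesssim\min(|s|^{3-k},1)$ at arguments of size $\lesssim t^a\lambda$; combining the cubic vanishing built into these bounds with the quadratic vanishing of $\Theta-\phi'(x/t)\,y$ as $y\to 0$ keeps the integrand integrable at the origin. Dividing by $|y|^{\alpha+2}$ and integrating over $|y|\le t^a$, split at $|y|\sim\lambda^{-1}$ — where the lower endpoint controls the $y$-integral when $\alpha>1$ and the upper endpoint controls it when $\alpha<1$ — produces the remaining $\lambda$- and $t$-powers in the two cases. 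The main obstacle is exactly this bookkeeping: the frozen phase $\phi'(x/t)\,y$ is genuinely large ($\approx t^a\lambda$), so $g$ itself is not small and the exponential cannot simply be expanded; one must trade this against the smallness of the phase error while simultaneously securing convergence at $y=0$ and landing on the stated (mildly lossy but sufficient) exponents, the split $\alpha\gtrless 1$ reflecting which endpoint of the $y$-integral dominates.
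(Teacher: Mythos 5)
Your reduction of $Q(e^{it\phi(x/t)})$ to the single oscillatory integral $\tfrac13 e^{it\phi}\int |y|^{-\alpha-2}g(\Theta)\,dy$ with $g(s)=(e^{is}-1)|e^{is}-1|^2$, the identification of $q$, the split at $|y|=t^a$, the long-range bound $t^{-(1+\alpha)a}$, and the Taylor expansion of the phase $\Theta-\phi'(x/t)y$ are all correct, and this packaging is arguably cleaner than the paper's. However, there is a genuine gap: the estimation of $g(\Theta)-g(\phi'(x/t)y)$ on $|y|\le t^a$, which you yourself call ``the technical heart'' and ``the main obstacle,'' is only outlined. The lemma's conclusion has very specific $\lambda$- and $t$-exponents ($\lambda^{6-3\alpha}$, $\lambda^{5-2\alpha}$, $\lambda^{3-\alpha}$, etc., with exponents of $t$ involving $a$ in precise ways), and one cannot accept the statement without actually running the proposed MVT/Taylor argument on each dyadic $y$-region and checking that the resulting powers match. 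As written, the last paragraph is a plan, not a proof.

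The gap is also not purely cosmetic, because your proposed expansion is structurally different from the paper's. The paper uses the exact algebraic identity $e^{i\Theta}-1 = e^{iy\phi'}(e^{i\Delta}-1)+(e^{iy\phi'}-1) =: y(a+b)$ with $\Delta=\Theta-\phi'y$, so the cube $|a+b|^2(a+b)$ expands into a \emph{finite} sum of six monomials in $a,\bar a,b,\bar b$; the $|b|^2b$ piece is the main term and each of the other five carries a definite power of the small factor $a$. Small-$y$ integrability is then automatic, because the crude bound $|b|\lesssim 1/|y|$ used on one $b$-factor (together with $|a|\lesssim|y|\lambda^{2-\alpha}/t$) produces, e.g., $|b|^2|a|\lesssim\lambda^{3-\alpha}/t$ uniformly in $y$, no splitting at $|y|\sim\lambda^{-1}$ needed. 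Your route instead Taylor-expands $g$ (a trigonometric polynomial, so its Taylor series at $\phi'y$ does not terminate) and trades the cubic vanishing of $g$ at $0$ against the quadratic vanishing of $\Delta$; this \emph{can} be made to work, and your estimates $|g^{(k)}(s)|\lesssim\min(|s|^{3-k},1)$ and the split at $|y|\sim\lambda^{-1}$ are the right ingredients, but you must then integrate each piece over the two sub-regions and verify that the outcome is bounded by the stated expressions in both the $\alpha>1$ and $\alpha<1$ cases. A quick check of the leading MVT contribution (using $|g'(s)|\lesssim s^2$ for $|y|\lesssim\lambda^{-1}$) already produces a term of size $\lambda/t$, which is not manifestly dominated by $\lambda^{3-\alpha}/t^{1-(2-\alpha)a}$ when $\lambda<1$, so this verification is not automatic and must be done explicitly. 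Until it is, the proof is incomplete.
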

\begin{proof}
We write
    \begin{align*}
       e^{-it\phi(x/t)} \dq^ye^{it\phi(x/t)}&=\frac{e^{iy\phi'(x/t)}(e^{it/2\phi''(c_{x,y}/t)y^2/t^2}-1)}{y} + \frac{e^{iy\phi'(x/t)}-1}{y} =: a + b,
    \end{align*}
    where $c_{x,y}$ is between $x$ and $x+y$.
We now use the fact that $x/t$ belongs to the support of $\chi_\lambda$. We have
\begin{align*}
        |\chi_\lambda(x/t)||b| &\lesssim \lambda.
    \end{align*}
    Moreover, when $|y|\leq t^a$, $\displaystyle|c_{x,y}/t-x/t|\leq|y/t|\leq t^{a-1}$. This implies that $c_{x,y}/t$ belongs to the support of the enlarged cut-off $\chi_\lambda^1$, hence $\phi''(c_{x,y}/t)\simeq\lambda^{2-\alpha}$. We note the bound
    \begin{align*}
        |\chi_\lambda(x/t)||a|&\lesssim |\chi_\lambda(x/t)||y/(2t)\phi''(c_{x,y}/t)|\left|\frac{e^{\pm i\phi''(c_{x,y}/t)y^2/(2t)}-1}{\phi''(c_{x,y}/t)y^2/(2t)}\right| \lesssim \lambda^{2-\alpha} t^{a-1}
    \end{align*}
    Thus, we have the bounds
    \begin{equation}\label{ab-Bounds1}
    \begin{aligned}
|\chi_\lambda(x/t)||a|&\lesssim \lambda^{2-\alpha} t^{a-1}\\
|\chi_\lambda(x/t)||b|&\lesssim \lambda
\end{aligned}
    \end{equation}
    We also note the cruder bounds
    \begin{equation}\label{ab-Bounds2}
    \begin{aligned}
|\chi_\lambda(x/t)||a|+
|\chi_\lambda(x/t)||b|&\lesssim \frac{1}{|y|}
\end{aligned}
    \end{equation}
    We write
    \begin{align*}
(\chi_\lambda(x/t))^3Q(e^{it\phi(x/t)})&=(\chi_\lambda(x/t))^3e^{it\phi(x/t)}\int\frac{1}{|y|^{\alpha-1}} \left|b\right|^2b\,dy\\
&\quad +(\chi_\lambda(x/t))^3e^{it\phi(x/t)}\int\frac{1}{|y|^{\alpha-1}}( a^2\overline{a}+a^2\overline{b}+2|a|^2b+2a|b|^2+b^2\overline{a})\,dy\\
&:=T_1+T_2
    \end{align*}
We note that
\begin{align*}
    T_1&=(\chi_\lambda(x/t))^3e^{it\phi(x/t)}\int\frac{1}{|y|^{\alpha-1}} \left|b\right|^2b\,dy=(\chi_\lambda(x/t))^3e^{it\phi(x/t)}q(\phi'(x/t)),
\end{align*}
so we only need to analyze $T_2$.

We first bound the contribution over the region $|y|\leq t^a$, which we shall denote by $T_2^1$. We denote the contribution over the region $|y|>t^a$ by $T_2^2$. We have 
    \begin{align*}
T_2^1&=(\chi_\lambda(x/t))^3e^{it\phi(x/t)}\int_{|y|\leq t^a}\frac{1}{|y|^{\alpha-1}}( a^2\overline{a}+a^2\overline{b}+2|a|^2b)\,dy\\
&+(\chi_\lambda(x/t))^3e^{it\phi(x/t)}\int_{|y|\leq t^a}\frac{1}{|y|^{\alpha-1}}( 2a|b|^2+b^2\overline{a})\,dy:=T_{2a}+T_{2b},
    \end{align*}
    \ref{ab-Bounds1} implies that 
    \begin{align*}
|T_{2a}|&\lesssim |\chi_{\lambda}(x/t)|^3\int_{|y|\leq t^a}\frac{1}{|y|^{\alpha-1}}(|a|^3+|a|^2|b|)\,dy\lesssim \int_{|y|\leq t^a}\frac{1}{|y|^{\alpha-1}}\frac{\lambda^{4-2\alpha}}{t^{2-2a}}\left(\frac{\lambda^{2-\alpha}}{t^{1-a}}+\lambda\right)\\
&\lesssim \int_{|y|\leq t^a}\frac{1}{|y|^{\alpha-1}}\frac{\lambda^{6-3\alpha}+\lambda^{5-2\alpha}}{t^{2-2a}}\,dy\lesssim t^{a(2-\alpha)}\frac{\lambda^{6-3\alpha}+\lambda^{5-2\alpha}}{t^{2-2a}}\lesssim \frac{\lambda^{6-3\alpha}+\lambda^{5-2\alpha}}{t^{2-(4-\alpha)a}},
    \end{align*}
    when $\alpha>1$, and
       \begin{align*}
|T_{2a}|&\lesssim |\chi_{\lambda}(x/t)|^3\int_{|y|\leq t^a}\frac{1}{|y|^{\alpha-1}}(|a|^3+|a|^2|b|)\,dy\lesssim \int_{|y|\leq t^a}\frac{1}{|y|^{\alpha}}\frac{\lambda^{2-\alpha}}{t^{1-a}}\left(\frac{\lambda^{2-\alpha}}{t^{1-a}}+\lambda\right)\\
&\lesssim \int_{|y|\leq t^a}\frac{1}{|y|^{\alpha}}\left(\frac{\lambda^{4-2\alpha}}{t^{2-2a}}+\frac{\lambda^{3-\alpha}}{t^{1-a}}\right)\,dy\lesssim t^{a(1-\alpha)}\left(\frac{\lambda^{4-2\alpha}}{t^{2-2a}}+\frac{\lambda^{3-\alpha}}{t^{1-a}}\right)\lesssim \frac{\lambda^{4-2\alpha}}{t^{2-(3-\alpha)a}}+\frac{\lambda^{3-\alpha}}{t^{1-(2-\alpha)a}},
    \end{align*}
    when $\alpha<1$.
    
  \ref{ab-Bounds1} and \ref{ab-Bounds2} imply the bound
  \begin{align*}
|\chi_{\lambda}(x/t)|^3|b|^2|a|&=|\chi_{\lambda}(x/t)|^3\left|b\right|^2|y/(2t)\phi''(c_{x,y}/t)|\left|\frac{e^{\pm i\phi''(c_{x,y}/t)y^2/(2t)}-1}{\phi''(c_{x,y}/t)y^2/(2t)}\right|\\
&\lesssim\lambda\frac{1}{|y|}|\chi_{\lambda}(x/t)||y/(2t)\phi''(c_{x,y}/t)|\lesssim \frac{\lambda^{3-\alpha}}{t}
  \end{align*}
    It follows that $T_{2b}$ satisfies the bound 
    \begin{align*}
|T_{2b}|&\lesssim |\chi_\lambda(x/t)|^3\int_{|y|\leq t^a}\frac{1}{|y|^{\alpha-1}}\left|b\right|^2|a|\,dy\lesssim\int_{|y|\leq t^a}\frac{1}{|y|^{\alpha-1}}\frac{\lambda^{3-\alpha}}{t}\,dy\lesssim\frac{\lambda^{3-\alpha}}{t^{1-(2-\alpha)a}}
    \end{align*}

For $T_2^2$, \ref{ab-Bounds2} implies that
  \begin{align*}
|T_2^2|\lesssim \int_{|y|>t^a}\frac{1}{|y|^{\alpha+2}}\,dy\lesssim \frac{1}{t^{(1+\alpha)a}}.
  \end{align*}
\end{proof}
\

\subsection{The asymptotic equation for $\gamma$}

Here we prove the following:

\begin{proposition}\label{Asymptotic equation}
Let $v\in J_\lambda$. Under the assumption $(t,v)\in\mathcal{D}$, we have
\begin{align*}
    \dot{\gamma}(t,v)=iq(\xi_v)\xi_vt^{-1}\gamma(t,v)|\gamma(t,v)|^2+f(t,v),
\end{align*}
where
\begin{align*}
    |f(t,v)|&\lesssim \lambda^{-\delta}g(\lambda,\alpha)t^{-1-\delta+C\eps^2}\eps,
\end{align*} 
where $g=g(\lambda,\alpha)$ is a sum of powers of $\lambda$ that might also depend on $\alpha$.

Moreover, we also have the following $L_v^2$ bound:
\begin{align*}
    \|f(t,v)\|_{L_v^2(J_{\lambda})}&\lesssim (\lambda^{-\delta}+\lambda^{-\frac32-\delta})t^{-1-\delta+C\eps^2}\eps.
\end{align*}
\end{proposition}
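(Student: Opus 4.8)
The strategy is to test the equation \eqref{gSQG} with the wave packet $\pax^v_\lambda$ and track how well $\gamma^\lambda$ satisfies the claimed ODE. I would begin by differentiating $\gamma^\lambda(t,v) = \langle \varphi, \pax^v_\lambda \rangle$ in $t$, using \eqref{gSQG} in the form $i\D_t\varphi - A(D)\varphi = iQ(\varphi,\D_x\varphi)$ (after the appropriate complexification, writing $Q(\varphi)$ as in the trilinear notation) and using Lemma~\ref{l:paxsoln} for the action of $(i\D_t - A(D))$ on $\pax^v$. The linear terms combine to produce a remainder of the form $t^{-3/2}(\langle \varphi_\lambda, L\pax^{v,I}\rangle + \langle\varphi_\lambda, \rpax^v\rangle)$; the first is handled by the vector field bound $\|\tilde L\varphi_\lambda\|_{L^2} \lesssim t^{1/4}\lambda^{\text{(power)}}\|\varphi\|_X$ from Lemma~\ref{Gamma bounds}/the discussion around \eqref{dpax}, combined with $\|\varphi\|_X \lesssim \eps t^{C\eps^2}$ from \eqref{Vector Field Energy Estimate}; the second is controlled directly by the pointwise bound \eqref{bootstrap}. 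Both give contributions of size $\lesssim \lambda^{(\text{power})}t^{-1-\delta+C\eps^2}\eps$ after using the $t^{-1/2}$ decay twice.

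The heart of the argument is the cubic term $\langle Q(\varphi,\D_x\varphi), \pax^v_\lambda\rangle$, or rather $\langle \D_x Q^{bal}(\varphi), \pax^v_\lambda\rangle$ plus the $Q^{unbal}$ contribution. For $Q^{unbal}$, I would invoke Lemma~\ref{Unbalanced cubic estimate} directly: pairing its $L^\infty$ (or $L^2$) bound against $\|\pax^v_\lambda\|_{L^1}$ (resp. $\|\pax^v_\lambda\|_{L^2}$), which costs at most $t^{1/2}$, yields a bound of order $\lambda^{(\text{power})-\delta}t^{-3/2}\|\varphi\|_X^3$, hence $\lesssim \lambda^{-\delta}g(\lambda,\alpha)t^{-1-\delta+C\eps^2}\eps$. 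For the balanced part, the plan is to replace $\varphi_\lambda$ on the support of $\chi_\lambda$ by its asymptotic form $t^{-1/2}\gamma(t,x/t)e^{-it\phi(x/t)}$ using $r^\lambda$ of Lemma~\ref{Error bounds 1}, then further replace $\gamma(t,x/t)$ by $\gamma(t,v)$ using $\beta^\lambda_v$ of Lemma~\ref{Beta bounds}, and replace difference quotients $\dq^y$ acting on these profiles using Lemma~\ref{Beta bounds} and Lemma~\ref{Psi bounds}/\ref{Theta bounds}. Each such substitution produces an error integrable against the wave packet with the claimed decay, because the profile errors carry an extra $t^{-1/4}$ or better. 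What remains after these substitutions is the ``frozen-coefficient'' expression $t^{-3/2}\chi_\lambda^3 Q(e^{it\phi(x/t)})|\gamma(t,v)|^2\gamma(t,v)$ tested against the wave packet (with the $\D_x$ landing, to leading order, as a factor $i\phi'(x/t)\approx i\xi_v$). Here I apply Lemma~\ref{Semiclassical computation}: the main term $\chi_\lambda^3 e^{it\phi}q(\phi'(x/t))$ evaluates against $\pax^v$ to give $q(\xi_v)\xi_v$ times $t^{-1}\gamma(t,v)|\gamma(t,v)|^2$ after the $t^{-1/2}$ from the wave packet normalization cancels against the $t^{-3/2}$, while the remainder $h(\lambda,t)$ contributes $\lesssim$ a sum of powers of $\lambda$ times $t^{-1-\delta}$-type decay for a suitable choice of the parameter $a\in(0,1)$ (this is where the powers of $t$ in Lemma~\ref{Semiclassical computation} are balanced to beat $t^{-1}$ by a margin of $\delta$, absorbing the $t^{C\eps^2}$ losses from the energy bounds).

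For the $L^2_v$ bound, the approach is identical but bookkeeping the $L^2_x$ norms instead of $L^\infty_x$: Lemma~\ref{Unbalanced cubic estimate} supplies an $L^2$ version, Lemma~\ref{Gamma bounds} supplies $L^2$ control of $\D_v\gamma^\lambda$, and the elliptic estimates \eqref{L2 Elliptic Estimate-IT} and Lemma~\ref{Elliptic bounds for the derivative} supply the off-diagonal $L^2$ decay; almost orthogonality of the wave packets $\pax^v_\lambda$ over $v\in J_\lambda$ converts pointwise-in-$v$ bounds on the various error pieces into the stated $L^2_v(J_\lambda)$ bound, with the extra $\lambda^{-3/2}$ factor arising from the worst term, namely the contribution where the elliptic/off-support estimates of Lemma~\ref{Elliptic bounds for the derivative} are used on a derivative factor.

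\textbf{Main obstacle.} The delicate point is the chain of substitutions for the balanced cubic term: one must verify that replacing $\varphi_\lambda \to t^{-1/2}\gamma e^{-it\phi}$, then $\gamma(t,x/t)\to\gamma(t,v)$, and then freezing the difference quotients, each time produces errors that (i) are genuinely integrable in $y$ against $|y|^{1-\alpha}$ near $y=0$ and $|y|\gtrsim 1$, given only the limited regularity $\varphi\in \dot H^s$ with $s>\alpha+2$ and the pointwise norm $Y$, and (ii) decay strictly faster than $t^{-1}$ so as to survive the $t^{C\eps^2}$ losses. This requires carefully exploiting the wave packet localization to scale $\delta x \approx t^{1/2}\lambda^{\alpha/2-1}$ (so that $\beta^\lambda_v$ and $\D_v\gamma$ estimates apply), and in the $\alpha<1$ regime, using the sharper difference-quotient and elliptic bounds to compensate for the weaker decay of the kernel — this is precisely why Lemma~\ref{Semiclassical computation}, Lemma~\ref{Beta bounds}, and Lemma~\ref{Unbalanced cubic estimate} are split into $\alpha>1$ and $\alpha<1$ cases, and the final $g(\lambda,\alpha)$ must be assembled from those branches. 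Choosing the exponent $a$ in Lemma~\ref{Semiclassical computation} optimally (depending on $\alpha$ and $\delta$) to make $h(\lambda,t)$ acceptable is the last quantitative hurdle.
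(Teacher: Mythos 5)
Your proposal follows the same approach as the paper's proof: differentiate $\gamma$, use Lemma~\ref{l:paxsoln} together with the $X$-energy bound for the linear wave-packet remainder, split the cubic nonlinearity into balanced and unbalanced pieces (the latter handled via Lemma~\ref{Unbalanced cubic estimate}), run the chain of profile substitutions $\varphi_\lambda \to \chi_\lambda\varphi_\lambda \to \psi \to \theta$ via Lemmas~\ref{Cubic difference bound}, \ref{Error bounds 1}, \ref{Beta bounds}, and finally invoke Lemma~\ref{Semiclassical computation} and freeze $x/t\to v$ on the wave-packet scale to extract the ODE main term, with parallel $L^2_v$ bookkeeping. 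The one preliminary step you fold into ``writing $Q(\varphi)$ as in the trilinear notation'' but do not spell out is the Taylor expansion of $F(\dq^y\varphi)$ to its quadratic part, whose higher-order remainder the paper estimates separately via Moser estimates and Lemma~\ref{Trilinear integral estimate-v0} before the balanced/unbalanced split.
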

\begin{proof} 
We have
\begin{align*}
  \dot{\gamma}(t,v)=\left\langle \dot{\varphi},\mathbf{u}^v_\lambda\right\rangle+\left\langle \varphi,\dot{\mathbf{u}^v_\lambda}\right\rangle  &=\left\langle P_\lambda A_\varphi \varphi,\pax^v\right\rangle + i\langle \varphi_\lambda,(i\D_t - A(D))\mathbf{u}^v\rangle:=I_1+I_2.
\end{align*}

We first analyze $I_2$. We use Lemma~\ref{l:paxsoln} to write
\[
(i\D_t - A(D)) \pax^v = t^{-\frac32}(L \pax^{v, I} + \rpax^v)
\]
\begin{align*}
   \left|\langle \varphi_\lambda,(i\D_t - A(D))\mathbf{u}^v\rangle\right|&\lesssim t^{-\frac32}(\| L\varphi_\lambda\|_{L_x^2} \cdot \lambda^{1-\frac{\alpha}{2}}\lambda^{\half-\frac{\alpha}{4}}t^{1/4} + \|\varphi_\lambda\|_{L_x^2}  \cdot \lambda^{-\alpha/2}\lambda^{\half-\frac{\alpha}{4}}t^{1/4})\\
   &\lesssim \lambda^{\half-\frac{3\alpha}{4}} t^{-5/4}\|\varphi\|_X\\
   &\lesssim \lambda^{\frac{1-\alpha}{2}-\delta} t^{-1-\delta}\eps t^{C\eps^2}
\end{align*}
and 
\begin{align*}
   \|\chi_\lambda \langle \varphi_\lambda,(i\D_t - A(D))\mathbf{u}^v\rangle\|_{L_v^2}&\lesssim t^{-3/2}\left(\|L\varphi_\lambda\|_{L_x^2}\lambda^{1-\frac{\alpha}{2}}+\|\varphi_\lambda\|_{L_x^2}\lambda^{-\frac{\alpha}{2}}\right)\\
   &\lesssim \lambda^{-\alpha/4} t^{-5/4}t^{-1/4}\lambda^{-\alpha/4}\|\varphi\|_X\\
   &\lesssim \lambda^{-\delta} t^{-1-\delta}\eps t^{C\eps^2}
\end{align*}
(we have used the condition $\displaystyle (t,v)\in\mathcal{D}$.)

\

In the remaining part of this section we shall analyze the term $I_1$. We first exchange $F$ for its principal quadratic term, expanding
\begin{align*}
   F(\dq^y\varphi)-\frac{1}{2}(\dq^y\varphi)^2=\int_0^1\frac{(1-h)^2}{2}(\dq^y\varphi)^3F'''(h\dq^y\varphi)\,dh.
\end{align*}
When $\alpha<1$, from Moser's estimate (the nonlinear version, as well as the one for products), Lemma \ref{Trilinear integral estimate-v0}, and Sobolev embedding and interpolation, we get that 

\begin{align*}
   \lambda^{\frac{\alpha}{2}-\frac{1}{2}}&\left\|P_\lambda \int \frac{1}{|y|^{\alpha-1}}(\dq^y\varphi)^3F'''(h\dq^y\varphi)\sdq^y\varphi_x \,dy\right\|_{L_x^{\infty}}\\
   &\lesssim \int\frac{1}{|y|^{\alpha-1}}\||D_x|^{\frac{\alpha}{2}}((\dq^y\varphi)^3\sdq^y\varphi_xF'''(h\dq^y\varphi))\|_{L_x^{2}}\,dy\\
   &\lesssim \|\varphi_x\|_{L_x^\infty}\||D_x|^{\frac{\alpha}{2}}\varphi_{x}\|_{L_x^{2}}(\|\varphi_x\|^3_{L_x^\infty}+\|\varphi_x\|^2_{L_x^\infty}\||D_x|^{\delta}\varphi\|_{L_x^\infty})\\
   &\lesssim \frac{1}{t^{2}}\eps^5\langle t\rangle^{C\eps^2}.
   \end{align*}
When $\alpha>1$, we have
\begin{align*}
   \lambda^{\delta}&\left\|P_\lambda \int \frac{1}{|y|^{\alpha-1}}(\dq^y\varphi)^3F'''(h\dq^y\varphi)\sdq^y\varphi_x \,dy\right\|_{L_x^{\infty}}\\
   &\lesssim \int\frac{1}{|y|^{\alpha-1}}\||D_x|^{2\delta}((\dq^y\varphi)^3\sdq^y\varphi_xF'''(h\dq^y\varphi))\|_{L_x^{1/\delta}}\,dy\\
   &\lesssim \|\varphi_x\|^4_{L_x^\infty}(\||D_x|^{\frac{1}{2}+\delta}\varphi_{x}\|_{L_x^{2}}+\||D_x|^{\frac{1}{2}+\delta}\varphi_{xx}\|_{L_x^{2}})\\
   &+\|\varphi_x\|^3_{L_x^\infty}\||D_x|^{\frac{1}{2}+\delta}\varphi_{x}\|_{L_x^{2}}(\|\varphi_x\|_{L_x^\infty}+\||D_x|^{\alpha-1+\delta}\varphi_x\|_{L_x^\infty})\\
   &\lesssim \frac{1}{t^{2}}\eps^5\langle t\rangle^{C\eps^2}.
   \end{align*}

We have also used Sobolev embedding and the classical Moser estimate, keeping in mind $F'''(0)=0$. Similarly,
\begin{align*}
   \left\|\int_{\mathbb{R}}\frac{1}{|y|^{\alpha-1}}P_\lambda((F(\dq^y\varphi)-\frac{1}{2}(\dq^y\varphi)^2)\sdq^y\varphi_x)\,dy\right\|_{L_x^{2}}&\lesssim 
   \lambda^{-\delta}\frac{1}{t^{3/2}}\eps^5\langle t\rangle^{C\eps^2}.
\end{align*}

By H\"older's inequality and Young's inequality respectively,
\begin{align*}
\left|\left\langle P_\lambda \int_{\mathbb{R}}\frac{1}{|y|^{\alpha-1}}\left(F(\dq^y\varphi)-\frac{1}{2}(\dq^y\varphi)^2\right)\sdq^y\varphi_x\,dy,\mathbf{\varphi}_v\right\rangle\right|&\lesssim \lambda^{\max\{\frac{1-\alpha}{2},0\}-\delta}\frac{1}{t^{3/2}}\eps^5\langle t\rangle^{C\eps^2}, \\
\left\|\left\langle P_\lambda\int_{\mathbb{R}}\frac{1}{|y|^{\alpha-1}}\left(F(\dq^y\varphi)-\frac{1}{2}(\dq^y\varphi)^2\right)\sdq^y\varphi_x\,dy,\mathbf{\varphi}_v\right\rangle\right\|_{L_v^2(J_{\lambda})}&\lesssim \lambda^{-\delta}\frac{1}{t^{3/2}}\eps^5\langle t\rangle^{C\eps^2}.\end{align*}

\

We are left to estimate
\begin{align*}
\left\langle P_\lambda\int_{\mathbb{R}}\frac{1}{|y|^{\alpha-1}}(\dq^y\varphi)^2\sdq^y\varphi_x\,dy,\mathbf{u}^v\right\rangle&=\left\langle \partial_x P_\lambda Q^{\text{bal}}(\varphi),\mathbf{u}^v\right\rangle+\left\langle \chi_\lambda^1\partial_x P_\lambda Q^{\text{unbal}}(\varphi),\mathbf{u}^v\right\rangle\\
&\quad +\left\langle (1-\chi_\lambda^1)\partial_x P_\lambda Q^{\text{unbal}}(\varphi),\mathbf{u}^v\right\rangle,
\end{align*}
where $\chi^1_{\lambda}$ be a cut-off function enlarging $\chi_{\lambda}$. Due to the fact that $\displaystyle \mathbf{u}^v$ is supported in the region $\displaystyle \left|\frac{x}{t}-v\right|\lesssim \lambda^{\frac{\alpha}{2}-1}t^{-1/2}$, the condition $(t,v)\in\mathcal{D}$ will imply that the third term is identically zero, while Lemma \ref{Unbalanced cubic estimate} implies that the second term is an acceptable error. 
Thus, we only have to analyze
\begin{align*}
\left\langle \partial_x P_\lambda Q^{\text{bal}}(\varphi),\mathbf{u}^v\right\rangle =\left\langle \partial_x P_\lambda Q(\varphi_\lambda),\mathbf{u}^v\right\rangle.
\end{align*}
 Let $\chi^1$ be a cut-off function that is equal to $1$ on the support of the wave packet $\mathbf{u}_v$. Let $\tilde{\chi}$ be another cut-off function whose support is  slightly larger than the one of $\chi^1$.
 We write
 \begin{align*}
\left\langle \partial_x P_\lambda Q(\varphi_\lambda),\mathbf{u}^v\right\rangle&=\left\langle \partial_x P_\lambda \tilde{\chi}Q(\varphi_\lambda),\mathbf{u}^v\right\rangle+\left\langle \chi^1\partial_x P_\lambda(1-\tilde{\chi})Q(\varphi_\lambda),\mathbf{u}^v\right\rangle
 \end{align*}
 As in the proof of Lemma \ref{Unbalanced cubic estimate}, we note that the operator norm bounds
 \begin{align*}
\|\chi^1\partial_xP_\lambda(1-\tilde{\chi})\|_{L^\infty\rightarrow L^\infty}+\|\chi^1\partial_xP_\lambda(1-\tilde{\chi})\|_{L^2\rightarrow L^2}&\lesssim \lambda^{1-2N}t^{-N}
 \end{align*}
 for every $N$ imply that the second term is acceptable error. This leaves us with the first.
 
We first replace $\varphi_\lambda$ by $\chi_\lambda\varphi_\lambda$. When $\alpha>1$, from Lemma \ref{Cubic difference bound}, we have
\begin{align*}
|\langle \partial_x P_\lambda \tilde{\chi}&(Q(\varphi_\lambda)-Q(\chi_\lambda\varphi_\lambda)),\mathbf{u}^v\rangle|\\
&\lesssim \lambda\|(\partial_x(\chi_\lambda\varphi_\lambda),\partial_x\varphi_\lambda)\|_{L_x^\infty}\|(|D_x|^{\alpha-1}(\chi_\lambda\varphi_\lambda),|D_x|^{\alpha-1}\varphi_\lambda)\|_{L_x^\infty}\|\partial_x((1-\chi_\lambda)\varphi_\lambda)\|_{L_x^{\infty}}\|\mathbf{u}^v\|_{L_x^1}\\
&+\lambda\|(\partial_x(\chi_\lambda\varphi_\lambda),\partial_x\varphi_\lambda)\|
_{L_x^{\infty}}\|(\chi_\lambda\varphi_\lambda,\varphi_\lambda)\|_{L_x^\infty} \|\partial_x((1-\chi_\lambda)\varphi_\lambda)\|_{L_x^{\infty}}\|\mathbf{u}^v\|_{L_x^1},
\end{align*}
while for $\alpha<1$, the same lemma implies that
\begin{align*}
\lambda^{\frac{\alpha-1}{2}}|\langle \partial_x P_\lambda \tilde{\chi}&(Q(\varphi_\lambda)-Q(\chi_\lambda\varphi_\lambda)),\mathbf{u}^v\rangle|\\
&\lesssim \lambda^{\frac{\alpha+1}{2}}\|((\chi_\lambda\varphi_\lambda),\varphi_\lambda)\|^2_{L_x^\infty}\|\partial_x((1-\chi_\lambda)\varphi_\lambda)\|_{L_x^{\infty}}\|\mathbf{u}^v\|_{L_x^1}\\
&+\lambda^{\frac{\alpha+1}{2}}\|(|D_x|^{\alpha+\delta}(\chi_\lambda\varphi_\lambda),|D_x|^{\alpha+\delta}\varphi_\lambda)\|
_{L_x^{\infty}}\|(\chi_\lambda\varphi_\lambda,\varphi_\lambda)\|_{L_x^\infty} \|\partial_x((1-\chi_\lambda)\varphi_\lambda)\|_{L_x^{\infty}}\|\mathbf{u}^v\|_{L_x^1}.
\end{align*}
By interpolation, along with Lemma \ref{Elliptic bounds for the derivative} and the condition $(t,v)\in\mathcal{D}$, it follows that the errors are acceptable.  The $L_x^2$-bound is similar.

\

We now replace $\chi_\lambda\varphi_\lambda$ by $\psi$. From Lemma \ref{Cubic difference bound}, when $\alpha>1$, we have
\begin{align*}
|\langle \partial_x P_\lambda \tilde{\chi}&(Q(\chi_\lambda\varphi_\lambda)-Q(\psi)),\mathbf{u}^v\rangle|\\
&\lesssim \lambda\|(\partial_x(\chi_\lambda\varphi_\lambda),\partial_x\psi)\|_{L_x^\infty}^2\|\partial_x(\chi_\lambda(x/t) r^\lambda)\|_{L_x^{\infty}}\|\mathbf{u}^v\|_{L_x^1}\\
&+\lambda\|(\partial_x(\chi_\lambda\varphi_\lambda),\partial_x\psi)\|
_{L_x^{\infty}}\|(\chi_\lambda\varphi_\lambda,\psi)\|_{L_x^\infty} \|\partial_x(\chi_\lambda(x/t) r^\lambda)\|_{L_x^{\infty}}\|\mathbf{u}^v\|_{L_x^1},
\end{align*}
and
\begin{align*}
\lambda^{\frac{\alpha-1}{2}}|\langle \partial_x P_\lambda \tilde{\chi}&(Q(\chi_\lambda\varphi_\lambda)-Q(\psi)),\mathbf{u}^v\rangle|\\
&\lesssim \lambda^{\frac{\alpha+1}{2}}\|((\chi_\lambda\varphi_\lambda),\psi)\|_{L_x^\infty}^2\|\partial_x(\chi_\lambda(x/t) r^\lambda)\|_{L_x^{\infty}}\|\mathbf{u}^v\|_{L_x^1}\\
&+\lambda^{\frac{\alpha+1}{2}}\|(\partial_x(\chi_\lambda\varphi_\lambda),\partial_x\psi)\|
_{L_x^{\infty}}\|(\chi_\lambda\varphi_\lambda,\psi)\|_{L_x^\infty} \|\partial_x(\chi_\lambda(x/t) r^\lambda)\|_{L_x^{\infty}}\|\mathbf{u}^v\|_{L_x^1},
\end{align*}
when $\alpha<1$.

By interpolation, along with Lemmas \ref{Error bounds 1} and \ref{Psi bounds}, and the condition $(t,v)\in\mathcal{D}$, it follows that the errors are acceptable.  The $L_x^2$-bound is similar.

\

We seek to replace $\psi$ by $\theta$. We evaluate
\begin{align*}
\left\langle \partial_x P_\lambda \tilde{\chi}(Q(\psi)-Q(\theta)),\mathbf{u}^v\right\rangle
\end{align*}
We have
\begin{align*}
\left|\tilde{\chi}(Q(\psi)-Q(\theta))\right|&\lesssim \left|\tilde{\chi}\left(\frac{x-vt}{\sqrt{|ta''(\xi_v)|}}\right)\right|\int\frac{1}{|y|^{\alpha-1}}(|\dq^y\psi|^2+|\dq^y\theta|^2)|\delta^y\beta^\lambda_v(x)|\,dy
\end{align*}
The support condition of $\tilde{\chi}$ implies that $x$ is in the region $|x-vt|\lesssim \delta x=t^{1/2}\lambda^{\frac{\alpha}{2}-1}$. From Lemma \ref{Beta bounds} we now get that
\begin{align*}
\left|\tilde{\chi}(Q(\psi)-Q(\theta))\right|\lesssim t^{-3/4}\lambda^{\frac{\alpha}{4}-\half}\|\varphi\|_X\int\frac{1}{|y|^{\alpha-1}}(|\dq^y\psi|^2+|\dq^y\theta|^2)\,dy 
\end{align*}
Bernstein's inequality and Lemma \ref{Trilinear integral estimate-v0} imply that
\begin{align*}
\left|\partial_xP_\lambda\tilde{\chi}(Q(\psi)-Q(\theta))\right|&\lesssim t^{-3/4}\lambda^{\frac{\alpha}{4}+\half}\|\varphi\|_X(\|\psi_x\|^2_{L_x^\infty}+\|\theta_x\|^2_{L^\infty})\|\mathbf{u}^v\|_{L_x^1}\\
&+t^{-3/4}\lambda^{\frac{\alpha}{4}+\half}\|\varphi\|_X(\|\psi\|_{L_x^\infty}\|\psi_x\|_{L_x^{\infty}}+\|\theta\|_{L_x^\infty}\|\theta_x\|_{L_x^{\infty}})\|\mathbf{u}^v\|_{L_x^{1}},
\end{align*}
when $\alpha>1$, and
\begin{align*}
\lambda^{\frac{\alpha-1}{2}}\left|\partial_xP_\lambda\tilde{\chi}(Q(\psi)-Q(\theta))\right|&\lesssim t^{-3/4}\lambda^{\frac{3\alpha}{4}}\|\varphi\|_X(\|\psi\|^2_{L_x^\infty}+\|\theta\|^2_{L^\infty})\|\mathbf{u}^v\|_{L_x^1}\\
&+t^{-3/4}\lambda^{\frac{3\alpha}{4}}\|\varphi\|_X(\|\psi\|_{L_x^\infty}\|\psi_x\|_{L_x^{\infty}}+\|\theta\|_{L_x^\infty}\|\theta_x\|_{L_x^{\infty}})\|\mathbf{u}^v\|_{L_x^{1}},
\end{align*}
when $\alpha<1$.

From Lemmas \ref{Psi bounds} and \ref{Theta bounds}, along with the condition $(t,v)\in\mathcal{D}$, it follows in both cases that this error is acceptable. The $L_x^2$-bound is similar.

\

We are left to analyze
\begin{align*}
t^{-3/2}\gamma(t,v)|\gamma(t,v)|^2\left\langle \partial_x P_\lambda Q(\chi_\lambda e^{it\phi(x/t)}),\mathbf{u}^v\right\rangle.
\end{align*}
Since by Lemma~\ref{Gamma bounds},
\[
\|t^{-3/2}\gamma(t,v)|\gamma(t,v)|^2\|_{L_v^\infty(J_\lambda)} \lesssim  \|\varphi_\lambda\|^3_{L_x^\infty}, \qquad \|t^{-3/2}\gamma(t,v)|\gamma(t,v)|^2\|_{L_v^2(J_\lambda)} \lesssim t^{-1/2}\|\varphi_\lambda\|^2_{L_x^\infty}\|\varphi_\lambda\|_{L_x^2}
\]
it suffices to estimate
\begin{align*}
|\langle \partial_x P_\lambda Q(\chi_\lambda e^{it\phi(x/t)}),\mathbf{u}^v\rangle - t^{\half}q(\xi_v)\xi_v(\chi_\lambda(v))^3| &\lesssim \lambda^{\max\{\frac{1-\alpha}{2},0\}-\delta}t^{\frac12+C\epsilon^2}\epsilon\\
|\langle \partial_x P_\lambda Q(\chi_\lambda e^{it\phi(x/t)}),\mathbf{u}^v\rangle - t^{\half}q(\xi_v)\xi_v(\chi_\lambda(v))^3| &\lesssim \lambda^{-\delta}t^{\frac12+C\epsilon^2}\epsilon.
\end{align*}

 We note that
    \begin{align*}
\dq^y(\chi_{\lambda}e^{\pm it\phi(x/t)})&=\chi_{\lambda}\dq^y\left(e^{\pm it\phi(x/t)}\right)+\dq^y(\chi_{\lambda})e^{\pm it\phi((x+y)/t)}.
\end{align*}
Lemma \ref{Trilinear integral estimate-v0}, implies that 
\begin{align*}
\left|\left\langle\partial_x P_\lambda\int \frac{1}{|y|^{\alpha-1}}\dq^y(\chi_\lambda)e^{it\phi((x+y)/t)}\dq^y(\chi_\lambda e^{-it\phi(x/t)})\dq^y(\chi_\lambda e^{it\phi(x/t)})\,dy,\mathbf{u}^v \right\rangle\right|&\lesssim \lambda(t^{-1/2}\lambda^{2-\alpha} +t^{-1/2}\lambda^{3-\alpha}), 
\end{align*}
when $\alpha>1$, and 
\begin{align*}
\left|\left\langle\partial_x P_\lambda\int \frac{1}{|y|^{\alpha-1}}\dq^y(\chi_\lambda)e^{it\phi((x+y)/t)}\dq^y(\chi_\lambda e^{-it\phi(x/t)})\dq^y(\chi_\lambda e^{it\phi(x/t)})\,dy,\mathbf{u}^v \right\rangle\right|&\lesssim \lambda(t^{-1/2}\lambda^{1-\alpha} +t^{-1/2}\lambda^{3-\alpha}), 
\end{align*}
when $\alpha<1$.

The most problematic contribution in the case $\alpha>1$ is the one that arises from the first term. We have
\begin{align*}
\lambda^{\delta}\lambda^{3-\alpha}t^{-1/2}\|\varphi_\lambda\|^3_{L_x^\infty}&\lesssim \lambda^{-\alpha}t^{-2}\|\varphi\|^3_X\lesssim t^{-1-\delta}\epsilon^3t^{C\epsilon^2}
\end{align*}
The other term is analogous.

The most problematic contribution in the case $\alpha<1$ is the one that arises from the second term. We have
\begin{align*}
\lambda^{\frac{\alpha+1}{2}+\delta}\lambda^{3-\alpha}t^{-1/2}\|\varphi_\lambda\|^3_{L_x^\infty}&\lesssim t^{-\frac{3}{2}}\|\varphi\|^3_X\lesssim t^{-1-\delta}\epsilon^3t^{C\epsilon^2}
\end{align*}
The other term is analogous.

The $L_v^2$-bound is treated similarly, and so is the case in which one chooses the term $\displaystyle\dq^y(\chi_{\lambda})e^{-it\phi((x+y)/t)}$ in the expansion of $\displaystyle \dq^y(\chi_{\lambda}e^{-it\phi(x/t)})$. This leaves us with
\begin{align*}
\langle \partial_x P_\lambda\left(\chi_\lambda(x/t)^3Q(e^{it\phi(x/t)})\right),\mathbf{u}^v\rangle
\end{align*}
Lemma \ref{Semiclassical computation}, implies that we can replace the latter with
\begin{align*}
\langle \partial_x P_\lambda\left(\chi_\lambda(x/t)^3e^{it\phi(x/t)}(\phi'(x/t))^2q(1)\right),\mathbf{u}^v\rangle,
\end{align*}
 with error bounded by 
\begin{align*}
\lambda t^{1/2} \left(\frac{\lambda^{6-3\alpha}+\lambda^{5-2\alpha}}{t^{2-(4-\alpha)a}}+\frac{\lambda^{3-\alpha}}{t^{1-(2-\alpha)a}}+\frac{1}{t^{(1+\alpha)a}}\right)
\end{align*}
when $\alpha>1$, and
\begin{align*}
\lambda t^{1/2} \left(\frac{\lambda^{4-2\alpha}}{t^{2-(3-\alpha)a}}+\frac{\lambda^{3-\alpha}}{t^{1-(2-\alpha)a}}+\frac{1}{t^{(1+\alpha)a}}\right)
\end{align*}
when $\alpha<1$.

We note that one problematic contribution is the one arising from the last term. From $\displaystyle(t,v)\in\mathcal{D}$, we have the bound
\begin{align*}
\frac{\lambda^{1+\delta}}{t^{(1+\alpha)a-1/2}}\|\varphi_\lambda\|^3_{L_x^\infty}&\lesssim \lambda^{-2}t^{-(1+\alpha)a-1}\|\varphi\|^3_X\lesssim \lambda^{-1-\delta}t^{-1-\delta}\epsilon^3 t^{C\epsilon^2}
\end{align*}
We note that this contribution is acceptable. We now take $a=\frac{1}{10}$. The only other problematic contribution is the one arising from the first term, for which we bound
\begin{align*}
\lambda^{\delta}\frac{\lambda^{7-3\alpha}+\lambda^{6-2\alpha}}{t^{3/2-(4-\alpha)a}}\|\varphi_\lambda\|^3_{L_x^\infty}&\lesssim (\lambda^{4-3\alpha-\delta}+\lambda^{3-2\alpha-\delta})t^{(4-\alpha)a-3}\|\varphi\|^3_X\lesssim (\lambda^{4-3\alpha-\delta}+\lambda^{3-2\alpha-\delta})t^{-1-\delta}\epsilon^3 t^{C\epsilon^2},
\end{align*}
when $\alpha>1$, and
\begin{align*}
\lambda^{2\alpha-3+\delta}\frac{\lambda^{5-2\alpha}}{t^{3/2-(3-\alpha)a}}\|\varphi_\lambda\|^3_{L_x^\infty}&\lesssim t^{(3-\alpha)a-3}\|\varphi\|^3_X\lesssim t^{-6/5}\epsilon^3 t^{C\epsilon^2},
\end{align*}
when $\alpha<1$.

The contribution arising from the second term can be immediately bounded by
\begin{align*}
\frac{\lambda^{\min\{\frac{7-\alpha}{2},4-\alpha\}+\delta}}{t^{1/2-(2-\alpha)a}}\|\varphi_\lambda\|^3_{L_x^\infty}&\lesssim t^{(2-\alpha)a-\frac{3}{2}}\|\varphi\|^3_X\lesssim t^{-1-\delta}\epsilon^3 t^{C\epsilon^2}
\end{align*}
The $L_v^2$-bound is similar.
This means that we have to analyze
\begin{align*}
&q(1)\langle \partial_x (\chi_\lambda(x/t)^3(\phi'(x/t))^2e^{it\phi(x/t)}),\mathbf{u}^v_\lambda \rangle =q(1)\langle (\chi_\lambda(x/t)\phi'(x/t))^3e^{it\phi(x/t)},\mathbf{u}^v_\lambda \rangle\\
&\quad +q(1)t^{-1}\langle (3\chi_\lambda(x/t)^2\chi_\lambda'(x/t)(\phi'(x/t))^2+2\chi_\lambda(x/t)^3\phi'(x/t)\phi''(x/t)e^{it\phi(x/t)}),\mathbf{u}^v_\lambda \rangle,
\end{align*}
where the last contribution can be immediately shown to be an acceptable error by using the condition $(t,v)\in\mathcal{D}$. Further, we may replace $\pax^v_\lambda$ by $\pax^v$. To see this, from the proof of Lemma 5.8 in \cite{ITpax}, we have
\begin{align*}
    \left|P_{\neq\lambda}\mathbf{u}^v\right|&\lesssim \lambda^{1-\frac{\alpha}{2}}(1+|y|)^{-1-\delta}t^{-1-\delta}\lambda^{-(1+\delta)\alpha}, \qquad y=(x - vt)|ta''(\xi_v)|^{-\half},
\end{align*}
and
\begin{align*}
|(\chi_\lambda(x/t)\phi'(x/t))^3e^{it\phi(x/t)}|&\lesssim \lambda^3.
\end{align*}
Thus,
\begin{align*}
\left|\left\langle (\chi_\lambda(x/t)\phi'(x/t))^3e^{it\phi(x/t)},P_{\neq\lambda}\mathbf{u}^v\right\rangle\right|&\lesssim \lambda^3\lambda^{1-\frac{\alpha}{2}}\lambda^{-(1+\delta)\alpha}t^{-1-\delta}t^{1/2}\lambda^{\alpha/2-1}\lesssim t^{-1/2-\delta}\lambda^{3-(1+\delta)\alpha},
\end{align*}
which along with the condition $(t,v)\in\mathcal{D}$ shows that this is an acceptable error.

As $\mathbf{u}^v$ is supported in the region $\displaystyle\left|\frac{x}{t}-v\right|\lesssim t^{-1/2}\lambda^{\frac{\alpha}{2}-1}$, we can replace $x/t$ by $v$ in $\chi_\lambda(x/t)\phi'(x/t)$, with acceptable errors. As $\chi_\lambda(v)=1$, the remaining term is now
\begin{align*}
iq(1)(\chi_\lambda(v)\xi_v)^3\left\langle e^{it\phi(x/t)},\mathbf{u}^v\right\rangle &=t^{\half}iq(\xi_v)\xi_v,
\end{align*}
as desired.
\end{proof}
\subsection{Closing the bootstrap argument}
We recall that
\begin{align*}
  \|\varphi_\lambda\|_{L_x^{\infty}}
  &\lesssim\frac{1}{\sqrt{t}}\lambda^{-1+\delta+\delta_1}\|\varphi\|_X\lesssim \frac{1}{\sqrt{t}}\lambda^{-1+\delta+\delta_1}\epsilon t^{C\epsilon^2}
\end{align*}
and when $\lambda>1$,
\begin{align*}
  \|\varphi_\lambda\|_{L_x^{\infty}}
  &\lesssim \frac{1}{\sqrt{t}}\lambda^{-(1+\alpha/2+2\delta)}\|\varphi\|_X\lesssim \frac{1}{\sqrt{t}}\lambda^{-(1+\alpha/2+2\delta)}\epsilon t^{C\epsilon^2}.
\end{align*}

Thus, if $\displaystyle t\lesssim \lambda^{N}$ when $\lambda>1$, and if $\displaystyle t\lesssim \lambda^{-N}$ when $\lambda\leq 1$, where $N$ can be chosen arbitrarily, we get the desired bounds. We are left to analyze $\displaystyle t\gtrsim \lambda^{N}$ when $\lambda>1$, and $\displaystyle t\gtrsim \lambda^{-N}$ when $\lambda\leq 1$.

We recall the following bounds in the elliptic region:
\begin{align*}
   \||D_x|^{1-\delta-\delta_1}((1-\chi_{\lambda})\varphi_\lambda(x))\|_{L_x^{\infty}}&\lesssim \frac{\lambda^{\frac32-\alpha-\delta-\delta_1}}{t}(\|\varphi\|_X+\|\varphi_\lambda\|_{L_x^2})\lesssim \frac{\lambda^{\frac32-\alpha-\delta-\delta_1}+\lambda^{1-\alpha-\delta-\delta_1}}{t}\epsilon t^{C\epsilon^2}\\
\||D_x|^{\frac{\alpha}{2}+\delta}\partial_x((1-\chi_{\lambda})\varphi_\lambda(x))\|_{L_x^{\infty}}&\lesssim \frac{\lambda^{\frac{3-\alpha}{2}+\delta}}{t}(\|\varphi\|_X+\|\varphi_{\lambda}\|_{L_x^2})\lesssim \frac{\lambda^{\frac{3-\alpha}{2}+\delta}+\lambda^{\frac{2-\alpha}{2}+\delta}}{t}\epsilon t^{C\epsilon^2}
\end{align*}
which gives the desired bounds when $t\gtrsim \lambda^{N}$ ($\lambda>1$), and $t\gtrsim \lambda^{-N}$ ($\lambda\leq 1$).
We still have to bound $\displaystyle \chi_{\lambda}\varphi_\lambda$.
We recall that, if $x/t\in J_{\lambda}$, and $\displaystyle r(t,x)=\chi_{\lambda}\varphi_\lambda(t,x)-\frac{1}{\sqrt{t}}\chi_{\lambda}\gamma(t,x/t)e^{it\phi(x/t)}$,
\begin{align*}
   t^{1/2}\|r^\lambda\|_{L_x^{\infty}}&\lesssim t^{-1/4}\lambda^{\frac{\alpha}{4}-\frac32}\eps t^{C\eps^2} 
\end{align*}

We note that
\begin{align*}
  t^{-1/4}\lambda^{\frac{\alpha}{4}-\frac32}\eps t^{C\eps^2}&\lesssim \lambda^{1-\delta-\delta_1}\eps   \end{align*}
when $\lambda\leq 1$, because this is equivalent to
\begin{align*}
    \lambda^{\frac{\alpha}{4}-\frac{5}{2}+\delta+\delta_1}\lesssim t^{1/4-C\eps^2}
\end{align*}
(this is true when $t\gtrsim \lambda^{-N}$),
and that
\begin{align*}
  t^{-1/4}\lambda^{\frac{\alpha}{4}-\frac32}\eps t^{C\eps^2}&\lesssim \lambda^{-\left(1+\frac{\alpha}{2}+3\delta/2\right)}\eps   
\end{align*}
when $\lambda>1$, because this is equivalent to
\begin{align*}
    \lambda^{\frac{3\alpha}{4}-\frac12+3\delta/2}\lesssim t^{1/4-C\eps^2}
\end{align*}
(this is true when $t\gtrsim \lambda^{N}$).

This means that we only need the bounds
\begin{align*}
   |\gamma(t,v)|\lesssim\eps \lambda^{-\left(1-\delta-\delta_1\right)} 
\end{align*}
when $\lambda\leq 1$, and
\begin{align*}
    |\gamma(t,v)|\lesssim\eps \lambda^{-\left(1+\frac{\alpha}{2}+3\delta/2\right)} 
\end{align*}
when $\lambda>1$.
By initializing at time $t=1$, up to which the bounds are known to be true from the energy estimates, and by using Proposition \ref{Asymptotic equation}, we reach the desired conclusion.

\section{Modified scattering}\label{s:scattering}

In this section, we prove the final part of Theorem \ref{t:gwp}, which refers to the modified scattering behavior of the solutions constructed in Section~\ref{s:gwp}.

 As was shown by Hunter-Shu-Zhang \cite{HSZfamily} when $\alpha<1$, the mass of the solutions to \eqref{gSQG} stays conserved. We begin with a short proof of this property for the full range $(0,2)-\{1\}$ :
\begin{proposition}\label{Conservation of mass}
   For solutions $\varphi$ of \eqref{gSQG}, $\|\varphi(t)\|_{L^2}$ is conserved in time.  
\end{proposition}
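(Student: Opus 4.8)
The plan is to show $\frac{d}{dt}\|\varphi(t)\|_{L^2}^2 = 0$ by differentiating under the integral sign and using the equation \eqref{gSQG} to substitute for $\varphi_t$. Concretely,
\[
\frac{d}{dt}\int \varphi^2\,dx = 2\int \varphi \varphi_t\,dx = 2\int \varphi \left(c(\alpha)|D_x|^{\alpha-1}\D_x\varphi + Q(\varphi,\D_x\varphi)\right)dx.
\]
The linear term $\int \varphi\, c(\alpha)|D_x|^{\alpha-1}\D_x\varphi\,dx$ vanishes because $|D_x|^{\alpha-1}\D_x$ is skew-adjoint (its symbol $c(\alpha)i\xi|\xi|^{\alpha-1}$ is purely imaginary and odd), so the whole burden is to show that the nonlinear contribution
\[
\int \varphi(x)\, Q(\varphi,\D_x\varphi)(x)\,dx = \int\!\!\int \varphi(x)\frac{1}{|y|^{\alpha-1}}F(\dq^y\varphi(x))\cdot \sdq^y\varphi_x(x)\,dy\,dx
\]
vanishes. (For the regularity needed to justify these manipulations, one works with the smooth solutions produced in Section~\ref{s:lwp} and passes to the limit, or simply notes the integrals converge absolutely under the control norms $A$, $B$ by Lemma~\ref{Besov trilinear integral estimate}.)

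The key algebraic step is a symmetrization in the $y$-integral combined with an integration by parts in $x$. First I would symmetrize: replacing $y\mapsto -y$ and averaging, and using $\sdq^{-y}\varphi_x(x) = -\sdq^{y}\varphi_x(x-y)\cdot$(sign bookkeeping), rewrite the inner integrand so that the difference quotients are centered. The cleanest route is to write $\sdq^y\varphi_x = \D_x(\sdq^y\varphi)$ up to the $\sgn y$ factor, integrate by parts in $x$ to move this derivative onto $\varphi(x) F(\dq^y\varphi(x))$, and then exploit that $\D_x[\varphi F(\dq^y\varphi)]$ produces terms in which, after a further change of variables $x\mapsto x+y$ inside the double integral and another symmetrization $y \mapsto -y$, everything cancels. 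Equivalently, one can introduce the potential $\psi = \D_x^{-1}F(\varphi_x)$ and recognize that $\int \varphi\, Q(\varphi,\varphi_x)\,dx$ is, up to the null/resonance structure recorded in \eqref{resonance}, a trilinear form whose symbol is antisymmetric under permuting two of its three inputs while the other two arguments are equal, forcing it to vanish.

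The main obstacle I anticipate is purely organizational rather than deep: correctly tracking the $\sgn y$ and $|y|$ versus $y$ factors in $\dq^y$ vs $\sdq^y$ through the changes of variables $y\mapsto -y$ and $x \mapsto x \pm y$, so that the claimed cancellation is exact and not merely "cancellation up to lower order." A careful bookkeeping lemma — writing $Q(\varphi,\varphi_x)$ as $\frac13\int |y|^{1-\alpha}\sgn(y)\,\D_x(\dq^y\varphi)^3\,dy$-type expression, analogous to the $Q(\varphi) = \frac13\int\frac{\sgn y}{|y|^{\alpha-1}}|\dq^y\varphi|^2\dq^y\varphi\,dy$ identity used in Section~\ref{s:gwp} — should make the pairing $\int \varphi\,\D_x[\cdots]\,dx$ manifestly a perfect derivative or manifestly antisymmetric, at which point the integral vanishes. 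I would also double-check the endpoint issue that the double integral converges: near $y=0$ the integrand is $O(|y|^{1-\alpha}\cdot|y|^2)$-controlled by $\|\varphi_x\|_{L^\infty}$ and $\|\D_x^2\varphi\|$, integrable since $\alpha < 2$, and for $|y|$ large one uses decay from $\varphi \in \dot H^{s_0}$, $s_0 < 3/2$, together with the smallness/localization so no boundary terms at infinity arise in the $x$-integration by parts.
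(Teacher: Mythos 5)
Your plan follows essentially the same route as the paper: reduce to $\int \varphi\, Q(\varphi,\varphi_x)\,dx = 0$, use the change of variables $(x,y) \mapsto (x+y,-y)$ to turn the $x$-integrand into a difference quotient, and then recognize a perfect $x$-derivative. The paper, however, performs these two steps in the opposite order from your ``cleanest route'', and that ordering is genuinely cleaner. It symmetrizes \emph{first}: the change of variables yields
\[
I = -\int\!\!\int |y|^{1-\alpha} F(\dq^y\varphi)\,\sdq^y\varphi_x\cdot\varphi(x+y)\,dx\,dy,
\]
hence
\[
-2I = \int |y|^{2-\alpha}\!\!\int F(\dq^y\varphi)\,\dq^y\varphi\cdot\dq^y\varphi_x\,dx\,dy,
\]
and the inner integrand is exactly $\partial_x G(\dq^y\varphi)$ with $G'(s) = sF(s)$, explicitly $G(s) = \tfrac{s^2}{2} - \tfrac{1}{2-\alpha}(1+s^2)^{1-\alpha/2}$, so the $x$-integral vanishes in one step. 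Your proposal (integrate by parts in $x$ first, then symmetrize) also works but is longer: the $\partial_x$ hits both $\varphi$ and $F(\dq^y\varphi)$, producing two terms. The $F(\dq^y\varphi)\varphi_x\sdq^y\varphi$ term equals $-I$ after the same symmetrization, and the $F'(\dq^y\varphi)\dq^y\varphi_x\,\varphi\,\sdq^y\varphi$ term vanishes by a second symmetrization combined with a second perfect-derivative identity $H'(s)=s^2F'(s)$. So your plan is viable, just a step more involved. Two smaller cautions: the $\tfrac13\int |y|^{1-\alpha}\sgn y\,(\dq^y\varphi)^3\,dy$ template from Section~\ref{s:gwp} corresponds only to replacing $F$ by its leading quadratic part; since $F$ is genuinely nonpolynomial here you need the exact antiderivative $G$ above, and for the same reason the ``antisymmetric trilinear symbol'' heuristic cannot be applied literally. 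Also, $\sdq^y\varphi_x = \partial_x(\sdq^y\varphi)$ holds exactly, with no extra $\sgn y$ factor. Your integrability remarks are consistent with what is needed.
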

\begin{proof}
We have
\begin{align*}
  \frac{d}{dt}\|\varphi\|^2_{L_x^2}&=\int\varphi_t\cdot\varphi\,dx\\
  &=-2\int\int\frac{1}{|y|^{\alpha-1}} F(\dq^y\varphi)\sdq^y\varphi_x\,dy\cdot\varphi\,dx-2c(\alpha)\int \varphi\cdot |\partial_x|^{\alpha-1}\varphi_x\,dx\\
  &=-2\int\int\frac{1}{|y|^{\alpha-1}} F(\dq^y\varphi)\sdq^y\varphi_x\,dy\cdot\varphi\,dx:=-2I.
\end{align*}

We note that by the change of variables $(x, y) \mapsto (x + y, -y)$,
\begin{align*}
I&= -\int\int \frac{1}{|y|^{\alpha-1}}F(\dq^{y}\varphi)\sdq^{y}\varphi_x\cdot\varphi(x+y)\,dx\,dy.
\end{align*}
Thus,
\begin{align*}
-2I&=\int\int \frac{1}{|y|^{\alpha-1}}F(\dq^{y}\varphi)\sdq^{y}\varphi_x\cdot(\varphi(x+y) - \varphi(x))\,dx\,dy\\
&=\int|y|^{2-\alpha}\int F(\dq^{y}\varphi)\dq^y\varphi\cdot \dq^{y}\varphi_x\,dx\,dy\\
&=\int|y|^{2-\alpha}\int \partial_x(G(\dq^{y}\varphi))\,dx\,dy=0,
\end{align*}
where $\displaystyle G(x)=\frac{x^2}{2}-\frac{1}{2-\alpha}(1+x^2)^{1-\frac{\alpha}{2}}$.
\end{proof}
Recall the asymptotic equation
\begin{align*}
    \dot{\gamma}(t,v)&=iq(\xi_v)\xi_v t^{-1}\left|\gamma(t,v)\right|^2\gamma(t,v)+f(t,v),
\end{align*}

As $t\rightarrow\infty$, $\gamma(t,v)$ converges to the solution of the equation
\begin{align*}
    \dot{\tilde{\gamma}}(t,v)&=iq(\xi_v)\xi_vt^{-1}\tilde{\gamma}(t,v)|\tilde{\gamma}(t,v)|^2,
\end{align*}
whose solution is 
\begin{align*}
    \tilde{\gamma}(t,v)&=W(v)e^{iq(\xi_v)\xi_v\ln(t)|W(v)|^2}
\end{align*}
We can immediately see that
$W(v)$ is well-defined, as
$|W(v)|=|\tilde{\gamma}(t,v)|$, which is a constant, and
\[
W(v)=\lim_{\substack{s\rightarrow\infty}}\tilde{\gamma}(e^{2s\pi/(q(\xi_v)\xi_v|W(v)|^2)},v).
\]

\begin{corollary}\label{Asymptotic expansions}
Let $v\in J_\lambda$. Under the assumption $(t,v)\in\mathcal{D}$, we have
\begin{align*}
    \|\varphi_0\|_{X}\lesssim\eps\ll 1,
\end{align*}
as well as $t\gtrsim \lambda^{-N}$ when $\lambda\leq 1$, we have the asymptotic expansions
\begin{equation}\label{Wdiff1}
    \|\gamma(t,v) - W(v)e^{ i q(\xi_v)\xi_v\log t |W(v)|^2}\|_{L^\infty(J_\lambda)} \lesssim \lambda^{-\delta}g(\lambda,\alpha)t^{-\delta+C^2\eps^2}\eps.
\end{equation}
\begin{equation}\label{Wdiff2}
    \|\gamma(t,v) - W(v)e^{ i q(\xi_v)\xi_v\log t |W(v)|^2}\|_{L^2(J_\lambda)} \lesssim \lambda^{-\delta}(1+\lambda^{-3/2})t^{-\delta+C^2\eps^2}\eps.
\end{equation}
\end{corollary}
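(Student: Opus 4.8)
The plan is to derive the asymptotic expansions \eqref{Wdiff1} and \eqref{Wdiff2} by comparing the exact profile $\gamma(t,v)$ with the modified-scattering profile $\tilde\gamma(t,v) = W(v)e^{iq(\xi_v)\xi_v\log t\,|W(v)|^2}$ via a Gr\"onwall-type argument applied to the difference of the two ODEs. First I would substitute the polar form $\gamma(t,v) = \rho(t,v)e^{i\Theta(t,v)}$ and observe from the asymptotic equation of Proposition~\ref{Asymptotic equation} that $\frac{d}{dt}|\gamma|^2 = 2\,\mathrm{Re}(\bar\gamma \dot\gamma) = 2\,\mathrm{Re}(\bar\gamma f)$, since the cubic term is purely imaginary times $|\gamma|^2$; hence $|\gamma(t,v)|$ is almost conserved, with $\big||\gamma(t,v)|^2 - |\gamma(t_0,v)|^2\big| \lesssim \int_{t_0}^t |\gamma||f|\,d\tau$. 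Using the pointwise bound $|\gamma(t,v)| \lesssim \eps\lambda^{-(1-\delta-\delta_1)}$ (from the end of Section~\ref{s:gwp}, in the regime $\lambda\le 1$, $t\gtrsim\lambda^{-N}$) together with the bound $|f(t,v)|\lesssim \lambda^{-\delta}g(\lambda,\alpha)t^{-1-\delta+C\eps^2}\eps$ from Proposition~\ref{Asymptotic equation}, the integral $\int_{t_0}^\infty |\gamma||f|\,d\tau$ converges, so $|\gamma(t,v)|$ has a limit $|W(v)|$ as $t\to\infty$, with the rate $\big||\gamma(t,v)| - |W(v)|\big| \lesssim \lambda^{-\delta}g(\lambda,\alpha)t^{-\delta+C\eps^2}\eps$.

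Next I would handle the phase. Writing $\Theta(t,v)$ for the argument of $\gamma$, the asymptotic equation gives $\dot\Theta = q(\xi_v)\xi_v t^{-1}|\gamma(t,v)|^2 + \mathrm{Im}(f/\gamma)$, and comparing with $\dot{\tilde\Theta} = q(\xi_v)\xi_v t^{-1}|W(v)|^2$ we get
\[
\frac{d}{dt}(\Theta - \tilde\Theta) = q(\xi_v)\xi_v t^{-1}\big(|\gamma(t,v)|^2 - |W(v)|^2\big) + \mathrm{Im}\!\left(\frac{f}{\gamma}\right).
\]
The first term on the right is integrable in $t$ because $||\gamma|^2 - |W|^2|\lesssim \lambda^{-\delta}g(\lambda,\alpha)\tau^{-\delta+C\eps^2}\eps$ contributes $\tau^{-1-\delta+C\eps^2}$, and the second term is integrable by the $f$-bound (here one divides by $|\gamma|\gtrsim \lambda^{\cdots}\eps$, which reintroduces a power of $\lambda$ into $g$ but is harmless since $g$ is allowed to be any sum of powers of $\lambda$). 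Hence $\Theta(t,v) - \tilde\Theta(t,v)$ converges as $t\to\infty$; adjusting the constant of integration in $\tilde\gamma$ so that the limit is $0$ pins down $\arg W(v)$, and gives $|\Theta(t,v) - \tilde\Theta(t,v)| \lesssim \lambda^{-\delta}g(\lambda,\alpha)t^{-\delta+C\eps^2}\eps$. Combining the amplitude and phase estimates via $|\gamma - \tilde\gamma| \le \big||\gamma| - |W|\big| + |W|\,|e^{i\Theta} - e^{i\tilde\Theta}| \lesssim \big||\gamma|-|W|\big| + |W|\,|\Theta-\tilde\Theta|$ yields \eqref{Wdiff1}. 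For \eqref{Wdiff2}, I would run the identical argument but keeping $L^2_v(J_\lambda)$ norms throughout, using the $L^2_v$ bound $\|f(t,v)\|_{L^2_v(J_\lambda)}\lesssim(\lambda^{-\delta}+\lambda^{-3/2-\delta})t^{-1-\delta+C\eps^2}\eps$ from Proposition~\ref{Asymptotic equation} and the $L^2$-type bounds on $\gamma$ from Lemma~\ref{Gamma bounds}; the main subtlety is that dividing by $\gamma$ in the phase equation is only an $L^\infty$ operation, so one works with the difference $\gamma - \tilde\gamma$ directly (writing $\dot\gamma - \dot{\tilde\gamma} = iq\xi_v t^{-1}(\gamma|\gamma|^2 - \tilde\gamma|\tilde\gamma|^2) + f$, estimating the cubic difference by $(|\gamma|^2+|\tilde\gamma|^2)|\gamma-\tilde\gamma|$ in $L^2_v$ with $L^\infty_v$ control of the amplitudes, and closing by Gr\"onwall in $t$ — the prefactor $t^{-1}$ times an $L^\infty_v$-bounded quantity that is $O(\eps^2)$ only costs a $t^{C\eps^2}$, which is already present).

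The main obstacle I anticipate is bookkeeping the $\lambda$-weights consistently: dividing by $\gamma$ in the phase computation, and multiplying by $|W(v)|$ when passing from the phase difference back to $|\gamma - \tilde\gamma|$, both shift powers of $\lambda$ around, and one must check these stay within the (generous) allowance that $g(\lambda,\alpha)$ is merely "a sum of powers of $\lambda$" and that the $L^2_v$ estimate only picks up the extra $\lambda^{-3/2}$ already recorded in the statement. A secondary point of care is the lower cutoff on $t$: the estimates on $\gamma$ and $f$ are only valid for $(t,v)\in\mathcal D$ and (for $\lambda\le 1$) $t\gtrsim\lambda^{-N}$, so the time integration defining $W$ and the error rates must be started from $t_0 \approx \max(1,\lambda^{-N})$ rather than from $t_0=1$ uniformly; since the integrands decay like $\tau^{-1-\delta+C\eps^2}$ this only improves the bound, but it should be stated. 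Everything else is a routine Gr\"onwall/integration argument, so I would not belabor it.
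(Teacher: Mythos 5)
The paper's own proof of this corollary is the single sentence ``This is an immediate consequence of Proposition~\ref{Asymptotic equation},'' so in substance your argument is exactly the standard ODE-asymptotics elaboration the paper is implicitly relying on, and the overall structure (amplitude near-conservation from $\frac{d}{dt}|\gamma|^2 = 2\,\mathrm{Re}(\bar\gamma f)$, then phase comparison, then assembly, all integrated from $t_0\approx\max(1,\lambda^{-N})$) is the right one.

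There is, however, one step in your $L^\infty$ argument that does not hold as written: you assert a lower bound $|\gamma(t,v)|\gtrsim\lambda^{\cdots}\eps$ in order to divide by $\gamma$ in $\dot\Theta = q(\xi_v)\xi_v t^{-1}|\gamma|^2 + \mathrm{Im}(f/\gamma)$. The paper only gives an \emph{upper} bound on $|\gamma|$ (via Lemma~\ref{Gamma bounds} and the vector-field bounds), and there is no reason $\gamma(t,v)$ cannot be small or vanish at some $v$; at such points the polar decomposition and the intermediate quantity $\Theta-\tilde\Theta$ are uncontrolled, even though the final combination $|W|\,|\Theta-\tilde\Theta|$ would again be fine. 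The fix is your own $L^2$ workaround, which should be used uniformly: set $\beta(t,v)=\gamma(t,v)e^{-iq(\xi_v)\xi_v\log t\,|W(v)|^2}$ with $|W(v)|^2=\lim_{t\to\infty}|\gamma(t,v)|^2$ (which exists by the division-free amplitude identity), compute $\dot\beta = e^{-iq\xi_v\log t|W|^2}\big(iq\xi_v t^{-1}(|\gamma|^2-|W|^2)\gamma + f\big)$, bound $\big||\gamma|^2-|W|^2\big|\lesssim\int_t^\infty 2|\gamma f|\,d\tau\lesssim\lambda^{-\delta}g\,t^{-\delta+C\eps^2}\eps$ using only the upper bound on $|\gamma|$, and integrate $\dot\beta$ from $t$ to $\infty$ — this gives \eqref{Wdiff1} and \eqref{Wdiff2} directly, with no division by $\gamma$ anywhere. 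With that replacement, the rest of your bookkeeping on $\lambda$-powers and the lower cutoff on $t$ is correct.
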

\begin{proof}
This is an immediate consequence of Proposition \ref{Asymptotic equation}.
\end{proof}
\begin{proposition}
Under the assumption
\begin{align*}
    \|\varphi_0\|_{X}\lesssim\eps\ll 1,
\end{align*}
the asymptotic profile $W$ defined above satisfies \begin{align*}
\|(-v)^{\frac{1+\delta}{\alpha-1}}(-v)^{\frac{\sgn(\log|\xi_v|)}{\alpha-1}(1+\delta/2)}|D_v|^{1-C_1\epsilon^2}W(v)\|_{L_v^2}\lesssim\epsilon
\end{align*}
Moreover, when $s_0=0$, we also have $\|W(v)\|_{L_v^2}\lesssim\eps$.
\end{proposition}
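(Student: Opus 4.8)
The strategy is to transfer the weighted Sobolev bounds on $\varphi$, encoded by $\|\varphi\|_X \lesssim \eps \langle t\rangle^{C\eps^2}$, onto the asymptotic profile $W$ via the intermediate profile $\gamma^\lambda(t,v)$. First I would record the correspondence between the weighted norm $\|L\partial_x\varphi\|_{L^2}$ and a derivative $\partial_v \gamma$: differentiating the defining relation $\gamma^\lambda(t,v) = \langle \varphi_\lambda, \pax^v\rangle$ in $v$ and using the first representation in \eqref{dpax}, namely $\D_v\pax^v = -\tilde L\pax^v + (\text{lower order})$, one gets that $\partial_v\gamma^\lambda$ is controlled, up to the wave-packet-scale losses quantified in Lemma~\ref{Gamma bounds} and the last estimate there, by $t^{\frac14}\lambda^{-\frac12-\frac\alpha4}\|\varphi\|_X$ together with the genuinely dispersive term $t^\half\lambda^{1-\alpha}\|\varphi_\lambda\|_{L^\infty}$. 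Summing over $v\in J_\lambda$ in $L^2_v$ and using $|J_\lambda|\approx\lambda^{\alpha-1}$, this yields an $L^2_v(J_\lambda)$ bound on $\partial_v\gamma^\lambda$ with a $t^{C\eps^2}$ loss and appropriate powers of $\lambda$. Combined with the $L^2_v$ bound on $\gamma^\lambda$ itself (from Proposition~2.1 of \cite{ITpax}, which is already implicitly used to get \eqref{Pointwise Vector Field Bound 2}), one obtains a frequency-localized $\dot H^1_v$-type bound on $\gamma^\lambda$ carrying a small power $t^{C\eps^2}$.

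Next I would pass from $\gamma$ to $W$ using the modified scattering relation. From Corollary~\ref{Asymptotic expansions}, $\gamma(t,v) = W(v)e^{iq(\xi_v)\xi_v\log t|W(v)|^2} + O(\lambda^{-\delta}g(\lambda,\alpha)t^{-\delta+C^2\eps^2}\eps)$ in $L^\infty(J_\lambda)$, with the analogous $L^2_v(J_\lambda)$ statement \eqref{Wdiff2}. The issue is that the oscillatory factor $e^{iq(\xi_v)\xi_v\log t|W(v)|^2}$ has a $v$-derivative of size $\sim (\log t)\cdot\partial_v(q(\xi_v)\xi_v|W(v)|^2)$, so differentiating the scattering relation in $v$ introduces a factor of $\log t$ multiplying a derivative of $W$. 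This is exactly the mechanism that forces one to lose a small amount of regularity: one cannot control a full derivative of $W$ uniformly in $t$, but can control $|D_v|^{1-C_1\eps^2}W$, since $(\log t)\cdot t^{-C_1\eps^2\delta'}$ stays bounded. Concretely, I would interpolate: the $\dot H^1_v$ bound on $\gamma$ holds with a $t^{C\eps^2}$ loss, the $L^2_v$ bound on $W$ (equivalently on $\gamma$, since $|W| = |\tilde\gamma|$ is $t$-independent) holds with no loss, and $W$ differs from $\gamma e^{-i(\dots)}$ by the error terms of Corollary~\ref{Asymptotic expansions} which decay in $t$. Interpolating between the lossy $\dot H^1$ bound and the lossless $L^2$ bound, at exponent $1-C_1\eps^2$ the $t^{C\eps^2}$ loss is absorbed, and the error terms and the $\log t$ factor from the phase are likewise dominated. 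This produces the claimed weighted $L^2_v$ bound on $|D_v|^{1-C_1\eps^2}W$, where the weights $(-v)^{\frac{1+\delta}{\alpha-1}}(-v)^{\frac{\sgn(\log|\xi_v|)}{\alpha-1}(1+\delta/2)}$ are simply the translation of the dyadic weights $\lambda^{-(1-\delta-\delta_1)}$ and $\lambda^{-(1+\alpha/2+3\delta/2)}$ appearing in the closing of the bootstrap, rewritten via $v = -c(\alpha)\alpha|\xi_v|^{\alpha-1}$ so that $\lambda \approx |\xi_v| \approx (-v)^{1/(\alpha-1)}$ and the two regimes $\lambda\le 1$ and $\lambda>1$ correspond to the sign of $\log|\xi_v|$.

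For the final claim that $s_0 = 0$ gives $\|W\|_{L^2_v}\lesssim\eps$: when $s_0 = 0$ the energy space $X$ includes control of $\|\varphi\|_{L^2}$, and by Proposition~\ref{Conservation of mass} this $L^2$ mass is conserved, $\|\varphi(t)\|_{L^2} = \|\varphi_0\|_{L^2}\lesssim\eps$ for all time with no growth. Then the $L^2_v$ bound on $\gamma$ inherits no $t^{C\eps^2}$ loss on the low-frequency piece, and since $|W(v)| = |\tilde\gamma(t,v)|$ and $\tilde\gamma$ is the $t\to\infty$ limit of $\gamma$, one gets $\|W\|_{L^2_v}\lesssim\limsup_t\|\gamma(t)\|_{L^2_v}\lesssim\eps$ directly; alternatively one uses an almost-conservation/approximate-conservation argument on $\|\gamma(t,\cdot)\|_{L^2_v}^2$ using the asymptotic equation of Proposition~\ref{Asymptotic equation}, whose nonlinear term is purely imaginary (hence conserves $|\gamma|$) and whose error $f$ is integrable in $t$.

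\textbf{Main obstacle.} The delicate point is bookkeeping the interplay between the $\log t$ factor generated by differentiating the modified-scattering phase and the $t^{C\eps^2}$ growth of $\|\varphi\|_X$: one must choose $C_1$ large enough (depending on $C$ and $\delta$) that the interpolation exponent $1 - C_1\eps^2$ turns both the polynomial loss $t^{C\eps^2}$ and the logarithmic loss into a net decay, while simultaneously checking that the $\lambda$-dependent constants $g(\lambda,\alpha)$ from Corollary~\ref{Asymptotic expansions}, after the dyadic-to-weight translation, are exactly matched by the prescribed weights $(-v)^{\frac{1+\delta}{\alpha-1}}(-v)^{\frac{\sgn(\log|\xi_v|)}{\alpha-1}(1+\delta/2)}$ uniformly over both frequency regimes. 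Verifying that these weights are the correct ones — and in particular that the $\sgn(\log|\xi_v|)$ bookkeeping correctly encodes the split between $\lambda\le 1$ and $\lambda>1$ — is where the care is required; the rest is routine interpolation and summation over dyadic $\lambda$.
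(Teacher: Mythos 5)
Your proposal follows the same route as the paper: use Corollary~\ref{Asymptotic expansions} and Lemma~\ref{Gamma bounds} to control $\gamma$ and $\partial_v\gamma$, differentiate the modified-scattering phase to pick up the $\log t$ factor, interpolate to trade the $t^{C\eps^2}$ loss for a loss of $C_1\eps^2$ derivatives, sum dyadically and translate the $\lambda$-weights into $(-v)$-weights via $\lambda\approx(-v)^{1/(\alpha-1)}$, and invoke Proposition~\ref{Conservation of mass} for the $s_0=0$ claim.

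One imprecision is worth flagging in the interpolation step. You describe it as interpolating between a \emph{lossless $L^2_v$ bound on $W$} and a \emph{lossy $\dot H^1_v$ bound on $\gamma$}, in Gagliardo--Nirenberg style. Taken literally this does not close: $W$ itself is not known to lie in $\dot H^1_v$, so one cannot write $\|W\|_{\dot H^{1-C_1\eps^2}}\lesssim\|W\|_{L^2}^{C_1\eps^2}\|W\|_{\dot H^1}^{1-C_1\eps^2}$; and even if one could, raising the $t^{C\eps^2}$ loss to the power $1-C_1\eps^2$ still grows in $t$. What the paper actually uses is a decomposition for each $t\gtrsim t_\lambda$,
\begin{align*}
W = \underbrace{e^{-iq(\xi_v)\xi_v\log t\,|\gamma(t,v)|^2}\gamma(t,v)}_{A_t} + \underbrace{\bigl(W - A_t\bigr)}_{B_t},
\end{align*}
with $\|A_t\|_{\dot H^1_v(J_\lambda)}\lesssim\lambda^{-\delta}(1+\lambda^{-2})(\log t)\,\eps\,t^{C^2\eps^2}$ and $\|B_t\|_{L^2_v(J_\lambda)}\lesssim\lambda^{-\delta}(1+\lambda^{-3/2})t^{-\delta+C^2\eps^2}\eps$; the interpolation is then real (Lions--Peetre / $K$-functional), optimizing the choice of $t$ against the $v$-frequency. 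The crucial ingredient is not the lossless $L^2$ bound on $W$ but the \emph{$t$-decaying} $L^2$ bound on $B_t$ from \eqref{Wdiff2}, which you do mention but not in the role it actually plays. With that replacement your outline matches the paper's proof.
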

\begin{proof}
We fix $\lambda$, and let $t\gtrsim\max\{1,\lambda^{-\alpha}\}:=t_\lambda$. From Corollary \ref{Asymptotic expansions} we know that
\begin{align*}
\|W(v)-e^{- i q(\xi_v)\xi_v\log t |\gamma(t,v)|^2}\gamma(t,v)\|_{L_v^2(J_{\lambda})}\lesssim \lambda^{-\delta}(1+\lambda^{-3/2})t^{-\delta+C^2\eps^2}\eps
\end{align*}
From the product and chain rules with Lemma \ref{Gamma bounds}, we have 
\begin{align*}
    \left\|\partial_v\left(e^{-i q(\xi_v)\xi_v\log t |\gamma(t,v)|^2}\gamma(t,v)\right)\right\|_{L_v^2(J_{\lambda})}&\lesssim \lambda^{-\delta}(1+\lambda^{-2})\log(t)\eps t^{C^2\eps^2}.
\end{align*}
In this case,
\begin{align*}
W(v)=O_{\dot{H}^1_v(J_\lambda)}(\lambda^{-\delta}(1+\lambda^{-2})\log(t)\eps t^{C^2\eps^2})+O_{L_v^2(J_\lambda)}(\lambda^{-\delta}(1+\lambda^{-3/2})t^{-\delta+C^2\eps^2}\eps), \qquad t \gtrsim t_\lambda.
\end{align*}
By interpolation  this will imply that
for $C_1$ large enough we have
\begin{align*}
\|W(v)\|_{\dot{H}_v^{1-C_1\epsilon^2}(J_\lambda)}&\lesssim \lambda^{-\delta}(1+\lambda^{-2})\epsilon.
\end{align*}

By dyadic summation over $\lambda\geq 1$ and $\lambda\leq 1$,
\begin{align*}
\|(-v)^{\frac{1+\delta}{\alpha-1}}(-v)^{\frac{\sgn(\log|\xi_v|)}{\alpha-1}(1+\delta/2)}|D_v|^{1-C_1\epsilon^2}W(v)\|_{L_v^2}\lesssim\epsilon
\end{align*}
The last part immediately follows from the conservation of mass.
\end{proof}

\appendix

\section{Euler fronts}\label{s:appendix}
Here we will briefly discuss the ingredients that are necessary in proving the local well-posedness result in the case of Euler fronts, $\alpha=0$. For the purpose of this section, we define our control parameter $B$ as $B:=\|\varphi_x\|_{C^{0,\delta}}$.
\begin{lemma}
We have
\[
Q(\varphi, v) = R(x, D) v
\]
where 
\[
\|(\D_x R)(x, D) v\|_{L^2} \lesssim_A B^2.
\]
\end{lemma}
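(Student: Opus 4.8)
The claim is the $\alpha = 0$ (Euler) analogue of Lemma~\ref{l:toy-red}, with the crucial difference that here the resonance function degenerates: since $\omega(\xi) = c(\alpha) i\xi|\xi|^{\alpha-1}$ becomes $c(0) i \xi = -\frac{1}{2} i\xi$ is \emph{linear}, the bilinear form $\Omega$ is itself perturbative, so $Q(\varphi, v)$ reduces directly to the remainder $R(x,D)v$ with no surviving principal $\Omega(\psi, v)$ term. The plan is therefore to run the same computation as in Lemma~\ref{l:toy-red} but observe that the ``$\Omega(\psi, v)$'' contribution is absent (or rather, can be absorbed into $R$).

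\textbf{First step.} Recall $F(s) = \frac{1}{2\pi}\log(1+s^2)$ when $\alpha = 0$, which still vanishes to second order at $s=0$: $F(0) = F'(0) = 0$. Write, exactly as in Lemma~\ref{l:toy-red} with $\alpha = 0$ so that $|y|^{1-\alpha} = |y|$,
\[
Q(\varphi, v)(x) = \int |y|\, F(\dq^y\varphi(x)) \cdot \sdq^y v(x)\, dy = \int |y|^{\alpha - 1}\Big|_{\alpha = 0} F(\dq^y\varphi) \cdot \sdq^y v\, dy,
\]
and differentiate in $x$ to obtain
\[
(\D_x R)(x, D) v = \int \frac{F'(\dq^y\varphi)\,\dq^y\varphi_x}{y|y|^{\alpha - 1}\big|_{\alpha=0}} \cdot (v(x+y) - v(x))\, dy =: \int K(x,y)\cdot (v(x+y) - v(x))\, dy,
\]
where now the kernel weight is $|y|^{\alpha} K = |y|^0 K = \frac{1}{y} F'(\dq^y \varphi)\,\dq^y\varphi_x$. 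The key structural point is that $F'(\dq^y\varphi)$ vanishes to first order at $\dq^y\varphi = 0$, so $F'(\dq^y\varphi)$ itself carries one factor of $\dq^y\varphi$; more precisely $|F'(s)| \lesssim_A |s|$ for $|s| \lesssim A$. Hence
\[
\|K(\cdot, y)\|_{L^\infty_x} \lesssim_A \frac{1}{|y|}\,\|\dq^y\varphi\|_{L^\infty_x}\,\|\dq^y\varphi_x\|_{L^\infty_x}.
\]

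\textbf{Second step.} Apply Minkowski's inequality in $y$:
\[
\|(\D_x R)(x,D) v\|_{L^2} \lesssim \int \|K(\cdot, y)\|_{L^\infty_x}\,\|v(\cdot + y) - v(\cdot)\|_{L^2_x}\, dy \lesssim_A \int \frac{1}{|y|}\,\|\dq^y\varphi\|_{L^\infty_x}\,\|\dq^y\varphi_x\|_{L^\infty_x}\,\|v(\cdot+y) - v(\cdot)\|_{L^2_x}\, dy.
\]
Now use $\|v(\cdot + y) - v(\cdot)\|_{L^2_x} = |y| \|\sdq^y v\|_{L^2_x}$ and split $\int_{|y| \le 1} + \int_{|y| > 1}$. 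On $|y| \le 1$, bound $\|\dq^y \varphi\|_{L^\infty_x} \lesssim \|\varphi_x\|_{L^\infty}$ trivially, and use the $C^{0,\delta}$ control: $\|\dq^y\varphi_x\|_{L^\infty_x} \lesssim \|\varphi_x\|_{C^{0,\delta}}\,|y|^{\delta - 1} = B|y|^{\delta-1}$; together with $\|\sdq^y v\|_{L^2} \lesssim \|v_x\|_{L^2}$... but since we want only $L^2$ control of $v$ on the right — here the statement only asks $\lesssim_A B^2$ with no $v$ norm, which suggests $v$ is implicitly the nonlinear solution $\varphi$ itself or is normalized; in any case one closes the $|y|\le 1$ integral since $\int_{|y|\le 1} |y|^{\delta} \frac{dy}{|y|} < \infty$ after the Hölder/Sobolev bookkeeping, picking up two powers of $B$ (one from each difference quotient of $\varphi_x$, or one from $\dq^y\varphi$ via interpolation with the $C^{0,\delta}$ norm of its derivative). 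On $|y| > 1$, bound $\|\dq^y\varphi\|_{L^\infty} \lesssim |y|^{-1}\|\varphi\|_{L^\infty}$ and similarly for $\varphi_x$, gaining decay $|y|^{-2}$ which is integrable against $dy/|y| \cdot |y|$; alternatively invoke Lemma~\ref{Besov trilinear integral estimate}-type reasoning as in the $\alpha > 0$ proof, adapted to $C^{0,\delta} = \dot B^{\delta}_{\infty,\infty}$ in place of the Besov norm used there.

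\textbf{Main obstacle.} The genuine difficulty, compared to Lemma~\ref{l:toy-red}, is the \emph{logarithmic loss} flagged in the remark after Theorem~\ref{t:lwp}: the Euler dispersion relation is degenerate ($\omega$ linear), so one cannot use the resonant normal form to gain an extra derivative, and the endpoint $s = \frac{\alpha+3}{2} = \frac{3}{2}$ is lost — this is precisely why the control parameter is taken as $B = \|\varphi_x\|_{C^{0,\delta}}$ rather than a Besov–$L^2$ norm. Concretely, the subtle point in the argument above is making sure that \emph{both} factors of $B$ genuinely appear: $F'(\dq^y\varphi)$ contributes one factor of $\dq^y\varphi$ which must be converted (via interpolation between $\|\varphi_x\|_{L^\infty} \le A$ and $\|\varphi_x\|_{C^{0,\delta}} = B$, or directly) into a $B$-controlled quantity with the right power of $|y|$ to keep the $y$-integral convergent near $0$, while the explicit $\dq^y\varphi_x$ supplies the second $B$. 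One must also handle the possibility $\delta$ close to $0$ carefully so the small-$y$ integral $\int_{|y|\le 1}|y|^{2\delta - 1}\,dy$ converges; this forces $\delta > 0$ strictly, consistent with the $s > \frac{3}{2}$ (not $s \ge \frac{3}{2}$) threshold. No new ideas beyond Lemma~\ref{l:toy-red} and the difference-quotient machinery of Section~\ref{s:notation} are needed — only the bookkeeping of where the logarithm/endpoint is surrendered.
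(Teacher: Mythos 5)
Your core observation matches the paper's and is correct: because $F'(0)=0$ one has $|F'(\dq^y\varphi)|\lesssim_A|\dq^y\varphi|$, so differentiating the coefficient of $Q$ in $x$ produces the bilinear structure $\dq^y\varphi\cdot\dq^y\varphi_x$ and hence two factors of $B$; and you are right that for $\alpha=0$ the resonance degenerates (since $\omega$ is linear), so the $\Omega(\psi,v)$ piece of Lemma~\ref{l:toy-red} disappears and all of $Q$ goes into the remainder $R(x,D)v$.

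There is, however, a concrete algebraic error in your first step that then forces a wrong workaround. When $\alpha=0$ the weight $|y|^{1-\alpha}=|y|$ exactly cancels the $1/|y|$ hidden inside $\sdq^y v$, so $Q(\varphi,v)=\int F(\dq^y\varphi)\,(v(x+y)-v(x))\,dy$, and after differentiating the coefficient the kernel is simply $K(x,y)=F'(\dq^y\varphi)\,\dq^y\varphi_x$, with \emph{no} $1/y$. You instead carry a spurious $1/y$ in $K$, which in your second step you have to trade against $\|v(\cdot+y)-v(\cdot)\|_{L^2}=|y|\,\|\sdq^y v\|_{L^2}\lesssim|y|\,\|v_x\|_{L^2}$, losing a derivative on $v$; this is exactly the gap that leads you to speculate that ``$v$ is implicitly the nonlinear solution or is normalized.'' With the correct kernel you just use $\|v(\cdot+y)-v(\cdot)\|_{L^2}\leq 2\|v\|_{L^2}$ after Minkowski, and the argument closes at the $L^2$ level with no derivative loss, giving $\lesssim_A B^2\|v\|_{L^2}$ (the missing $\|v\|_{L^2}$ in the paper's statement is a typo, matching Lemma~\ref{l:toy-red}). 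The remaining difference from the paper is only stylistic: the paper works at the symbol level, writes $r(x,\xi)=-\int F(\dq^y\varphi)(e^{i\xi y}-1)\,dy$, bounds $|e^{i\xi y}-1|\lesssim 1$, and applies Lemma~\ref{Trilinear integral estimate-v0} to conclude $|\D_x r|\lesssim_A\int|\dq^y\varphi||\dq^y\varphi_x|\,dy\lesssim_A B^2$ pointwise, while you argue at the Schwartz kernel level via Minkowski; both routes reduce to the same integral in $y$, so once the spurious $1/y$ is removed your small-$y$ estimate ($\|\dq^y\varphi_x\|_{L^\infty}\lesssim B|y|^{\delta-1}$, $\|\dq^y\varphi\|_{L^\infty}\leq A\lesssim B$) is precisely the content of that trilinear lemma.
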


\begin{proof}
We write
\begin{equation}
\begin{aligned}
Q(\varphi, v) &= \int F(\dq^y \varphi)  \cdot (v(x + y) - v(x)) \, dy
\end{aligned}
\end{equation}
and set
\[
r(x, \xi) = -\int F(\dq^y \varphi) (e^{i\xi y} - 1) \, dy.
\]

We have
\[
\D_x r(x, \xi) =  -\int F'(\dq^y \varphi)\dq^y \varphi_x  (e^{i\xi y} - 1) \, dy,
\]
hence by Lemma~\ref{Trilinear integral estimate-v0} ,\[
|\D_x r| \lesssim_A \int |\dq^y \varphi||\dq^y \varphi_x| \, dy \lesssim_A B^2.
\]

\end{proof}

We now establish energy estimates for the paradifferential equation \eqref{paradiff-eqn}. We define the usual energy
\[
E(v) := \int v^2 \, dx.
\]

\begin{proposition}\label{Paradiferential flow linearized energy estimates-2}
We have
\begin{equation}
\frac{d}{dt} E(v) \lesssim_A B^2 \|v\|_{L^2}^2 + \|f\|_{L^2} \|v\|_{L^2}.
\end{equation}

\end{proposition}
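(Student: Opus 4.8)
The statement to be proved is the energy estimate for the paradifferential equation \eqref{paradiff-eqn} in the Euler case $\alpha = 0$, namely
\[
\frac{d}{dt} E(v) \lesssim_A B^2 \|v\|_{L^2}^2 + \|f\|_{L^2} \|v\|_{L^2}
\]
with $E(v) = \int v^2\,dx$ and $B = \|\varphi_x\|_{C^{0,\delta}}$. The plan is to differentiate $E(v)$ in time, substitute the equation \eqref{paradiff-eqn} (which in the $\alpha = 0$ case reads $\D_t v - c(\alpha)\D_x v - \D_x Q_{lh}(\varphi, v) = f$, since $|D_x|^{\alpha-1} = |D_x|^{-1}$ and $c(0) = -\tfrac12$), and then show that each resulting term is controlled by the right-hand side. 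Concretely,
\[
\frac{d}{dt}\int v^2\,dx = 2\int \D_t v \cdot v\,dx = 2\int \big(c(\alpha)\D_x v + \D_x Q_{lh}(\varphi, v) + f\big)\cdot v\,dx.
\]
The contribution of $f$ is handled by Cauchy--Schwarz and gives the $\|f\|_{L^2}\|v\|_{L^2}$ term directly. The transport term $c(\alpha)\int \D_x v \cdot v\,dx = \tfrac{c(\alpha)}{2}\int \D_x(v^2)\,dx = 0$ by integration by parts, so it does not contribute at all. Hence everything reduces to estimating $\int \D_x Q_{lh}(\varphi, v)\cdot v\,dx$.

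First I would recall that, unlike the generic $\alpha$ case, here there is no dispersive term to exploit: the group velocity is constant and $Q_{lh}$ is purely a low-high paraproduct-type operator. From the paralinearization \eqref{paralin} specialized to $\alpha = 0$, $Q_{lh}(\varphi, v) = \int T_{F(\dq^y\varphi)}\sdq^y v\,dy$, which after integrating by parts in $x$ (moving the outer $\D_x$ onto $v$ and onto the paracoefficient) splits into a term where $\D_x$ falls on $v$ — producing a \emph{skew-symmetric} operator of order zero modulo commutators — and a term where $\D_x$ lands on the low-frequency coefficient $F(\dq^y\varphi)$, which has a favorable balance of derivatives. The skew-symmetric principal part contributes nothing to $\frac{d}{dt}E(v)$ up to a commutator; this commutator, together with the term where the derivative falls on the coefficient, is what must be bounded by $B^2\|v\|_{L^2}^2$. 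I would estimate these using the para-commutator Lemma~\ref{l:para-com} and the para-product Lemmas~\ref{l:para-prod}, \ref{l:para-prod3}, converting the bilinear average of difference quotients into a bound involving $\|\varphi_x\|_{C^{0,\delta}}^2 = B^2$ via Lemma~\ref{Trilinear integral estimate-v0} with the appropriate choice of H\"older-type exponents $\beta_i$ (since $n + \alpha - 2 = 0$ for $n = 2$, $\alpha = 0$, one works with the $\alpha_i$-branch summing to $\le 2$, placing $\delta + (1+\delta)$ worth of derivatives on the two copies of $\varphi_x$ and a residual on $v$). The key point is that each of the two factors $\dq^y\varphi_x$ (one from differentiating the coefficient, one hidden in the structure) supplies one power of $B$, while the remaining $L^2$ factor is $\|v\|_{L^2}$ — this is exactly the balanced structure.

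The main obstacle is that the logarithmic nature of the $\alpha = 0$ dispersion relation is absent but is replaced by a genuine loss: the operator $Q_{lh}$ at $\alpha = 0$ is order zero (rather than negative order), so there is no smoothing to spend, and one must verify that the commutator structure alone suffices, i.e.\ that after peeling off the skew-symmetric principal part there is literally nothing left of positive order. Concretely the delicate step is controlling $\int [\D_x, T_{F(\dq^y\varphi)}]\sdq^y v \cdot v\,dx$ (and the analogous cyclic-integration-by-parts remainder) purely by $B^2\|v\|_{L^2}^2$, for which I would use that $\D_x F(\dq^y\varphi) = F'(\dq^y\varphi)\dq^y\varphi_x$ and that the commutator gains back exactly the derivative lost, leaving a bilinear average $\int |\dq^y\varphi||\dq^y\varphi_x|\,|y|^{-?}\,dy$ bounded by $B^2$ as in the lemma of the preceding paragraph of the appendix. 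One should also note — as in Proposition~\ref{Paradiferential flow linearized energy estimates} — that without loss of generality one may take $f = 0$ during the main computation and reinstate the $\|f\|_{L^2}\|v\|_{L^2}$ term at the end by linearity. I expect the write-up to closely mirror the proof of Proposition~\ref{Paradiferential flow linearized energy estimates}, but considerably shorter, since the $\Gamma$ and $\tilde J$ machinery collapses ($J = (1 - F(\varphi_x))^{-1}$ still appears, but the energy is the plain $L^2$ energy because the normal form correction is trivial at leading order here).
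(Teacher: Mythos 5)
Your plan is essentially the paper's argument: differentiate $E(v)=\int v^2$, note that the linear term is skew-adjoint and therefore drops out, and reduce the remaining nonlinear contribution to a term where $\partial_x$ has landed on the paracoefficient/symbol, which is then controlled by $B^2\|v\|_{L^2}^2$. The main structural difference is that the paper first proves (as a separate lemma in the appendix) that $Q(\varphi,v) = R(x,D)v$ with $\|(\partial_x R)(x,D)v\|_{L^2}\lesssim_A B^2\|v\|_{L^2}$, after which the proposition itself is a two-line computation, $\frac{d}{dt}E(v) = \int T_{\partial_x R}v\cdot v\,dx$. You re-derive this bound in-line, which is fine, but you should be aware that the integration-by-parts and commutator bookkeeping you describe is exactly the content of that preceding lemma; the proposition's proof in the paper is genuinely trivial given it.

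Two small inaccuracies worth fixing. First, the linear operator at $\alpha = 0$ is $c(0)|D_x|^{-1}\partial_x = \tfrac12\mathbf{H}$ (half the Hilbert transform), not $c(\alpha)\partial_x$ as you wrote after the parenthetical — you correctly observe $|D_x|^{\alpha-1} = |D_x|^{-1}$ but then drop it. This does not affect the argument since both operators are skew-adjoint, so $\int (\text{linear term})\cdot v\,dx = 0$ either way, but the displayed equation is wrong as written. Second, the exponent choices you propose for Lemma~\ref{Trilinear integral estimate-v0}, ``$\delta$ and $1+\delta$ on the two copies of $\varphi_x$'', do not fit the lemma's hypotheses: that lemma requires $\alpha_i\in[0,1]$, so $1+\delta$ is out of range. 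The paper's own appendix bound $\int|\dq^y\varphi||\dq^y\varphi_x|\,dy\lesssim B^2$ is obtained by interpolating between the $\alpha_i$-branch for $|y|\lesssim 1$ (placing $\delta$ on $\varphi_x$ and $1$ on $\varphi$) and crude $L^\infty$ bounds for $|y|\gtrsim 1$; you should match exponents to $\alpha_i\in[0,1]$ rather than using a supercritical exponent on a single factor.
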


\begin{proof}
Without loss of generality we assume $f = 0$.  Using the equation \eqref{paradiff-eqn} for $v$, we have

\begin{align*}
    \frac{d}{dt}E^0_{\text{lin}}(v)&=2\int v_t\cdot v\,dx=\int (\mathbf{H}v+2\partial_xT_{R} )v\cdot v\,dx\\
    &=\int T_{\partial_xR} v\cdot v\,dx
\end{align*}

This can be immediately seen to satisfy the desired estimate. 

\end{proof}

We now proceed to establish higher order energy estimates. 

\begin{proposition}\label{High energy normalized variable-2}
    Let $s\geq 0$. Given $v$ solving \eqref{paradiff-eqn-inhomog}, there exists a normalized variable $v^s$ such that
    \begin{align*}
        \partial_tv^s - \partial_xQ(\varphi,v^s)-\frac{1}{2}\mathbf{H}v^s= f + \mathcal{R}(\varphi, v),
    \end{align*}
    with
    \begin{align*}
        \|v^s\|_{L_x^2}&\approx \||D_x|^sv\|_{L_x^2}
    \end{align*}
    and $\mathcal{R}(\varphi,v)$ satisfying balanced cubic estimates,
\begin{equation}
\|\mathcal{R}(\varphi, v)\|_{L^2} \lesssim_A B^2 \|v\|_{L^2}.
\end{equation}
\end{proposition}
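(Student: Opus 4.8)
The plan is to mirror the construction of the normalized high-order variable $v^s$ from Proposition~\ref{High energy normalized variable}, but in the degenerate setting $\alpha = 0$, where the dispersion relation disappears (up to the Hilbert transform $\mathbf{H} = \D_x|D_x|^{-1}$ appearing at order $0$ rather than a genuinely dispersive term). First I would set $v^s := |D_x|^s v$ and compute the equation it satisfies, so that the only new contributions relative to \eqref{paradiff-eqn-inhomog} come from the commutators of $|D_x|^s$ with $\D_x Q(\varphi, \cdot)$ and with the operator $\tfrac12 \mathbf{H}$. The commutator with $\mathbf{H}$ is perturbative: since $\mathbf{H}$ commutes with any Fourier multiplier, $[|D_x|^s, \mathbf{H}] = 0$ exactly, so that term contributes nothing. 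The genuinely problematic contribution is the commutator of $|D_x|^s$ with the paradifferential part of $\D_x Q(\varphi, \cdot)$; as in the $\alpha > 0$ case this is \emph{quadratic} rather than cubic, hence not directly absorbable into the perturbative right-hand side $f + \mathcal{R}(\varphi, v)$.

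The key steps, in order, are as follows. First, paralinearize $Q(\varphi, v^s)$ using Lemma~\ref{l:toy-red} (in its $\alpha = 0$ incarnation, i.e.\ the first Lemma of this appendix) and the low-high decomposition \eqref{paralin}, so that $\D_x Q(\varphi, v^s)$ is written as $\D_x Q_{lh}(\varphi, v^s)$ plus balanced cubic errors. Since $\alpha = 0$, $Q_{lh}$ has order $0$ and its leading paracoefficient is $T_{F'(\varphi_x)}$ (equivalently $T_{\psi_x}$ after the substitution $\psi = \D_x^{-1} F(\varphi_x)$); there is no dispersive $|D_x|^{\alpha-1}$ factor, which simplifies the structure considerably. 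Second, extract the commutator $[|D_x|^s, T_{\psi_x}]\D_x v^s$-type terms and identify their order-$0$ principal parts, which take the schematic form $L(\D_x^2\psi, v^s) \approx -s T_{\D_x^2 \psi} v^s$ (paralleling the bilinear form $L$ of Proposition~\ref{High energy normalized variable}). Third, remove these non-perturbative quadratic terms by a two-step procedure exactly as in the main text: (a) a Jacobian exponential conjugation $\tilde v^s := T_{J^{-s/\alpha}} v^s$ — but here, since $\alpha = 0$, this must be replaced by a conjugation of the form $T_{\exp(-s \psi_x)} v^s$ or an analogous logarithmic substitution, chosen so that the $\D_t$ commutator (computed via the $\psi$-equation, Proposition~\ref{p:psi-eqn}, specialized to $\alpha = 0$) cancels the leading $L$ contribution; then (b) a normal form correction $\tilde w^s$ built from the residual order-$(-1)$ (or, in the $\alpha=0$ case, genuinely lower-order) bilinear forms, which converts what remains into balanced cubic source terms satisfying \eqref{bal-est-eng}. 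The norm equivalence $\|v^s\|_{L^2_x} \approx \||D_x|^s v\|_{L^2_x}$ follows because the conjugation and normal form corrections are bounded perturbations of the identity, controlled by $A \ll 1$ (note the statement here asserts equivalence of norms, not an $O(A)$ difference, so one records that both $T_{\exp(\mp s\psi_x)}$ are bounded with bounded inverses under $A \ll 1$).

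The main obstacle I anticipate is that the degenerate dispersion relation changes the bookkeeping of the conjugation step: in the $\alpha > 0$ case the conjugation $T_{J^{-s/\alpha}}$ is designed so that the $\D_t$-commutator produces a term $-\tfrac{s c(\alpha)}{\alpha} T_{|D_x|^{\alpha-1}\D_x^2\psi} \tilde v^s$ that cancels the principal part of one of the $L$ forms, and the $1/\alpha$ factor is essential in matching coefficients. When $\alpha = 0$ this scaling breaks, so one must instead exploit that the equation for $\varphi$ (hence $\psi$) is $\D_t \varphi - \half \mathbf{H} \D_x \varphi = Q(\varphi, \D_x\varphi)$, where the linear part $\half\mathbf{H}\D_x$ has order $1$ but is not elliptic; the commutator of the conjugating paracoefficient with $\half \mathbf{H}\D_x$ will itself be order $1$, and one needs to check this does not regenerate an unbalanced term of the same order as the $L$ forms. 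The resolution should be that $\mathbf{H}\D_x$ is the Fourier multiplier $|D_x|$, so its commutator with $T_g$ is of order $0$ — hence genuinely lower order relative to the order-$1$ term, and the cancellation of the $L$ principal parts can be arranged against the order-$0$ commutator $[|D_x|, T_g]$ rather than against a $\D_t$ term. A careful matching of these order-$0$ pieces, using the paraproduct and paracommutator Lemmas~\ref{l:para-com}, \ref{l:para-prod}, \ref{l:para-prod3} together with the identity $J J^{-1} = 1$ and $J = (1 - \D_x\psi)^{-1}$, and Lemma~\ref{Trilinear integral estimate-v0} for the difference-quotient averages (with $\alpha = 0$, so up to $2$ derivatives may be balanced), is where the real work lies; everything else is a routine adaptation of Proposition~\ref{High energy normalized variable}.
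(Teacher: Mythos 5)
Your proposal takes a fundamentally different — and far more complicated — route than the paper, and in doing so misses the structural simplification that makes the $\alpha=0$ case essentially trivial. The paper simply sets $v^s := |D_x|^s v$ with no conjugation and no normal form correction: the entire proof is a one-line commutator check. The reason this works is the first lemma of the appendix, which shows $Q(\varphi,v) = R(x,D)v$ with $\|\partial_x r\|_{L^\infty_{x,\xi}} \lesssim_A B^2$. Once one writes \eqref{paradiff-eqn-inhomog} for $\alpha=0$ as $\partial_t v - \tfrac12\mathbf{H}v - \partial_x T_R v = f$ and conjugates by $|D_x|^s$, the only new contribution is $\partial_x[|D_x|^s, T_R]v$, since $\mathbf{H}$ is a Fourier multiplier and commutes with $|D_x|^s$ exactly (you did get this part right). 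The commutator $[|D_x|^s, T_R]$ gains a derivative on the symbol $r$, so $\partial_x[|D_x|^s, T_R]v$ is effectively $s\, T_{\partial_x R}v^s$ plus lower order; since $\partial_x r$ is already a $B^2$-sized quantity, this term is \emph{already} balanced cubic. There is nothing non-perturbative to eliminate.

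Your plan misidentifies the problematic term: you assert "as in the $\alpha>0$ case this is quadratic rather than cubic," but that is precisely the difference. In the $\alpha>0$ setting, the paracoefficients of $Q_{lh}$ (e.g.\ $\D_x\psi$, $|D_x|^{\alpha-1}\D_x\psi$) produce commutators that are only \emph{quadratic} in $\varphi$ after a single derivative is gained — one $B$, not two — which is why Proposition~\ref{High energy normalized variable} needs the conjugation $T_{J^{-s/\alpha}}$ plus a normal form. In the $\alpha=0$ setting the roles change: the symbol $r$ is itself quadratic, and $\partial_x r$ already carries the full $B^2$ weight, so no further manipulation is necessary. Your proposed conjugation by $T_{\exp(-s\psi_x)}$ (a reasonable guess for the $\alpha\to 0$ limit of $T_{J^{-s/\alpha}}$, since $J^{-s/\alpha} = e^{-\frac{s}{\alpha}\log J}$ and $\log J \approx \D_x\psi$) would not only be unnecessary, it would create work: the $\D_t$-commutator of $T_{\exp(-s\psi_x)}$ generates a new term with nothing to cancel against, because the quadratic $L$-type term it is supposed to absorb never arises. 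Likewise invoking Lemma~\ref{l:q-principal} and Proposition~\ref{p:psi-eqn} is off the table here — both are formulated with an explicit $1/\alpha$ and genuinely degenerate at $\alpha=0$ — and the appendix deliberately avoids them by treating $Q$ as a single zeroth-order operator $R(x,D)$ rather than splitting off a dispersive paracoefficient structure.
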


\begin{proof}
Let $v$ satisfy \eqref{paradiff-eqn-inhomog}, where without loss of generality, $f = 0$. Then, we can immediately see that $v^s := |D_x|^s v$ satisfies
\begin{equation}
\begin{aligned}
\D_t v^s &- \frac{1}{2}\mathbf{H}v-\D_x T_{R}v^s=\mathcal R.
\end{aligned}
\end{equation}

\end{proof}

\

We thus obtain the following energy estimate: 

\begin{proposition}
Let $s\geq 0$. There exist energy functionals $E^s(\varphi)$ such that we have the following:
\begin{enumerate}
    \item[a)]Norm equivalence:
    \begin{align*}
        E^s(\varphi)\approx\|\varphi\|^2_{\dot{H}_x^s}
    \end{align*}
    \item[b)]Energy estimates:
    \begin{align*}
        \frac{d}{dt}E^s(\varphi)\lesssim B^2\|\varphi\|^2_{\dot{H}_x^s}
    \end{align*}
\end{enumerate}
\end{proposition}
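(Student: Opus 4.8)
The plan is to mirror, in the much simpler $\alpha = 0$ setting, the argument used for Proposition~\ref{Higher energy estimates}: we define $E^s(\varphi) := E(\varphi^s)$, where $E(\cdot) = \int (\cdot)^2\,dx$ is the energy functional of Proposition~\ref{Paradiferential flow linearized energy estimates-2} and $\varphi^s$ is the normalized variable produced by Proposition~\ref{High energy normalized variable-2} applied to the solution $\varphi$ of \eqref{SQG} (equivalently \eqref{gSQG} with $\alpha = 0$). Part a) is then immediate from the norm equivalence $\|\varphi^s\|_{L^2_x} \approx \||D_x|^s \varphi\|_{L^2_x} = \|\varphi\|_{\dot H^s_x}$ stated in Proposition~\ref{High energy normalized variable-2}.

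For part b), I would first note that in order to apply Proposition~\ref{High energy normalized variable-2} we must reduce the nonlinear equation \eqref{SQG} for $\varphi$ to the inhomogeneous paradifferential flow \eqref{paradiff-eqn-inhomog} with a source $\tilde f$ satisfying balanced cubic estimates in $\dot H^s$; this is exactly the $\alpha = 0$ analogue of Proposition~\ref{Normalized variable}, and since in the Euler case the nonlinearity $Q(\varphi, v)$ already reduces (by the first lemma of the appendix) to $R(x,D)v$ with $\|(\D_x R)(x,D)v\|_{L^2} \lesssim_A B^2$, this reduction is essentially trivial — the only paradifferential error is the high-high interaction term $Q_{hh}$, which is balanced. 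Then, with $\varphi^s$ in hand, Proposition~\ref{High energy normalized variable-2} gives
\[
\D_t \varphi^s - \D_x Q(\varphi, \varphi^s) - \tfrac12 \mathbf H \varphi^s = f + \mathcal R(\varphi, \varphi),
\]
with $\|\mathcal R\|_{L^2} \lesssim_A B^2 \|\varphi\|_{L^2}$ (scaled appropriately so that at the $\dot H^s$ level the source is $\lesssim_A B^2 \|\varphi\|_{\dot H^s}$). Finally, I apply Proposition~\ref{Paradiferential flow linearized energy estimates-2} with $v = \varphi^s$ and $f$ replaced by this balanced source; since $\mathbf H$ is skew-adjoint and contributes nothing, and $\|f + \mathcal R\|_{L^2} \lesssim_A B^2 \|\varphi^s\|_{L^2}$, we obtain
\[
\frac{d}{dt} E(\varphi^s) \lesssim_A B^2 \|\varphi^s\|_{L^2}^2 \approx_A B^2 \|\varphi\|_{\dot H^s}^2,
\]
which is exactly b).

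The main obstacle is bookkeeping rather than any genuinely new difficulty: one must check that when $R$ and the balanced errors $Q_{hh}$, $\mathcal R$ are placed at the $\dot H^s$ level (rather than $L^2$), one can still extract two copies of $B$ with $s$ outstanding derivatives to put on the high-frequency factor — this is the same observation used in the proof of Proposition~\ref{Normalized variable}, namely that in the balanced $\Pi$ setting one gets one $B$ from the low frequency and a second $B$ carrying $s$ derivatives from the remaining factor. Because $\alpha = 0$ removes the dispersive smoothing and all the delicate $|D_x|^{\alpha-1}$ commutator juggling that complicates Sections~\ref{s:energy estimates}--\ref{s:higher order energy estimates}, no conjugation or second normal form is needed here, and the argument is a direct composition of the three appendix propositions. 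I would keep the write-up to a single short paragraph, citing Propositions~\ref{Paradiferential flow linearized energy estimates-2} and \ref{High energy normalized variable-2} and the appendix reduction, exactly as the analogous proof of Proposition~\ref{Higher energy estimates} cites its inputs.

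\begin{proof}
Let $E^s(\varphi) = E(\varphi^s)$, where $E(\cdot)$ is the functional from Proposition~\ref{Paradiferential flow linearized energy estimates-2} and $\varphi^s$ is the normalized variable from Proposition~\ref{High energy normalized variable-2}, applied after reducing \eqref{SQG} to the paradifferential flow \eqref{paradiff-eqn-inhomog} as in the $\alpha=0$ case of Proposition~\ref{Normalized variable}; here the only paradifferential error is the balanced $Q_{hh}$ term. Part a) is immediate from the norm equivalence in Proposition~\ref{High energy normalized variable-2}. For part b), we apply Proposition~\ref{Paradiferential flow linearized energy estimates-2} to $\varphi^s$ with the balanced cubic source $f + \mathcal R(\varphi,\varphi)$, which satisfies $\|f + \mathcal R\|_{\dot H^s} \lesssim_A B^2 \|\varphi\|_{\dot H^s_x} \approx_A B^2 \|\varphi^s\|_{L^2_x}$ in the $\dot H^s$ setting, since in the balanced $\Pi$ setting one $B$ comes from the low frequency and a second $B$ carries the $s$ outstanding derivatives onto the high frequency factor. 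This yields
\[
\frac{d}{dt} E^s(\varphi) = \frac{d}{dt} E(\varphi^s) \lesssim_A B^2 \|\varphi^s\|_{L^2_x}^2 \approx_A B^2 \|\varphi\|_{\dot H^s_x}^2,
\]
as desired.
\end{proof}
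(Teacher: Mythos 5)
Your proposal is correct and follows the same route as the paper: define $E^s(\varphi) = E(\varphi^s)$ using the appendix's Propositions~\ref{High energy normalized variable-2} and \ref{Paradiferential flow linearized energy estimates-2}, then read off both parts. The extra details you supply about the reduction to the paradifferential flow and the balanced errors are a reasonable elaboration of what the paper's one-line proof leaves implicit, and they check out in the $\alpha=0$ setting.
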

\begin{proof}
Let $E^s(\varphi)=E(v^s)$, where $E(v)$ is defined in Proposition \ref{Paradiferential flow linearized energy estimates-2}, and $v^s$ is defined in Proposition \ref{High energy normalized variable-2}. Part a) is immediate, whereas part b) follows from Proposition \ref{Paradiferential flow linearized energy estimates-2}.
\end{proof}

We also have the linearized counterpart:

\begin{proposition}
There exists an energy functional $E^{\text{lin}}(v)$ such that we have the following:
\begin{enumerate}
    \item[a)]Norm equivalence:
    \begin{align*}
        E^{\text{lin}}(v)\approx\|v\|^2_{L_x^2}
    \end{align*}
    \item[b)]Energy estimates:
    \begin{align*}
        \frac{d}{dt}E^{\text{lin}}(v)\lesssim B^2\|v\|^2_{L_x^2}
    \end{align*}
\end{enumerate}
\end{proposition}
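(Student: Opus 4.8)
The plan is to mirror the proof of Proposition~\ref{Linearized equation energy estimates} in the degenerate case $\alpha = 0$. I would set $E^{\text{lin}}(v) := E(\tilde v)$, where $E(\cdot)$ is the flat $L^2$ energy of Proposition~\ref{Paradiferential flow linearized energy estimates-2} and $\tilde v$ is the variable obtained by reducing the linearized equation \eqref{linearized-eqn} to the paradifferential flow \eqref{paradiff-eqn}. The key simplification relative to Proposition~\ref{Linearized normalized variable} is that in the Euler case the opening lemma of this appendix already gives $Q(\varphi, v) = R(x,D)v$ with $\partial_x R(x,D)v$ satisfying the balanced cubic bound $\lesssim_A B^2\|v\|_{L^2}$; hence the cubic term $\partial_x Q(\varphi, v)$ on the right of \eqref{linearized-eqn} is perturbative up to its low-high paralinearization, and no normal form correction is needed. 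In particular one may take $\tilde v = v$, consistent with the fact that the normal form transform \eqref{full-nf} degenerates as $\alpha \to 0$ since the resonance function $\Omega$ vanishes.

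With this choice, both assertions are immediate. For a), $E^{\text{lin}}(v) = E(v) = \|v\|_{L^2}^2$; should one instead retain a bounded paradifferential correction in $\tilde v$, it is $O(A)$ as an operator on $L^2$, and the equivalence still follows from $A \ll 1$. For b), I would write \eqref{linearized-eqn} in the form \eqref{paradiff-eqn} with source $f = \partial_x\big(R(x,D) - T_R\big)v$ and apply Proposition~\ref{Paradiferential flow linearized energy estimates-2}: the dispersive operator $c(0)|D_x|^{-1}\partial_x$ has odd imaginary symbol and so contributes nothing to $\tfrac{d}{dt}E$, the paradifferential term $\partial_x T_R$ produces only the commutator $T_{\partial_x R}$ after integration by parts, and $f$ obeys the balanced cubic $L^2$ bound, giving $\tfrac{d}{dt}E^{\text{lin}}(v) \lesssim_A B^2\|v\|_{L^2}^2$.

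The one point that needs to be verified is that the source $f = \partial_x\big(R(x,D) - T_R\big)v$ — i.e.\ the high-high and high-low interactions of $\partial_x Q(\varphi, v)$ not captured by the low-high paraproduct — satisfies $\|f\|_{L^2} \lesssim_A B^2\|v\|_{L^2}$. Here the outer $\partial_x$ can be routed onto the high-frequency input (there is no derivative loss, since the $\alpha = 0$ symbol is order zero), after which one expands the coefficient $F(\dq^y\varphi)$, uses $F(0) = F'(0) = 0$ to extract two difference quotients of $\varphi$, and closes the $y$-integral via Lemma~\ref{Trilinear integral estimate-v0} together with the Hölder control $B = \|\varphi_x\|_{C^{0,\delta}}$. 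This is entirely parallel to the proof of Proposition~\ref{Paradiferential flow linearized energy estimates-2} and to the opening lemma, and is the step I expect to require the most bookkeeping, though it poses no genuine obstacle; the degeneracy of the dispersion, far from being an impediment here, is exactly what makes the undifferentiated energy $\|v\|_{L^2}^2$ sufficient and removes the need for the modified energies used when $\alpha \neq 0$.
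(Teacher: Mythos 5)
Your proposal matches the paper's proof in all essentials: the paper likewise takes $E^{\text{lin}}(v)=E(v)=\|v\|_{L^2_x}^2$ with no normal form correction, and deduces b) directly from Proposition~\ref{Paradiferential flow linearized energy estimates-2} after observing (via the opening lemma of the appendix) that the full nonlinearity is already perturbative in the Euler case. You are a bit more explicit in flagging the source-term bound $\|f\|_{L^2}\lesssim_A B^2\|v\|_{L^2}$ as the point needing verification, but this is precisely what the opening lemma of the appendix is there to supply, so the two arguments coincide.
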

\begin{proof}
Let $E^{\text{lin}}(v)=E(v)$, where $E^0(v)$ is defined in Proposition \ref{Paradiferential flow linearized energy estimates-2}. Part a) is immediate, whereas part b) follows from Proposition \ref{Paradiferential flow linearized energy estimates-2}.
\end{proof}

Now the local well-posedness result easily follows as in the other cases.

\bibliography{bib-gsqg}
\bibliographystyle{plain}

\end{document}